\documentclass{amsart}

\usepackage{amsmath,amssymb,amsfonts,amsthm}
\usepackage[bbgreekl]{mathbbol}
\usepackage{mathrsfs}
\usepackage{dsfont}
\usepackage{stmaryrd}
\usepackage{physics}
\usepackage{faktor}
\usepackage{aliascnt}

\numberwithin{equation}{section}

\theoremstyle{definition}

\usepackage{graphicx}
\usepackage{array,multirow,booktabs,makecell}
\usepackage{longtable}
\usepackage{blkarray,bigstrut}
\usepackage{float}
\usepackage{geometry}

\usepackage{pdflscape}
\usepackage{rotating}

\usepackage[usenames,dvipsnames,table]{xcolor}

\usepackage{tikz}
\usetikzlibrary{
    matrix,
    arrows,
    arrows.meta,
    cd,
    calc,
    positioning,
    automata,
    backgrounds,
    shapes.multipart,
    decorations.pathmorphing,
    decorations.pathreplacing,
    decorations.markings
}

\usepackage{pgf}
\usepackage{pgfplots}
\usepackage[all]{xy}

\usepackage{nicematrix}
\usepackage{ytableau}
\usepackage{genyoungtabtikz}

\usepackage[backref=page]{hyperref}

\hypersetup{
    colorlinks=false,
    citebordercolor=Cerulean,
    urlbordercolor=ForestGreen,
    linkbordercolor=Fuchsia
}

\usepackage[capitalize]{cleveref}

\Crefname{equation}{}{}

\usepackage{paralist}
\usepackage{dirtytalk}
\usepackage{url}

\renewcommand{\arraystretch}{1.2}

\usepackage{shuffle}
\usepackage{tcolorbox}
\usepackage[new]{old-arrows}

\usepackage{frcursive}

\newcommand{\rk}{\operatorname{rk}}

\usepackage{mynewcommands}

\begin{document}

\sloppy

\title{Enumerating cores of charged multipartitions}

\author{Thomas Gerber}
\address[T.G.]{Université Claude Bernard Lyon 1}
\email{gerber@math.univ-lyon1.fr}
\author{Emily Norton}
\address[E.N.]{University of Kent}
\email{e.norton@kent.ac.uk}

%%%%%%%%%%%%%%%%%%%%%%%%%%%%%%%%%%%%%%%%%%%%%%%%%%
%%%%%%%%%%%%%%%%%%%%%%%%%%%%%%%%%%%%%%%%%%%%%%%%%%
%%%%%%%%%%%%%%%%%%%%%%%%%%%%%%%%%%%%%%%%%%%%%%%%%%
%%%%%%%%%%%%%%%%%%%%%%%%%%%%%%%%%%%%%%%%%%%%%%%%%%
%%%%%%%%%%%%%%%%%%%%%%%%%%%%%%%%%%%%%%%%%%%%%%%%%%
%%%%%%%%%%%%%%%%%%%%%%%%%%%%%%%%%%%%%%%%%%%%%%%%%%
%%%%%%%%%%%%%%%%%%%%%%%%%%%%%%%%%%%%%%%%%%%%%%%%%%

\begin{abstract}
Granville and Ono proved that there is an $e$-core partition of $n$ for every $n\in\mathbb{N}$ if and only if $e\geq 4$, which translates to a statement about the existence of defect $0$ blocks for symmetric groups in positive characteristic and defect $0$ unipotent blocks of finite general linear groups in positive, non-defining characteristic \cite{Ono1994,GO1996}. Motivated by analogous applications to the block theory of finite classical groups, imprimitive spetses, and cyclotomic Hecke algebras, we prove similar positivity statements about different variants of $e$-cores for charged multipartitions.
\end{abstract}

\maketitle

\begingroup
  \hypersetup{hidelinks}
\setcounter{tocdepth}{2} 
  \tableofcontents
\endgroup
\section*{Introduction}

A $t$-core partition is a partition that does not contain a removable rim-hook of length $t$. The ``$t$-core conjecture'' states that there is a $t$-core partition of $n$ for every $n\in\N$ if and only if $t\geq 4$, and was proved by Granville and Ono \cite[Theorem 1]{GO1996}. This has a consequence for representations of symmetric (and alternating) groups in characteristic $p>0$: as two ordinary irreducible characters labeled by partitions of $n$ belong to the same $p$-block of $S_n$ if and only if the partitions have the same $p$-core \cite{Brauer1947,Robinson1947}, it follows that every symmetric group $S_n$, $n\in\N$, has a defect $0$ $p$-block if and only if $p\geq 5$ \cite[Theorem 2]{GO1996}. The goal of this paper is to state and prove meaningful generalizations of the $t$-core conjecture in higher level, that is, replacing partitions with appropriate sets of charged multipartitions suggested by representation theory. These generalizations of the $t$-core conjecture imply analogous consequences for the defect $0$ unipotent blocks of finite classical groups, the $\Phi$-Harish-Chandra series of imprimitive spetses that are singletons, and the defect $0$ blocks of cyclotomic Hecke algebras.

\medskip

\textbf{Enumeration and main results.} The gold standard method to deal with a statement like the $t$-core conjecture is to work with the generating function of the type of object being enumerated, a gadget that considers all $n$ at once. 
In order to study positivity questions around higher-level analogues of core partitions, our first step is thus to find the generating functions of the charged multipartitions playing the role of cores. These are: 
\begin{itemize}
\item cocores (level $2$, in the context of finite classical groups, introduced by Olsson \cite{Olsson1986}), 
\item spetsial $d$-cores (arbitrary level, in the context of imprimitive spetses, introduced by Malle under the name $(d,\zeta)$-cores \cite{Malle1995}),
\item $(e,\bs)$-cores (arbitrary level, in the context of cyclotomic Hecke algebras, introduced by Jacon and Lecouvey \cite{JaconLecouvey2020}).
\end{itemize}
We study their combinatorics in \Cref{sec_higher_level_cores}. In this context, it is really important to replace the size of a (charged) multipartition with its rank, defined by Malle \cite[§3.5]{Malle1995}. The rank depends on both the size of the multipartition and the charge. We study the rank statistic in \Cref{sec_rank,rk_cusp}. The formulas for the generating functions of higher-level cores hinge on Malle's rank-preserving analogue of the Littlewood decomposition for type $G(\ell,1,n)$-symbols \cite[Corollary 3.5]{Malle1995}, which we establish in \Cref{rk_vs_lit} without restrictions.
\medskip

It turns out that the generating functions of the cocores and spetsial $d$-cores factorize nicely (\Cref{thm_prod_cocores} and \Cref{gf_spetsialdcores}), allowing us to establish the analogue of the $t$-core conjecture in these settings by standing on the shoulders of Granville and Ono and dealing with the cases of $d=1,2,3$ by hand. This is \Cref{thm_positivity_cocores} and \Cref{thm_positivity_spetsialdcores}. On the other hand, pinning down the generating functions of the $(e,\bs)$-cores for an individual $\bs$ is much more difficult, and we only managed to find their generating functions for small $e$ and $\ell$. The trick here is to realize that when $e=d\ell$, the generating functions of spetsial $d$-cores are linear combinations of generating functions of $(e,\bs)$-cores multiplied by some powers of $q$ (\Cref{gf_cores_decomp}). This allows us to compute the $(e,\bs)$-core generating functions for small values of $e$ and $\ell$ and determine positivity of their coefficients in \Cref{thm_gf_escores}.

\medskip

We summarize the various enumeration results of this paper in \Cref{table_gf}.
The different generating functions are indeed always of the form
$$\sff(q) = \sum_{n\in\N} f(n)q^n = \sum_{\la\in\cS} q^{\Vert\la\Vert} $$
where the notations for the series $\sff$, the set $\cS$, the statistic $\Vert.\Vert$ and the coefficients $f$ are also detailed in the table. 
The first three rows of \Cref{table_gf} contain well-known formulas: the partition generating function is due to Euler, and the product formula for the generating function of $e$-core partitions to Garvan, Kim, and Stanton \cite{GKS1990} (who used Littlewood's decomposition \cite{Lit1951}) 
and Granville-Ono's positivity theorem \cite[Theorem 1]{GO1996}.
All further rows contain new results on the generating functions of higher-level cores and positivity of their coefficients.

\begin{table}[h!]
\centering
\renewcommand{\arraystretch}{1.8}
\begin{math}
\begin{array}{@{}l@{\hskip 10pt}l@{\hskip 10pt}l@{\hskip 10pt}l@{\hskip 10pt}l@{\hskip 10pt}l@{\hskip 10pt}l}
\\
\hline
\text{Objects}
&
\text{$\cS$}
&
\text{Statistic $\Vert.\Vert$}
&
\text{$\sff(q)$}
&
\text{$f(n)$}
& \text{Formula}
& \text{Positivity}
\\
\hline
\text{partitions $\la$} & \cP & |\la| & \sfp(q)  & p(n) & (q;q)_\infty^{-1}&\text{yes}
\\
\text{$\ell$-partitions $\bla$} & \cP^\ell & |\bla| & p(q)^\ell  & p(n)^\ell &(q;q)_\infty^{-\ell}&\text{yes}
\\
{\renewcommand{\arraystretch}{1}
\begin{array}[t]{@{}l}
\text{$e$-core} \\
\text{partitions $\la$}
\end{array}}
& \cC_e & |\la| & \sfc_e(q)  & c_e(n) &(q^e;q^e)_\infty^{e}(q;q)_\infty^{-1}
& \text{yes iff }e\geq 4
\\
{\renewcommand{\arraystretch}{1}
\begin{array}[t]{@{}l}
\text{$\ell$-charges $\bsig$}\\
\text{(cusp. symbols)}
\end{array}}
&\Z^\ell[\si] & \rk(\bsig) & \mathsf{r}^\ell_\si(q)  & r^\ell_\si(n) &
{\renewcommand{\arraystretch}{1}
\begin{array}[t]{@{}l}
\Theta_{A_{\ell-1}}(\frac{z}{2}) \text{ if } \si=0
\\
\text{(\Cref{rem_cusp_theta})}
\\
\text{see \Cref{charge_gf_si=1}}  \text{ if } \si=1
\end{array}}
&{\renewcommand{\arraystretch}{1}
\begin{array}[t]{@{}l}
\text{yes iff }\ell\geq 5 \text{ if }\si=0
\\
\text{yes iff }\ell\geq 4 \text{ if }\si=1
\\
\text{(\Cref{thm_positivity_cuspidals})}
\end{array}}
\\
{\renewcommand{\arraystretch}{1}
\begin{array}[t]{@{}l}
\text{charged}
\\
\text{$\ell$-partitions}
\\ 
|\bla,\bsig\rangle, \bsig\in\Z^\ell[\si] 
\end{array}}
& \cU_\si^\ell  & \rk(|\bla,\bsig\rangle) & \sfu^\ell_\si(q)  & u^\ell_\si(n) 
&{\renewcommand{\arraystretch}{1}
\begin{array}[t]{@{}l}
\mathsf{r}^\ell_\si(q)  \sfp(q)^\ell
\\
\text{(\Cref{symbolgf_dec})}
\end{array}}
&
\text{yes if $\si\equiv0,\pm1 \bmod \ell$}
\\
{\renewcommand{\arraystretch}{1}
\begin{array}[t]{@{}l}
\text{$d$-cocores} 
\\
|\bla,\bsig\rangle, \bsig\in\Z^\ell[\si] 
\end{array}}
& \cC_{[d,\si]}^2 &  \rk(|\bla,\bsig\rangle) & \sfz^2_{d,\si}(q) & z^2_{d,\si}(n) & 
{\renewcommand{\arraystretch}{1}
\begin{array}[t]{@{}l}
\phi(q)\sfc_d(q)^2\text{ if }\si=0
\\
2\psi(q)\sfc_d(q)^2\text{ if }\si=1
\\
\text{(\Cref{thm_prod_cocores})}
\end{array}}
& {\renewcommand{\arraystretch}{1}
\begin{array}[t]{@{}l}
\text{yes iff } d\geq 2,
\\ 
\text{(\Cref{thm_positivity_cocores})}
\end{array}}
\\
{\renewcommand{\arraystretch}{1}
\begin{array}[t]{@{}l}
\text{spetsial}
\\
\text{$d$-cores } 
\\
|\bla,\bsig\rangle, \bsig\in\Z^\ell[\si]
\end{array}}
& \cC^\ell_{[d,\si]} & \rk(|\bla,\bsig\rangle) & \sfz^\ell_{d,\si}(q)  & z^\ell_{d,\si}(n) & 
{\renewcommand{\arraystretch}{1}
\begin{array}[t]{@{}l}
 \mathsf{r}^\ell_\si(q)\sfc_d(q)^\ell
\\
\text{(\Cref{gf_spetsialdcores})}
\end{array}}
& 
{\renewcommand{\arraystretch}{1}
\begin{array}[t]{@{}l}
\text{if }\ell=2,\text{ see cocores}
\\
\text{if }\ell\geq 3:
\\
\si=0:\text{yes iff }\ell d\geq 5
\\
\si=1:\text{yes iff }\ell d\geq 4,
\\
\text{(\Cref{thm_positivity_spetsialdcores})}
\end{array}}
\\
{\renewcommand{\arraystretch}{1}
\begin{array}[t]{@{}l}
\text{$e$-core}
\\
\text{$\ell$-partitions} 
\\
\text{with charge $\bs$}
\end{array}}
& \cC_{e,\bs}^\ell & |\bla| & \sfc_{e,\bs}(q)  & c_{e,\bs}(n)
& \text{see \Cref{thm_gf_escores}}
&\text{see \Cref{thm_gf_escores}}
\\
\hline
\end{array}
\end{math}
\caption{Notation and results on the various generating functions studied}
\label{table_gf}
\end{table}

\medskip

\textbf{Applications in representation theory.} Our results have applications to modular representation theory of finite groups of Lie type, $\Phi$-Harish-Chandra theory of spetses, and block theory for cyclotomic Hecke algebras.

\medskip

Let $v$ be a power of an odd prime $p$, let $l$ be an odd prime different from $p$, and let $e$ be the order of $v$ mod $l$. First of all, positivity of the coefficients of the ``$\si=1$'' cocore generating function $\sfz^2_{d,1}({{q}})$ implies that $\mathrm{SO}_{2n+1}(v)$ and $\mathrm{Sp}_{2n}(v)$ both have a defect $0$ unipotent 
$l$-block if $e=2d$. This is a straightforward corollary for these finite classical groups of Dynkin types $B$ and $C$. On the other hand, the situation is more delicate in types $D$ and $^2D$, as we bundle the two types of orthogonal groups together in the generating functions we study and they cannot be easily pulled apart. Positivity of the coefficients of the ``$\si=0$'' cocore generating function $\sfz^2_{d,0}({{q}})$ implies that at least one of $\mathrm{O}_{2n}^+(v),\mathrm{O}_{2n}^-(v)$ has a defect $0$ unipotent 
$l$-block if $e=2d$. Accordingly, when $e$ is even (the unitary prime case), $\mathrm{SO}_{2n+1}(v),\;\mathrm{Sp}_{2n}(v),$ and at least one of $\mathrm{O}_{2n}^\pm(v)$ have a defect $0$ unipotent $l$-block if and only if $e\geq 4$. When $e$ is odd (the linear prime case), $\mathrm{SO}_{2n+1}(v)$, $\mathrm{Sp}_{2n}(v),$ and $\mathrm{O}_{2n}^\pm(v)$ all have a defect $0$ unipotent $l$-block if and only if $e\geq 3$. This is \Cref{cor_fglt}.

\medskip

Secondly, the spetsial $d$-cores are related, as the name suggests, to the theory of spetses.
This mysterious program initiated by Brou\'{e}, Malle, and Michel seeks to reverse-engineer a theory of exotic analogues of finite reductive groups where Weyl group data is replaced by complex reflection group data \cite{BMM1999,Malle1998}. 
While this paper is not directly about the spetses program, our results have consequences for imprimitive spetses. In our study of different generalizations of cores to higher level, we show that the combinatorial framework of $\ell$-symbols underpinning the ``unipotent characters" of imprimitive spetses (that is, for the complex reflection groups $G(\ell,m,n)$ where $m$ divides $\ell$) and their ``block theory" has a direct link to the combinatorics of higher-level Fock spaces: in \Cref{spetsial combinat} we establish a precise relationship with the level-rank duality, which is the combinatorial shadow of a duality functor for affine Lie algebras. Note that a connection between $\Phi$-Harish-Chandra theory and level-rank duality was recently found in type $A$ \cite{TrinhXue2023}. 

\medskip

In \Cref{sec_rank}, we recast the rank statistic of an $\ell$-symbol -- which, if it were actually labeling a representation of something, would tell the rank of the group whose representation it labels -- in terms of charged $\ell$-partitions, and this shift in perspective reveals that the generating functions of ``cuspidal symbols" are related to sums of squares and are often Theta series of lattices (see e.g. \Cref{rksumsquares,sigma=2 gf}). For instance, when $\ell=4$, the Theta series of the famous sphere-packing lattice in $3$-dimensional Euclidean space (the fcc lattice) coincides with the generating function of cuspidal $4$-symbols whose content is congruent to $0$ mod $4$ (after substituting $q^2$ for $q$), see \Cref{charge gf level 4}. Our result on ``$\si=1$'' spetsial $d$-cores implies that when $e$ is divisible by $\ell$, the $G(\ell,1,n)$-spets has a $\Phi_e$-block consisting of a single symbol for every $n\in\N$ if and only if $e\geq 4$.
This is \Cref{spetsial_results}.

\medskip

Thirdly, the $(e,\bs)$-cores control block theory for cyclotomic Hecke algebras, also known as Ariki-Koike algebras.
These deformations of the group algebra of $G(\ell,1,n)$ have attracted extensive attention in the literature, studied from representation-theoretic, combinatorial, geometric, and topological viewpoints.
Being symmetric algebras, there is a notion of defect that measures the complexity of the associated blocks, and we are interested again in the defect $0$ blocks.
More precisely, the data of $(e,\bs)\in\Z_{\geq 2}\times \Z^\ell$ give rise to a non-semisimple cyclotomic Hecke algebra, and it was proved in \cite[Corollary 4.4]{JaconLecouvey2021} that
two multipartitions of the same size are in the same block for this algebra if and only if 
they have the same $(e,\bs)$-core.
In particular, $\ell$-partitions that are their own $(e,\bs)$-core label the defect $0$ blocks.
Variations on higher-level cores in this context can be found in
\cite{Fayers2007, Fayers2019, JaconLecouvey2020, JaconLecouvey2021, ChlouverakiJacon2023, Lyle2024, CGJ2025}.
Therefore, our positivity results on the $(e,\bs)$-cores generating function
established in \Cref{escore_gf_sec} imply the existence of defect $0$ blocks
for the associated cyclotomic Hecke algebras, see \Cref{sec_def0_Hecke}.
Note that finding a general formula for the generating function  of the $(e,\bs)$-cores
appears to be a particularly challenging problem.

\section{Core partitions, generating functions, and positivity}
\label{sec_part}

\subsection{Partitions and multipartitions: notation and basic definitions}
We start by setting some conventions and notation and recalling some background.
Let us follow the French convention in denoting by $\N$ the set of natural numbers together with $0$:
\[
\N=\{0,1,2,3,4,\ldots\}.
\]

A {\em partition} $\lambda$ is a (possibly infinite) sequence $\lambda=(\lambda_1,\lambda_2,\la_3,\ldots )$, $\la_i\in\N$ for all $i\geq 1$, such that $\la_i>0$ for only finitely many $i$ and $\la_1\geq\la_2\geq\la_3\geq\ldots$ We further say {\em $\la$ is a partition of $n$} if $\sum_{i\geq 1}\la_i=n$, in which case we write $|\la|=n$ and call $n$ the \textit{size} of $\la$. Partitions are the same if their non-zero parts are the same. The unique partition of $0$ is the empty partition, denoted $\emptyset$. 
We call the $\la_i$'s the {\em parts} of $\la$. We may depict $\la$ using its {\em Young diagram}, 
an array of boxes with $\la_1$ boxes in its first row, $\la_2$ boxes in its second row, etc, as illustrated below (in English notation) for $\la=(4,4,2)$, a partition of $10$:
\[
\ds
\Yboxdim{7pt}
\young(<><><><>,<><><><>,<><>)
\]
We abbreviate repeated non-zero parts of $\la$ using exponential notation, e.g. $(5,5,5,2,2,2,2,1)=(5^3,2^4,1)$.

\medskip

For $\ell\in\Z_{\geq 1}$, an {\em $\ell$-partition} is an $\ell$-tuple $\bla=(\la^1,\la^2,\ldots,\la^\ell)$ where each $\la^j$ is a partition for $j=1,\ldots,\ell$. We refer to the $\la^j$'s as the {\em component partitions} (or just {\em components}) of $\bla$. A {\em multipartition} refers to an $\ell$-partition for some $\ell\geq 2$ (not specified). A $2$-partition is usually called a {\em bipartition.} The Young diagram of $\bla$ is the $\ell$-tuple of Young diagrams of its component partitions. For example, the Young diagram of the $3$-partition $((3,2,1),(4,1),(1^5))$ is 
\[
\ds
\Yboxdim{7pt}
\young(<><><>,<><>,<>) \quad , \quad \young(<><><><>,<>)  \quad,\quad \young(<>,<>,<>,<>,<>)\;.
\]
The {\em size} of an $\ell$-partition is the sum of the sizes of its component partitions, i.e. $|\bla|=|\la^1|+\ldots+|\la^\ell|$, which is the number of boxes in the Young diagram of $\bla$. For instance, $((3,2,1),(4,1),(1^5))$ has size $16$. The unique $\ell$-partition of $0$ is the empty $\ell$-partition, which we denote $\bemp$. The integer $\ell$ will often be referred to as the {\em level}; this comes from the appearance of (charged) $\ell$-partitions in the domains of affine Lie algebras and mathematical physics. 

\medskip

For $n\in\N$, denote by $\cP(n)$ the set of partitions of $n$, and by $\cP=\bigcup_{n\in\N}  \cP(n)$ the set of all partitions.
We denote by $\cP^\ell=\{\bla=(\la^1,\la^2,\ldots,\la^\ell)\;\mid\; \la^j\in\cP\}$ the set of all $\ell$-partitions.
Consider the formal power series
$$\sfp(q)  = \sum_{\la\in\cP} q^{|\la|} =\sum_{n\geq 0} p(n)q^n,$$
where $p(n)=|\cP(n)|$. That is, the coefficient of $q^n$ in $\sfp(q)$ is the number of partitions of $n$. The formal power series $\sfp(q)$ is the {\em generating function of partitions}. Considering all partitions of all possible sizes at once and enumerating them using a generating function is a simple but powerful concept, largely thanks to Euler's product formula for $\sfp(q)$:
\begin{equation}
\label{gf_part}
\sfp(q)= \prod\limits_{n=1}^\infty \frac{1}{1-q^n}.
\end{equation}
Using the Pochhammer symbol notation $(a;q)_\infty:=\prod_{n\geq 1}(1-aq^{n-1})$, Euler's formula for the generating function of partitions may be written as $\sfp(q)=(q;q)_\infty^{-1}$. 
For $\ell\geq 1$, the generating function of $\ell$-partitions (with respect to their size) is then
\[\sfp(q)^\ell=\sum_{\bla\in\cP^\ell}q^{|\bla|}=\prod\limits_{n=1}^\infty \frac{1}{(1-q^n)^\ell}=
(q;q)_\infty^{-\ell}.\]

\begin{Not}
We will use sans serif typeface for our generating functions, e.g. $\sff(q)$. For $n\in\N$, we will write the coefficient of $q^n$ in $\sff(q)$ as $f(n)$, using ordinary typeface. Thus, $\sff(q)=\sum\limits_{n\in\N}f(n)q^n$.

We will be working with multiple parameters which index our generating functions. We will always put the level $\ell$ in superscript and the parameters $e$ or $d$ (which are the forbidden hooklengths defining the cores) in subscript. These decorations precede the variable $q$. So, for instance, the notation $\sff^\ell(q)$ will indicate the generating function of some level $\ell$ thing, rather than the $\ell$-th power of a generating function $\sff(q)$\footnote{However, in the example of the generating function of $\ell$-partitions with respect to size, the two amount to the same thing!}. For the $\ell$'th power of $\sff(q)$ we will write $\sff(q)^\ell$. We hope this will not lead to confusion.

\end{Not}

\subsection{The $e$-core and $e$-quotient of a partition and the Littlewood decomposition}
\label{sec_Lit}

\newcommand\gry{\Yfillcolour{black!40}}
\newcommand\white{\Yfillcolour{white}}

Fix $e\in\Z_{\geq 1}$ and let $\la$ be a partition. An  \textit{$e$-rim-hook} is a sequence of $e$ adjacent\footnote{by which we mean, they must share an edge} boxes on the border of the Young diagram of $\la$ such that removing them still yields the Young diagram of a partition. We will often refer to an $e$-rim-hook simply as an {\em $e$-hook}. 
A partition $\la$ is called an \textit{$e$-core} if its Young diagram has no $e$-hook. 
Let $\cC_e(n)$ be the set of $e$-core partitions of $n$, and set $c_e(n)=|\cC_e(n)|$.

\medskip

Recursively removing all $e$-hooks from a partition $\la$ yields the \textit{$e$-core of $\la$}, denoted $\la_{(e)}$.
\begin{Exa}\label{exa_core_1}
Let $\la=(10,6,3^2)\in\cP(22)$.
The Young diagram of $\la$ has a $7$-rim-hook, for instance the one represented in gray below, so it is not a $7$-core.
$$\ds
\Yboxdim{7pt}
\young(<><><><><><><><><><>,<><>!\gry<><><><>,!\white<><>!\gry<>,!\white<>!\gry<><>)
$$
We have to remove two successive $7$-hooks to reach the $7$-core of $\la$, which is $\la_{(7)} = (3,2^2,1).$
On the other hand, if we take $e=5$ then it is easy to see that $\la$ is a $5$-core, so $\la_{(5)} = \la$.
\end{Exa}
There might be several ways to remove $e$-hooks, but it turns out that $\la_{(e)}$ does not depend on the choice of
the $e$-hooks to be removed. 
This is best seen using the abacus of the partition, as we explain next.

\medskip

A level $1$ \textit{symbol} is a subset $\bS$ of $\Z$ satisfying the condition that there exists $M\in\N$ such that
$\be\in \bS$ for all $\be\in\Z_{\leq -M}$ and $\be\notin \bS$ for all $\be \in\Z_{\geq M}$.
Because of these conditions, we can write $\bS=\{ \be_1,\be_2,\be_3,\ldots \}$ with $\be_k>\be_{k+1}$ for all $k\geq 1$ and $\be_{k+1}=\be_k-1$ for all $k$ sufficiently large. 
We picture a symbol $\bS$ by a one-row {\em abacus} $\bA$ by placing beads on the integer number line at positions $\be_1,\be_2,\be_3,\ldots$.  A one-row abacus is sometimes also called a {\em Maya diagram}. 
\begin{Exa}\label{ex_abacus} If $\bS=\{ 10,5,1,0,-4,-5,-6,\ldots \}$, we picture its abacus $\bA$ as
 \begin{center}
\begin{tikzpicture}[scale=0.5, bb/.style={draw,circle,fill,minimum size=2.5mm,inner sep=0pt,outer sep=0pt}, wb/.style={draw,circle,fill,minimum size=0.5mm,inner sep=0pt,outer sep=0pt}]
%%%%%%%%%%%%%%%%%%%%%
%%%  On dessine les lignes de l'abaque et tous les emplacements
\draw [line width=0.1mm] (-6,1) -- (13,1);
    \foreach \k in {-6,...,12}
    {
        \node [wb] at (\k,1) {};
    }
%%%%%%%%%%%%%%%%%%%%%
%%%  On place les billes
\foreach \k/\j in { -6/1,-5/1,-4/1, 0/1,1/1, 5/1, 10/1}
{
    \node [bb] at (\k,\j) {};
}
%%%%%%%%%%%%%%%%%%%%%
%%%  On fait appraître la graduation
\foreach \k in {-6,...,12}
{
    \node [scale = 0.7] at (\k,0) {$\k$};
}
\end{tikzpicture}
\end{center}
 Note that there are infinitely many beads to the left and infinitely many spaces to the right.
 \end{Exa}

\medskip

Given a partition $\la= (\la_1,\la_2,\ldots )$,  we view it as having infinitely many parts of size $0$ and set $\be_k=\la_k-k+1$ for each $k\geq 1$. The $\be_k$'s are often called the {\em $\be$-numbers of $\la$}. Note that the $\be$-numbers of $\la$ satisfy the conditions  $\be_k>\be_{k+1}$ for all $k\geq 1$ and $\be_{k+1}=\be_k-1$ for all $k$ sufficiently large. The set of $\be$-numbers of $\la$ is therefore a symbol which we denote by $\bS(\la)$ and call the \textit{symbol of} $\la$:
\[ 
\bS(\la) = \{ \la_k-k+1 \,\mid \, k\in\Z_{\geq 1}  \}.
\]
We let $\bA(\la)$ be the corresponding one-row abacus and refer to it as the {\em abacus of $\la$}.

\medskip

Now, removing an $e$-hook from $\la$ amounts to moving a bead $e$ positions to the left to an empty space in $\bA(\la)$. That is, if $\be\in \bS(\la)$ and $\be-e\notin\bS(\la)$, then we replace $\be$ with $\be-e$ in $\bS(\la)$. 
Doing this recursively yields the $e$-core $\la_{(e)}$ when no more $e$-hooks can be removed, and shows that $\la_{(e)}$ is well-defined.
\begin{Exa} Let $\la=(10,6,3^2)$.
The symbol of $\la$ is as in \Cref{exa_core_1}, and its abacus $\bA(\la)$ is depicted in \Cref{ex_abacus}. Take $e=7$ again and let us compute $\la_{(7)}$ from $\bA(\la)$. In total, one can move two beads $7$ positions to the left to obtain the following abacus
\begin{center}
\begin{tikzpicture}[scale=0.5, bb/.style={draw,circle,fill,minimum size=2.5mm,inner sep=0pt,outer sep=0pt}, wb/.style={draw,circle,fill,minimum size=0.5mm,inner sep=0pt,outer sep=0pt}]
%%%%%%%%%%%%%%%%%%%%%
%%%  On dessine les lignes de l'abaque et tous les emplacements
\draw [line width=0.1mm] (-7,1) -- (13,1);
    \foreach \k in {-6,...,12}
    {
        \node [wb] at (\k,1) {};
    }
%%%%%%%%%%%%%%%%%%%%%
%%%  On place les billes
\foreach \k/\j in { -6/1,-5/1,-4/1,-2/1, 0/1,1/1, 3/1}
{
    \node [bb] at (\k,\j) {};
}
%%%%%%%%%%%%%%%%%%%%%
%%%  On fait appraître la graduation
\foreach \k in {-6,...,12}
{
    \node [scale = 0.7] at (\k,0) {$\k$};
}
\end{tikzpicture}
\end{center}
which is $\bA(\la_{(7)})$. We read the $7$-core $\la_{(7)}=(3,2^2,1)$ off its abacus by counting the number of spaces to the left of each bead, starting from the rightmost bead.
\end{Exa}

From the abacus description, it is clear that a partition $\la$ is a $2$-core if and only if the beads and spaces in its abacus alternate in a pattern like

\begin{center}
\begin{tikzpicture}[scale=0.5, bb/.style={draw,circle,fill,minimum size=2.5mm,inner sep=0pt,outer sep=0pt}, wb/.style={draw,circle,fill,minimum size=0.5mm,inner sep=0pt,outer sep=0pt}]
%%%%%%%%%%%%%%%%%%%%%
%%%  On dessine les lignes de l'abaque et tous les emplacements
\draw [line width=0.1mm] (-7,1) -- (13,1);
    \foreach \k in {-6,...,12}
    {
        \node [wb] at (\k,1) {};
    }
%%%%%%%%%%%%%%%%%%%%%
%%%  On place les billes
\foreach \k/\j in { -6/1,-5/1,-4/1, -3/1, -2/1, 0/1, 2/1, 4/1, 6/1}
{
    \node [bb] at (\k,\j) {};
}
%%%%%%%%%%%%%%%%%%%%%
%%%  On fait appraître la graduation
%\foreach \k in {-6,...,12}
%{
%    \node [scale = 0.7] at (\k,0) {$\k$};
%}
\end{tikzpicture}
\end{center}

which is the case if and only if $\la=\emptyset$ or $\la=(k,k-1,\ldots,3,2,1)$ for some $k\geq 1$. It follows that there is a $2$-core partition of $n$ if and only if $n=\frac{k(k+1)}{2}$ for some $k\in\N$, that is, $n$ is a triangular number. In this case, the Young diagram of $\la=\la_{(2)}$ has a triangular, or staircase, shape. The first few $2$-cores are depicted below:
\[
\ds
\Yboxdim{7pt}
\emptyset\quad, \quad \young(<>)\quad,\quad  \young(<><>,<>)\quad, \quad \young(<><><>,<><>,<>)\quad,\quad \young(<><><><>,<><><>,<><>,<>) \quad , \quad \young(<><><><><>,<><><><>,<><><>,<><>,<>) \quad ,  \;\ldots
\]

\medskip

The number $c_3(n)$ of $3$-core partitions of $n$ was given in \cite{GO1996} by the formula $c_3(n)=\sum_{d\mid 3n+1}\left( \frac{d}{3}\right)$, where $\left( \frac{-}{3}\right)$ is the Legendre symbol.  An explicit description of the $3$-core partitions themselves was given in \cite{Robbins}. Among the numbers $n$ that admit a $3$-core partition, we have found the following observation useful. First, we recall that an integer of the form $k(k+1)$, $k\in\N$, is called a \textit{pronic number}. 
\begin{Lem}\label{lem_size_3cores}
Suppose $n\in\N$ is a square, a square plus $1$, or a pronic number. Then 
there exists a $3$-core partition of size $n$.
\end{Lem}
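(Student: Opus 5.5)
Since the statement only asks for existence, I will exhibit explicit families of $3$-core partitions whose sizes run through the squares, the squares plus one, and the pronic numbers. The natural tool is the $3$-runner abacus, obtained by splitting $\bA(\la)$ into the residue classes of the positions mod $3$. A partition is a $3$-core exactly when every bead on each runner has a bead directly above it, so each runner is described by a single integer $x_i$, the position of its lowest bead. Normalizing the charge forces $x_0+x_1+x_2=0$. The classical formula of Garvan--Kim--Stanton then gives the size as a quadratic form:
\[
|\la| \;=\; \tfrac{3}{2}\,(x_0^2+x_1^2+x_2^2) + (x_1+2x_2),
\]
up to the convention for labelling runners. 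I will rederive this quickly from the abacus by counting the spaces to the left of each bead, taking the standard normalization from the description of $\bS(\la)$ above. Every vector $(x_0,x_1,x_2)\in\Z^3$ with zero sum gives a $3$-core.

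The plan is then to restrict to one-parameter subfamilies and evaluate the quadratic form on each.

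\textbf{Family $(k,-k,0)$.} This gives $\tfrac32\cdot 2k^2 - k = 3k^2-k$.

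\textbf{Family $(0,k,-k)$.} This gives $3k^2 - k$ again, so I should vary the choice more carefully.

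\textbf{Family $(k,0,-k)$.} This gives $3k^2-2k$.

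These sizes are pentagonal-type numbers, not squares. I therefore expect to need two-parameter choices, which is the real content. Alternatively, I can argue more directly with explicit partitions whose diagrams are built from staircases.

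\textbf{Squares.} I will use $\la=(2k-1,2k-2,\dots)$ type shapes, or better, guess and verify explicit families. For $n=k^2$, try
\[
\la=(2k-1,2k-3,\dots,3,1),
\]
the partition with odd parts. Its hook lengths are all odd but include $3$, so this family fails. Instead I will try $\la=(k,k-1,k-1,k-2,k-2,\dots,1,1)$ and similar doubled staircases, and check the hook lengths mod $3$ via the abacus. A cleaner route is to solve the quadratic form along lines. Writing $x_0=a$, $x_1=b$, $x_2=-a-b$, the size becomes
\[
3(a^2+ab+b^2) - a - b \quad\text{(or its conjugate variant)}.
\]
Setting $b=-a$ yields $3a^2$. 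Setting $b=-a+1$ yields $3(a^2-a+1)-1 = 3a^2-3a+2$. I will tabulate small values to locate the lines on which the form is a perfect square.

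The honest route is a search for lines $(a,b)=(\alpha t+\beta,\gamma t+\delta)$ on which the value equals $t^2$, $t^2+1$, or $t(t+1)$. Since the form is positive definite of discriminant $-3\cdot$const, its values along a line have leading coefficient $3(\alpha^2+\alpha\gamma+\gamma^2)$, which is never $1$. So a single line cannot cover all squares. Instead I will split by $t \bmod 3$: for $t=3m+r$ we need value $9m^2+\dots$, and $\alpha^2+\alpha\gamma+\gamma^2=3$ is achieved by $(\alpha,\gamma)=(1,1)$. So for each residue $r$ I solve for constants $\beta,\delta$ matching the linear and constant terms. This gives three families for squares, three for squares plus one, and three for pronic numbers, nine small linear systems in total.

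The main obstacle is that these linear systems need not have integer solutions for every residue class. Where they fail, I will try $(\alpha,\gamma)=(2,-1)$ or $(1,-2)$, which have the same norm $3$, and use the conjugate runner labelling. As a fallback, I will use the Granville--Ono formula $c_3(n)=\sum_{d\mid 3n+1}\left(\frac d3\right)$. The quantities $3n+1$ are
\[
3t^2+1,\qquad 3t^2+4,\qquad 3t^2+3t+1,
\]
and each is the norm $x^2+xy+y^2$ of a primitive vector. For example $3t^2+3t+1=(t+1)^2+(t+1)t+t^2$ and $3t^2+4=(t+2)^2-(t+2)(2t)+\dots$, to be checked. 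A primitive representation by $x^2+xy+y^2$ forces all prime factors $\equiv 2 \bmod 3$ to appear to even power, indeed to be absent. Hence the divisor sum is a product of positive factors and is therefore positive.
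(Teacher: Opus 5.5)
Your proposal does not yet give a proof, and the family you threw away is exactly the paper's witness for squares. The partition $(2k-1,2k-3,\dots,3,1)$ \emph{is} a $3$-core. Its hook lengths are not all odd, and none is divisible by $3$: for $(5,3,1)$ they are $7,5,4,2,1,4,2,1,1$. On the abacus its $\beta$-numbers are $2k-1,2k-4,\dots,2-k$, followed by $-k,-k-1,\dots$. The first block is an unbroken progression of step $3$ ending just above the occupied position $-1-k$, so no bead can move $3$ places left. The paper's proof uses this partition for $k^2$. For $k^2+1$ it uses $(2k-1,\dots,3,1,1)$, with $\beta$-numbers $2k-1,\dots,2-k,\,1-k,\,-k-1,-k-2,\dots$. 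For $k^2+k$ it uses $(2k,2k-2,\dots,2)$, with $\beta$-numbers $2k,2k-3,\dots,3-k$, then $-k,-k-1,\dots$. The paper cites Robbins for the $3$-core property; the same bead check verifies it directly.

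Your main route also fails as written, because of an algebra slip. Substituting $(x_0,x_1,x_2)=(a,b,-a-b)$ into $\tfrac32\sum x_i^2+x_1+2x_2$ gives $3(a^2+ab+b^2)-2a-b$, not $3(a^2+ab+b^2)-a-b$. Your form takes the value $3$ at $(1,-1)$, but there is no $3$-core of size $3$.

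With your form, take any line $(\alpha m+\beta,\gamma m+\delta)$ with $\alpha^2+\alpha\gamma+\gamma^2=3$. The coefficient of $m$ is $\equiv-(\alpha+\gamma)\not\equiv 0 \pmod 3$. The targets $(3m+r)^2$, $(3m+r)^2+1$ and $(3m+r)(3m+r+1)$ all have $m$-coefficient divisible by $3$, so every one of your linear systems is unsolvable.

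With the correct form the plan works, taking $m\in\mathbb{Z}$ and using $t^2=(-t)^2$ and $t(t+1)=(-t-1)(-t)$:
\begin{itemize}
\item $(m,-2m)$ and $(m+1,m)$ give $9m^2$ and $(3m+1)^2$;
\item $(m+1,-2m)$ and $(m,m+1)$ give $9m^2+1$ and $(3m+1)^2+1$;
\item $(m,m)$ and $(m+1,-2m-1)$ give $3m(3m-1)$ and $(3m+1)(3m+2)$.
\end{itemize}

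In the Granville--Ono fallback, the word ``primitive'' is false half the time. For odd $t$, $3t^2+1$ is divisible by $4$, and so is $3t^2+4$ for even $t$, whereas primitive values of $x^2+xy+y^2$ are odd. The conclusion survives anyway. Any integer $N$ represented by $x^2+xy+y^2$ has every prime $p\equiv 2\pmod 3$ to an even power, so $\sum_{d\mid N}\left(\frac{d}{3}\right)=\prod_{p\equiv 1 \bmod 3}(e_p+1)\geq 1$. The needed representations are:
\begin{itemize}
\item $3t^2+1=(1-t)^2+(1-t)(2t)+(2t)^2$;
\item $3t^2+4=(2-t)^2+(2-t)(2t)+(2t)^2$;
\item $3t^2+3t+1=(t+1)^2+(t+1)t+t^2$.
\end{itemize}
Repaired this way, the divisor-sum argument is a legitimate alternative. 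It is non-constructive and leans on Granville--Ono's formula plus the arithmetic of the norm form $x^2+xy+y^2$. The paper's explicit partitions need nothing beyond the abacus.
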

\begin{proof}
If $n=k^2$, consider the partition $\la = (2k-1,2k-3,\ldots, 3,1)$, whose parts are all positive odd numbers up to $2k-1$ with no part repeated. 
If $n=k^2+1$, consider the partition $\mu = (2k-1,2k-3,\ldots, 3,1,1)$, where $\mu$ is obtained from $\la$ above by adding one more box at the bottom in a new row. 
If $n=k^2+k$, consider the partition $\la = (2k,2k-2,\ldots, 4,2)$, whose parts are all even positive numbers up to $2k$ with no part repeated. 
These are three-cores by \cite[Theorems 3 and 4]{Robbins} and it is easy to check they are of size $n$.
\end{proof}

\begin{Exa} The three types of $3$-core partitions from \Cref{lem_size_3cores} for $k=4$ are illustrated below, yielding $3$-cores of sizes $4^2=16$, $4^2+1=17$, and $4^2+4=20$ respectively:
\[
\ds
\Yboxdim{7pt}
\young(<><><><><><><>,<><><><><>,<><><>,<>)\qquad, \qquad \qquad \young(<><><><><><><>,<><><><><>,<><><>,<>,<>)\qquad,\qquad\qquad \young(<><><><><><><><>,<><><><><><>,<><><><>,<><>)\;.
\] 
\end{Exa}

We now introduce the $e$-quotient of a partition by defining the $e$-abacus of a partition. Let $\la$ be  partition.
For each $1\leq j\leq e$, consider the abacus $\bA^{(j)}$ given by the symbol
\[
\bS^{(j)} = \left\{ \left\lfloor\frac{\be-1}{e} \right\rfloor +1 \,\mid \, \be\in \bS(\la) \text{ and }  j = (\be-1) \mod e +1 \right\}.
\]
Then $(\bS^{(1)}, \ldots, \bS^{(e)})$ is a {\em level $e$ symbol}. We depict it by a corresponding {\em e-row abacus}, or simply an {\em e-abacus}, by stacking the $e$ abaci of $\bS^{(1)},\ldots,\bS^{(e)}$ on top of one another, so that $\bA^{(j+1)}$ lies on top of $\bA^{(j)}$ for each $j=1,\ldots,e-1$.
The $e$-abacus of $\la$ corresponds to an $e$-partition called the \textit{$e$-quotient} of $\la$ whose $j$'th component partition is the unique partition whose one-row abacus is, up to a horizontal shift, $\bA^{(j)}$. We denote the $e$-quotient of $\la$ by $\la^{(e)}$. 

\begin{Exa}\label{exa_cores_A}
Let $\la = (8,5^2,2^2,1) \in\cP(23)$, and choose $e=5$.
The $5$-abacus of $\la$ is 
\begin{center}
\begin{tikzpicture}[scale=0.5, bb/.style={draw,circle,fill,minimum size=2.5mm,inner sep=0pt,outer sep=0pt}, wb/.style={draw,circle,fill,minimum size=0.5mm,inner sep=0pt,outer sep=0pt}]
%%%%%%%%%%%%%%%%%%%%%
%%%  On dessine les lignes de l'abaque et tous les emplacements
\draw [line width=0.1mm] (-15,1) -- (16,1);
    \foreach \k in {-14,...,15}
    {
        \node [wb] at (\k,1) {};
    }
%%%%%%%%%%%%%%%%%%%%%
%%%  On place les billes
\foreach \k/\j in { -14/1,-13/1,-12/1,-11/1,-10/1,-9/1,-8/1,-7/1,-6/1,-4/1,-2/1,-1/1,3/1,4/1,8/1}
{
    \node [bb] at (\k,\j) {};
}
%%%%%%%%%%%%%%%%%%%%%
%%%  On fait appraître la graduation
\foreach \k in {-14,...,15}
{
    \node [scale = 0.7] at (\k,0) {$\k$};
}
\end{tikzpicture}
\end{center}
We can figure out $\la^{(5)}$ by subdividing the abacus into packets of $5$,
\begin{center}
\begin{tikzpicture}[scale=0.5, bb/.style={draw,circle,fill,minimum size=2.5mm,inner sep=0pt,outer sep=0pt}, wb/.style={draw,circle,fill,minimum size=0.5mm,inner sep=0pt,outer sep=0pt}]
%%%%%%%%%%%%%%%%%%%%%
%%%  On dessine les lignes de l'abaque et tous les emplacements
\draw [line width=0.1mm] (-15,1) -- (16,1);
    \foreach \k in {-14,...,15}
    {
        \node [wb] at (\k,1) {};
    }
%%%%%%%%%%%%%%%%%%%%%
%%%  On place les billes
\foreach \k/\j in { -14/1,-13/1,-12/1,-11/1,-10/1,-9/1,-8/1,-7/1,-6/1,-4/1,-2/1,-1/1,3/1,4/1,8/1}
{
    \node [bb] at (\k,\j) {};
}
%%%%%%%%%%%%%%%%%%%%%
%%%  On fait appraître la graduation
\foreach \k in {-14,...,15}
{
    \node [scale = 0.7] at (\k,0) {$\k$};
}
%%%%%%%%%%%%%%%%%%%%%
%%%  On dessine les rectangles
\draw [line width=0.1mm, color=gray] (-14.5,0.5) -- (15.5,0.5);
\draw [line width=0.1mm, color=gray] (-14.5,1.5) -- (15.5,1.5);
\foreach \k in {-14.5, -9.5, -4.5, 0.5, 5.5, 10.5, 15.5}
{
\draw [line width=0.1mm, color=gray] (\k,1.5) -- (\k,0.5);
}
\end{tikzpicture}
\end{center}
and then rotating each rectangle 90 degrees anticlockwise, assembling them into the $5$-abacus
\begin{center}
\begin{tikzpicture}[scale=0.5, bb/.style={draw,circle,fill,minimum size=2.5mm,inner sep=0pt,outer sep=0pt}, wb/.style={draw,circle,fill,minimum size=0.5mm,inner sep=0pt,outer sep=0pt}]
%%%%%%%%%%%%%%%%%%%%%
%%%  On dessine les lignes de l'abaque et tous les emplacements
\foreach \j in {1,2,3,4,5}
{
\draw [line width=0.1mm] (-3,\j) -- (4,\j);
    \foreach \k in {-2,...,3}
    {
        \node [wb] at (\k,\j) {};
    }
}
%%%%%%%%%%%%%%%%%%%%%
%%%  On place les billes
\foreach \k/\j in { 
-2/1,-1/1,0/1,
-2/2,-1/2,
-2/3,-1/3,0/3,1/3,2/3,
-2/4,-1/4,0/4,1/4,
-2/5
}
{
    \node [bb] at (\k,\j) {};
}
%%%%%%%%%%%%%%%%%%%%%
%%%  On fait appraître la graduation
\foreach \k in {-2,...,3}
{
    \node [scale = 0.7] at (\k,0) {$\k$};
}
%%%%%%%%%%%%%%%%%%%%%
%%%  On dessine les rectangles
\draw [line width=0.1mm, color=gray] (-2.5,0.5) -- (3.5,0.5);
\draw [line width=0.1mm, color=gray] (-2.5,5.5) -- (3.5,5.5);
\foreach \k in {-2.5,-1.5,-0.5,0.5,1.5,2.5,3.5}
{
\draw [line width=0.1mm, color=gray] (\k,5.5) -- (\k,0.5);
}
\end{tikzpicture}
\end{center}
We obtain $\la^{(5)}=\bemp$, the empty $5$-partition.
\end{Exa}

We have seen that removing an $e$-hook amounts to moving a bead $e$ positions to the left in $\bA(\la)$.
In turn, this amounts to moving a bead one position to the left in the $e$-abacus of $\la$.
Therefore, the following key property is straightforward.

\begin{Lem}\label{charac_e_cores}
The partition $\la$ is an $e$-core if and only if $\la^{(e)} =\bemp$, the empty $e$-partition.
\end{Lem}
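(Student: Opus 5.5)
The plan is to rewrite both conditions of \Cref{charac_e_cores} as statements about beads on the $e$-abacus, and then check that they say the same thing.

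\emph{Hooks become runner moves.} By the discussion above, $\la$ has an $e$-hook exactly when some bead of $\bA(\la)$ can move $e$ positions to the left to an empty space. That is, there is $\be\in\bS(\la)$ with $\be-e\notin\bS(\la)$. Both $\be$ and $\be-e$ give the same residue $j=(\be-1)\bmod e+1$. Their images $\lfloor(\be-1)/e\rfloor+1$ and $\lfloor(\be-e-1)/e\rfloor+1$ differ by exactly $1$. So this move is the same as moving a bead on runner $\bA^{(j)}$ one step left to an empty position. Consequently, $\la$ is an $e$-core if and only if no runner $\bA^{(j)}$ has a bead with an empty space immediately to its left.

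\emph{Runner condition becomes empty quotient.} A one-row abacus has no bead immediately preceded by a gap if and only if its symbol is of the form $\{m,m-1,m-2,\ldots\}$ for some $m\in\Z$. Indeed, if some position $x<m$ were empty, where $m$ is the largest bead, take the smallest bead exceeding $x$. The position just left of it is empty, which is a contradiction. The converse is clear. Such a symbol is a horizontal shift of the abacus of $\emptyset$, whose $\be$-numbers are $\{0,-1,-2,\ldots\}$. The partition read off from a one-row abacus is invariant under shifts; its parts are the numbers of spaces to the left of each bead. So this is exactly the condition that the $j$-th component of $\la^{(e)}$ is $\emptyset$.

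\emph{Conclusion and the one point needing care.} Combining the two steps over all $j=1,\ldots,e$ gives the claim. The only real point is the bookkeeping in the first step: $\be\mapsto\be-e$ must preserve the residue $j$ and lower $\lfloor(\be-1)/e\rfloor$ by exactly one. This holds because $\lfloor(x-e)/e\rfloor=\lfloor x/e\rfloor-1$. It follows that the map $\be\mapsto(j,\lfloor(\be-1)/e\rfloor+1)$ is a bijection from $\Z$ to $\{1,\ldots,e\}\times\Z$, and that it matches moves of length $e$ on $\bA(\la)$ with moves of length one on the runners.
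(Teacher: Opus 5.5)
Your proof is correct and follows the same route as the paper. The paper observes that removing an $e$-hook amounts to moving a bead $e$ steps left in $\bA(\la)$, hence one step left on a runner of the $e$-abacus, and calls the lemma straightforward; you supply the bookkeeping it leaves implicit (the residue/floor computation, and the characterisation of gap-free one-row abaci as shifts of the abacus of $\emptyset$), and both pieces check out.
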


As an illustration, the partition $\la$ in \Cref{exa_cores_A} is a $5$-core as $\la^{(5)}=\bemp$.

\medskip

The $e$-abacus construction thus provides us with a bijection
\begin{equation}
\label{Lit}
\begin{array}{rcl}
\cP & \longrightarrow & \cC_e \times \cP^e
\\
\la &\longmapsto & ( \la_{(e)}, \la^{(e)} )
\end{array}
\end{equation}
which is usually referred to as the \textit{Littlewood decomposition} \cite{Lit1951}. A partition is thus uniquely determined by its $e$-core and $e$-quotient.
Moreover, the sizes of a partition, its $e$-core and its $e$-quotient are related by 
\begin{equation}
\label{size_e_core_e_quot}
|\la| = |\la_{(e)}| + e| \la^{(e)} |.
\end{equation}
This formula reflects the fact that moving a bead to the left in the $e$-abacus of $\la$ corresponds to removing an $e$-hook from $\la$, which reduces the size of $\la$ by $e$. Pushing all beads to the left in the $e$-abacus of $\la$, like tidying up the books on a bookshelf\footnote{Thanks to Roman Gonin for this analogy!}, yields the $e$-abacus of the $e$-core of $\la$.

\begin{Exa}
Let $e=2$ and consider the partition $\la=(9,5^2,4^3,1)$. Under the Littlewood bijection, it goes to the pair $(\la_{(2)},\la^{(2)})=\left((3,2,1),\left((3,1^3),(4,3)\right)\right)$. We $|\la_{(2)}|+2|\la^{(2)}|=6+2\cdot13=32=|\la|$.
\end{Exa}

\subsection{Sums of squares, triangular numbers, and Ramanujan's $\phi$- and $\psi$-functions}
In this section, we recall some classical results from number theory and introduce two functions that will be important for us later. We will repeatedly use three famous theorems from classical number theory:

\medskip

\emph{Legendre's Three-Square Theorem.} Let $n\in\N$. Then $n$ can be written as the sum of three squares if and only if $n\neq 4^j(8k+7)$ for all $j,k\geq 0$.

\medskip

\emph{Lagrange's Four-Square Theorem.} Every $n\in\N$ is the sum of four squares.

\medskip

\emph{Gauss' Eureka Theorem.} Every $n\in\N$ is the sum of three triangular numbers.

\medskip

Next, consider the functions
\[
\ds \phi(q) = \sum_{n\in\Z} q^{n^2} =1+\sum_{n>0}2q^{n^2} \qquad \hbox{ and } \qquad \psi(q)=\sum_{t\geq 0}q^{\frac{t^2+t}{2}}.
\]
These are known as Ramanujan's $\phi$- and $\psi$-functions. Gauss' Eureka Theorem says that the coefficient of $q^n$ in $\psi(q)^3$ is positive for every $n\in\N$. Lagrange's Four-Square Theorem says that the coefficient of $q^n$ in $\phi(q)^4$ is positive for every $n\in\N$, while Legendre's Three-Square Theorem says that the coefficient of $q^n$ in $\phi(q)^3$ is positive unless $n$ belongs to the sequence \href{https://oeis.org/A004215}{A004215}. In particular, if $n\equiv1,2,3,5,$ or $6\bmod 8$, then $n$ is the sum of three squares. Gauss' Eureka Theorem may be deduced as a corollary of Legendre's Three-Square Theorem, and we will often imitate that argument.

\begin{Lem}\label{phi and psi prod}
We have the following product formulas:

  \begin{align*}
      \phi(q) = (-q;q^2)_\infty^2(q^2;q^2)_\infty^2 &=\prod_{n\geq 1}(1+q^{2n-1})^2(1-q^{2n})^2\\
       &= \prod_{n\geq 1} \frac{(1+q^{2n-1})(1-q^{2n})}{(1-q^{2n-1})(1+q^{2n})}
    \end{align*}
and
    \begin{align*}
      \psi(q) = (q;q)_\infty(-q;q)_\infty^2 &= \prod_{n\geq 1}(1-q^n)(1+q^n)^2\\
       &= \prod_{n\geq 1}\frac{(1-q^{2n})^2}{1-q^n}.
    \end{align*}
\end{Lem}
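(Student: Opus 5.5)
The plan is to derive both product formulas from the Jacobi triple product identity,
\[
\sum_{n\in\Z} z^n q^{n^2} = \prod_{m\geq 1}(1-q^{2m})(1+zq^{2m-1})(1+z^{-1}q^{2m-1}),
\]
valid as an identity of formal Laurent series in $z$ with coefficients in $\Z[[q]]$. After that, only elementary manipulations of infinite products remain, using Euler's identity $(-q;q)_\infty = 1/(q;q^2)_\infty$. That identity follows from $(1+q^n)(1-q^n)=1-q^{2n}$, which gives
\[
(-q;q)_\infty=\frac{(q^2;q^2)_\infty}{(q;q)_\infty}=\frac{1}{(q;q^2)_\infty}.
\]

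\textbf{Step 1: $\phi$.} Set $z=1$ in the triple product. This gives directly
\[
\phi(q)=\prod_{n\geq1}(1-q^{2n})(1+q^{2n-1})^2 .
\]
The statement instead has $(1-q^{2n})^2$, so this point has to be reconciled first. The triple product yields the first power, which is the correct identity: checking the coefficient of $q^2$, the right-hand side with the square gives $-1$ there, while $\phi$ has $0$. I would therefore record the first-power version, reading the exponent in the statement as a typo.

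For the second expression, write $1-q^{2n}=(1-q^n)(1+q^n)$ and use Euler's identity in the forms
\[
(-q;q^2)_\infty=\frac{(-q;-q)_\infty}{(q^2;q^2)_\infty}\cdot(\cdots)
\qquad\text{and}\qquad
(q^2;q^2)_\infty=(q;q)_\infty(-q;q)_\infty .
\]
Multiplying numerator and denominator by $(q;q^2)_\infty(-q^2;q^2)_\infty$ and simplifying should show
\[
(q^2;q^2)_\infty(-q;q^2)_\infty^2=\frac{(-q;q^2)_\infty(q^2;q^2)_\infty}{(q;q^2)_\infty(-q^2;q^2)_\infty}.
\]
This is equivalent to $(q;q^2)_\infty(-q;q^2)_\infty(-q^2;q^2)_\infty=1$, that is, $(q^2;q^4)_\infty(-q^2;q^2)_\infty=1$, which is Euler's identity with $q$ replaced by $q^2$.

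\textbf{Step 2: $\psi$.} Set $q\mapsto q^{1/2}$ and $z=q^{1/2}$ in the triple product. Equivalently, pair $n$ with $-1-n$, which shows that
\[
\sum_{n\in\Z} q^{(n^2+n)/2} = 2\psi(q).
\]
The triple product then gives
\[
2\psi(q) = \prod_{m\geq1}(1-q^m)(1+q^m)(1+q^{m-1}).
\]
Removing the factor $2$ coming from $m=1$ in the last product yields
\[
\psi(q)=\prod_{m\geq1}(1-q^m)(1+q^m)^2 = (q;q)_\infty(-q;q)_\infty^2 .
\]
Finally, $(1-q^m)(1+q^m)^2=(1-q^{2m})^2/(1-q^m)$ termwise, which gives the last form.

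The only real obstacle is bookkeeping. This means justifying the specialisations of $z$ (fine formally, since each coefficient of $q$ is a finite Laurent polynomial in $z$, or analytically for $|q|<1$) and resolving the exponent discrepancy noted in Step 1.
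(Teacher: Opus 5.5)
Your proof is correct and follows the paper's route. The paper only says the formulas follow from Jacobi's triple product identity and cites Kassel--Reutenauer for the quotient form of $\phi$; your reduction of that form to $(q;q^2)_\infty(-q;q^2)_\infty(-q^2;q^2)_\infty=1$ via Euler's identity supplies this step directly. You are also right that the statement has a typo. The correct identity is $\phi(q)=(-q;q^2)_\infty^2(q^2;q^2)_\infty=\prod_{n\geq 1}(1+q^{2n-1})^2(1-q^{2n})$, since the squared version has coefficient $-1$ at $q^2$. The quotient form of $\phi$ and all three forms of $\psi$ are correct as stated, and these are the only forms used later in the paper (e.g.\ in \Cref{Sylvie4alt}).
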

\begin{proof}
 These well-known formulas may be derived from Jacobi's triple product identity.
 The second variant of the formula for $\phi(q)$ is \cite[Eq. (A.1)]{KassReut2018}.%, while the formula for $\psi(q)$ is the formula for $\sfc_2(q)$ in \Cref{gf_cores}.
\end{proof}

The following formulas will be useful later.
No doubt they are well-known but we give proofs here for completeness, as they are easy to establish by manipulation of formal power series or by counting arguments.

\begin{Lem}\label{Sylvie4alt}
It holds that
\[
\psi(q)^2=\psi(q^2)\phi(q).
\]
\end{Lem}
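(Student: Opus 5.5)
We will prove $\psi(q)^2=\psi(q^2)\phi(q)$ in two ways: a direct manipulation of the product formulas of \Cref{phi and psi prod}, and, as a cross-check, a bijective counting argument.

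\textbf{Product formulas.} From \Cref{phi and psi prod} we have $\psi(q)=\prod_{n\ge1}(1-q^{2n})^2/(1-q^n)$, hence
\[
\psi(q)^2=\prod_{n\geq 1}\frac{(1-q^{2n})^4}{(1-q^n)^2}.
\]
Substituting $q^2$ for $q$ gives $\psi(q^2)=\prod_{n\ge1}(1-q^{4n})^2/(1-q^{2n})$. For $\phi(q)$ we use the first product form, $\phi(q)=\prod_{n\ge1}(1+q^{2n-1})^2(1-q^{2n})^2$.

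The key step is to rewrite $\phi(q)$ in terms of $(1-q^k)$-factors only. Since $\prod_{n}(1+q^{2n-1})=(-q;q^2)_\infty$, and $(1+x)=(1-x^2)/(1-x)$, we get
\[
(-q;q^2)_\infty=\frac{(q^2;q^4)_\infty}{(q;q^2)_\infty}.
\]
Next use $(q^2;q^4)_\infty=(q^2;q^2)_\infty/(q^4;q^4)_\infty$ and $(q;q^2)_\infty=(q;q)_\infty/(q^2;q^2)_\infty$. Together these give
\[
\phi(q)=\frac{(q^2;q^2)_\infty^6}{(q^4;q^4)_\infty^2\,(q;q)_\infty^2},
\]
and
\[
\psi(q^2)\phi(q)=\frac{(q^4;q^4)_\infty^2}{(q^2;q^2)_\infty}\cdot\frac{(q^2;q^2)_\infty^6}{(q^4;q^4)_\infty^2(q;q)_\infty^2}=\frac{(q^2;q^2)_\infty^4}{(q;q)_\infty^2}=\psi(q)^2.
\]

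\textbf{Bijective check.} The coefficient of $q^n$ in $\psi(q)^2$ counts pairs $(s,t)\in\N^2$ with $T_s+T_t=n$, where $T_k=k(k+1)/2$. The coefficient of $q^n$ in $\psi(q^2)\phi(q)$ counts pairs $(a,m)\in\N\times\Z$ with $2T_a+m^2=n$. Multiplying by $4$ and adding $2$ turns $T_s+T_t=n$ into $(2s+1)^2+(2t+1)^2=8n+2$. The linear change of variables $u=s+t+1$, $v=s-t$ sends $(s,t)$ to $(u,v)$ with $u\ge1$, $v\in\Z$, $u+v$ odd. One checks that $T_s+T_t=T_{u-1}+\dots$; more precisely, $(2s+1)^2+(2t+1)^2=2\big((s+t+1)^2+(s-t)^2\big)$, so $n=\tfrac{u^2+v^2-1}{2}$. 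Matching this with $2T_a+m^2=a^2+a+m^2$ requires identifying, according to the parity of $u$, which of $u,v$ plays the role of $m$ and how $a$ is recovered from the other.

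\textbf{Main obstacle.} The only delicate point is the bookkeeping of Pochhammer identities; there is no conceptual difficulty, so the product-formula route will be the written proof. In the bijective version, the harder part is the case split on parity: because of that, I would present the product computation and only remark on the counting interpretation.
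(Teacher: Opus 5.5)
Your last display is miscomputed. With your formula for $\phi$,
\[
\psi(q^2)\phi(q)=\frac{(q^4;q^4)_\infty^2}{(q^2;q^2)_\infty}\cdot\frac{(q^2;q^2)_\infty^6}{(q^4;q^4)_\infty^2(q;q)_\infty^2}=\frac{(q^2;q^2)_\infty^5}{(q;q)_\infty^2}=(q^2;q^2)_\infty\,\psi(q)^2,
\]
which is not $\psi(q)^2$.

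This slip is masking a wrong input. The first product form of $\phi$ printed in \Cref{phi and psi prod} has the wrong exponent. Jacobi's triple product at $z=1$ gives $\phi(q)=(-q;q^2)_\infty^2(q^2;q^2)_\infty=\prod_{n\geq 1}(1+q^{2n-1})^2(1-q^{2n})$. The printed product $\prod_{n\geq1}(1+q^{2n-1})^2(1-q^{2n})^2$ already fails at $q^2$: it begins $1+2q-q^2+\cdots$, whereas $\phi(q)=1+2q+2q^4+\cdots$. Your Pochhammer manipulations are themselves correct. Applied to that false premise, they lead to the false identity $\psi(q^2)\phi(q)=(q^2;q^2)_\infty\psi(q)^2$, and only the arithmetic error $6-1=4$ brings you back to the target.

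The repair keeps your route intact. From the correct triple product, your same steps give the standard eta quotient $\phi(q)=(q^2;q^2)_\infty^5/\bigl((q;q)_\infty^2(q^4;q^4)_\infty^2\bigr)$. Then $\psi(q^2)\phi(q)=(q^2;q^2)_\infty^4/(q;q)_\infty^2=\psi(q)^2$ holds honestly. That is essentially the paper's proof, which uses the second product form $\prod_{n\geq 1}\frac{(1+q^{2n-1})(1-q^{2n})}{(1-q^{2n-1})(1+q^{2n})}$ of $\phi$ and cancels factors directly. That form is correct and equals the same eta quotient.

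Your bijective cross-check also needs fixing before it checks anything:
\begin{itemize}
\item The scaling is $8(T_s+T_t)+2=(2s+1)^2+(2t+1)^2$. So $u^2+v^2=4n+1$, not $n=(u^2+v^2-1)/2$.
\item Pairs $(s,t)\in\mathbb{N}^2$ correspond only to those $(u,v)$ with $|v|<u$.
\item To match these with representations $4n+1=(2a+1)^2+(2m)^2$, send $(u,v)$ to itself when $u$ is odd, and to $(|v|,\pm u)$, with the sign of $v$, when $u$ is even.
\end{itemize}
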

\begin{proof}
Applying the product formulas in \Cref{phi and psi prod}, we have:
\begin{align*}
\psi(q^2)\phi(q) &= \left(\prod_{n\geq 1} \frac{(1-q^{4n})^2}{(1-q^{2n})}\right)\left( \prod_{n\geq 1} \frac{(1+q^{2n-1})(1-q^{2n})}{(1-q^{2n-1})(1+q^{2n})} \right) \\
			&= \prod_{n\geq 1} \frac{(1-q^{2n})^3(1+q^{2n})^2(1+q^{2n-1})}{(1-q^{2n})(1-q^{2n-1})(1+q^{2n})} \times \prod_{n\geq 1}\frac{(1-q^{2n})}{(1-q^{2n})} \\
			&= \prod_{n\geq 1} \frac{(1-q^{2n})^4(1+q^n)}{(1-q^n)(1-q^{2n})}\\
			&=  \prod_{n\geq 1} \frac{(1-q^{2n})^4}{(1-q^n)^2}\\
			&= \psi(q)^2.
\end{align*}
\end{proof}

\begin{Lem}\label{phipsi}
We have $\phi(q)^2 = \phi(q^2)^2 + 4q\psi(q^4)^2$.
\end{Lem}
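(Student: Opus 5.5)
We prove the identity $\phi(q)^2=\phi(q^2)^2+4q\psi(q^4)^2$ by a counting argument: split the sums of two squares according to parity. Directly from the definition,
\[
\phi(q)^2=\sum_{(a,b)\in\Z^2} q^{a^2+b^2},
\]
and we partition $\Z^2$ according to whether $a+b$ is even or odd.

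\emph{Even part.} If $a+b$ is even, write $a=u+v$ and $b=u-v$. The map $(u,v)\mapsto(u+v,u-v)$ is a bijection from $\Z^2$ onto $\{(a,b)\in\Z^2 \mid a\equiv b \bmod 2\}$. It satisfies $a^2+b^2=2(u^2+v^2)$. So the even part contributes
\[
\sum_{(u,v)\in\Z^2}q^{2u^2+2v^2}=\phi(q^2)^2 .
\]

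\emph{Odd part.} If $a+b$ is odd, the same substitution gives half-integers. Instead write $a=u+v+1$ and $b=u-v$. The map $(u,v)\mapsto(u+v+1,u-v)$ is a bijection from $\Z^2$ onto the pairs with $a+b=2u+1$ odd: the inverse is $u=(a+b-1)/2$, $v=(a-b-1)/2$, both integers. Expanding,
\[
a^2+b^2=2u^2+2v^2+2u+2v+1=1+4\Big(\tfrac{u(u+1)}{2}+\tfrac{v(v+1)}{2}\Big).
\]
So the odd part equals
\[
q\Big(\sum_{u\in\Z}q^{4\cdot\frac{u(u+1)}{2}}\Big)^2 .
\]

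\emph{Relating to $\psi$.} The map $u\mapsto -1-u$ is an involution of $\Z$ that preserves $u(u+1)/2$. It exchanges $\Z_{\ge0}$ with $\Z_{<0}$, so each triangular number $t(t+1)/2$ with $t\geq 0$ is hit exactly twice. Hence
\[
\sum_{u\in\Z}q^{u(u+1)/2}=2\psi(q),
\]
and the odd part is $q\,(2\psi(q^4))^2=4q\psi(q^4)^2$.

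Adding the two parts gives the claimed identity. The only step needing care is choosing the shifted substitution in the odd case and checking it is a bijection; the rest is routine.
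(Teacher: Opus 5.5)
Your proof is correct and follows essentially the same route as the paper: both split $\sum_{(a,b)\in\Z^2}q^{a^2+b^2}$ by the parity of $a+b$ and use the bijections $(a,b)\mapsto\bigl(\tfrac{a+b}{2},\tfrac{a-b}{2}\bigr)$ and $(a,b)\mapsto\bigl(\tfrac{a+b-1}{2},\tfrac{a-b-1}{2}\bigr)$, which are the inverses of your substitutions. Both then apply the symmetry $u\mapsto -1-u$ of triangular numbers to get the factor $2\psi(q^4)$.
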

\begin{proof}
The coefficient of $q^n$ in $\phi(q)^2$ is the number of ways to write
$n=a^2+b^2$ with $a,b\in\Z$.
We split the computation depending on the parity of $a+b$,
and we start by noticing that $a^2+b^2$ has the same parity as $a+b$.
\begin{itemize}
\item On the one hand, the map $(a,b)\mapsto (\al,\be)$ 
with $\al=\frac{a+b}{2}, \be=\frac{a-b}{2}$ is a bijection
from $\{(a,b)\in\Z^2\mid a+b\text{ even}\}$ to $\Z^2$,
and we have 
$a^2+b^2=2(\al^2+\be^2)$.
\item On the other hand, the map $(a,b)\mapsto (\al,\be)$ with $\al=\frac{a+b-1}{2}$ and $\be=\frac{a-b-1}{2}$ is a bijection
from $\{(a,b)\in\Z^2\mid a+b\text{ odd}\}$ to $\Z^2$, and we have
$a^2+b^2=4 ( \frac{\al(\al+1)}{2}+ \frac{\be(\be+1)}{2} ) + 1$.
\end{itemize}
Therefore we obtain
\begin{align*}
\phi(q)^2 
& = \sum_{a+b \text{ even}} q^{a^2+b^2} +  \sum_{a+b \text{ odd}} q^{a^2+b^2}
\\
& = \sum_{\al,\be\in\Z} q^{2(\al^2+\be^2)} +  \sum_{\al,\be\in\Z} q^{4(\frac{\al(\al+1)}{2}+ \frac{\be(\be+1)}{2} ) + 1}
\\
& = \phi(q^2)^2 + q\left( 2\psi(q^4)\right)^2
\end{align*}
where, in the last step, we have used the usual symmetry for the triangular numbers to restrict to 
$\al,\be\in\N$.
\end{proof}

\subsection{The generating function of the $e$-core partitions}
\label{sec_gf_cores}

Let $e\in\Z_{\geq 1}$. Let
\[
\sfc_e(q) = \sum_{n\geq 0} c_e(n)q^n =  \sum_{\la\in\cC_e} q^{|\la|}
\]
be the generating function of the $e$-cores, where $c_e(n)$ denotes the number of $e$-core partitions of $n$. For example, if $e=4$ and $n=5$ then $c_4(5)=3$ as there are exactly three $4$-core partitions of $5$, given by:
\[
\ds
\Yboxdim{7pt}
\young(<><><><>,<>)\quad, \qquad \young(<><><>,<>,<>)\quad,\qquad\young(<><>,<>,<>,<>)\quad.
\]
In the case that $e=2$, we have already observed that $c_2(n)=1$ if $n$ is a triangular number, and $c_2(n)=0$ otherwise. This yields the following equality that we will often use in this paper:
\[
\sfc_2(q)=\sum_{t\in\N}q^{\frac{t^2+t}{2}}=\psi(q).
\]

Combining \Cref{size_e_core_e_quot} with \Cref{gf_part}, we obtain the product formula for the generating function of $e$-core partitions that was established in \cite{GKS1990}:

\begin{equation}
\label{gf_cores}
\sfc_e(q) = \prod \frac{(1-q^{en})^e}{(1-q^n)}.
\end{equation}
We note that taking $e=1$ in this formula yields $\sfc_1(q)=1$, and taking $e=2$ yields the second version of the product formula for $\psi(q)$ in \Cref{phi and psi prod}.

\medskip

From \Cref{gf_cores}, we easily deduce that $\sfc_e(q)$ divides $\sfc_f(q)$ whenever $e$ divides $f$, as follows.
\begin{Lem}\label{factorization gf e-cores}
For any $e,m\geq 1$, \[ \sfc_{em}(q)=\sfc_e(q)\sfc_m(q^e)^e. \]
\end{Lem}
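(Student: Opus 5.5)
The plan is a direct manipulation of the product formula \Cref{gf_cores}. I will write $\sfc_{em}(q)$ as a product, then split the factors so that one piece is visibly $\sfc_e(q)$ and the remaining piece is $\sfc_m(q^e)^e$.

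First, apply \Cref{gf_cores} with $em$ in place of $e$:
\[
\sfc_{em}(q)=\prod_{n\geq1}\frac{(1-q^{emn})^{em}}{1-q^n}.
\]

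Next, multiply and divide by $\prod_{n\geq 1}(1-q^{en})^e$. This is legitimate as an identity of formal power series with constant term $1$, since each such product is invertible. Group the factors as
\[
\sfc_{em}(q)=\left(\prod_{n\geq1}\frac{(1-q^{en})^{e}}{1-q^n}\right)\left(\prod_{n\geq1}\frac{(1-q^{emn})^{m}}{1-q^{en}}\right)^{e}.
\]
The first factor is $\sfc_e(q)$ by \Cref{gf_cores}.

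The second factor inside the $e$-th power is $\prod_{n\geq 1}(1-(q^e)^{mn})^m/(1-(q^e)^n)$. By \Cref{gf_cores} for $m$, this is exactly $\sfc_m(q)$ evaluated at $q^e$, that is, $\sfc_m(q^e)$. Raising it to the $e$-th power gives the claim.

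The only point requiring care is bookkeeping of exponents: $(1-q^{emn})^{em}$ splits as $\bigl((1-q^{emn})^m\bigr)^e$. The substitution $q\mapsto q^e$ also respects infinite products of formal power series, because $q^e$ has zero constant term. No step here is a real obstacle.

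As an alternative, one could argue combinatorially via iterated Littlewood decompositions \Cref{Lit}: an $em$-core's $e$-quotient consists of $m$-cores with the size relation \Cref{size_e_core_e_quot}. However, this would need a finer bijection than is available, so the generating-function computation above is the route I would take.
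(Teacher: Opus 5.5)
Your proof is correct and is exactly the paper's argument: multiply and divide by $\prod_{n\geq 1}(1-q^{en})^e$ in the product formula \Cref{gf_cores}, then recognize the two resulting factors as $\sfc_e(q)$ and $\sfc_m(q^e)^e$. Your aside dismissing the combinatorial route is too pessimistic, and that route also works. A partition is an $em$-core if and only if every component of its $e$-quotient is an $m$-core, so \Cref{Lit} restricts to a bijection $\cC_{em}\to\cC_e\times\cC_m^e$, and \Cref{size_e_core_e_quot} then gives the identity directly.
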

\begin{proof}
\[
\sfc_{em}(q) = \prod_{n\geq 1} \frac{(1-q^{emn})^{em}}{(1-q^n)}
=\prod_{n\geq 1}\frac{(1-q^{en})^e(1-q^{emn})^{em}}{(1-q^n)(1-q^{en})^e}
=\sfc_e(q)\left(\prod_{n\geq 1}
\frac{(1-(q^e)^{mn})^m}{(1-(q^e)^n)}\right)^e
=\sfc_e(q)\sfc_m(q^e)^e.
\]
\end{proof}
In particular, $\sfc_{2d}(q)=\psi(q)\sfc_d(q^2)^2=\sfc_d(q)\psi(q^d)^d$ for any $d\geq 1$ (the latter variant was proved in \cite[Theorem 5]{Ono1994}). By induction on $k$, it follows that $\sfc_{2^k}(q)=\prod\limits_{i=0}^{k-1}\psi(q^{2^i})^{2^i}$ for any $k\geq 1$. In particular, $\sfc_4(q)=\psi(q)\psi(q^2)^2$.

\subsection{Granville and Ono's theorem on positivity of the number of $e$-core partitions of $n$}
\label{sec_GO}

The only $1$-core partition is $\emptyset$. 
For $e\in\{2,3\}$, there are infinitely many integers $n$ such that $c_e(n)= 0$. Indeed, $c_2(n)=0$ whenever $n$ is not a triangular number. As for $e=3$, it turns out that $c_3(n)=0$ for all $n\equiv 3 \bmod 4$ \cite[Corollary 3]{Robbins}. In fact, not only $c_2(n)$ but also $c_3(n)$ is $0$ for almost all $n$ \cite{GO1996}.

\medskip

By contrast, work in the early 1990's on $5$-cores and $7$-cores showed they exhibit a very different behavior: for any $n\in\N$, there exist a $5$-core partition of $n$ and a $7$-core partition of $n$ \cite{ErdmannMichler,GKS1990}. The conjectural statement that $c_e(n)>0$ for all $n\in\N$ whenever $e\geq 4$ was known (with the letter $t$ in place of $e$) as the ``$t$-core conjecture.'' The $t$-core conjecture was settled in the affirmative in two papers by Ono \cite{Ono1994} and Granville-Ono \cite{GO1996} dating from the mid-1990's:

\begin{Thm}\cite[Theorem 1]{GO1996}\label{grono}
It holds that $c_e(n)>0$ for all $n\in\N$, provided that $e\geq 4.$
\end{Thm}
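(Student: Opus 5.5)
The plan is to reduce the whole theorem to the four cases $e\in\{4,5,6,7\}$, using the divisibility of core generating functions in \Cref{factorization gf e-cores}. If $e\mid f$, then $\sfc_f(q)=\sfc_e(q)\sfc_{f/e}(q^e)^e$. The second factor has nonnegative coefficients and constant term $1$. Hence $c_e(n)>0$ for all $n$ implies $c_f(n)\geq c_e(n)>0$ for all $n$. Every $e\geq 4$ is divisible by $4$, by some prime $p\geq 5$, or equals $6$ or $9$, so it suffices to handle $e=4,6,9$ and all primes $p\geq 5$. The case $e=9$ is not covered by $e=3$, since $3$-cores fail, so it must be treated as well.

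For the small cases I would use theta-function identities, in the spirit of the paper's use of $\phi$ and $\psi$. For $e=4$ we have $\sfc_4(q)=\psi(q)\psi(q^2)^2$, so the claim is that every $n$ can be written as $T_a+2T_b+2T_c$ with $T_i$ triangular. Multiplying by $8$ and adding $5$ turns this into $8n+5=x^2+2y^2+2z^2$ with $x,y,z$ odd. Setting $y=u+v$ and $z=u-v$ gives $2y^2+2z^2=(2u)^2+(2v)^2$, so we need $8n+5=x^2+(2u)^2+(2v)^2$ with $x$ odd and $u,v$ of opposite parity. By Legendre's Three-Square Theorem, $8n+5$ is a sum of three squares. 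Reducing mod $8$ forces exactly one of these squares to be odd and the even parts to have the right parity pattern. For $e=6$ I would write $\sfc_6(q)=\sfc_3(q)\psi(q^3)^3$ and combine the description of $3$-core sizes (\Cref{lem_size_3cores}, together with Robbins' formula $c_3(n)=\sum_{d\mid 3n+1}(\frac d3)$) with Gauss' Eureka theorem applied to $q^3$. Any residue not covered this way would need a finite check or a further quadratic-form argument. The case $e=9$ would be handled similarly, writing $\sfc_9(q)=\sfc_3(q)\sfc_3(q^3)^3$.

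The main obstacle is primes $p\geq 5$, where no elementary quadratic-form identity is available. Here I would follow Granville–Ono. Through the $p$-abacus, $c_p(n)$ counts lattice points $(x_1,\dots,x_p)$ with $\sum x_i=0$ on an ellipsoid $\frac p2\sum x_i^2+\sum i x_i=n$. Equivalently, $\eta(24z)^p\cdot$ a correction term makes $\sum c_p(n)q^{24n+p^2-1}$ a weight $(p-1)/2$ modular form, which splits into an Eisenstein part and a cusp part. For $p\geq 5$ the weight is at least $2$. One then bounds the Eisenstein coefficients below by roughly $n^{(p-3)/2}$, via explicit divisor sums or $L$-values of twisted characters, and bounds the cusp coefficients above using Deligne's bound, or the Shimura lift in the half-integral case. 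This gives positivity for all $n$ beyond an explicit bound, and the remaining $n$ are checked by computer. The delicate part is $p=5$ and $p=7$, where the Eisenstein part is smallest relative to the cusp part. For these I would use the known exact formulas, for example Garvan–Kim–Stanton's $c_5(n)=\sigma^*$-type divisor sum, which is visibly positive, instead of analytic bounds. This is exactly the content of \cite{GO1996}, so in the paper the theorem is cited rather than reproved.
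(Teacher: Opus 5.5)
There is a genuine gap in how you handle the infinitely many primes. For a fixed prime $p\geq 5$, the Eisenstein-versus-cusp comparison you sketch is Ono's argument in \cite{Ono1994}, not the new content of \cite{GO1996}. It only gives $c_p(n)>0$ for $n\geq N_p$. The threshold $N_p$ depends on the lower bound for the Eisenstein coefficients and on the constant in Deligne's bound for the cusp part, and both depend on $p$. Nothing in your sketch controls them uniformly in $p$. ``Check the remaining $n$ by computer'' is then a separate finite computation for each of infinitely many primes, so the argument never terminates. It is fine for any finite list of primes, but not for all $p\geq 5$.

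What is missing is a uniform argument for all large $e$. Granville and Ono use the Garvan--Kim--Stanton parametrization \cite{GKS1990} of $e$-cores by vectors $\vec n\in\Z^e$ with $\sum_i n_i=0$, whose size is an explicit quadratic polynomial in the $n_i$. They combine it with Lagrange's Four-Square Theorem to get $c_e(n)>0$ for every $n$ and every $e\geq 17$ at once, with no reduction to primes needed there. Only after that are you left with a finite list of primes. The ones that actually need work are $11$ and $13$: Ono's verification for $11$, the modular-form argument of \cite[Theorem 5]{GO1996} for $13$, or Kiming's elementary treatment \cite{Kiming} of both. The primes $5$ and $7$ are not the delicate cases; they were settled earlier by explicit formulas.

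Some smaller points:
\begin{itemize}
\item In the reduction, ``equals $6$ or $9$'' must read ``is divisible by $6$ or $9$'' (think of $e=18$ or $e=27$).
\item For $e=6$ nothing is left over. We have $c_3(0),c_3(1),c_3(2)>0$, and Gauss' Eureka Theorem makes every coefficient of $\psi(q^3)^3$ at a multiple of $3$ positive.
\item For $e=9$, ``similarly'' hides the real input. Using $\sfc_9(q)=\sfc_3(q)\sfc_3(q^3)^3$, you need $\sfc_3(q)^3$ to have positive coefficients, i.e.\ every integer is a sum of three $3$-core sizes. This follows from Legendre's theorem: every $n$ has the form $w^2+y^2+z^2+z$ (\Cref{pos phipsipsi}), and squares and pronic numbers are sizes of $3$-cores (\Cref{lem_size_3cores}).
\item Your $e=4$ argument via $8n+5=x^2+(2u)^2+(2v)^2$ is correct. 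It matches the paper's remark that Legendre's theorem suffices there.
\end{itemize}
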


\medskip

The main idea of the papers \cite{Ono1994,GO1996} is to treat the enumeration of $e$-core partitions as a number-theoretic problem, thanks to arithmetic properties of the generating function \Cref{gf_cores}. We summarize the steps of the proof of the $t$-core conjecture, starting with Ono's paper \cite{Ono1994}, which came close. As $c_e(n)\leq c_f(n)$ whenever $e$ divides $f$, it suffices to show that $c_e(n)$ is positive for all $n\in\N$ when $e\geq 5$ is prime and for $e=4,6,9$.  When $p\geq 5$ is prime, the generating function of the $p$-core partitions, $\sfc_p(q)$, is almost a modular form (it is only off by an integral power of $q$) \cite{Ono1994}. A deep result in the theory of modular forms (Deligne's Theorem, stated as \cite[Theorem 1]{Ono1994}) then allowed Ono to deduce in a short argument that for all primes $p\geq 5$, $c_p(n)$ must be positive for all but finitely many $n$, thus establishing \cite[Theorem 2]{Ono1994}. He also claimed to have checked positivity of $c_{11}(q)$. Similar tactics dispatched the $e=4$ and $e=9$ cases \cite[Theorems 3 and 6]{Ono1994}, while the case $e\equiv 2\bmod 4$, $e\geq 6$, succumbed to Gauss's Eureka Theorem 
\cite[Theorem 5]{Ono1994}.

\medskip

In \cite[§2]{GO1996}, Granville and Ono filled the gap in \cite[Theorem 2]{Ono1994} for primes $p\geq 17$ by showing that $c_e(n)>0$ for all $n\in\N$ whenever $e\geq 17$. The argument consists in applying 
Lagrange's Four-Square Theorem in conjunction with a formula found by Garvan, Kim and Stanton expressing the size of an $e$-core by a quadratic polynomial in $e$ variables \cite[Bijection 2]{GKS1990}. To conclude the $t$-core conjecture held, all that was left was the case $e=13$; for this, they used modular forms \cite[Theorem 5]{GO1996}.

\medskip

The presence of modular forms in the story of $e$-cores is intriguing and suggests a powerful explanatory framework. However, while they offer an elegant method of proof and an avenue for understanding, modular forms turn out not to be strictly necessary to prove positivity. Later, Kiming showed that it is possible to deal with the $e=11$ and $e=13$ cases by elementary methods \cite{Kiming}, and modular forms may be circumvented in the $e=4$ and $e=9$ cases by applying Legendre's Three-Square Theorem. 

\subsection{Defect $0$ blocks of finite groups}\label{sec_def0}

We follow \cite{Craven} for the exposition of block theory of finite groups in this section as well as for an overview of finite groups of Lie type and their unipotent characters.

\medskip

Let $G$ be a finite group, let $p$ be a prime, and let $\mathbb{k}$ be a field of characteristic $p$. If $p$ divides the order of $G$ then
 $\mathbb{k}G$ is not semisimple. Instead, the group algebra breaks into a direct sum of indecomposable two-sided ideals called {\em blocks}, which partition the ordinary irreducible characters of $G$. A {\em block of defect $0$} is a block $B\subset \mathbb{k}G$ such that $p^k$ divides the degree $\chi(1)$ for $\chi$ an ordinary irreducible character lying in $B$. Blocks of defect $0$ are characterized by the property that the block consists of a single ordinary irreducible character and they are isomorphic to matrix algebras \cite{Brauer1944}. We may think of the blocks of defect $0$ as making up the part of the representation theory of $G$ that remains semisimple in characteristic $p$. 
 
\medskip

Take $G=S_n$, the symmetric group on $n$ letters. The ordinary irreducible characters of $S_n$ are labeled by partitions of $n$. Let $\mathbb{k}$ be a field of characteristic $p$. The statement known as the Nakayama Conjecture, which was proved by Brauer and Robinson, characterizes the blocks of $\mathbb{k}S_n$ in terms of $p$-cores:
\begin{Thm}\label{NakayamaConjecture1}\cite{Brauer1947,Robinson1947}
Let $\la,\mu\in\cP(n)$ and let $\chi_\la,\chi_\mu$ be the corresponding irreducible characters of $S_n$. Then $\chi_\la$ and $\chi_\mu$ belong to the same block of $\mathbb{k}S_n$ if and only if $\la_{(p)}=\mu_{(p)}$.
\end{Thm}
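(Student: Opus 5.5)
The plan is to prove the statement through central characters. Two ordinary characters $\chi_\la,\chi_\mu$ of $S_n$ lie in the same $p$-block of $\mathbb{k}S_n$ if and only if their central characters agree modulo $p$. Here the central character is $\omega_\la(\hat K)=\chi_\la(g_K)|K|/\chi_\la(1)$ on class sums $\hat K$, and it takes values in the algebraic integers (here in $\Z$). This is the standard characterization of blocks via $Z(\mathbb{k}G)$: the block idempotents are the primitive idempotents of the center, and each block determines a distinct algebra homomorphism $Z(\mathbb{k}G)\to\mathbb{k}$. So the statement reduces to a purely combinatorial question: when do $\omega_\la$ and $\omega_\mu$ coincide modulo $p$ on $Z(\Z S_n)$?

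To compute central characters we use the Jucys--Murphy elements $L_k=\sum_{i<k}(i,k)$, $1\leq k\leq n$. By Young's seminormal form, each $L_k$ acts diagonally on the Specht module $S^\la$. Its eigenvalue on the basis vector indexed by a standard tableau $T$ is the content $c_T(k)=j-i$ of the box $(i,j)$ containing $k$. Hence any symmetric polynomial $f(L_1,\dots,L_n)$ is central and acts on $S^\la$ by the scalar $f(\mathrm{cont}(\la))$, where $\mathrm{cont}(\la)$ is the multiset of contents of the boxes of $\la$. The key input is the theorem of Farahat--Higman (reproved by Murphy): the elementary symmetric polynomials in $L_1,\dots,L_n$ generate $Z(\Z S_n)$ as a ring, over $\Z$ and not just over $\mathbb{Q}$. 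Granting it, $\omega_\la\equiv\omega_\mu \bmod p$ if and only if $e_r(\mathrm{cont}(\la))\equiv e_r(\mathrm{cont}(\mu))\bmod p$ for all $r$. That is equivalent to $\prod_{b\in\la}(x-c(b))=\prod_{b\in\mu}(x-c(b))$ in $\mathbb{F}_p[x]$, i.e. to the multisets of contents reduced mod $p$ being equal. I expect this integrality statement to be the main obstacle. I would prove it by induction on $n$, using the filtration of $Z(\Z S_n)$ by the support size of a permutation. The point is that the top-degree part of a product of $e_r(L)$'s is an integer combination of class sums whose leading coefficient is $\pm1$, so one can triangularly solve for every class sum over $\Z$.

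The remaining step is to show that, for $|\la|=|\mu|$, the multisets of contents of $\la$ and $\mu$ coincide mod $p$ if and only if $\la_{(p)}=\mu_{(p)}$. For the backward direction, removing a $p$-hook deletes $p$ boxes with consecutive contents, hence exactly one box of each residue mod $p$. So the residue multiset of $\la$ equals that of $\la_{(p)}$ plus $w$ copies of each residue, where $w$ is the number of $p$-hooks removed. Two partitions of the same size with the same $p$-core therefore have the same residue multiset. For the forward direction, we read the residue counts off the $p$-abacus of \Cref{sec_Lit}. Adding a box of residue $i$ moves a bead from runner $i$ to runner $i+1$, with runner indices read mod $p$ and the bead-position convention of \Cref{sec_Lit}. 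So the vector $(n_0,\dots,n_{p-1})$ of residue counts determines the bead counts on the runners, relative to the empty partition. The bead counts on the runners determine the $p$-core, since a core has all its beads pushed to the left. By \Cref{charac_e_cores} and the Littlewood bijection \Cref{Lit}, $\la$ and $\mu$ then have the same $p$-core, which completes the proof.
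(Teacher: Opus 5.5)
Your argument is correct and takes a different route from the paper, but the paper has no proof of its own to compare it with. The statement is quoted from Brauer and Robinson and used only as a black box, to convert $p$-cores into defect $0$ blocks.

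What you propose is essentially the later proof of Farahat--Higman and Murphy:
\begin{itemize}
\item blocks are detected by central characters modulo $p$;
\item on $e_r(L_1,\dots,L_n)$ the central character of $\chi_\lambda$ is $e_r$ of the contents of $\lambda$, so the working block invariant is the multiset of residues mod $p$;
\item your final step (residue counts determine the runner charges of the $p$-abacus, hence the core via \Cref{Lit}) is exactly the bookkeeping of \Cref{sec_Lit}.
\end{itemize}
This route makes the intermediate invariant explicit. It also shows that ``same block $\Rightarrow$ same core'' is elementary, needing only that the $e_r(L)$ lie in $Z(\mathbb{Z}S_n)$. That half carries over to Hecke and cyclotomic Hecke algebras, where blocks are again classified by residue multisets \cite{LyleMathas2007}, and \Cref{naka_hecke} converts these into $(e,\mathbf{s})$-cores much as your last step does for $p$-cores. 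The price is that the converse rests entirely on the integrality theorem that the $e_r(L)$ generate $Z(\mathbb{Z}S_n)$ as a ring.

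For that theorem you should cite Farahat--Higman or Murphy rather than rely on your sketch, which does not work as written.
\begin{itemize}
\item \emph{Wrong filtration.} With the filtration by support size, the top part of a product $e_{r_1}(L)\cdots e_{r_k}(L)$ (as long as $2(r_1+\dots+r_k)\leq n$) is a multiple of the class sum of products of $r_1+\dots+r_k$ disjoint transpositions. So a class such as the $3$-cycles is never reached as a top part. The relevant filtration is by $n$ minus the number of cycles (Jucys: $e_r(L)$ is the sum of the permutations with $n-r$ cycles).
\item \emph{Not unitriangular.} Even with the right filtration, write $\hat K_\alpha$ for the class sum of cycle type $\alpha$ (fixed points omitted) in $S_n$, $n\geq 4$. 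Then $e_2(L)=\hat K_{(3)}+\hat K_{(2,2)}$, whereas the degree-$2$ part of $e_1(L)^2$ is $3\hat K_{(3)}+2\hat K_{(2,2)}$. No ordering of the two classes makes this $2\times 2$ transition matrix triangular; only its determinant is $\pm1$.
\end{itemize}
Establishing this unimodularity in general, including the range where $S_n$ has fewer classes of a given degree than there are products, is the actual content of the Farahat--Higman theorem.
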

Consequently, the blocks of defect $0$ in $\mathbb{k}S_n$ are labeled by $p$-core partitions of $n$. Granville and Ono deduced the following corollary of \Cref{grono}:
\begin{Cor}\cite[Corollary 1]{GO1996}
Let $p=\mathrm{char}(\mathbb{k})>0$. Then $\mathbb{k}S_n$ has a defect $0$ block for all $n\in\N$ if and only if $p\geq 5$. 
\end{Cor}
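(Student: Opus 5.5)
The corollary has two directions, and I will reduce both to \Cref{grono} together with \Cref{NakayamaConjecture1} and the behaviour of $e$-cores for small $e$. First I record the translation. By \Cref{NakayamaConjecture1}, the blocks of $\mathbb{k}S_n$ correspond to the $p$-cores of partitions of $n$. A block consists of a single ordinary irreducible character, which is the defect $0$ condition recalled in \Cref{sec_def0}, if and only if its core $\kappa$ is the core of exactly one partition $\la\in\cP(n)$.

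Let $w=(n-|\kappa|)/p$. By the Littlewood bijection \Cref{Lit} and \Cref{size_e_core_e_quot}, the partitions of $n$ with $p$-core $\kappa$ are in bijection with the $p$-partitions of size $w$. There is exactly one such $p$-partition if and only if $w=0$. Hence $\mathbb{k}S_n$ has a defect $0$ block if and only if $c_p(n)>0$.

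With this translation, the ``if'' direction is immediate. If $p\geq 5$, then in particular $p\geq 4$, so \Cref{grono} gives $c_p(n)>0$ for every $n\in\N$.

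For the ``only if'' direction, I must show that for $p=2$ and $p=3$ there is some $n$ with $c_p(n)=0$.
\begin{itemize}
\item For $p=2$, the $2$-cores are the staircase partitions, so $c_2(n)>0$ only when $n$ is triangular. Taking $n=2$ gives $c_2(2)=0$.
\item For $p=3$, I use the fact quoted in \Cref{sec_GO} that $c_3(n)=0$ for all $n\equiv 3\bmod 4$, so already $c_3(3)=0$. This can be checked by hand: the partitions of $3$ are $(3)$, $(2,1)$ and $(1^3)$, and each has a hook of length $3$.
\end{itemize}

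The only step needing care is the equivalence between ``the block has defect $0$'' and ``the block contains a single character''. The paper recalls this characterization from \cite{Brauer1944}, so I take it as given. Everything else is a direct citation.
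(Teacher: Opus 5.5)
Your proposal is correct and follows the route the paper indicates: \Cref{NakayamaConjecture1} together with the Littlewood decomposition identifies the defect $0$ blocks of $\mathbb{k}S_n$ with the $p$-core partitions of $n$, \Cref{grono} gives the case $p\geq 5$, and the vanishing of $c_2(n)$ and $c_3(n)$ for suitable $n$ gives the converse; your explicit witnesses $n=2$ and $n=3$ are valid. Your argument that a block is a singleton exactly when $w=0$ spells out the step the paper states only as ``consequently''.
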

They deduced the analogous result for the alternating groups $A_n$ as well. This allowed them to build off the results of Michler \cite{Michler1986} and Willems \cite{Willems1988} to complete the classification of the finite simple groups with blocks of defect $0$ \cite[Corollary 2]{GO1996}.

\medskip

Now let us explain a second corollary of \Cref{grono} that does not appear in the paper \cite{GO1996}, which is about the positivity of the number of defect $0$ {\em unipotent} blocks of finite general linear groups in non-defining characteristic. 
Throughout the rest of this section, let ${{v}}$ be a power of a prime, and let $\mathbb{F}_{{v}}$ be the field with ${{v}}$ elements. The finite general linear group $\mathrm{GL}_n({{v}})$ consists of all invertible matrices with entries in $\mathbb{F}_{{v}}$, with group operation given by matrix multiplication. It is an example of a {\em finite group of Lie type}. 
Finite groups of Lie type admit a distinguished type of irreducible character called {\em unipotent characters}, defined by Deligne and Lusztig in terms of virtual representations of Deligne-Lusztig varieties \cite{DeligneLusztig}. No algebraic definition of the unipotent characters is known. Unipotent characters admit a parametrization which depends only on Weyl group data and is independent of ${{v}}$. Moreover, the rest of the irreducible characters can be described in terms of them \cite{BonnafeRouquier02,BDR17,Ruhstorfer20}. 
For $\mathrm{GL}_n({{v}})$, the unipotent characters are parametrized by the partitions of $n$.

 \medskip
 
 Let $G({{v}})$ be a finite group of Lie type and consider $\mathbb{k}G({{v}})$, where ${{l}}=\mathrm{char}(\mathbb{k})$ is coprime to ${{v}}$.
The blocks of $\mathbb{k}G({{v}})$ partition the irreducible characters of $G({{v}})$, and a {\em unipotent block} is a block of $\mathbb{k}G({{v}})$ containing a unipotent character. Thus, the unipotent blocks partition the unipotent characters of $G({{v}})$. 
Fong and Srinivasan described this partition combinatorially for $G({{v}})=\mathrm{GL}_n({{v}})$, extending the Nakayama Conjecture to the case of finite general linear groups.
\begin{Thm}\cite{FongSrinivasan1982}
Two unipotent characters labeled by partitions $\la$ and $\mu$ belong to the same block of $\mathbb{k}G_n({{v}})$ if and only if $\la_{(e)}=\mu_{(e)}$, where $e$ is the order of ${{v}}$ modulo ${{l}}$.
\end{Thm}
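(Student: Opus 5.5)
This theorem is a cited result of Fong and Srinivasan, and the paper gives no proof of it. If I had to establish it, I would argue through the character theory of $\mathrm{GL}_n(v)$ and Brauer's second main theorem.

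The first step is to compute degrees. The unipotent character $\chi_\la$ has degree given by the hook formula
\[
\chi_\la(1)=v^{a(\la)}\prod_{i=1}^n (v^i-1)\Big/\prod_{h}(v^{h}-1),
\]
where the second product runs over the hook lengths $h$ of $\la$. Since $e$ is the order of $v$ mod $l$, the $l$-part of $v^m-1$ is nontrivial exactly when $e\mid m$. Using the $l$-adic valuation of $v^{em}-1$, which is the valuation of $v^e-1$ plus the valuation of $m$ for odd $l$, one checks that the $l$-defect of $\chi_\la$ depends only on the number of $e$-hooks. Equivalently, by \Cref{size_e_core_e_quot}, it depends only on $|\la^{(e)}|$. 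This already shows that $e$-cores ($\la^{(e)}=\bemp$) have maximal $l$-part, and gives the numerical invariants.

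The ``if'' direction, that equal $e$-cores give the same block, is the second step. I would use Brauer's second main theorem with $l$-elements $x$ whose centralizers are of the form $\mathrm{GL}_{n-em}(v)\times \mathrm{GL}_m(v^e)$. The Murnaghan–Nakayama type formula for unipotent characters (Green's character formula / Deligne–Lusztig) expresses $\chi_\la(xu)$ through removal of $e$-hooks, i.e. moving beads one step on the $e$-abacus. This links $\chi_\la$ with characters of $\mathrm{GL}_{n-e}$ obtained by removing a single $e$-hook. Induction on the weight $|\la^{(e)}|$ then connects all $\la$ with a fixed $e$-core.

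The ``only if'' direction is the third step, and I expect it to be the main obstacle. I would use central characters: two characters lie in the same block iff $\omega_\chi(\hat K)$ agree mod $l$ for all class sums. Evaluating on suitable semisimple classes, for instance Coxeter-type elements in Levi subgroups $\mathrm{GL}_{m}$ with $e\nmid m$, should recover the $e$-core, mirroring the content/residue argument in the symmetric group case.

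Alternatively, and more cleanly, I would use Broué–Malle–Michel $\Phi_e$-Harish-Chandra theory. The $e$-split Levi subgroups are $\mathrm{GL}_{r}\times \mathrm{GL}_1(v^e)^{w}$ with $r+ew=n$, and the $e$-cuspidal pairs correspond to $e$-cores. The theorem of Cabanes–Enguehard then identifies unipotent $l$-blocks with $e$-cuspidal pairs, which gives both directions at once.
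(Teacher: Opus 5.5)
The paper gives no proof of this statement; it is quoted from Fong--Srinivasan, so your sketch has to stand on its own.

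\textbf{Steps 1 and 2.} Your first step contains a false claim. For odd $l$, put $a=\nu_l(v^e-1)$ and $w=|\lambda^{(e)}|$. The hook formula then gives
\[
\nu_l(\chi_\lambda(1))=a\bigl(\lfloor n/e\rfloor-w\bigr)+\nu_l\bigl(\lfloor n/e\rfloor !\bigr)-\sum_{h}\nu_l(h),
\]
where $h$ runs over the hook lengths of the components of $\lambda^{(e)}$. So the defect depends on the quotient, not only on $w$; characters in one block can have different heights. For example, take $v=2$, $l=3$, so $e=2$. The partitions $(6)$ and $(4,2)$ both have empty $2$-core and weight $3$, yet $\nu_3(\chi_{(6)}(1))=0$ while $\nu_3(\chi_{(4,2)}(1))=1$. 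What survives of Step 1 is only that $e$-cores label characters of defect zero.

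Your second step is a sound outline of the ``if'' direction. Take $x$ of order $l$ with $C_G(x)\cong\mathrm{GL}_{n-e}(v)\times\mathrm{GL}_1(v^e)$. Then $d^x(\chi_\lambda)$ is the restriction to $l$-regular elements of $\sum_\mu\pm\chi_\mu\otimes 1$. Brauer's second main theorem and induction on $n$ then show that the block of $\chi_\lambda$ is $(b\otimes b_0)^G$. Here $b$ is the block of $\mathrm{GL}_{n-e}(v)$ containing the $\chi_\mu$ with $\mu_{(e)}=\lambda_{(e)}$, and $b_0$ is the principal block of $\mathrm{GL}_1(v^e)$. Two things should be made explicit:
\begin{itemize}
\item the formula is needed on all $l$-regular elements of $C_G(x)$, not only on unipotent ones;
\item $d^x(\chi_\lambda)\neq 0$, which holds because unipotent characters of $\mathrm{GL}_{n-e}(v)$ are linearly independent on unipotent classes.
\end{itemize}

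\textbf{The gap: the ``only if'' direction.} Your third step names no class sum and computes no central character value. ``Should recover the $e$-core'' is exactly what has to be proved. It is also unclear that the suggested classes can work. By Brauer's min--max theorem, the central character of a block with defect group $D$ vanishes mod $l$ on every class whose elements have centralizers containing no conjugate of $D$. So the classes must be adapted to the weight, and you would need the values of $\chi_\lambda$ on the corresponding non-regular elements, plus an argument that these residues determine the core.

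The usual way to separate the blocks is local:
\begin{itemize}
\item Take $x$ of order $l$ with $C_G(x)\cong\mathrm{GL}_r(v)\times\mathrm{GL}_w(v^e)$, where $r=|\kappa|$ and $\kappa=\lambda_{(e)}$.
\item The same Deligne--Lusztig plus Murnaghan--Nakayama computation shows that $d^x(\chi_\lambda)$ is a nonzero combination of characters $\chi_\kappa\otimes\psi$ with $\psi$ unipotent.
\item Hence $(\langle x\rangle, b_\kappa\otimes b'_0)$ is a Brauer pair of the block of $\chi_\lambda$. Here $b_\kappa$ is the defect-zero block of $\chi_\kappa$ and $b'_0$ is the principal block of $\mathrm{GL}_w(v^e)$.
\item Using that $b_\kappa$ has defect zero, one shows that a maximal Brauer pair containing it is $(D,b_D)$, with $D$ a Sylow $l$-subgroup of $\mathrm{GL}_{ew}(v)$.
\item Since $C_G(D)=\mathrm{GL}(V^D)\times C_{\mathrm{GL}([V,D])}(D)$ and $N_G(D)$ acts on $\mathrm{GL}(V^D)\cong\mathrm{GL}_r(v)$ by inner automorphisms, $\kappa$ can be read off from the block.
\end{itemize}

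\textbf{The fallback.} Your route via Broué--Malle--Michel and Cabanes--Enguehard is legitimate, subject to that theorem's hypotheses on $l$. But it only replaces one citation by a more general one. It also still needs a $\mathrm{GL}_n$-specific fact that you assert without justification: for $L=\mathrm{GL}_r(v)\times\mathrm{GL}_1(v^e)^w$, the unipotent constituents of $R_L^G(\chi_\kappa\otimes 1)$ are exactly the $\chi_\mu$ with $\mu_{(e)}=\kappa$.
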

Unlike in the case of symmetric groups, here $e$ does not need to be prime. Sometimes $e$ is referred to as the {\em quantum characteristic}, as it is the minimal positive integer such that $1+{{v}}+{{v}}^2+\ldots+{{v}}^{e-1}=0$ in $\mathbb{k}$.

 \medskip
 
A {\em defect $0$ unipotent block} of a finite group of Lie type $G({{v}})$ is a unipotent block over $\mathbb{k}$ that is a block of defect $0$. Thus, a defect $0$ unipotent block contains a single, unipotent character, which remains irreducible when reduced modulo ${{l}}$ and does not appear as a composition factor in the ${{l}}$-modular reduction of any other character. We can thus state a second representation-theoretic corollary of \Cref{grono}:
\begin{Cor}\label{def0gln}
Let ${{l}}$ be a prime not dividing ${{v}}$, and let $e$ be the order of ${{v}}$ mod $l$. Let $\mathbb{k}$ be a field of characteristic ${{l}}$. Then $\mathbb{k}\mathrm{GL}_n({{v}})$ has a unipotent block of defect $0$ for every $n\in\N$ if and only if $e\geq 4$.
\end{Cor}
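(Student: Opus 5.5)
The plan is to reduce \Cref{def0gln} to the combinatorial positivity statement \Cref{grono}, using the theorem of Fong and Srinivasan. First I will unwind the definitions. A unipotent block of $\mathbb{k}\mathrm{GL}_n(v)$ has defect $0$ exactly when it consists of a single irreducible character, which is then unipotent. By Fong--Srinivasan, the unipotent characters in a given block are exactly those labeled by the partitions of $n$ with a fixed $e$-core $\kappa$.

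To get a genuine equivalence, I will use the standard complement to Fong--Srinivasan: a block of defect $0$ contains exactly one irreducible character. So a unipotent block is of defect $0$ precisely when its unipotent part is a singleton $\{\chi_\la\}$ \emph{and} the block contains no non-unipotent characters. I will invoke the known description of the defect of the unipotent block with $e$-core $\kappa$: its defect group is the Sylow $l$-subgroup of a $\mathrm{GL}_{w}(v^e)$-type torus normalizer, where $w=(n-|\kappa|)/e$ is the weight. Hence the block has defect $0$ iff $w=0$, i.e.\ iff $\la$ is itself an $e$-core. I will also double-check the special case $e=1$, where $l\mid v-1$.

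This yields the reformulation: $\mathbb{k}\mathrm{GL}_n(v)$ has a unipotent block of defect $0$ iff $c_e(n)>0$. The converse direction works the same way. If $\la\in\cC_e(n)$, then by Littlewood's bijection \Cref{Lit} and \Cref{charac_e_cores}, $\la$ is the only partition of $n$ with $e$-core $\la$. So its block contains a single unipotent character, and by weight $0$ it has defect $0$.

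Next I will apply positivity. If $e\geq4$, \Cref{grono} gives $c_e(n)>0$ for all $n$, so a defect $0$ unipotent block exists for every $n$. If $e\in\{1,2,3\}$, I exhibit failing $n$:
\begin{itemize}
\item for $e=1$, $c_1(n)=0$ for all $n\geq1$;
\item for $e=2$, take $n=2$, which is not triangular;
\item for $e=3$, take any $n\equiv3\bmod4$, e.g.\ $n=3$, using \cite[Corollary 3]{Robbins} (or directly: $(3),(2,1),(1^3)$ all contain $3$-hooks).
\end{itemize}

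The main obstacle is justifying that "defect $0$" coincides with "weight $0$", i.e.\ that the block with $e$-core $\kappa$ and positive weight has positive defect. I expect to cite Broué's or Fong--Srinivasan's results on defect groups of unipotent blocks (the defect group is a Sylow $l$-subgroup of $C_G(\mathbf{S})$ for an $e$-split Levi, nontrivial when $w>0$). Alternatively, I could use the degree criterion: the hook formula for unipotent degrees shows that $l$ divides $|G|_{l}/\chi_\la(1)$ exactly when $\la$ has an $e$-hook. I would try the degree/hook-length computation first, since it is elementary given the $q$-hook formula.
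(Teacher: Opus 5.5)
Your proposal is correct and follows the same route as the paper, which obtains the corollary directly from Fong--Srinivasan's $e$-core description of unipotent blocks together with Granville--Ono's positivity theorem, with the cases $e\le 3$ ruled out by $c_1(n)=0$, non-triangular $n$, and $n\equiv 3 \bmod 4$ exactly as you do. Your extra check that defect $0$ is equivalent to weight $0$ (via defect groups or the $q$-hook degree formula) makes explicit a step the paper leaves implicit. That check is genuinely needed when $e=1$: for $n=1$ the trivial character of $\mathrm{GL}_1(v)$ is the only unipotent character in its block, yet that block has nontrivial defect.
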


%%%%%%%%%%%%%%
%%%%%%%%%%%%%%%
%%%%%%%%%%%%%%
%%%%%%%%%%%%%%%
%%%%%%%%%%%%%%

\section{Higher-level generalizations of core partitions}
\label{sec_higher_level_cores}

In this section, we study generalizations of $e$-cores and $e$-quotients to levels $\ell>1$. We will work within the framework of {\em charged multipartitions}. A charged multipartition $|\bla,\bs\rangle$, or {\em charged $\ell$-partition} when $\ell\in\Z_{\geq 2}$ is being emphasized, is a pair consisting of an $\ell$-partition $\bla\in\cP^\ell$ and a {\em charge} $\bs\in\Z^\ell$.

\medskip

In level $2$, charged bipartitions were first introduced by Lusztig under the guise of Lusztig symbols in order to parametrize the unipotent characters of finite classical groups, which are the most important class of irreducible characters of those groups \cite{Lusztig1977}. The analogs of cores and quotients for Lusztig symbols are called cocores and coquotients. The combinatorics of cohooks and cocores was studied by Olsson \cite{Olsson1986}. Fong and Srinivasan then showed that cocores describe the unipotent blocks of the finite classical groups in positive, non-defining characteristic \cite{FongSrinivasan1989}. Malle generalized the definition of cocores and coquotients from level $2$ to level $\ell$ for any $\ell\in\Z_{\geq 2}$ in the context of unipotent characters of spetses \cite{Malle1995}. Malle's higher-level spetsial cores and quotients 
include the cocores as the special case $\ell=2$. 

\medskip

More recently, different authors defined and studied several variants of cores and quotients in higher level, motivated by obtaining a description of the blocks of cyclotomic Hecke algebras at roots of unity by a privileged subset of multipartitions, analogous to the description of the blocks of the Hecke algebra of the symmetric group at a root of unity by core partitions (yet another variant of the Nakayama conjecture) \cite{Fayers2007, Fayers2019, JaconLecouvey2020, JaconLecouvey2021, ChlouverakiJacon2023, Lyle2024, CGJ2025}.

\medskip

These two different -- albeit closely linked -- representation-theoretic directions correspond roughly to two different perspectives on how to generalize partitions to higher level. 
In this section, we recall some of these constructions and establish precise relationships
between the two approaches.

\subsection{Charged $\ell$-partitions and $\ell$-abaci}\label{sec_symbols_defs} 

Let $\bla=(\la^1,\la^2,\ldots,\la^\ell)\in\cP^\ell$ and let $\bs=(s_1,s_2,\ldots,s_\ell)\in\Z^\ell$. Fix $j\in\{1,\ldots,\ell\}$. We will associate a unique {\em $\ell$-symbol} to the charged $\ell$-partition $|\bla,\bs\rangle$, and vice versa.
 First, we may associate a symbol to the pair $(\la^j,s_j)$ by shifting the $\be$-numbers of $\la^j$ by $s_j$:
\[
\bS(\la^j,s_j)=\{\be+s_j\;\mid\; \be\in\bS(\la^j) \}=\{ (\la^j)_k-k+1+s_j\;\mid\; k\in\Z_{\geq 1}\}.
\]
(This is the symbol of the {\em charged partition} $|\la^j,s_j\rangle$.)
The {\em symbol} of $|\bla,\bs\rangle$ is then defined to be the $\ell$-tuple of symbols
\[
\bS(\bla,\bs)=\left( \bS(\la^1,s_1),\bS(\la^2,s_2),\ldots,\bS(\la^\ell,s_\ell)\right).
\]

\medskip

If $\bS=(\bS^{(1)},\bS^{(2)},\ldots,\bS^{(\ell)})$ is any $\ell$-tuple of symbols, $\bS$ is called an {\em $\ell$-symbol}.  
We identify $\bS$ with a subset of $\Z\times\{1,\ldots,\ell\}$
via $\be\in\bS^{(j)} \mapsto (\be,j)$.  
For each $j=1,\ldots,\ell$, let $\bA^{(j)}$ be the abacus of $\bS^{(j)}$. Then $\bA=\left(\bA^{(1)},\ldots,\bA^{(\ell)}\right)$ is called an {\em $\ell$-abacus}. We stack the rows with $\bA^{(1)}$ at the bottom and $\bA^{(\ell)}$ at the top, labeling the rows from $1$ to $\ell$ when reading from bottom to top. The abacus $\bA$ has a bead in row $j$ and column $\be$ if and only if $\be\in\bS^{(j)}$. It is simply a convenient variation on the definition of the symbol. We will abuse notation and write things like $\be\in\bA^{(j)}$ by which we mean there is a bead in column $\be$ and row $j$. We will not really distinguish between symbols and their abaci. We let $\bA(\bla,\bs)$ be the $\ell$-abacus identified with the symbol $\bS(\bla,\bs)$.

\medskip

\begin{Exa}\label{exa_3ab}
Let $\ell=3$ and take $\bla=\left((4,3^2,1),(2,1^4),(7^3,5)\right)\in\cP^3$ and $\bs=(1,3,-4)\in\Z^3$. Then 
$\bA(\bla,\bs)$ is
\begin{center}
\begin{tikzpicture}[scale=0.5, bb/.style={draw,circle,fill,minimum size=2.5mm,inner sep=0pt,outer sep=0pt}, wb/.style={draw,circle,fill,minimum size=0.5mm,inner sep=0pt,outer sep=0pt}]
%%%%%%%%%%%%%%%%%%%%%
%%%  On dessine les lignes de l'abaque et tous les emplacements
\foreach \j in {1,2,3}
{
\draw [line width=0.1mm] (-10,\j) -- (6,\j);
    \foreach \k in {-10,...,6}
    {
        \node [wb] at (\k,\j) {};
    }
}
%%%%%%%%%%%%%%%%%%%%%
%%%  On place les billes
\foreach \k/\j in {-10/3,-9/3,-8/3,-2/3,1/3,2/3,3/3,   -10/2,-9/2,-8/2,-7/2,-6/2,-5/2,-4/2,-3/2,-2/2,0/2,1/2,2/2,3/2,5/2,    -10/1,-9/1,-8/1,-7/1,-6/1,-5/1,-4/1,-3/1,-1/1,2/1,3/1,5/1 }
{
    \node [bb] at (\k,\j) {};
}
%%%%%%%%%%%%%%%%%%%%%
%%%  On fait appraître la graduation
\foreach \k in {-10,...,6}
{
    \node [scale = 0.7] at (\k,0) {$\k$};
}
%%%%%%%%%%%%%%%%%%%%%
%%%  On dessine les rectangles
%\draw [line width=0.1mm, color=gray] (-4.5,0.5) -- (2.5,0.5);
%\draw [line width=0.1mm, color=gray] (-4.5,3.5) -- (2.5,3.5);
%\foreach \k in {-3.5, -1.5, 0.5, 2.5}
%{
%\draw [line width=0.1mm, color=gray] (\k,3.5) -- (\k,0.5);
%}
\end{tikzpicture}
\end{center}
\end{Exa}

On the other hand, starting from an $\ell$-abacus $\bA$ we obtain a unique charged $\ell$-partition $|\bla,\bs\rangle$. To find the charge $\bs=(s_1,\ldots,s_\ell)\in\Z^\ell$, we swipe all the beads to the left so there are no gaps and read off the position of the rightmost bead in row $j$ to obtain $s_j$ for each $j=1,\ldots,\ell$. To find $\bla=(\la^1,\ldots,\la^\ell)\in\cP^\ell$, in row $j$ for each $j=1,\ldots,\ell$ we count how many spaces are to the left of each bead, starting from the rightmost bead and working leftwards, and these are the parts of $\la^j$. The procedures of finding the $\ell$-abacus of a charged $\ell$-partition and finding the charged $\ell$-partition of an abacus are inverse to each other, and so we have a bijection between $\ell$-symbols/$\ell$-abaci on the one hand and charged $\ell$-partitions on the other.

\begin{Exa}
Taking $\bA$ to be the $3$-abacus in \Cref{exa_3ab} and swiping its beads all the way to the left yields the abacus
\begin{center}
\begin{tikzpicture}[scale=0.5, bb/.style={draw,circle,fill,minimum size=2.5mm,inner sep=0pt,outer sep=0pt}, wb/.style={draw,circle,fill,minimum size=0.5mm,inner sep=0pt,outer sep=0pt}]
%%%%%%%%%%%%%%%%%%%%%
%%%  On dessine les lignes de l'abaque et tous les emplacements
\foreach \j in {1,2,3}
{
\draw [line width=0.1mm] (-10,\j) -- (6,\j);
    \foreach \k in {-10,...,6}
    {
        \node [wb] at (\k,\j) {};
    }
}
%%%%%%%%%%%%%%%%%%%%%
%%%  On place les billes
\foreach \k/\j in {-10/3,-9/3,-8/3,-7/3,-6/3,-5/3,-4/3,   -10/2,-9/2,-8/2,-7/2,-6/2,-5/2,-4/2,-3/2,-2/2,-1/2,0/2,1/2,2/2,3/2,    -10/1,-9/1,-8/1,-7/1,-6/1,-5/1,-4/1,-3/1,-2/1,-1/1,0/1,1/1 }
{
    \node [bb] at (\k,\j) {};
}
%%%%%%%%%%%%%%%%%%%%%
%%%  On fait appraître la graduation
\foreach \k in {-10,...,6}
{
    \node [scale = 0.7] at (\k,0) {$\k$};
}
%%%%%%%%%%%%%%%%%%%%%
%%%  On dessine les rectangles
%\draw [line width=0.1mm, color=gray] (-4.5,0.5) -- (2.5,0.5);
%\draw [line width=0.1mm, color=gray] (-4.5,3.5) -- (2.5,3.5);
%\foreach \k in {-3.5, -1.5, 0.5, 2.5}
%{
%\draw [line width=0.1mm, color=gray] (\k,3.5) -- (\k,0.5);
%}
\end{tikzpicture}
\end{center}
from which we find that the charge $\bs$ of $\bA$ is $(1,3,-4)$.
\end{Exa}

\begin{Not}
We will let $\cU^\ell$ denote the set of all charged $\ell$-partitions. We will often abuse notation and identify $\cU^\ell$ with the set of all $\ell$-abaci. We remark that while $|\bla,\bs\rangle$ is nothing but a pair $(\bla,\bs)\in\cP^\ell\times\Z^\ell$, the ``physics-y'' notation $|\bla,\bs\rangle$ captures the shape of the abacus with which we identify it.
\end{Not}

\subsection{Level-rank duality}\label{sec_LR}

Fix positive integers $\ell,e\geq 2$ and consider the bijective map 
\[
\begin{array}{lll}
 \Z\times \{ 1,\ldots, \ell\} & \lra & \Z\times \{ 1,\ldots, e\} 
\\
 (k,j) & \longmapsto & (\ell\left(\left\lfloor \frac{k-1}{e}\right\rfloor+1\right) -j+1, (k-1)\mod e +1).
\end{array}
\]
Identifying $\cU^\ell$ as before with a subset of $\Z\times\{1,\ldots,\ell\}$  and  $\cU^e$ with a subset of $\Z\times\{1,\ldots,e\}$, we get a bijective map
\[
\begin{array}{llll}
\LR : & \cU^\ell & \lra & \cU^e
\end{array} 
\]
called the level-rank duality which sends $\ell$-abaci to $e$-abaci \cite{Uglov1999}. The $e$-abacus $\LR(\bA)$ is called the \textit{level-rank dual} of $\bA$.
It can be visualized by tiling $\bA$ into rectangles of width $e$ and height $\ell$ and rotating each rectangle 90 degrees counterclockwise. The convention is that $0$ is in the rightmost column of its rectangle.

\begin{Exa}
\label{exaLR}
Let $\ell=2$ and $e=3$, and consider the charged bipartition $|\bla,\bs\rangle$ for $\bla = \left((1^2), (3,2)\right)\in\cP^2$ and $\bs=(0,-1)\in\Z^2$, whose abacus $\bA$ is
\begin{center}
\begin{tikzpicture}[scale=0.5, bb/.style={draw,circle,fill,minimum size=2.5mm,inner sep=0pt,outer sep=0pt}, wb/.style={draw,circle,fill,minimum size=0.5mm,inner sep=0pt,outer sep=0pt}]
%%%%%%%%%%%%%%%%%%%%%
%%%  On dessine les lignes de l'abaque et tous les emplacements
\foreach \j in {1,2}
{
\draw [line width=0.1mm] (-6,\j) -- (4,\j);
    \foreach \k in {-5,...,3}
    {
        \node [wb] at (\k,\j) {};
    }
}
%%%%%%%%%%%%%%%%%%%%%
%%%  On place les billes
\foreach \k/\j in {-5/1,-4/1,-3/1,-2/1,0/1,1/1,    -5/2,-4/2,-3/2,0/2,2/2 }
{
    \node [bb] at (\k,\j) {};
}
%%%%%%%%%%%%%%%%%%%%%
%%%  On fait appraître la graduation
\foreach \k in {-5,...,3}
{
    \node [scale = 0.7] at (\k,0) {$\k$};
}
%%%%%%%%%%%%%%%%%%%%%
%%%  On dessine les rectangles
\draw [line width=0.1mm, color=gray] (-5.5,0.5) -- (3.5,0.5);
\draw [line width=0.1mm, color=gray] (-5.5,2.5) -- (3.5,2.5);
\foreach \k in {-5.5, -2.5, 0.5, 3.5}
{
\draw [line width=0.1mm, color=gray] (\k,2.5) -- (\k,0.5);
}
\end{tikzpicture}
\end{center}
where we have grouped the beads in $(e\times \ell)$-rectangles.
The level-rank dual $\LR(\bA)$ is then
\begin{center}
\begin{tikzpicture}[scale=0.5, bb/.style={draw,circle,fill,minimum size=2.5mm,inner sep=0pt,outer sep=0pt}, wb/.style={draw,circle,fill,minimum size=0.5mm,inner sep=0pt,outer sep=0pt}]
%%%%%%%%%%%%%%%%%%%%%
%%%  On dessine les lignes de l'abaque et tous les emplacements
\foreach \j in {1,2,3}
{
\draw [line width=0.1mm] (-4,\j) -- (3,\j);
    \foreach \k in {-3,...,2}
    {
        \node [wb] at (\k,\j) {};
    }
}
%%%%%%%%%%%%%%%%%%%%%
%%%  On place les billes
\foreach \k/\j in {-3/3,-2/3,-1/3,0/3,   -3/2,-2/2,1/2,    -3/1,-2/1,0/1,2/1 }
{
    \node [bb] at (\k,\j) {};
}
%%%%%%%%%%%%%%%%%%%%%
%%%  On fait appraître la graduation
\foreach \k in {-3,...,2}
{
    \node [scale = 0.7] at (\k,0) {$\k$};
}
%%%%%%%%%%%%%%%%%%%%%
%%%  On dessine les rectangles
\draw [line width=0.1mm, color=gray] (-4.5,0.5) -- (2.5,0.5);
\draw [line width=0.1mm, color=gray] (-4.5,3.5) -- (2.5,3.5);
\foreach \k in {-3.5, -1.5, 0.5, 2.5}
{
\draw [line width=0.1mm, color=gray] (\k,3.5) -- (\k,0.5);
}
\end{tikzpicture}
\end{center}
which is the abacus of $|\bmu,\bt\rangle$ where $\bmu = ((2,1), (2), \emptyset)\in\cP^3$ and $\bt = (0,-1,0)\in\Z^3$.
\end{Exa}

In the rest of the paper, we will use the following convenient notation.
\begin{Not}\label{not_set} Let $\ell\in\Z_{\geq1}$.
For all $X\subseteq \Z^\ell$ and for all $s\in\Z$, let
$X[s]=\{(x_1,\ldots, x_\ell)\in X \mid \sum_{j=1}^\ell x_j=s\}.$
\end{Not}

For a fixed $\bs\in\Z^\ell$, let
$\cU_\bs^\ell = \{ |\bla,\bs\rangle \; ; \; \bla\in\cP^\ell\}$ be the set of all charged $\ell$-partitions with charge $\bs$. 
This forms the basis of the so-called \textit{level $\ell$ Fock space} $\cF^\ell_\bs$ associated to $\bs$. Now, for any charge $\bs=(s_1,s_2,\ldots,s_\ell)\in\Z^\ell$ we can consider the sum of its components 
\[ |\bs|:=s_1+s_2+\ldots +s_\ell.\] 
For any $s\in\Z$, using \Cref{not_set} we denote by $\Z^\ell[s]$ the set of all $\bs\in\Z^\ell$ such that $|\bs|=s$. Consider the set of all charged $\ell$-partitions with charge in $\Z^\ell[s]$:
\[
\cU_s^\ell:=\bigsqcup_{\bs\in\Z^\ell[s]}\cU_\bs^\ell
\]
Uglov showed that 
the level-rank duality map $\LR$ in fact restricts to a bijection
\begin{equation}
\label{LR}
\begin{array}{cccc}
\LR :&\cU_s^\ell &  \longrightarrow & \cU_s^e,
\end{array}
\end{equation}
(again denoted by $\LR$)  \cite{Uglov1999}.
For example, in \Cref{exaLR} we observe that $s_1+s_2=-1=t_1+t_2+t_3$, as indicated by Uglov's theorem.

\begin{Rem}
It is not true that $\LR$ takes $\cU^\ell_\bs$ to $\cU^e_\bt$ for some fixed $\bt\in\Z^e$. For instance, fix $\bs=(0,-1)$ as in \Cref{exaLR}. If we take $\bla=(\emptyset,\emptyset)$ then, letting $\bA$ denote the abacus of $|\bla,\bs\rangle$, we find that the charge of $\LR(\bA)$ is $(0,0,-1)$, not $(0,-1,0)$ as in \Cref{exaLR}. If we take $\bla=\left((3^4,2^2),(5,3,2^2,1)\right)$ then we find that the charge of $\LR(\bA)$ is $(1,-1,-1)$. If we take $\bla=\left( (5,3,2^2,1^2),(6,5^2,4^2)\right)$ then we find that the charge of $\LR(\bA)$ is $(-4,3,0)$. So we emphasize that $\LR$ scatters $\cU^\ell_\bs$ across many $\cU^e_\bt$ for different charges $\bt$, while always preserving the sum of the charge (equal to $-1$ in this case).
\end{Rem}

\subsection{Cores and quotients of charged multipartitions}
\label{sec_cores_l}

We now introduce $e$-cores and $e$-quotients of charged multipartitions following the work of Jacon and Lecouvey \cite{JaconLecouvey2020,JaconLecouvey2021}.
In order to do this, we view the level-rank duality \Cref{LR} as an analogue of the Littlewood decomposition \Cref{Lit}. Fix $\ell,e\in\Z_{\geq 2}$. 

\medskip
First, the definition of the level-rank dual $\LR(\bA)$ of an $\ell$-abacus $\bA$ directly generalizes the definition of the $e$-abacus of a partition in \Cref{sec_Lit} from level $1$ to level $\ell$. In level $1$, we tile the one-row abacus of $\la$ by $(e\times 1)$ rectangles, then rotate the rectangles by $90$ degrees counterclockwise to obtain the $e$-abacus of $\la$. We then read off the corresponding $e$-partition to obtain the $e$-quotient of $\la$. Let us define the $e$-quotient of a charged $\ell$-partition $|\bla,\bs\rangle$ in the same way.

\begin{Def}\label{def_equot_ell}
Let $|\bla,\bs\rangle$ be a charged $\ell$-partition for some $\ell\geq 2$ and let $\bA$ be its abacus. We define the {\em $e$-quotient of $|\bla,\bs\rangle$} to be the unique $e$-partition $ \bmu$ such that $\LR(\bA)=\bA(\bmu,\bt)$ for some charge $\bt\in\Z^e$. We denote the $e$-quotient of $|\bla,\bs\rangle$ by $|\bla,\bs\rangle^{(e)}$.
\end{Def}

\medskip

Moreover, we can define the $e$-core of $|\bla,\bs\rangle$
analogously to how we computed the $e$-core $\la_{(e)}$ of a partition $\la$ in \Cref{sec_Lit} using the $e$-abacus of $\la$. \footnote{We note: we should not define the $e$-core to be simply a multipartition, but a charged multipartition. One simple reason for this is that we would like it to be possible for object to be its own $e$-core, and thus the $e$-core of an object should be again of the same type. There are other reasons, which should become apparent soon.}

\begin{Def}\label{def_ecore_ell} The $e$-core of a charged $\ell$-partition $|\bla,\bs\rangle$ is constructed by the following procedure. Let $\bA$ be the $\ell$-abacus of $|\bla,\bs\rangle$.
First compute the $e$-abacus $\LR(\bA)=:\bA(\bmu,\bt)$,
swipe all its beads to the left as much as possible to obtain the $e$-abacus $\bA':=\bA((\emptyset,\emptyset,\ldots,\emptyset),\bt)$,
then compute the $\ell$-abacus $\LR^{-1}(\bA')$. Then $\LR^{-1}(\bA')$ is the $\ell$-abacus of a charged $\ell$-partition $|\bla,\bs\rangle_{(e)}$
that we call the \textit{$e$-core of $|\bla,\bs\rangle$}. 
\end{Def}

Thanks to bijection \Cref{LR}, we thus obtain a bijection analogous to the Littlewood decomposition \Cref{Lit}:
\begin{equation}
\label{Lit_l}
\begin{array}{ccc}
\cU_s^\ell &   \overset{1:1}{\longleftrightarrow} &  \cC_{e,s}^\ell \times  \cP^e
\\
|\bla,\bs\rangle & \longleftrightarrow & \left( |\bla,\bs\rangle_{(e)} \,,\,  |\bla,\bs\rangle^{(e)} \right).
\end{array}
\end{equation}

If $|\bla,\bs\rangle_{(e)}=|\bla,\bs\rangle$, then we call $|\bla,\bs\rangle$ an \textit{$e$-core}. 
The set of all $e$-cores in $\cU^\ell_{\bs}$ for a fixed charge $\bs\in\Z^\ell$ is denoted by
$\cC_{e,\bs}^\ell$,
and for a fixed $s\in\Z$, we define
\[ 
\cC_{e,s}^\ell = \bigsqcup_{\bs\in\Z^\ell[s]} \cC_{e,\bs}^\ell,
\]
the set of all $e$-cores in $\cU^\ell_s$. 
By definition, $|\bla,\bs\rangle\in\cU^\ell$ is an $e$-core if and only if $\LR(\bA)$ is the abacus of the empty $e$-partition $(\emptyset,\emptyset,\ldots,\emptyset)$ with some charge, where $\bA$ is the $\ell$-abacus of $|\bla,\bs\rangle$. That is, charges in $\Z^e[s]$ parametrize $e$-cores in $\cU^\ell_s$.

\begin{Rem}
If we set
\begin{equation}\label{flotw_domain}
\sD_e^\ell = \{ (s_1,\ldots, s_\ell)\in\Z^{\ell} \mid s_1\leq\cdots\leq s_\ell \leq s_1+e\}, \end{equation}
and if $|\bla,\bs\rangle$ is an $e$-core,
then it is easy to see that $\bs\in \sD_e^\ell$ \cite[Proposition 2.14]{JaconLecouvey2021}.
\end{Rem}
\medskip

We can determine whether $|\bla,\bs\rangle\in\cU^\ell$ is an $e$-core by properties of its abacus (without computing its level-rank dual). This is elegantly described using the 
\textit{$e$-extended} abacus of $|\bla,\bs\rangle$. Consider the $(\ell+1)$-symbol
\[
\bS_+(\bla,\bs) = \left(\bS(\la^1,s_1),\bS(\la^2,s_2),\ldots, \bS(\la^\ell,s_\ell), \bS(\la^1,s_1+e)\right)
\]
and let $\bA_+(\bla,\bs)$ be the corresponding $(\ell+1)$-abacus. Notice that $\bA_+(\bla,\bs)$ is the abacus of $|\bla,\bs\rangle$ with an additional row  given by duplicating the bottom row of $\bA$, shifting it to the right by $e$ positions, and placing it on top of the abacus.

\begin{Exa}\label{ext_ab}
\label{abacus}
Let $\ell=2$ and $e=4$, and consider $|\bla,\bs\rangle \in\cU^2$ with $\bla=\left((1^2), (3,1^2)\right)$ and $\bs=(1,2)$.
Its $e$-extended abacus $\bA_+\left(\bla,\bs\rangle\right)$ is
\begin{center}
\begin{tikzpicture}[scale=0.5, bb/.style={draw,circle,fill,minimum size=2.5mm,inner sep=0pt,outer sep=0pt}, gbb/.style={draw,circle,fill,minimum size=2.5mm,inner sep=0pt,outer sep=0pt, color=black!60}, wb/.style={draw,circle,fill,minimum size=0.5mm,inner sep=0pt,outer sep=0pt}, gwb/.style={draw,circle,fill,minimum size=0.5mm,inner sep=0pt,outer sep=0pt, color=black!60}]
%%%%%%%%%%%%%%%%%%%%%
%%%  On dessine les lignes de l'abaque et tous les emplacements
\foreach \j in {1,2}
{
\draw [line width=0.1mm] (-2,\j) -- (7,\j);
    \foreach \k in {-2,...,7}
    {
        \node [wb] at (\k,\j) {};
    }
}
\draw [line width=0.1mm, color=black!60] (-2,3) -- (7,3);
    \foreach \k in {-2,...,7}
    {
        \node [gwb, color=gray] at (\k,3) {};
    }
%%%%%%%%%%%%%%%%%%%%%
%%%  On place les billes
\foreach \k/\j in {-2/1,-1/1,1/1,2/1,    -2/2,-1/2,1/2,2/2,5/2   }
{
    \node [bb] at (\k,\j) {};
}
\foreach \k/\j in {-2/3,-1/3,0/3,1/3,2/3,3/3,5/3,6/3}
{
    \node [gbb] at (\k,\j) {};
}
%%%%%%%%%%%%%%%%%%%%%
%%%  On fait appraître la graduation
\foreach \k in {-2,...,7}
{
    \node [scale = 0.7] at (\k,0) {$\k$};
}
\end{tikzpicture}
\end{center}
Rows are indexed from bottom to top, in particular the grey (topmost) row corresponds to $\bS(\la^1,s_1+e)$.
\end{Exa}

By \cite[Proposition 2.16]{JaconLecouvey2021} the following criterion on $\bA_+(\bla,\bs)$ detects whether $|\bla,\bs\rangle$ is an $e$-core.
\begin{Lem}\label{charac_e_cores_l} 
The charged $\ell$-partition $|\bla,\bs\rangle$ is an $e$-core if and only if 
the rows of $\bA_+(\bla,\bs)$ are nested, meaning, $\bS(\la^1,s_1)\subseteq\bS(\la^2,s_2)\subseteq\ldots\subseteq\bS(\la^\ell,s_\ell)\subseteq\bS(\la^1,s_1+e)$.
\end{Lem}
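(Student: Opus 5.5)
The plan is to translate the nesting condition into a statement about the level-rank dual $\LR(\bA)$, via the explicit bijection $(k,j)\mapsto(\ell(\lfloor\frac{k-1}{e}\rfloor+1)-j+1,(k-1)\bmod e+1)$. By definition, $|\bla,\bs\rangle$ is an $e$-core exactly when $\LR(\bA)$ is the abacus of the empty $e$-partition with some charge. This is equivalent to the following: in each row $i\in\{1,\ldots,e\}$ of $\LR(\bA)$, whenever column $c$ carries a bead, so does column $c-1$. Each row of an $e$-abacus being a symbol, this says each row is an initial segment $\Z_{\le t_i}$.

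First I will compute, for a bead $(c,i)$ of $\LR(\bA)$, its left neighbour $(c-1,i)$ and pull it back to the $\ell$-abacus. Write $c=\ell m - j+1$ with $1\le j\le\ell$, so that $(c,i)$ comes from $(k,j)$ with $k=e(m-1)+i$. There are two cases.
\begin{itemize}
\item If $j<\ell$, then $c-1=\ell m-(j+1)+1$, so $(c-1,i)$ comes from $(k,j+1)$. The left-neighbour condition therefore reads: $k\in\bS^{(j)}\Rightarrow k\in\bS^{(j+1)}$, i.e.\ $\bS(\la^j,s_j)\subseteq\bS(\la^{j+1},s_{j+1})$.
\item If $j=\ell$, then $c-1=\ell(m-1)-1+1$, so $(c-1,i)$ comes from $(k-e,1)$. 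The condition reads: $k\in\bS^{(\ell)}\Rightarrow k-e\in\bS^{(1)}$, i.e.\ $\bS(\la^\ell,s_\ell)\subseteq\bS(\la^1,s_1)+e=\bS(\la^1,s_1+e)$.
\end{itemize}

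As $(c,i)$ ranges over all positions of the $e$-abacus, $(k,j)$ ranges over all of $\Z\times\{1,\ldots,\ell\}$, so these implications over all $k$ are exactly the chain of inclusions in the statement. This gives both directions at once.

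The only delicate point is the index bookkeeping: checking that $c\mapsto c-1$ corresponds precisely to $j\mapsto j+1$, or to the wrap-around $(j=\ell)\mapsto(j=1,\ k\mapsto k-e)$, with the floor conventions of $\LR$. I will verify this on \Cref{exaLR}.

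Finally, I need the fact that "every row of an $e$-abacus is left-closed" is equivalent to "the $e$-quotient is empty". This is the same observation as in \Cref{charac_e_cores}: a charged partition is empty if and only if its symbol has no gap.
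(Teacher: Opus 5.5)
Your argument is correct and complete. Writing $k=e(m-1)+i$ gives $\lfloor (k-1)/e\rfloor=m-1$, so $(k,j)\mapsto(\ell m-j+1,i)$. Your two cases are then exactly right: for $j<\ell$ the left neighbour pulls back to $(k,j+1)$, and for $j=\ell$ it pulls back to $(k-e,1)$. Add the observation that a symbol equals $\Z_{\le t}$ iff it is closed under $c\mapsto c-1$, and both directions follow.

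The paper does not prove this lemma; it quotes it from \cite[Proposition 2.16]{JaconLecouvey2021}. Your route is therefore genuinely different: a self-contained verification from the explicit formula for $\LR$ used in this paper. The citation buys only brevity. Your computation buys three things:
\begin{enumerate}
\item It explains the extended row. The left-neighbour relation inside a row of $\LR(\bA)$ is the ``same column, next row up'' relation in $\bA$, with the wrap-around from row $\ell$ to row $1$ shifted by $e$. This is precisely why the extra top row of $\bA_+(\bla,\bs)$ is $\bS(\la^1,s_1+e)$.
\item It confirms the criterion under this paper's own normalisation of $\LR$ (the $-j+1$ reversal of rows, and $0$ in the rightmost column of its rectangle). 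This is worth having, since conventions for level-rank duality vary between sources.
\item It immediately recovers the remark that cores have $\bs\in\sD^\ell_e$: nested symbols have weakly increasing charges, and the last inclusion bounds $s_\ell$ by $s_1+e$.
\end{enumerate}

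Checking against \Cref{exaLR} is a good sanity test but is not needed for the proof.
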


\begin{Exa}
The extended abacus in \Cref{ext_ab} is nested, so the charged bipartition $|\left((1^2),(3,1^2)\right),(1,2)\rangle$ is a $4$-core.
\end{Exa}

\medskip

Now, recall from \Cref{sec_part} that the $e$-core $\la_{(e)}$ of a partition $\la$ can also be computed by one of the following equivalent procedures:
\begin{enumerate}
\item recursively moving beads $e$ steps to the left in $\bA(\la)$,
\item recursively sliding beads $1$ step to the left in the $e$-abacus of $\la$,
\item recursively sliding up beads in consecutive rows of the $e$-extended abacus $\bA_+(\la)$,
\item recursively removing $e$-hooks in the Young diagram of $\la$.
\end{enumerate}

Similarly, the $e$-core $|\bla,\bs\rangle_{(e)}$ of a charged multipartition $|\bla,\bs\rangle$ 
can be computed by one of the following three equivalent procedures:
\begin{enumerate}
\item[$(1')$] 
recursively sliding up beads in consecutive rows of $\bA(\bla,\bs)$, and moving beads $e$ spots to the left from row $\ell$ to row $1$ of $\bA(\bla,\bs)$,
\item[$(2')$] recursively sliding beads $1$ step to the left in $\LR\left(\bA(\bla,\bs)\right)$,
\item[$(3')$] 
recursively sliding up beads in consecutive rows of $\bA_+(\bla,\bs)$.
\end{enumerate}
We see that the algorithms in $(2)$ and $(2')$ coincide, and that $(1')$ and $(3')$ are natural analogues of $(1)$ and $(3)$.
We will see in the next section that $(1')$ can also be viewed as a ``hook removal'' procedure for special charges (by using a shift in the charge).

\begin{Exa}\label{exa_bead_slide}
Take $\ell=3$ and $e=6$. Let $\bla=\left( (2,1^3), (4,2,1), (3) \right)\in\cP^3$ and $\bs=(0,-1,-1)\in\Z^3$.
Computing the $e$-core $|\bla,\bs\rangle_{(e)}$ can be achieved by Procedure $(1')$ above.
The abacus of $|\bla,\bs\rangle$ is
\begin{center}
\begin{tikzpicture}[scale=0.5, bb/.style={draw,circle,fill,minimum size=2.5mm,inner sep=0pt,outer sep=0pt}, wb/.style={draw,circle,fill,minimum size=0.5mm,inner sep=0pt,outer sep=0pt}]
%%%%%%%%%%%%%%%%%%%%%
%%%  On dessine les lignes de l'abaque et tous les emplacements
\foreach \j in {1,...,3}
{
\draw [line width=0.1mm] (-7,\j) -- (7,\j);
    \foreach \k in {-6,...,6}
    {
        \node [wb] at (\k,\j) {};
    }
}
%%%%%%%%%%%%%%%%%%%%%
%%%  On place les billes
\foreach \k/\j in { -6/1,-5/1,-4/1,-2/1,-1/1,0/1,2/1,    -6/2,-5/2,-4/2,-2/2,0/2,3/2,  -6/3,-5/3,-4/3,-3/3,-2/3,2/3 }
{
    \node [bb] at (\k,\j) {};
}
%%%%%%%%%%%%%%%%%%%%%
%%%  On fait appraître la graduation
\foreach \k in {-6,...,6}
{
    \node [scale = 0.7] at (\k,0) {$\k$};
}
% %%%%%%%%%%%%%%%%%%%%%
% %%%  On dessine les rectangles
% \draw [line width=0.1mm, color=gray] (-5.5,0.5) -- (6.5,0.5);
% \draw [line width=0.1mm, color=gray] (-5.5,3.5) -- (6.5,3.5);
% \foreach \k in {-5.5, 0.5, 6.5}
% {
% \draw [line width=0.1mm, color=gray] (\k,3.5) -- (\k,0.5);
% }

%%%%%%%%%% Les fleches
\draw[-{Triangle[length=2mm,width=2mm]}, thick, Aquamarine]
  (-3,1) -- (-3,2);
\draw[-{Triangle[length=2mm,width=2mm]}, thick, Aquamarine]
  (-1,1) -- (-1,2);
\draw[-{Triangle[length=2mm,width=2mm]}, thick, Aquamarine]
  (0,1) -- (0,2);
\draw[-{Triangle[length=2mm,width=2mm]}, thick, Aquamarine]
  (2,1) -- (2,2);
  %%%%
\draw[-{Triangle[length=2mm,width=2mm]}, thick, Dandelion]
  (3,2) -- (3,3);
\draw[-{Triangle[length=2mm,width=2mm]}, thick, Dandelion]
  (0,2) -- (0,3);
\draw[-{Triangle[length=2mm,width=2mm]}, thick, Dandelion]
  (-1,2) -- (-1,3);
\draw[-{Triangle[length=2mm,width=2mm]}, thick, VioletRed]
  (3,3) -- (-3,1);
\end{tikzpicture}
\end{center}
where we have drawn all possible (recursive) bead operations described in $(1')$ with colored arrows.
We obtain
$|\bla,\bs\rangle_{(6)} = |\left( (1),(2,1),(1) \right),(-3,0,1)\rangle$.

\end{Exa}

To conclude this section, we mention that Jacon and Lecouvey actually defined a core to be simply a multipartition instead of a charged multipartition.

\begin{Def}\cite{JaconLecouvey2020,JaconLecouvey2021}
Fix $\bs\in\Z^\ell$. A multipartition $\bla\in\cP^\ell$ is called an {\em $(e,\bs)$-core} if $|\bla,\bs\rangle$ is an $e$-core.
\end{Def}

This language is convenient when considering the set of all $e$-cores with fixed charge $\bs\in\Z^\ell$, which we have denoted above by $\cC^\ell_{e,\bs}$. Obviously $\cC^\ell_{e,\bs}$ is in bijection with the set of $(e,\bs)$-cores, and it makes sense to just think about this set as a subset of $\ell$-partitions. However, this perspective is less optimal when considering the $e$-core {\em of} a charged multipartition, as the charge usually changes when computing the $e$-core. 

\medskip

One immediate consequence of \Cref{charac_e_cores_l} worth stating for its implications for positivity questions is a divisibility result for $(e,\bs)$-cores identical to the analogous statement for $e$-core partitions:

\begin{Lem}\label{ecore_div}
Suppose $e\mid f$, then $\bla$ is an $(e,\bs)$-core implies that $\bla$ is an $(f,\bs)$-core.
\end{Lem}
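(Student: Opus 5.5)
The plan is to argue directly with the nesting criterion of \Cref{charac_e_cores_l}. Write $f=em$ with $m\geq 1$. Assume $\bla$ is an $(e,\bs)$-core, so that
\[
\bS(\la^1,s_1)\subseteq\bS(\la^2,s_2)\subseteq\cdots\subseteq\bS(\la^\ell,s_\ell)\subseteq\bS(\la^1,s_1+e).
\]
By the same criterion, to conclude that $\bla$ is an $(f,\bs)$-core we only need the final inclusion $\bS(\la^\ell,s_\ell)\subseteq\bS(\la^1,s_1+f)$. The inclusions among the first $\ell$ rows do not depend on $e$, so they are unchanged.

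The key observation is that increasing the charge of a single charged partition by $1$ enlarges its symbol. Indeed, $\bS(\la,c+1)=\bS(\la,c)+1$. If $\be=\la_k-k+1+c$, then $\be+1$ is either $\la_{k-1}-(k-1)+1+c$ with $\la_{k-1}=\la_k$, or it exceeds the next larger $\be$-number. In both cases every element of $\bS(\la,c)$ either lies in $\bS(\la,c+1)$ or sits below it in the appropriate sense. More concretely, $\be+1\in\bS(\la,c+1)$ is clear, and we want $\be\in\bS(\la,c+1)$. This holds because $\bS(\la,c+1)=\{\la_k-k+2+c\}=\{\la_{k}-(k+1)+1+c+2\}$.

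Rather than working element by element, the cleaner route is the following. The chain above gives $\bS(\la^1,s_1)\subseteq\bS(\la^1,s_1+e)$. Shifting everything by $e$ and iterating yields
\[
\bS(\la^1,s_1+je)\subseteq\bS(\la^1,s_1+(j+1)e)\quad\text{for all } j\geq 0.
\]
This uses only that translation preserves inclusion and that $\bS(\la^1,s_1+je)=\bS(\la^1,s_1)+je$. Chaining these inclusions for $j=1,\dots,m-1$ gives $\bS(\la^\ell,s_\ell)\subseteq\bS(\la^1,s_1+e)\subseteq\bS(\la^1,s_1+em)$, which is exactly the missing inclusion. Applying \Cref{charac_e_cores_l} with $f$ then shows that $\bla$ is an $(f,\bs)$-core.

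I do not expect any real obstacle here. The only point needing care is to justify the self-inclusion $\bS(\la^1,s_1+je)\subseteq\bS(\la^1,s_1+(j+1)e)$ by translating the inclusion obtained from the core condition. One should not try to prove it as a general monotonicity of symbols in the charge, which only holds for shifts by $1$. Even so, the translation argument suffices and avoids that route entirely.
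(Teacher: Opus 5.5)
Your argument is correct and follows the paper's route: the paper states this lemma as an immediate consequence of the nesting criterion \Cref{charac_e_cores_l}, and the only new inclusion needed is $\bS(\la^1,s_1+e)\subseteq\bS(\la^1,s_1+f)$, which you correctly get by translating $\bS(\la^1,s_1)\subseteq\bS(\la^1,s_1+e)$ by multiples of $e$. You should delete your middle paragraph and the closing remark that monotonicity holds for shifts by $1$, because $\bS(\la,c)\subseteq\bS(\la,c+1)$ is false unless $\la=\emptyset$ (for $\la=(1)$ and $c=0$ one has $1\in\bS(\la,0)$ but $1\notin\bS(\la,1)$); your actual proof never uses that claim.
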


\subsection{Spetsial cores and quotients of charged multipartitions}\label{spetsial combinat}

In this section, we introduce
Malle's spetsial cores and quotients for level $\ell$ symbols (i.e. charged $\ell$-partitions and their abaci). We then establish a relationship between Malle's combinatorics and the ``Uglovian" combinatorics from the previous section. 

\medskip

By way of introduction, let us quickly recall the notion of a cocore of a level $2$ symbol, which is a very classical concept among all the higher-level generalizations of partitions and their $e$-cores.
 Consider a level $2$ symbol $\bS=(\bS^{(1)},\bS^{(2)})\subset \Z\times\{1,2\}$ and let $\bA$ be its $2$-abacus. For $d\in\Z_{\geq 1}$, 
 a {\em $d$-cohook} in $\bS$ is some $\beta\in\bS^{(j)}$, $j\in\{1,2\}$, such that $\beta-d\notin\bS^{(j+1)}$ (where $j+1$ is taken mod $2$). Removing the $d$-cohook from $\bS$ consists in removing $\be$ from $\bS^{(j)}$ and including $\be-d$ in $\bS^{(j+1)}$. This is done on the abacus $\bA$ by moving the bead (corresponding to $\beta$) $d$ positions to the left and to the opposite row, as illustrated below for $d=3$:
 \begin{center}
\begin{tikzpicture}[scale=0.5, bb/.style={draw,circle,fill,minimum size=2.5mm,inner sep=0pt,outer sep=0pt}, wb/.style={draw,circle,fill,minimum size=0.5mm,inner sep=0pt,outer sep=0pt}]
%%%%%%%%%%%%%%%%%%%%%
%%%  On dessine les lignes de l'abaque et tous les emplacements
\foreach \j in {1,2}
{
\draw [line width=0.1mm] (0,\j) -- (5,\j);
    \foreach \k in {0,...,5}
    {
        \node [wb] at (\k,\j) {};
    }
}
%%%%%%%%%%%%%%%%%%%%%
%%%  On place les billes
\foreach \k/\j in {0/1,3/1,    0/2,1/2,2/2,3/2,5/2 }
{
    \node [bb] at (\k,\j) {};
}
%\TikZ{[scale=.5]
%\draw
%(5,2)node[fill,red,circle,inner sep=2pt]{}
%(5,1)node[fill,circle,inner sep=.5pt]{}
%(4,2)node[fill,circle,inner sep=.5pt]{}
%(4,1)node[fill,circle,inner sep=.5pt]{}
%(3,2)node[fill,circle,inner sep=2pt]{}
%(3,1)node[fill,circle,inner sep=2pt]{}
%(2,2)node[fill,circle,inner sep=2pt]{}
%(2,1)node[fill,red,circle,inner sep=.5pt]{}
%(1,2)node[fill,circle,inner sep=2pt]{}
%(1,1)node[fill,circle,inner sep=.5pt]{}
%(0,2)node[fill,circle,inner sep=2pt]{}
%(0,1)node[fill,circle,inner sep=2pt]{}
%;
\draw[-{Triangle[length=2mm,width=2mm]}, thick, VioletRed]
  (5,2) -- (2,1);
\end{tikzpicture}
\end{center}
We say that $\bS$ is a {\em$d$-cocore} if no $d$-cohooks can be removed from $\bS$.
 Given any level $2$ symbol $\bS$, {\em the $d$-cocore of $\bS$} is the unique level $2$ symbol obtained by recursively removing all $d$-cohooks from $\bS$. In the previous example, the abacus of the $3$-cocore is obtained after the removal of the single $3$-cohook:
\begin{center}
\begin{tikzpicture}[scale=0.5, bb/.style={draw,circle,fill,minimum size=2.5mm,inner sep=0pt,outer sep=0pt}, wb/.style={draw,circle,fill,minimum size=0.5mm,inner sep=0pt,outer sep=0pt}]
%%%%%%%%%%%%%%%%%%%%%
%%%  On dessine les lignes de l'abaque et tous les emplacements
\foreach \j in {1,2}
{
\draw [line width=0.1mm] (0,\j) -- (5,\j);
    \foreach \k in {0,...,5}
    {
        \node [wb] at (\k,\j) {};
    }
}
%%%%%%%%%%%%%%%%%%%%%
%%%  On place les billes
\foreach \k/\j in {0/1,2/1,3/1,    0/2,1/2,2/2,3/2 }
{
    \node [bb] at (\k,\j) {};
}
\end{tikzpicture}
\end{center}
Observe that the case $d=1$ is non-trivial, unlike in the case of partitions (removing $1$-cohooks does not amount to simply removing all boxes from the associated bipartition one by one).
 \medskip
 
 Malle generalized the combinatorics of cohooks and cocores to any level $\ell>1$ in \cite{Malle1995} as part of the description of unipotent character degrees for Spetses of the complex reflection groups $G(\ell,1,n)$. In fact, Malle essentially constructed a spetsial version of the level-rank duality (he did not use this language).
  
 \begin{Not}
 We will use Greek letters $\bsig,\btau,\ldots$ for a charge in $\Z^\ell$ whenever working in the spetsial framework, while we will use Roman letters $\bs,\bt,\ldots$ for a charge in $\Z^\ell$ when working in the Uglovian framework.
 \end{Not}

 We now explain Malle's spetsial level-rank duality by describing it on the abacus of a charged multipartition. Fix $d\in\Z_{\geq 1}$, and fix $\ell\in\Z_{\geq 2}$.
 Let $\bsig\in\Z^\ell$ be a charge and $\bla\in\cP^\ell$. Let $\bA$ be the $\ell$-abacus of $|\bla,\bsig\rangle$.
 
 \medskip
 
 {\em Step 0.} 
 Tile $\bA$ by ``big rectangles'' of width $d\ell$ and height $\ell$, in such a way that $0$ is in the rightmost column of its rectangle.\footnote{We have shifted Malle's convention; his big rectangles are aligned so that $0$ is in the leftmost column of its rectangle. This only cyclically permutes the resulting core and quotient. As his symbols are actually only considered up to a cyclic permutation, this difference in convention is irrelevant.} Then, subdivide each big rectangle into ``$d$-dominoes:'' small rectangles of width $d$ and height $1$. Thus, each big rectangle is tiled by an $\ell\times\ell$ grid of horizontal $d$-dominoes. Within each big rectangle, there are $\ell$ domino-columns, each consisting of $\ell$ stacked $d$-dominoes. Let us number the domino-columns from left to right by $1$ to up to $\ell$.
 
 \medskip
 
  {\em Step 1.} For each $j=1,\ldots,\ell$, and within every big rectangle, rotate the $j$'th domino-column cyclically in the upwards direction by $j-1$. (So, if a domino falls off the top of its domino-column, it reappears at the bottom of the same domino-column.)
  
  \medskip
  
  {\em Step 2.} Rotate every $d$-domino counterclockwise $90$ degrees. This transforms each big rectangle from an $\ell d \times \ell$ rectangle into an $\ell\times\ell d$ rectangle, which is tiled by vertical $d$-dominoes (small $1\times d$ rectangles). The position of $0$ in the new abacus is given by the column consisting of those rotated dominoes that contained the position $0$ in $\bA$. 
  
  \medskip
  
We will call the resulting $d\ell$-abacus the {\em spetsial level-rank dual} of $\bA$ and denote it $\sLR(\bA)$.   The inverse map $\sLR^{-1}$ is (obviously) given by rotating the vertical $d$-dominoes $90$ degrees clockwise, then applying the cyclic rotation $(\down 0\;,\down1\;,\ldots, \down(\ell-1))$ to the domino-columns.
  
  \begin{Exa}\label{exa_sLR}
  Let $\ell=3$ and $d=2$. We will find $\sLR(\bA)$ for the abacus $\bA$ below:
  \begin{center}
\begin{tikzpicture}[scale=0.5, bb/.style={draw,circle,fill,minimum size=2.5mm,inner sep=0pt,outer sep=0pt}, wb/.style={draw,circle,fill,minimum size=0.5mm,inner sep=0pt,outer sep=0pt}]
%%%%%%%%%%%%%%%%%%%%%
%%%  On dessine les lignes de l'abaque et tous les emplacements
\foreach \j in {1,2,3}
{
\draw [line width=0.1mm] (-8,\j) -- (7,\j);
}
%   \foreach \k in {-7,...,6}
%    {
 %       \node [wb] at (\k,\j) {};
%    }
\foreach \k/\j in {-3/3,3/3,4/3,    0/2,5/2,6/2,    }
{
    \node [wb] at (\k,\j) {};
}
\foreach \k/\j in {0/3,6/3,    -5/2,    -3/1,3/1}
{
    \node [wb] at (\k,\j) {};
}
\foreach \k/\j in {-4/3,2/3,   -3/2,-2/2,4/2,   5/1,6/1 }
{
    \node [wb] at (\k,\j) {};
}
%%%%%%%%%%%%%%%%%%%%%
%%%  On place les billes
\foreach \k/\j in {-2/3,    -7/2,-6/2,-1/2,    -5/1,-4/1,1/1,2/1 }
{
    \node [bb] at (\k,\j) {};
}
\foreach \k/\j in {-7/3,-6/3,-1/3,5/3,    -4/2,1/2,2/2,   -2/1,4/1 }
{
    \node [bb] at (\k,\j) {};
}
\foreach \k/\j in {-5/3,1/3,    3/2,    -7/1,-6/1,-1/1,0/1 }
{
    \node [bb] at (\k,\j) {};
}
%%%%%%%%%%%%%%%%%%%%%
%%%  On fait appraître la graduation
\foreach \k in {-7,...,6}
{
    \node [scale = 0.7] at (\k,0) {$\k$};
}
%%%%%%%%%%%%%%%%%%%%%
%%%  On dessine les rectangles

\end{tikzpicture}
\end{center}
  During the operations, we will not draw the abacus runners.
  
  \medskip
  
  {\em Step 0.} We mark the big rectangles of width $6$ and height $3$, subdividing each into nine $2$-dominoes. The three colors along the three cyclic, descending domino-diagonals are a visual aid for the operations to carry out next.
  \begin{center}
\begin{tikzpicture}[scale=0.5, bb/.style={draw,circle,fill,minimum size=2.5mm,inner sep=0pt,outer sep=0pt}, wb/.style={draw,circle,fill,minimum size=0.5mm,inner sep=0pt,outer sep=0pt}]
%%%%%%%%%%%%%%%%%%%%%
%%%  On dessine les lignes de l'abaque et tous les emplacements
%\foreach \j in {1,2,3}
%{
%\draw [line width=0.1mm] (-8,\j) -- (7,\j);
%}
%   \foreach \k in {-7,...,6}
%    {
 %       \node [wb] at (\k,\j) {};
%    }
\foreach \k/\j in {-3/3,3/3,4/3,    0/2,5/2,6/2,    }
{
    \node [wb,Dandelion] at (\k,\j) {};
}
\foreach \k/\j in {0/3,6/3,    -5/2,    -3/1,3/1}
{
    \node [wb,VioletRed] at (\k,\j) {};
}
\foreach \k/\j in {-4/3,2/3,   -3/2,-2/2,4/2,   5/1,6/1 }
{
    \node [wb,Aquamarine] at (\k,\j) {};
}
%%%%%%%%%%%%%%%%%%%%%
%%%  On place les billes
\foreach \k/\j in {-2/3,    -7/2,-6/2,-1/2,    -5/1,-4/1,1/1,2/1 }
{
    \node [bb,Dandelion] at (\k,\j) {};
}
\foreach \k/\j in {-7/3,-6/3,-1/3,5/3,    -4/2,1/2,2/2,   -2/1,4/1 }
{
    \node [bb,VioletRed] at (\k,\j) {};
}
\foreach \k/\j in {-5/3,1/3,    3/2,    -7/1,-6/1,-1/1,0/1 }
{
    \node [bb,Aquamarine] at (\k,\j) {};
}
%%%%%%%%%%%%%%%%%%%%%
%%%  On fait appraître la graduation
\foreach \k in {-7,...,6}
{
    \node [scale = 0.7] at (\k,0) {$\k$};
}
%%%%%%%%%%%%%%%%%%%%%
%%%  On dessine les rectangles
\draw [line width=0.1mm, color=gray] (-7.5,0.5) -- (6.5,0.5);
\draw [line width=0.1mm, color=gray] (-7.5,1.5) -- (6.5,1.5);
\draw [line width=0.1mm, color=gray] (-7.5,2.5) -- (6.5,2.5);
\draw [line width=0.1mm, color=gray] (-7.5,3.5) -- (6.5,3.5);
\foreach \k in {-7.5,-5.5,-3.5, -1.5, 0.5, 2.5,4.5,6.5}
{
\draw [line width=0.1mm, color=gray] (\k,3.5) -- (\k,0.5);
}
\foreach \k in {-5.5, 0.5, 6.5}
{
\draw [line width=0.5mm, color=gray] (\k,3.5) -- (\k,0.5);
}
\end{tikzpicture}
\end{center}
  {\em Step 1.} We perform the cyclic rotation $(\up0\;, \up1\;,\up2)$ of the domino-columns in each big rectangle. This amounts to aligning the colors along rows while fixing the first domino-column in each big rectangle.
    \begin{center}
\begin{tikzpicture}[scale=0.5, bb/.style={draw,circle,fill,minimum size=2.5mm,inner sep=0pt,outer sep=0pt}, wb/.style={draw,circle,fill,minimum size=0.5mm,inner sep=0pt,outer sep=0pt}]
%%%%%%%%%%%%%%%%%%%%%
%%%  On dessine les lignes de l'abaque et tous les emplacements
%\foreach \j in {1,2,3}
%{
%\draw [line width=0.1mm] (-8,\j) -- (7,\j);
%}
%   \foreach \k in {-7,...,6}
%    {
 %       \node [wb] at (\k,\j) {};
%    }
\foreach \k/\j in {-3/1,3/1,4/1,    0/1,5/1,6/1,    }
{
    \node [wb,Dandelion] at (\k,\j) {};
}
\foreach \k/\j in {0/2,6/2,    -5/2,    -3/2,3/2}
{
    \node [wb,VioletRed] at (\k,\j) {};
}
\foreach \k/\j in {-4/3,2/3,   -3/3,-2/3,4/3,   5/3,6/3 }
{
    \node [wb,Aquamarine] at (\k,\j) {};
}
%%%%%%%%%%%%%%%%%%%%%
%%%  On place les billes
\foreach \k/\j in {-2/1,    -7/1,-6/1,-1/1,    -5/1,-4/1,1/1,2/1 }
{
    \node [bb,Dandelion] at (\k,\j) {};
}
\foreach \k/\j in {-7/2,-6/2,-1/2,5/2,    -4/2,1/2,2/2,   -2/2,4/2 }
{
    \node [bb,VioletRed] at (\k,\j) {};
}
\foreach \k/\j in {-5/3,1/3,    3/3,    -7/3,-6/3,-1/3,0/3 }
{
    \node [bb,Aquamarine] at (\k,\j) {};
}
%%%%%%%%%%%%%%%%%%%%%
%%%  On fait appraître la graduation
\foreach \k in {-7,...,6}
{
    \node [scale = 0.7] at (\k,0) {$\k$};
}
%%%%%%%%%%%%%%%%%%%%%
%%%  On dessine les rectangles
\draw [line width=0.1mm, color=gray] (-7.5,0.5) -- (6.5,0.5);
\draw [line width=0.1mm, color=gray] (-7.5,1.5) -- (6.5,1.5);
\draw [line width=0.1mm, color=gray] (-7.5,2.5) -- (6.5,2.5);
\draw [line width=0.1mm, color=gray] (-7.5,3.5) -- (6.5,3.5);
\foreach \k in {-7.5,-5.5,-3.5, -1.5, 0.5, 2.5,4.5,6.5}
{
\draw [line width=0.1mm, color=gray] (\k,3.5) -- (\k,0.5);
}
\foreach \k in {-5.5, 0.5, 6.5}
{
\draw [line width=0.5mm, color=gray] (\k,3.5) -- (\k,0.5);
}
\end{tikzpicture}
\end{center}
  {\em Step 2.} We rotate each domino $90$ degrees counterclockwise to obtain $\sLR(\bA)$.
  \begin{center}
\begin{tikzpicture}[scale=0.5, bb/.style={draw,circle,fill,minimum size=2.5mm,inner sep=0pt,outer sep=0pt}, wb/.style={draw,circle,fill,minimum size=0.5mm,inner sep=0pt,outer sep=0pt}]
%%%%%%%%%%%%%%%%%%%%%
%%%  On dessine les lignes de l'abaque et tous les emplacements
%\foreach \j in {1,2,3,4,5,6}
%{
%\draw [line width=0.1mm] (-4,\j) -- (4,\j);
%}
%   \foreach \k in {-7,...,6}
%    {
 %       \node [wb] at (\k,\j) {};
%    }
\foreach \k/\j in {-1/1,2/1,3/1,    0/2,2/2,3/2}
{
    \node [wb,Dandelion] at (\k,\j) {};
}
\foreach \k/\j in {-2/3,-1/3,2/3,    0/4,3/4}
{
    \node [wb,VioletRed] at (\k,\j) {};
}
\foreach \k/\j in {-1/5,3/5,   -2/6,-1/6,1/6,2/6,3/6 }
{
    \node [wb,Aquamarine] at (\k,\j) {};
}
%%%%%%%%%%%%%%%%%%%%%
%%%  On place les billes
\foreach \k/\j in {-3/1,-2/1,0/1,1/1,    -3/2,-2/2,-1/2,1/2}
{
    \node [bb,Dandelion] at (\k,\j) {};
}
\foreach \k/\j in {-3/3,0/3,1/3,3/3,    -3/4,-2/4,-1/4, 1/4,2/4}
{
    \node [bb,VioletRed] at (\k,\j) {};
}
\foreach \k/\j in {-3/5,-2/5,0/5,1/5,2/5,   -3/6,0/6}
{
    \node [bb,Aquamarine] at (\k,\j) {};
}
%%%%%%%%%%%%%%%%%%%%%
%%%  On fait appraître la graduation
\foreach \k in {-3,...,3}
{
    \node [scale = 0.7] at (\k,0) {$\k$};
}
%%%%%%%%%%%%%%%%%%%%%
%%%  On dessine les rectangles
\draw [line width=0.1mm, color=gray] (-3.5,0.5) -- (3.5,0.5);
\draw [line width=0.1mm, color=gray] (-3.5,2.5) -- (3.5,2.5);
\draw [line width=0.1mm, color=gray] (-3.5,4.5) -- (3.5,4.5);
\draw [line width=0.1mm, color=gray] (-3.5,6.5) -- (3.5,6.5);
\foreach \k in {-3.5,-2.5,-1.5, -0.5,0.5, 1.5, 2.5,3.5}
{
\draw [line width=0.1mm, color=gray] (\k,6.5) -- (\k,0.5);
}
\foreach \k in {-2.5, 0.5, 3.5}
{
\draw [line width=0.5mm, color=gray] (\k,6.5) -- (\k,0.5);
}
\end{tikzpicture}
\end{center}
  Thus $\sLR(\bA)$ is given by the $6$-abacus (now erasing the rectangles and drawing the runners):
  \begin{center}
\begin{tikzpicture}[scale=0.5, bb/.style={draw,circle,fill,minimum size=2.5mm,inner sep=0pt,outer sep=0pt}, wb/.style={draw,circle,fill,minimum size=0.5mm,inner sep=0pt,outer sep=0pt}]
%%%%%%%%%%%%%%%%%%%%%
%%%  On dessine les lignes de l'abaque et tous les emplacements
\foreach \j in {1,2,3,4,5,6}
{
\draw [line width=0.1mm] (-3.5,\j) -- (3.5,\j);
}
%   \foreach \k in {-7,...,6}
%    {
 %       \node [wb] at (\k,\j) {};
%    }
\foreach \k/\j in {-1/1,2/1,3/1,    0/2,2/2,3/2}
{
    \node [wb] at (\k,\j) {};
}
\foreach \k/\j in {-2/3,-1/3,2/3,    0/4,3/4}
{
    \node [wb] at (\k,\j) {};
}
\foreach \k/\j in {-1/5,3/5,   -2/6,-1/6,1/6,2/6,3/6 }
{
    \node [wb] at (\k,\j) {};
}
%%%%%%%%%%%%%%%%%%%%%
%%%  On place les billes
\foreach \k/\j in {-3/1,-2/1,0/1,1/1,    -3/2,-2/2,-1/2,1/2}
{
    \node [bb] at (\k,\j) {};
}
\foreach \k/\j in {-3/3,0/3,1/3,3/3,    -3/4,-2/4,-1/4, 1/4,2/4}
{
    \node [bb] at (\k,\j) {};
}
\foreach \k/\j in {-3/5,-2/5,0/5,1/5,2/5,   -3/6,0/6}
{
    \node [bb] at (\k,\j) {};
}
%%%%%%%%%%%%%%%%%%%%%
%%%  On fait appraître la graduation
\foreach \k in {-3,...,3}
{
    \node [scale = 0.7] at (\k,0) {$\k$};
}
%%%%%%%%%%%%%%%%%%%%%
%%%  On dessine les rectangles
\end{tikzpicture}
\end{center}
  
  \end{Exa}

   From the spetsial level-rank duality, the definitions of spetsial cores and quotients fall out in parallel to \Cref{def_ecore_ell,def_equot_ell}. In both definitions below, let $|\bla,\bsig\rangle$ be a charged $\ell$-partition for some $\ell\geq 2$ and let $\bA$ be its abacus. Let $d\in\Z_{\geq 1}$. 
  \begin{Def}\label{def_spetsialdquot}
We define the {\em spetsial $d$-quotient} $|\bla,\bsig\rangle^{[d]}$ of $|\bla,\bsig\rangle$ to be the unique $\ell d$-partition $ \bmu$ such that $\sLR(\bA)=\bA(\bmu,\btau)$ for some charge $\btau\in\Z^{\ell d}$. 
\end{Def}
 
 \begin{Def}\label{def_spetsialdcore} The {\em spetsial $d$-core }$|\bla,\bsig\rangle_{[d]}$ of $|\bla,\bsig\rangle$ is constructed by the following procedure. 
First compute the $\ell d$-abacus $\sLR(\bA)=:\bA(\bmu,\btau)$,
swipe all its beads to the left as much as possible to obtain the $\ell d$-abacus $\bA':=\bA(\bemp,\btau)$,
then compute the $\ell$-abacus $\sLR^{-1}(\bA')$. Then $\sLR^{-1}(\bA')=:\bA(|\bla,\bsig\rangle_{[d]})$.
\end{Def}

\begin{Exa}\label{exa_spetsialcorequotient}
In \Cref{exa_sLR}, the abacus $\bA$ is $\bA(\bla,\bsig)$ for $\bla=\left((2,1^5),(4^3,3,1),(6,3,2^2)\right)\in\cP^3$ and $\bsig=(2,-1,-1)\in\Z^3$. From $\sLR(\bA)$ we see that $|\bla,\bsig\rangle^{[2]}=\left((1^2),(1),(3,2^2),(1^2),(1^3),(2)\right)\in\cP^6$ is the spetsial $2$-quotient. We likewise find that the charge of $\sLR(\bA)$ is $\btau=(0,0,0,1,1,-2)\in\Z^6$. To find the spetsial $2$-core $|\bla,\bsig\rangle_{[2]}$, we apply $\sLR^{-1}$ to $\bA(\bemp,\btau)$:
\begin{center}
\begin{tikzpicture}[scale=0.5, bb/.style={draw,circle,fill,minimum size=2.5mm,inner sep=0pt,outer sep=0pt}, wb/.style={draw,circle,fill,minimum size=0.5mm,inner sep=0pt,outer sep=0pt}]
%%%%%%%%%%%%%%%%%%%%%
%%%  On dessine les lignes de l'abaque et tous les emplacements
%\foreach \j in {1,2,3,4,5,6}
%{
%\draw [line width=0.1mm] (-4,\j) -- (4,\j);
%}
%   \foreach \k in {-7,...,6}
%    {
 %       \node [wb] at (\k,\j) {};
%    }
\foreach \k/\j in {1/1,2/1,3/1,    1/2,2/2,3/2}
{
    \node [wb,Dandelion] at (\k,\j) {};
}
\foreach \k/\j in {1/3,2/3,3/3,    2/4,3/4}
{
    \node [wb,VioletRed] at (\k,\j) {};
}
\foreach \k/\j in {2/5,3/5,   -1/6,0/6,1/6,2/6,3/6 }
{
    \node [wb,Aquamarine] at (\k,\j) {};
}
%%%%%%%%%%%%%%%%%%%%%
%%%  On place les billes
\foreach \k/\j in {-3/1,-2/1,-1/1,0/1,    -3/2,-2/2,-1/2,0/2}
{
    \node [bb,Dandelion] at (\k,\j) {};
}
\foreach \k/\j in {-3/3,-2/3,-1/3,0/3,    -3/4,-2/4,-1/4, 0/4,1/4}
{
    \node [bb,VioletRed] at (\k,\j) {};
}
\foreach \k/\j in {-3/5,-2/5,-1/5,0/5,1/5,   -3/6,-2/6}
{
    \node [bb,Aquamarine] at (\k,\j) {};
}
%%%%%%%%%%%%%%%%%%%%%
%%%  On fait appraître la graduation
\foreach \k in {-3,...,3}
{
    \node [scale = 0.7] at (\k,0) {$\k$};
}
%%%%%%%%%%%%%%%%%%%%%
%%%  On dessine les rectangles
\draw [line width=0.1mm, color=gray] (-3.5,0.5) -- (3.5,0.5);
\draw [line width=0.1mm, color=gray] (-3.5,2.5) -- (3.5,2.5);
\draw [line width=0.1mm, color=gray] (-3.5,4.5) -- (3.5,4.5);
\draw [line width=0.1mm, color=gray] (-3.5,6.5) -- (3.5,6.5);
\foreach \k in {-3.5,-2.5,-1.5, -0.5,0.5, 1.5, 2.5,3.5}
{
\draw [line width=0.1mm, color=gray] (\k,6.5) -- (\k,0.5);
}
\foreach \k in {-2.5, 0.5, 3.5}
{
\draw [line width=0.5mm, color=gray] (\k,6.5) -- (\k,0.5);
}
\draw[-{Triangle[length=2mm,width=2mm]}]
  (5,2) -- (7,2)
;
\node [scale=0.8] at (6,1){$\sLR^{-1}$};
%%%%%%%%%
%% next abacus%%
\foreach \k/\j in {3/3,4/3,    5/2,6/2,   1/1,2/1 }
{
    \node [wb,Dandelion] at (\k+16,\j) {};
}
\foreach \k/\j in {5/3,6/3,    1/2,    3/1,4/1}
{
    \node [wb,VioletRed] at (\k+16,\j) {};
}
\foreach \k/\j in {2/3,   -2/2,3/2,4/2,   0/1,5/1,6/1 }
{
    \node [wb,Aquamarine] at (\k+16,\j) {};
}
%%%%%%%%%%%%%%%%%%%%%
%%%  On place les billes
\foreach \k/\j in {-3/3,-2/3,    -7/2,-6/2,-1/2,0/2,    -5/1,-4/1}
{
    \node [bb,Dandelion] at (\k+16,\j) {};
}
\foreach \k/\j in {-7/3,-6/3,-1/3,0/3,   -5/2,-4/2,2/2,   -3/1,-2/1 }
{
    \node [bb,VioletRed] at (\k+16,\j) {};
}
\foreach \k/\j in {-5/3,-4/3,1/3,    -3/2,    -7/1,-6/1,-1/1 }
{
    \node [bb,Aquamarine] at (\k+16,\j) {};
}
%%%%%%%%%%%%%%%%%%%%%
%%%  On fait appraître la graduation
\foreach \k in {-7,...,6}
{
    \node [scale = 0.7] at (\k+16,0) {$\k$};
}
%%%%%%%%%%%%%%%%%%%%%
%%%  On dessine les rectangles
\draw [line width=0.1mm, color=gray] (8.5,0.5) -- (22.5,0.5);
\draw [line width=0.1mm, color=gray] (8.5,1.5) -- (22.5,1.5);
\draw [line width=0.1mm, color=gray] (8.5,2.5) -- (22.5,2.5);
\draw [line width=0.1mm, color=gray] (8.5,3.5) -- (22.5,3.5);
\foreach \k in {-7.5,-5.5,-3.5, -1.5, 0.5, 2.5,4.5,6.5}
{
\draw [line width=0.1mm, color=gray] (\k+16,3.5) -- (\k+16,0.5);
}
\foreach \k in {-5.5, 0.5, 6.5}
{
\draw [line width=0.5mm, color=gray] (\k+16,3.5) -- (\k+16,0.5);
}
;
\end{tikzpicture}
\end{center}
Therefore $|\bla,\bsig\rangle_{[2]}=|\left(\emptyset,(2,1^2),\emptyset\right),(-1,0,1)\rangle$.
\end{Exa}

We now make explicit the precise relationship between ordinary level-rank duality $\LR$ and spetsial level-rank duality $\sLR$. To go between them, we introduce the following shift of the charge. Set 
\[
\brho=(0,1,2,\ldots,\ell-1)\in\Z^\ell
\]
and for $d\in\Z_{\geq 1}$, consider $d\brho=(0,d,2d,\ldots,(\ell-1)d)$. Then consider the $d$-dilated $\brho$-shift of the charge as a map on charged $\ell$-partitions:
\begin{align*}
\mathsf{t}_{d\brho}&:\cU^\ell\lra\cU^\ell\\
&|\bla,\bsig\rangle\mapsto |\bla,\bsig+d\brho\rangle
\end{align*}
Next, for any finite sequence $\ba=(a_1,\ldots,a_\ell)$, write $\ba_{\mathrm{rev}}= (a_\ell,\ldots,a_1)$ for $\ba$ in the reverse order. Set $e=\ell d$. Define $\brho^d:=(0,0,\ldots,0,1,1,\ldots,1,\ldots,\ell-1,\ell-1,\ldots,\ell-1)\in\Z^e$, i.e. each entry of $\brho$ is repeated $d$ times. Then consider the shift of the charge by $\brho^d_{\mathrm{rev}}$ on charged $e$-partitions:
\begin{align*}
\mathsf{t}_{d\brho}&:\cU^e\lra\cU^e\\
&|\bmu,\btau\rangle\mapsto |\bmu,\btau+\brho^d_{\mathrm{rev}}\rangle
\end{align*}
\begin{Prop}\label{prop_lr_vs_slr}
Let $\ell\in\Z_{\geq 2}$ and $d\in\Z_{\geq 1}$, and set $e=\ell d$.
The following diagram commutes.
\[
\begin{tikzcd}
 \cU^\ell \arrow[r, "\sLR"] \arrow[d, "\mathsf{t}_{d\brho}"]
&  \cU^e  \arrow[d, "\mathsf{t}_{\brho^d_{\mathrm{rev}}}"] \\
  \arrow[r, "\LR"]
  \cU^\ell   &\cU^e
  \end{tikzcd}
\]
\end{Prop}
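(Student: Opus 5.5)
The plan is to reduce the statement to an identity between bijections on bead positions. All four maps in the square are induced by maps on positions:
\[
\LR,\ \sLR:\ \Z\times\{1,\ldots,\ell\}\to\Z\times\{1,\ldots,e\}.
\]
A charge shift also acts on positions: $\mathsf{t}_{d\brho}$ is the translation $(k,j)\mapsto(k+(j-1)d,\,j)$ of the $\ell$-abacus, since adding $a$ to $s_j$ shifts every bead of row $j$ by $a$ to the right. Likewise $\mathsf{t}_{\brho^d_{\mathrm{rev}}}$ translates row $i$ of the $e$-abacus by the $i$-th entry of $\brho^d_{\mathrm{rev}}$, namely $\ell-1-\lfloor (i-1)/d\rfloor$. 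It therefore suffices to show that the two composite maps on positions coincide. Since everything is a bijection on subsets, commutativity on bead sets, and hence on $\cU^\ell$, follows at once.

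\textbf{Step 1: a closed formula for $\sLR$ on positions.} I will write $k-1=em+(c-1)d+(u-1)$, where $m\in\Z$ indexes the big rectangle, $c\in\{1,\ldots,\ell\}$ is the domino-column, and $u\in\{1,\ldots,d\}$ is the position inside the domino. This is consistent with the convention that $0$ sits in the rightmost column of its rectangle. Step 1 of the construction sends the domino in row $j$ to row $j'=((j+c-2)\bmod \ell)+1$, keeping $m,c,u$. Step 2 rotates the dominoes. The domino in row $j'$ of domino-column $c$ becomes a vertical domino occupying rows $(j'-1)d+1,\ldots,j'd$ (possibly in reversed order, $\ell$ vs.\ reversed, which I will fix by checking against \Cref{exa_sLR}) of the $e$-abacus. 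It sits in a column determined by $m$ and $c$, and the offset $u$ fixes the row inside the vertical domino. I will calibrate the exact affine formula, including the orientation and the column labelling, using \Cref{exa_sLR} and the position of $0$.

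\textbf{Step 2: compute $\LR\circ\mathsf{t}_{d\brho}$.} The shifted position is $k'=k+(j-1)d$, so
\[
k'-1=em+(c+j-2)d+(u-1).
\]
Hence $(k'-1)\bmod e$ and $\lfloor (k'-1)/e\rfloor$ split into two cases. If $c+j-2<\ell$, the block index stays $m$ and the effective domino-column is $c+j-1$. If $c+j-2\geq\ell$, the block index becomes $m+1$ and the effective domino-column is $c+j-1-\ell$. In both cases the domino-column is exactly the cyclic rotation by $j-1$ from Step 1 of $\sLR$, read with the roles of $j$ and $c$ interchanged. This interchange is precisely what the $90^\circ$ rotation in Step 2 of $\sLR$ accomplishes, and plugging into the defining formula of $\LR$ gives an explicit output.

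\textbf{Step 3: compare the two outputs.} The $e$-row indices should agree directly. The column indices should differ only by the carry: the wrap-around in the cyclic rotation produces an extra $+1$ in the $\LR$ column exactly when $c+j-2\geq \ell$. This carry must be absorbed by the row-dependent shift $\ell-1-\lfloor (i-1)/d\rfloor$, together with the uniform part of the $\LR$ formula $\ell(\lfloor (k-1)/e\rfloor+1)-j+1$, whose $-j$ term contributes a row-dependent offset.

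I expect the main obstacle to be this last bookkeeping: matching the $-j+1$ term and the carry against $\brho^d_{\mathrm{rev}}$, and pinning down the orientation conventions of the domino rotation so that the reversal and the ordering of the $d$ entries within a block come out correctly. I would first verify the formulas on \Cref{exa_sLR} (for $\ell=3$, $d=2$) and on a single bead in each domino-column, to fix the conventions. I would then do the general case analysis on whether $c+j-2<\ell$.
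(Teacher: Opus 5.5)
Your approach is correct, but it differs from the paper's. The paper first observes that all four maps respect the tiling by $d$-dominoes: they translate dominoes, cyclically permute them within domino-columns, and rotate each one by $90^\circ$ exactly once. It uses this to reduce to $d=1$, and then checks the $d=1$ identity by writing $\beta=k\ell+i$ and comparing $\LR(\mathsf{t}_{\brho}(\beta,j))$ with $\mathsf{t}_{\brho_{\mathrm{rev}}}(\sLR(\beta,j))$.

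You instead verify the general-$d$ identity directly in the coordinates $(m,c,u)$. This costs an extra within-domino coordinate and the block structure of $\brho^d_{\mathrm{rev}}$. In exchange, it replaces the paper's geometric reduction, which rests on $\mathsf{t}_{d\brho}$, $\mathsf{t}_{\brho^d_{\mathrm{rev}}}$ and $\LR$ (for $e=\ell d$) acting on the domino grid like their $d=1$ counterparts, by an explicit computation. The paper's route buys a shorter calculation.

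The calibration you defer comes out with no reversal at all, and your Step 3 closes as follows.
\begin{itemize}
\item By \Cref{exa_sLR}, $\sLR$ sends $(k,j)$ with $k-1=em+(c-1)d+(u-1)$ to column $\ell m+c$ and row $(j'-1)d+u$, where $j'=((j+c-2)\bmod \ell)+1$. The left end of a domino goes to the bottom.
\item Since $\lfloor((j'-1)d+u-1)/d\rfloor=j'-1$, this row is shifted by $\ell-j'$. So $\mathsf{t}_{\brho^d_{\mathrm{rev}}}\circ\sLR$ sends $(k,j)$ to $(\ell m+c+\ell-j',\,(j'-1)d+u)$.
\item Write $c+j-2=\ell\epsilon+(j'-1)$ with $\epsilon\in\{0,1\}$. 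Your Step 2 then gives $\LR\circ\mathsf{t}_{d\brho}(k,j)=(\ell(m+\epsilon+1)-j+1,\,(j'-1)d+u)$.
\end{itemize}
The rows agree, and the columns agree because $\ell\epsilon-j+1=c-j'$. So the carry $\epsilon$ is absorbed exactly as you predicted.
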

\begin{proof}
All maps preserve the tiling by $d$-dominoes, acting by translations or cyclic permutations or $90$ degree rotations of the dominoes. It thus suffices to keep track of where a given $d$-domino goes under the maps $\LR\circ\mathsf{t}_{d\brho}$ and $\mathsf{t}_{\brho^d_{\mathrm{rev}}}\circ\sLR$, and check it is the same under both maps. This reduces to verifying the $d=1$ case, then substituting $d$-dominoes for the beads in the $d=1$ case (and remembering that under both maps, the dominoes are rotated $90$ degrees counterclockwise exactly once).

\medskip

We now check the statement for $d=1$. 
We follow where $(\be,j)\in\Z\times\{1,\ldots,\ell\}$ goes under the maps $\mathsf{t}_{\brho_{\mathrm{rev}}}\circ\sLR$ and $\LR\circ\mathsf{t}_{\brho}$. Write $\be=k\ell+i$ for some $i\in\{1,2,\ldots,\ell\}$.
First, $\mathsf{t}_\brho(\be,j)=(\be+j-1,j)$. We have 
\begin{align*}
\LR\left(\mathsf{t}_\brho(\be,j)\right)=\LR(\be+j-1,j) &=\left(\ell\left( \left\lfloor \frac{\be+j-2}{\ell} \right\rfloor +1\right) -j+1, (\be+j-2)\bmod\ell+1\right)\\
&=\left(\ell\left( \left\lfloor \frac{k\ell+i+j-2}{\ell} \right\rfloor +1\right) -j+1, (k\ell+i+j-2)\bmod\ell+1\right)\\
&=\left((k+1)\ell+\ell\left( \left\lfloor \frac{i+j-2}{\ell} \right\rfloor\right) -j+1, (i+j-2)\bmod\ell+1\right).
\end{align*}
On the other hand, the map $\sLR$ rotates column $k\ell+i$ by $i-1$, sending $(\be,j)$ to $(\be, (j+i-2)\bmod \ell+1)$. The map $\mathsf{t}_{\brho_{\mathrm{rev}}}$ sends $(\be,j)\in\Z\times\{1,\ldots,\ell\}$ to $(\be+\ell-j,j)$. If $0\leq i+j-2\leq \ell-1$ then $i-\left((j+i-2\bmod \ell)+1\right)=-j+1$, while if $\ell\leq i+j-2\leq 2\ell-2$ then $i-\left((j+i-2\bmod \ell)+1\right)=\ell-j+1$, whence 
\begin{align*}
\mathsf{t}_{\brho_{\mathrm{rev}}}\left(\sLR(\be,j)\right)& =(\be+\ell-1-(j+i-2)\bmod \ell,(j+i-2)\bmod \ell+1)\\
&=((k+1)\ell +i-((j+i-2)\bmod \ell+1),(j+i-2)\bmod \ell+1)\\
&= \left((k+1)\ell+\ell\left( \left\lfloor \frac{i+j-2}{\ell} \right\rfloor\right) -j+1, (i+j-2)\bmod\ell+1\right)\\
&=\LR\left(\mathsf{t}_\brho(\be,j)\right).
\end{align*}
This concludes the proof.
\end{proof}

As the maps $\LR$, $\mathsf{t}_{d\brho}$, and $\mathsf{t}_{\brho^d_{\mathrm{rev}}}$ are all bijections, we deduce:
\begin{Cor}
The spetsial level-rank duality $\sLR:\cU^\ell\lra\cU^{\ell d}$ is a bijection for any $d\in\Z_{\geq 1}$.
\end{Cor}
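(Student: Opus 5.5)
From \Cref{prop_lr_vs_slr} we have the identity $\mathsf{t}_{\brho^d_{\mathrm{rev}}}\circ\sLR=\LR\circ\mathsf{t}_{d\brho}$ of maps $\cU^\ell\to\cU^e$, with $e=\ell d$. The plan is to solve this for $\sLR$:
\[
\sLR=\mathsf{t}_{\brho^d_{\mathrm{rev}}}^{-1}\circ\LR\circ\mathsf{t}_{d\brho},
\]
which writes $\sLR$ as a composition of three maps. It then suffices to show that each of the three is a bijection.

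The two charge shifts are handled first. On $\cU^\ell$, the map $\mathsf{t}_{d\brho}$ sends $|\bla,\bsig\rangle\mapsto|\bla,\bsig+d\brho\rangle$ and has inverse $|\bla,\bsig\rangle\mapsto|\bla,\bsig-d\brho\rangle$, because $\cU^\ell$ is just $\cP^\ell\times\Z^\ell$. The same argument applies on $\cU^e$ to the shift by $\brho^d_{\mathrm{rev}}$, whose inverse is translation by $-\brho^d_{\mathrm{rev}}$.

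For the middle map, the defining map $\Z\times\{1,\ldots,\ell\}\to\Z\times\{1,\ldots,e\}$, $(k,j)\mapsto\bigl(\ell(\lfloor\frac{k-1}{e}\rfloor+1)-j+1,(k-1)\bmod e+1\bigr)$, is a bijection of the underlying index sets; the paper states this when defining $\LR$. One should check that it carries $\ell$-symbols to $e$-symbols. An $\ell$-symbol has all positions below some bound occupied and all positions above some bound empty. The index map is monotone up to bounded error in the first coordinate, since a column block $\lfloor (k-1)/e\rfloor$ goes to a column block of width $\ell$, so this property is preserved. The inverse index map preserves it by the same reasoning, so $\LR:\cU^\ell\to\cU^e$ is a bijection, as already asserted in \Cref{sec_LR}.

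A composition of bijections is a bijection, which proves the claim. The only point that needs any care is that $\LR$ is defined on all of $\cU^\ell$ and not only on the pieces $\cU^\ell_s$. This is settled by the index-set bijection argument above, and is in any case the content of \Cref{LR} taken over all $s$ together.
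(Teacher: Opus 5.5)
Your proof is correct and is essentially the paper's own argument. The paper deduces the corollary directly from the commutative square of \Cref{prop_lr_vs_slr}, using that $\LR$, $\mathsf{t}_{d\brho}$ and $\mathsf{t}_{\brho^d_{\mathrm{rev}}}$ are bijections. Your additional check, that the index bijection sends $\ell$-symbols to $e$-symbols and so $\LR$ is a bijection on all of $\cU^\ell$ rather than only on each $\cU^\ell_s$, is a valid detail that the paper takes for granted.
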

In fact, we can say more. Note that $|\brho|=|\brho_{\mathrm{rev}}|=\frac{\ell(\ell-1)}{2}$. For any $\si\in\Z$, the maps $\mathsf{t}_{d\brho}$ and $\mathsf{t}_{\brho^d_{\mathrm{rev}}}$ thus restrict to bijections $\cU^\ell_\si\lra\cU^\ell_{\si+d\frac{\ell(\ell-1)}{2}}$ and $\cU^e_\si\lra\cU^e_{\si+d\frac{\ell(\ell-1)}{2}}$, respectively. For any $\si\in\Z$,  by \Cref{LR} the commutative diagram in \Cref{prop_lr_vs_slr} therefore restricts to a commutative diagram
\begin{equation}\label{commlrdualities}
\begin{tikzcd}
 \cU^\ell_\si \arrow[r, "\sLR"] \arrow[d, "\mathsf{t}_{d\brho}"]
&  \cU^e_\si  \arrow[d, "\mathsf{t}_{\brho^d_{\mathrm{rev}}}"] \\
  \arrow[r, "\LR"]
  \cU^\ell_{\si+d\frac{\ell(\ell-1)}{2}}   &\cU^e_{\si+d\frac{\ell(\ell-1)}{2}}
  \end{tikzcd}
\end{equation}
from which we deduce:
\begin{Cor}\label{sLR}
Let $\si\in\Z$ and $d\in\Z_{\geq 1}$. The spetsial level-rank duality yields a bijection $\sLR:\cU^\ell_\si\lra\cU^{\ell d}_\si$.
\end{Cor}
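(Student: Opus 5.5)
The plan is to read the statement off the commutative square \eqref{commlrdualities}, so that the claim reduces to facts already established. Write $s'=\si+d\frac{\ell(\ell-1)}{2}$. The square gives
\[
\sLR=\mathsf{t}_{\brho^d_{\mathrm{rev}}}^{-1}\circ\LR\circ\mathsf{t}_{d\brho}
\]
as maps on $\cU^\ell_\si$, where $\mathsf{t}_{\brho^d_{\mathrm{rev}}}^{-1}$ denotes translation of the charge by $-\brho^d_{\mathrm{rev}}$. It therefore suffices to show that each of the three factors is a bijection between the relevant sets:
\[
\cU^\ell_\si\;\xrightarrow{\ \mathsf{t}_{d\brho}\ }\;\cU^\ell_{s'}\;\xrightarrow{\ \LR\ }\;\cU^e_{s'}\;\xrightarrow{\ \mathsf{t}^{-1}_{\brho^d_{\mathrm{rev}}}\ }\;\cU^e_\si .
\]

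\textbf{The two translations.} The map $\mathsf{t}_{d\brho}$ sends $|\bla,\bsig\rangle$ to $|\bla,\bsig+d\brho\rangle$. Its inverse is the translation by $-d\brho$. It changes the charge sum by $|d\brho|=d\frac{\ell(\ell-1)}{2}$, so it restricts to a bijection $\cU^\ell_\si\to\cU^\ell_{s'}$.

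For $\mathsf{t}_{\brho^d_{\mathrm{rev}}}$ the same argument applies, except that its effect on the charge sum must be checked directly. Each entry $0,\dots,\ell-1$ of $\brho$ is repeated $d$ times in $\brho^d$, so $|\brho^d_{\mathrm{rev}}|=d\frac{\ell(\ell-1)}{2}$. Hence translation by $-\brho^d_{\mathrm{rev}}$ restricts to a bijection $\cU^e_{s'}\to\cU^e_\si$. I will state this explicitly, since the remark preceding \eqref{commlrdualities} writes $|\brho_{\mathrm{rev}}|$ where the relevant quantity is $|\brho^d_{\mathrm{rev}}|$.

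\textbf{The middle factor.} By Uglov's theorem \eqref{LR}, $\LR$ restricts to a bijection $\cU^\ell_{s'}\to\cU^e_{s'}$.

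Composing the three bijections gives $\sLR:\cU^\ell_\si\to\cU^{\ell d}_\si$ bijectively. Its image lands in $\cU^{\ell d}_\si$ exactly because the two translations shift the charge sum by the same amount. I expect no real obstacle here; the only point needing care is this equality of the two charge-sum shifts.
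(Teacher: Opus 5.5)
Your proposal is correct and follows the paper's argument: the paper also restricts the commutative square of \Cref{prop_lr_vs_slr} to the charge-sum strata, using that both translations shift the charge sum by $d\frac{\ell(\ell-1)}{2}$ and that $\LR$ preserves charge sums by Uglov's theorem, which gives $\sLR=\mathsf{t}_{\brho^d_{\mathrm{rev}}}^{-1}\circ\LR\circ\mathsf{t}_{d\brho}$ as a composite of bijections $\cU^\ell_\si\to\cU^{\ell d}_\si$. Your explicit check that $|\brho^d_{\mathrm{rev}}|=d\frac{\ell(\ell-1)}{2}$ makes precise a step the paper states only loosely (it cites $|\brho_{\mathrm{rev}}|$).
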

Set
\[
\cC_{[d,\si]}^\ell  = \{|\bla,\bsig\rangle\in\cU^\ell_\si\;\mid\; |\bla,\bsig\rangle_{[d]}=|\bla,\bsig\rangle\}.
\]
Thanks to \Cref{sLR}, we thus obtain a second bijection analogous to the Littlewood decomposition \Cref{Lit}:

\begin{equation}
\label{Lit_l_bis}
\begin{array}{ccc}
\cU^\ell_\si &  
\overset{1:1}{\longleftrightarrow} &  
\cC_{[d,\si]}^\ell \times  \cP^e
\\
|\bla,\bsig\rangle & \longrightarrow & \left( |\bla,\bsig\rangle_{[d]} \,,\, |\bla,\bsig\rangle^{[d]} \right).
\end{array}
\end{equation}
where as before, $e=\ell d$.
In the case that $\si\equiv 1\bmod \ell$, we recover \cite[Corollary 3.5]{Malle1995}

\begin{Exa}\label{exa_lr_vs_slr} Let $\ell=2$ and consider the charged bipartition $|\bla,\bsig\rangle$ for $\bla=\left((3^3,2^3,1^3),(10,6,3^2)\right)$ and $\bsig=(3,-2)$. Let $d=3$. We illustrate \Cref{prop_lr_vs_slr} on the abacus of $|\bla,\bsig\rangle$. Note that $3\brho=(0,3)$ and $\brho^3_{\mathrm{rev}}=(1,1,1,0,0,0)$.
\begin{center}
\begin{tikzpicture}[scale=0.5, bb/.style={draw,circle,fill,minimum size=2.5mm,inner sep=0pt,outer sep=0pt}, wb/.style={draw,circle,fill,minimum size=0.5mm,inner sep=0pt,outer sep=0pt}]
%%%%%%%%%%%%%%%%%%%%%
%%%%%% top left abacus %%%%%
%%%%%%%%%%%%%%%%%%%%

%%%%%%%%%%%%%%%%%%%%
%%%%% On dessine les emplacements sans bille
\foreach \k/\j in {-5/2,-4/2,-3/2,1/2,2/2,7/2,9/2,      -1/1,10/1,11/1,12/1}
{
    \node [wb,VioletRed] at (\k-8,\j) {};
}
\foreach \k/\j in { 0/2,4/2,5/2,6/2,10/2,11/2,12/2,    -5/1,3/1,7/1,8/1,9/1}
{
    \node [wb,Aquamarine] at (\k-8,\j) {};
}
%%%%%%%%%%%%%%%%%%%%%
%%%  On place les billes
\foreach \k/\j in {3/2,8/2,    -2/1,0/1,4/1,5/1,6/1}
{
    \node [bb,VioletRed] at (\k-8,\j) {};
}
\foreach \k/\j in {-2/2,-1/2,     -4/1,-3/1,1/1,2/1}
{
    \node [bb,Aquamarine] at (\k-8,\j) {};
}
%%%%%%%%%%%%%%%%%%%%%
%%%  On fait appraître la graduation
\foreach \k in {-5,...,12}
{
    \node [scale = 0.7] at (\k-8,0) {$\k$};
}
%%%%%%%%%%%%%%%%%%%%%
%%%  On dessine les rectangles
\draw [line width=0.1mm, color=gray] (-13.5,0.5) -- (4.5,0.5);
\draw [line width=0.1mm, color=gray] (-13.5,1.5) -- (4.5,1.5);
\draw [line width=0.1mm, color=gray] (-13.5,2.5) -- (4.5,2.5);
\foreach \k in {-13.5,-10.5,-7.5, -4.5, -1.5, 1.5, 4.5}
{
\draw [line width=0.1mm, color=gray] (\k,2.5) -- (\k,0.5);
}
\foreach \k in {-13.5, -7.5, -1.5, 4.5}
{
\draw [line width=0.5mm, color=gray] (\k,2.5) -- (\k,0.5);
}

%%%%%%%%%%%%%
%%% top arrow %%%%%
%%%%%%%%%%%%%

\draw[-{Triangle[length=2mm,width=2mm]}]
  (6,1.5) -- (8,1.5)
;
\node [scale=0.8] at (7,0.5){$\sLR$};

%%%%%%%%%%%%%%
%% top right abacus%%%
%%%%%%%%%%%%%%

%%%%%%%%%%%%%%%%%%%%
%%%%% On dessine les emplacements sans bille
\foreach \k/\j in {-1/4,-1/5,-1/6,1/4,1/5,3/4,3/6,0/5,4/4,4/5,4/6}
{
    \node [wb,VioletRed] at (\k+11,\j) {};
}
\foreach \k/\j in { -1/1,0/3,1/3,2/1,2/2,2/3,3/1,3/2,3/3,4/1,4/2,4/3}
{
    \node [wb,Aquamarine] at (\k+11,\j) {};
}
%%%%%%%%%%%%%%%%%%%%%
%%%  On place les billes
\foreach \k/\j in {0/4,0/6,1/6,2/4,2/5,2/6,3/5}
{
    \node [bb,VioletRed] at (\k+11,\j) {};
}
\foreach \k/\j in {-1/2,-1/3, 0/1,0/2,1/1,1/2}
{
    \node [bb,Aquamarine] at (\k+11,\j) {};
}
%%%%%%%%%%%%%%%%%%%%%
%%%  On fait appraître la graduation
\foreach \k in {-1,...,4}
{
    \node [scale = 0.7] at (\k+11,0) {$\k$};
}
%%%%%%%%%%%%%%%%%%%%%
%%%  On dessine les rectangles
\draw [line width=0.1mm, color=gray] (9.5,0.5) -- (15.5,0.5);
\draw [line width=0.1mm, color=gray] (9.5,3.5) -- (15.5,3.5);
\draw [line width=0.1mm, color=gray] (9.5,6.5) -- (15.5,6.5);
\foreach \k in {9.5,10.5,11.5, 12.5, 13.5, 14.5, 15.5}
{
\draw [line width=0.1mm, color=gray] (\k,6.5) -- (\k,0.5);
}
\foreach \k in {9.5, 11.5, 13.5, 15.5}
{
\draw [line width=0.5mm, color=gray] (\k,6.5) -- (\k,0.5);
}
%%%%%%%%%%%%%
%%%% left arrow%%%%%
%%%%%%%%%%%%%

\draw[-{Triangle[length=2mm,width=2mm]}]
  (-5.5,-1) -- (-5.5,-4)
;
\node [scale=0.8] at (-4.5,-2.5){$\mathsf{t}_{3\brho}$};

%%%%%%%%%%%%%%%
%%%bottom left abacus%%%
%%%%%%%%%%%%%%%

%%%%%%%%%%%%%%%%%%%%
%%%%% On dessine les emplacements sans bille
\foreach \k/\j in {-2/2,-1/2,0/2,4/2,5/2,10/2,12/2,      -1/1,10/1,11/1,12/1}
{
    \node [wb,VioletRed] at (\k-8,\j-7) {};
}
\foreach \k/\j in { 3/2,7/2,8/2,9/2,    -5/1,3/1,7/1,8/1,9/1}
{
    \node [wb,Aquamarine] at (\k-8,\j-7) {};
}
%%%%%%%%%%%%%%%%%%%%%
%%%  On place les billes
\foreach \k/\j in {6/2,11/2,    -2/1,0/1,4/1,5/1,6/1}
{
    \node [bb,VioletRed] at (\k-8,\j-7) {};
}
\foreach \k/\j in {-5/2,-4/2,-3/2,1/2,2/2,     -4/1,-3/1,1/1,2/1}
{
    \node [bb,Aquamarine] at (\k-8,\j-7) {};
}
%%%%%%%%%%%%%%%%%%%%%
%%%  On fait appraître la graduation
\foreach \k in {-5,...,12}
{
    \node [scale = 0.7] at (\k-8,-7) {$\k$};
}
%%%%%%%%%%%%%%%%%%%%%
%%%  On dessine les rectangles
\draw [line width=0.1mm, color=gray] (-13.5,-6.5) -- (4.5,-6.5);
\draw [line width=0.1mm, color=gray] (-13.5,-5.5) -- (4.5,-5.5);
\draw [line width=0.1mm, color=gray] (-13.5,-4.5) -- (4.5,-4.5);
\foreach \k in {-13.5,-10.5,-7.5, -4.5, -1.5, 1.5, 4.5}
{
\draw [line width=0.1mm, color=gray] (\k,-6.5) -- (\k,-4.5);
}
\foreach \k in {-13.5, -7.5, -1.5, 4.5}
{
\draw [line width=0.5mm, color=gray] (\k,-6.5) -- (\k,-4.5);
}

%%%%%%%%%%%%%
%%%% bottom arrow%%%%%
%%%%%%%%%%%%%

\draw[-{Triangle[length=2mm,width=2mm]}]
  (6,-5.5) -- (8,-5.5)
;
\node [scale=0.8] at (7,-6.5){$\LR$};

%%%%%%%%%%%%%
%%%% right arrow%%%%%
%%%%%%%%%%%%%

\draw[-{Triangle[length=2mm,width=2mm]}]
  (12.5,-1) -- (12.5,-4)
;
\node [scale=0.8] at (13.65,-2.5){$\mathsf{t}_{\brho^3_{\mathrm{rev}}}$};

%%%%%%%%%%%%%%
%% bottom right abacus%%%
%%%%%%%%%%%%%%

%%%%%%%%%%%%%%%%%%%%
%%%%% On dessine les emplacements sans bille
\foreach \k/\j in {-1/4,-1/5,-1/6,1/4,1/5,3/4,3/6,0/5,4/4,4/5,4/6}
{
    \node [wb,VioletRed] at (\k+11,\j-11) {};
}
\foreach \k/\j in { 0/1,1/3,2/3,3/1,3/2,3/3,4/1,4/2,4/3}
{
    \node [wb,Aquamarine] at (\k+11,\j-11) {};
}
%%%%%%%%%%%%%%%%%%%%%
%%%  On place les billes
\foreach \k/\j in {0/4,0/6,1/6,2/4,2/5,2/6,3/5}
{
    \node [bb,VioletRed] at (\k+11,\j-11) {};
}
\foreach \k/\j in {-1/1,-1/2,-1/3,0/2,0/3, 1/1,1/2,2/1,2/2}
{
    \node [bb,Aquamarine] at (\k+11,\j-11) {};
}
%%%%%%%%%%%%%%%%%%%%%
%%%  On fait appraître la graduation
\foreach \k in {-1,...,4}
{
    \node [scale = 0.7] at (\k+11,-11) {$\k$};
}
%%%%%%%%%%%%%%%%%%%%%
%%%  On dessine les rectangles
\draw [line width=0.1mm, color=gray] (9.5,-4.5) -- (15.5,-4.5);
\draw [line width=0.1mm, color=gray] (9.5,-7.5) -- (15.5,-7.5);
\draw [line width=0.1mm, color=gray] (9.5,-10.5) -- (15.5,-10.5);
\foreach \k in {9.5,10.5,11.5, 12.5, 13.5, 14.5, 15.5}
{
\draw [line width=0.1mm, color=gray] (\k,-4.5) -- (\k,-10.5);
}
\foreach \k in {9.5, 11.5, 13.5, 15.5}
{
\draw [line width=0.5mm, color=gray] (\k,-4.5) -- (\k,-10.5);
}

\end{tikzpicture}
\end{center}
\end{Exa}

Therefore, the spetsial $d$-core $|\bla,\bsig\rangle_{[d]}$ of a charged multipartition $|\bla,\bsig\rangle$ 
can be computed by one of the following equivalent procedures.
\begin{enumerate}
\item[$(1'')$] 
recursively moving beads up and $d$ positions to the left in consecutive rows of $\bA(\bla,\bsig)$,
\item[$(2'')$] recursively sliding beads $1$ step to the left in $\sLR\left(\bA(\bla,\bsig)\right)$.
\end{enumerate}
Indeed, \Cref{def_spetsialdcore} involves exactly Procedure $(2'')$, and Procedure $(1'')$ corresponds to Procedure $(2'')$ under the map $\sLR$.
These are analogous to Procedure $(1')$ and $(2')$ for computing the $e$-core $|\bla,\bs\rangle_{(e)}$ of a charged multipartition $|\bla,\bs\rangle$, see \Cref{sec_cores_l}.
More precisely, when $e=d\ell$, Procedure $(1')$ is given by composing Procedure $(1'')$ with $\mathsf{t}_{d\brho}$, while Procedure $(2')$ is given by composing Procedure $(2'')$ with $\mathsf{t}_{\brho^d_{\mathsf{rev}}}$.

\medskip

In fact, Procedure $(1'')$ should really be viewed as a ``hook removal'' procedure, 
as introduced in Malle's original definition of the spetsial $d$-core.
More precisely, identifying $|\bla,\bsig\rangle$ with its abacus, define a {\em spetsial $d$-hook}, or simply {\em $d$-hook}, of $\bA=(\bA^{(1)},\ldots,\bA^{(\ell)})$ to be some $\be\in\bA^{(j)}$, $j\in\{1,\ldots,\ell\}$, such that $\be-d\notin\bA^{(j+1)}$ (where superscripts are taken mod $\ell$).\footnote{Note that when $\ell=2$, this is exactly the definition of a cohook. Likewise for the cocore.} In \cite{Malle1995}, a spetsial $d$-hook is called a {\em $(d,\zeta^1)$-hook}, where $\zeta$ is a primitive $\ell$'th root of $1$.\footnote{
Malle also considers $(d,\zeta^k)$-hooks for $1<k\leq \ell$, defined by $\be\in\bA^{(j)}$ such that $\be-d\notin\bA^{(j+k)}$.}
Now, {\em removing} such a $d$-hook from $\bA$ consists in removing $\be$ from $\bA^{(j)}$ and adding $\be-d$ to $\bA^{(j+1)}$. The spetsial $d$-core of $\bA$, called the {\em $(d,\zeta)$-core} in \cite{Malle1995}, is obtained by recursively removing all $d$-hooks from $\bA$. 
\begin{Exa}
Take $\ell=3$ and $d=2$.  Let $\bla=\left( (2,1^3), (4,2,1), (3) \right)\in\cP^3$ as in \Cref{exa_bead_slide}, and take $\bsig=(0,-3,-5)\in\Z^3$ so that $\bs$ in \Cref{exa_bead_slide} is given by $\bs=\bsig+2\brho$. We draw all the $2$-hook removals described by Procedure $(1'')$ with colored arrows.
\begin{center}
\begin{tikzpicture}[scale=0.5, bb/.style={draw,circle,fill,minimum size=2.5mm,inner sep=0pt,outer sep=0pt}, wb/.style={draw,circle,fill,minimum size=0.5mm,inner sep=0pt,outer sep=0pt}]
%%%%%%%%%%%%%%%%%%%%%
%%%  On dessine les lignes de l'abaque et tous les emplacements
\foreach \j in {1,...,3}
{
\draw [line width=0.1mm] (-7,\j) -- (7,\j);
    \foreach \k in {-6,...,6}
    {
        \node [wb] at (\k,\j) {};
    }
}
%%%%%%%%%%%%%%%%%%%%%
%%%  On place les billes
\foreach \k/\j in { -6/1,-5/1,-4/1,-2/1,-1/1,0/1,2/1,    -6/2,-4/2,-2/2,1/2,   -6/3,-2/3 }
{
    \node [bb] at (\k,\j) {};
}
%%%%%%%%%%%%%%%%%%%%%
%%%  On fait appraître la graduation
\foreach \k in {-6,...,6}
{
    \node [scale = 0.7] at (\k,0) {$\k$};
}
% %%%%%%%%%%%%%%%%%%%%%
% %%%  On dessine les rectangles
% \draw [line width=0.1mm, color=gray] (-5.5,0.5) -- (6.5,0.5);
% \draw [line width=0.1mm, color=gray] (-5.5,3.5) -- (6.5,3.5);
% \foreach \k in {-5.5, 0.5, 6.5}
% {
% \draw [line width=0.1mm, color=gray] (\k,3.5) -- (\k,0.5);
% }

%%%%%%%%%% Les fleches
\draw[-{Triangle[length=2mm,width=2mm]}, thick, Aquamarine]
  (-3,1) -- (-5,2);
\draw[-{Triangle[length=2mm,width=2mm]}, thick, Aquamarine]
  (-1,1) -- (-3,2);
\draw[-{Triangle[length=2mm,width=2mm]}, thick, Aquamarine]
  (0,1) -- (-2,2);
\draw[-{Triangle[length=2mm,width=2mm]}, thick, Aquamarine]
  (2,1) -- (0,2);
  %%%%
\draw[-{Triangle[length=2mm,width=2mm]}, thick, Dandelion]
  (1,2) -- (-1,3);
\draw[-{Triangle[length=2mm,width=2mm]}, thick, Dandelion]
  (-2,2) -- (-4,3);
\draw[-{Triangle[length=2mm,width=2mm]}, thick, Dandelion]
  (-3,2) -- (-5,3);
\draw[-{Triangle[length=2mm,width=2mm]}, thick, VioletRed]
  (-1,3) -- (-3,1);
\end{tikzpicture}
\end{center}
We obtain
$|\bla,\bsig\rangle_{[2]} = |\left( (1),(2,1),(1) \right),(-3,-2,-3)\rangle$. Observe that the operations in the abacus in \Cref{exa_bead_slide} are obtained from the picture above by simply translating row $j$ by $2(j-1)$ for $j=1,2,3$, and that $|\bla,\bs\rangle_{(6)}=\mathsf{t}_{2\brho}(|\bla,\bsig\rangle_{[2]})$.
\end{Exa}

Spetsial cores may thus be viewed as a particular case of 
cores in the sense of \Cref{sec_cores_l}.
\begin{Cor} \label{compare_cores} Let $\ell\in\Z_{\geq 2}$ and $d\in\Z_{\geq 1}$, and set $e=d\ell$. Let $\bsig\in\Z^\ell$ and set $\bs=\bsig+d\brho$. For any $\bla\in\cP^\ell$, 
\[
|\bla,\bs\rangle_{(e)}=\mathsf{t}_{d\brho}\left(|\bla,\bsig\rangle_{[d]}\right).
\] 
That is, the $e$-core of $|\bla,\bs\rangle$ coincides with the spetsial $d$-core of $|\bla,\bsig\rangle$ whose charge has been shifted by $d\brho$. Moreover, the spetsial $d$-quotient of $|\bla,\bsig\rangle$ is equal to the $e$-quotient of $|\bla,\bs\rangle$:
\[
|\bla,\bsig\rangle^{[d]}=|\bla,\bs\rangle^{(e)}.
\]
\end{Cor}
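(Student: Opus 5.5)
This follows by combining \Cref{prop_lr_vs_slr} with \Cref{def_ecore_ell,def_spetsialdcore,def_equot_ell,def_spetsialdquot}. Throughout, $\bA$ denotes the abacus of $|\bla,\bsig\rangle$. By definition $\mathsf{t}_{d\brho}(\bA)$ is the abacus of $|\bla,\bs\rangle$, since $\bs=\bsig+d\brho$.

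\textbf{The quotient statement.} Write $\sLR(\bA)=\bA(\bmu,\btau)$, so that $|\bla,\bsig\rangle^{[d]}=\bmu$. Applying \Cref{prop_lr_vs_slr} to $\bA$ gives
\[
\LR(\bA(\bla,\bs))=\mathsf{t}_{\brho^d_{\mathrm{rev}}}(\bA(\bmu,\btau))=\bA(\bmu,\btau+\brho^d_{\mathrm{rev}}).
\]
Here we use that a shift of the charge leaves the multipartition unchanged. Hence the $e$-quotient of $|\bla,\bs\rangle$ is $\bmu$, as required.

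\textbf{The core statement.} Swiping beads to the left gives $\bA(\bemp,\btau)$ from $\sLR(\bA)$, and $\bA(\bemp,\btau+\brho^d_{\mathrm{rev}})$ from $\LR(\bA(\bla,\bs))$. The key observation is that swiping commutes with $\mathsf{t}_{\brho^d_{\mathrm{rev}}}$: both send $\bA(\bmu,\btau)$ to $\bA(\bemp,\btau+\brho^d_{\mathrm{rev}})$. So, by \Cref{def_ecore_ell}, the $e$-core of $|\bla,\bs\rangle$ has abacus
\[
\LR^{-1}\bigl(\mathsf{t}_{\brho^d_{\mathrm{rev}}}(\bA(\bemp,\btau))\bigr).
\]
Rewriting the diagram of \Cref{prop_lr_vs_slr} as $\LR^{-1}\circ\mathsf{t}_{\brho^d_{\mathrm{rev}}}=\mathsf{t}_{d\brho}\circ\sLR^{-1}$, which is legitimate because all maps involved are bijections, this abacus equals
\[
\mathsf{t}_{d\brho}\bigl(\sLR^{-1}(\bA(\bemp,\btau))\bigr)=\mathsf{t}_{d\brho}\bigl(\bA(|\bla,\bsig\rangle_{[d]})\bigr),
\]
by \Cref{def_spetsialdcore}.

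\textbf{Main obstacle.} The only point needing care is that the shifts $\mathsf{t}$ act on abaci as translations of whole rows, consistently with the identification between charged multipartitions and abaci. This is what justifies both the statement that "the multipartition is unchanged" and the commutation of swiping with $\mathsf{t}_{\brho^d_{\mathrm{rev}}}$. It holds because $\mathsf{t}$ changes only the charge, and the $j$-th row of the abacus is just the $\be$-numbers of $\la^j$ shifted by the $j$-th charge entry.
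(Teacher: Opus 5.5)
Your proposal is correct and takes the same route as the paper, whose entire proof is the remark that the corollary follows directly from \Cref{prop_lr_vs_slr}. You have simply written out that deduction: swiping commutes with the charge shift on the $e$-abacus, and the commutative square can be inverted since all the maps in it are bijections.
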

\begin{proof}
This follows directly from \Cref{prop_lr_vs_slr}.
\end{proof}
We note that if
$|\bla,\bsig\rangle$ is a spetsial $d$-core then $\la^j$ is an $e$-core partition for every $j=1,\ldots,\ell$ 
and $\bsig\in \sDe_d^\ell$ for 
\[
\sDe_d^\ell=\{ (\si_1,\ldots,\si_\ell)\in\Z^{\ell} \mid \si_j\leq \si_{j+1}+d \text{ for all } 1\leq j\leq \ell-1 \text{ and } \si_\ell\leq \si_1+d\}.
\]
Note that, for $\si\in\Z$ and $s=\si+d\frac{\ell(\ell-1)}{2}$,
we have  $\bsig\in\sDe_d^\ell[\si] $ if and only if $ \bs \in\sD_e^\ell[s]$.

\subsection{The rank statistic}\label{sec_rank}

In order to have a full analogue of the Littlewood Decomposition \Cref{Lit}, the naive hope would be for Bijection \Cref{Lit_l} or \Cref{Lit_l_bis} to be compatible with the size of multipartitions. However, the size is a statistic that only sees $\bla$ and does not take account of the charge, unlike the symbols and their cores appearing in the bijection. 
So rather unsurprisingly, it turns out to be more subtle and we need to use the \textit{rank} statistic rather than the size, which we recall und study next.
This section is based on \cite{Olsson1986} and \cite{Malle1995}.
We first recall the definition of the rank of an abacus from \cite{Malle1995} and some basic properties, 
providing a few details not present in the original papers.
By viewing the abacus as a charged multipartition, we reformulate the rank of $|\bla,\bsig\rangle$ as the size of $\bla$ plus an $\N$-valued function of the charge $\bsig$.

\medskip

The rank of a charged $\ell$-partition $|\bla,\bsig\rangle$ is first defined using its symbol or abacus $\bA(\bla,\bsig)$. 
If \linebreak $\bS = (\bS^{(1)}, \ldots, \bS^{(\ell)})$ is an $\ell$-symbol, we will denote the multiset of its $\beta$-numbers by $\{\!\{\bS\}\!\}$. If $\bA$ is an $\ell$-abacus, we then define $\{\!\{\!\bA\!\}\!\}=\{\!\{\bS\}\!\}$ where $\bS$ is the symbol identified with $\bA$ by the correspondence $\be\in\bS^{(j)}$ if and only if there is a bead in row $j$ and column $\be$ of $\bA$. 

\begin{Exa}\label{ab_multiset}
For $\bA$ as below we have $\{\!\{\!\bA\!\}\!\}=\{\!\{4,3,2,1,1,0,-2,-2,-3,-4,-4,-5,-5,\ldots\}\!\}$, where we use double braces notation to distinguish a multiset from a set.
 \begin{center}
\begin{tikzpicture}[scale=0.5, bb/.style={draw,circle,fill,minimum size=2.5mm,inner sep=0pt,outer sep=0pt}, gbb/.style={draw,circle,fill,minimum size=2.5mm,inner sep=0pt,outer sep=0pt, color=black!60}, wb/.style={draw,circle,fill,minimum size=0.5mm,inner sep=0pt,outer sep=0pt}, gwb/.style={draw,circle,fill,minimum size=0.5mm,inner sep=0pt,outer sep=0pt, color=black!60}]
%%%%%%%%%%%%%%%%%%%%%
%%%  On dessine les lignes de l'abaque et tous les emplacements
\foreach \j in {1,2}
{
\draw [line width=0.1mm] (-5,\j) -- (4,\j);
    \foreach \k in {-5,...,4}
    {
        \node [wb] at (\k,\j) {};
    }
}
%%%%%%%%%%%%%%%%%%
%%%  On place les billes
\foreach \k/\j in {-5/1,-4/1,-2/1,1/1,2/1,    -5/2,-4/2,-3/2,-2/2,0/2, 1/2, 3/2, 4/2   }
{
    \node [bb] at (\k,\j) {};
}
%%%%%%%%%%%%%%%%%%%%%
%%%  On fait appraître la graduation
\foreach \k in {-5,...,4}
{
    \node [scale = 0.7] at (\k,0) {$\k$};
}
\end{tikzpicture}
\end{center}
 \end{Exa}

\begin{Def}\label{def_rk_abacus}
Following \cite[(3.2)]{Malle1995}, we define the \textit{rank} of an
$\ell$-abacus $\bA $ by
\[
\rk (\bA) = \sum_{\substack{\beta\in\{\!\{\!\bA\!\}\!\} \\ \beta \geq m}}  \beta - cm  -  \left\lfloor \frac{ (c - 1) ( c - \ell+1) }{ 2\ell} \right\rfloor
\]
where
\begin{itemize}
\item $m=\max\{z\in\mathbb{Z}\;\mid\; \hbox{ there exists }j\in\{1,\ldots,\ell\}\hbox{ such that }z\notin\bA^{(j)}\}$,
\item $c$ is the cardinality of the multiset $\{\!\{\beta\in\{\!\{\bA\}\!\}\;\mid\;\beta\geq m\}\!\}$. The integer $c$ is called the \textit{content} of $\bA$ \cite{Malle1995}. This is the number of beads in $\bA$ that lie in or to the right of column $m$ in $\bA$.
\end{itemize}
If $|\bla,\bsig\rangle\in\cU^\ell$, we define $\rk(|\bla,\bsig\rangle)=\rk(\bA)$ where $\bA=\bA(\bla,\bsig)$.
\end{Def}
We remark that for $\ell=1$, $c$ is the number of non-zero parts in the partition $\la$. 

\begin{Rem}
The convention used by \cite{Malle1995} and \cite{Chevie} is that $m=0$. This amounts to requiring that an arbitrary abacus be shifted horizontally by $m$ so that its leftmost empty space is at position $0$ in order for the rank of the abacus to be computed, which is natural in those authors' conventions where they work with equivalence classes of symbols by horizontal shift and cyclic rotation. We have incorporated the shift by $m$ into our formula for the rank which is valid for any abacus, hence the middle term $-cm$. 
\end{Rem}

\begin{Exa}
Let $\bA$ be the abacus in \Cref{ab_multiset}. Then $m=-3$ and $c=9$. As $\ell=2$, we have
\[
\rk(\bA)= 2+1+(-2) +4+3+1+0+(-2)+(-3)-(-3)(9)-\left\lfloor \frac{(9-1)(9-2+1)}{2\cdot 2}\right\rfloor=15.
\]
\end{Exa}

The following lemma shows that we can in fact truncate $\bA$ anywhere to the left of the leftmost space in $\bA$ in order to compute the rank of $\bA$.

\begin{Lem}\label{truncation}
Let $\bA$ be an $\ell$-abacus and let  $m$ be as in \Cref{def_rk_abacus}.
Let $k\leq m$. Then
\[
\rk(\bA)=\sum_{\substack{\beta \in\{\!\{\!\bA\!\}\!\}\\ \beta\geq k}}\beta-kc_k-\left\lfloor\frac{(c_k-1)(c_k-\ell+1)}{2\ell}\right\rfloor
\]
where $c_k$ is the cardinality of the multiset $\{\!\{\beta\in\{\!\{\!\bA\!\}\!\}\;\mid\;\beta\geq k \}\!\}$.
\end{Lem}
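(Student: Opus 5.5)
The plan is to show that the expression on the right-hand side is unchanged when $k$ is replaced by $k-1$, for every $k\leq m$. A downward induction starting from $k=m$ then gives the statement, since at $k=m$ the expression is exactly \Cref{def_rk_abacus} with $c_m=c$.

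Write $F(k)=\sum_{\beta\geq k}\beta-kc_k-\left\lfloor\frac{(c_k-1)(c_k-\ell+1)}{2\ell}\right\rfloor$. The key observation is the following. Since $k-1<m$, every position strictly left of $m$ is occupied in all $\ell$ rows; otherwise some $z<m$ would be empty in some row and the maximality in the definition of $m$ would be contradicted. Actually, all positions $z<m$ are full because $m$ is the maximal empty position, and it is certainly true that positions below the leftmost empty one are full. To handle this cleanly I would first check that for $k\le m$ every column $z$ with $z<k\le m$... Here a subtlety appears: $m$ is the \emph{maximal} $z$ missing from some row, so columns below $m$ need not be full. I therefore expect the statement to intend the leftmost space, as the sentence before the lemma says. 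I will use that all columns $z<m'$ are full, where $m'$ is the leftmost space, and require $k\leq m'$.

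Under this reading, column $k-1$ contains exactly $\ell$ beads, so $c_{k-1}=c_k+\ell$ and $\sum_{\beta\geq k-1}\beta=\sum_{\beta\geq k}\beta+\ell(k-1)$. The linear part then changes by
\[
\ell(k-1)-(k-1)(c_k+\ell)+kc_k=c_k .
\]
Setting $c=c_k$, it remains to check that
\[
\left\lfloor\frac{(c+\ell-1)(c+1)}{2\ell}\right\rfloor-\left\lfloor\frac{(c-1)(c-\ell+1)}{2\ell}\right\rfloor=c .
\]
Expanding, the two numerators differ by
\[
(c^2+c\ell+\ell-1)-(c^2-c\ell+\ell-1)=2c\ell .
\]
So the arguments of the floors differ by exactly the integer $c$, and the floors therefore differ by $c$. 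Hence $F(k-1)=F(k)$.

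To connect with the definition (which uses $m$) when $m$ is not the leftmost space, I would also verify the step $F(k)=F(k-1)$ for $m'<k\le m$. There the column count is not $\ell$, so this step fails in general. I would therefore read $m$ in \Cref{def_rk_abacus} as the leftmost empty position, consistent with the Remark about shifting so that the leftmost empty space is at $0$; the maximum there should be a minimum. The main obstacle is only this definitional point. The computation itself is the short floor identity above.
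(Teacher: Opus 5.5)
Your proof is correct and is the same as the paper's: a downward induction from $k=m$, using that column $k-1$ is full so that $c_{k-1}=c_k+\ell$, and that the two floor arguments differ by exactly the integer $c_k$. Reading $m$ as the leftmost empty position is also what the paper's proof uses implicitly (``since $k-1<m$ \ldots\ $k-1\in\bA^{(j)}$ for all $j$''). In fact the literal $\max$ in the definition cannot be meant: every row has infinitely many empty positions to the right, so that set has no maximum.
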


\begin{proof}
By downwards induction on $k$. The base case $k=m$ is the definition of the rank, with $c_m=c$. Now let $k\leq m$ and assume the formula holds for $k$. We will show it holds for $k-1$. We have \linebreak
$\rk(\bA)=\sum\limits_{\substack{\beta\in\{\!\{\!\bA\!\}\!\}\\ \beta\geq k}}\beta-kc_k-\left\lfloor \frac{(c_k-1)(c_k-\ell+1)}{2\ell}\right\rfloor$. Since $k-1<m$, it follows from the definition of $m$ that $k-1\in \bA^{(j)}$ for all $j=1,\ldots,\ell$. It follows that $c_{k-1}=c_k+\ell$ and $\sum\limits_{\substack{\beta\in\{\!\{\!\bA\!\}\!\}\\\beta\geq k-1}}\beta=\sum\limits_{\substack{\beta\in\{\!\{\!\bA\!\}\!\}\\ \beta\geq k}}\beta+\ell(k-1)$, whence:
\begin{align*}
\sum\limits_{\substack{\beta\in\{\!\{\!\bA\!\}\!\}\\\beta\geq k-1}}\beta-(k-1)c_{k-1}-\left\lfloor \frac{(c_{k-1}-1)(c_{k-1}-\ell+1)}{2\ell}\right\rfloor&=\sum\limits_{\substack{\beta\in\{\!\{\!\bA\!\}\!\}\\ \beta\geq k}}\beta+\ell(k-1)-(k-1)(c_k+\ell)-\left\lfloor\frac{(c_k+\ell-1)(c_k+1)}{2\ell} \right\rfloor\\
&=\sum\limits_{\substack{\beta\in\{\!\{\!\bA\!\}\!\}\\ \beta\geq k}}\beta-c_kk+c_k-\left\lfloor\frac{(c_k-1)(c_k-\ell+1)}{2\ell}+c_k \right\rfloor\\
&=\sum\limits_{\substack{\beta\in\{\!\{\!\bA\!\}\!\}\\ \beta\geq k}}\beta-kc_k-\left\lfloor \frac{(c_k-1)(c_k-\ell+1)}{2\ell}\right\rfloor\\
&=\rk(\bA).
\end{align*}
\end{proof}

The following properties of the rank function are straightforward.
\begin{Lem}\label{lem_invariance}
The rank is invariant under 
\begin{itemize}
\item permutation of the abacus rows, that is $\rk(\bA^{(1)},\ldots, \bA^{(\ell)}) = \rk(\bA^{(\pi(1))},\ldots, \bA^{(\pi(\ell))})$ for all $\pi\in\mathfrak{S}_\ell$,
\item simultaneous horizontal shift of all rows of an abacus by a fixed amount, that is, \linebreak $\rk(\bA^{(1)},\ldots, \bA^{(\ell)}) = \rk(\bA^{(1)}+k,\ldots, \bA^{(\ell)}+ k)$  for all $k\in\Z$,
\item permutation of the entries of a given abacus column. 
\end{itemize}
\end{Lem}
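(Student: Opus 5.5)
The plan is to show that each of the three operations leaves both the multiset $\{\!\{\bA\}\!\}$ and the quantities entering \Cref{def_rk_abacus} unchanged, or changes them in a controlled way. The rank depends on $\bA$ only through the multiset $\{\!\{\bA\}\!\}$ together with the integers $m$, $c$ and $\ell$. So for the first and third bullets it suffices to check that $\{\!\{\bA\}\!\}$ and $m$ are unchanged.

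\emph{Row permutations and column permutations.} Permuting the rows, or permuting the entries within one column, does not change how many beads sit in a given column $\beta$. Hence $\{\!\{\bA\}\!\}$ is unchanged. The integer $m$ is the largest column containing at least one empty position, and this condition is also preserved by both operations, so $m$ is unchanged. Consequently $c$, the sum $\sum_{\beta\geq m}\beta$, and the floor term all agree, which gives the first and third bullets immediately.

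\emph{Horizontal shift.} Shifting every row by $k$ sends the multiset to $\{\!\{\beta+k\}\!\}$ and the integer $m$ to $m+k$. The content $c$ is unchanged, since beads at positions $\geq m$ correspond bijectively to beads at positions $\geq m+k$. The sum becomes $\sum_{\beta\geq m}\beta + kc$, and the middle term becomes $-(m+k)c$. The two contributions of $kc$ cancel, and the floor term depends only on $c$ and $\ell$, so the rank is unchanged.

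There is no real obstacle. The only points needing care are checking that $m$ transforms as claimed, in particular that it is a genuine maximum, which holds because every row eventually consists of spaces to the right. One could alternatively invoke \Cref{truncation} to avoid tracking $m$ exactly, but this is unnecessary.
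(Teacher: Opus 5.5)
Your argument is the direct check from \Cref{def_rk_abacus} that the paper intends; the paper calls these properties straightforward and gives no proof. Row and column permutations preserve $\{\!\{\bA\}\!\}$ and the set of columns containing a gap. Under a shift by $k$, the sum gains $kc$ and the middle term loses $kc$, and these cancel.

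One point is wrong as written. You take $m$ to be the \emph{largest} column containing an empty position, and you justify its existence by noting that every row eventually consists of spaces to the right. That fact shows the opposite: every sufficiently large integer is a column with a gap, so this set has no maximum.

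The printed $\max$ in \Cref{def_rk_abacus} is a typo for $\min$; $m$ is the leftmost empty position. Three things in the paper confirm this:
\begin{itemize}
\item the example right after the definition gives $m=-3$;
\item the remark before it speaks of shifting so that the leftmost empty space is at $0$;
\item the proof of \Cref{truncation} uses that every column to the left of $m$ is full.
\end{itemize}
This minimum exists because each row contains all sufficiently negative integers, so the set of gap columns is bounded below, and each row has spaces far to the right, so the set is nonempty.

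With $m$ read correctly, your proof goes through verbatim. Both kinds of permutation preserve the set of gap columns, hence its minimum. A shift by $k$ translates that set, hence its minimum, by $k$.
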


Since the rank of an abacus or symbol is the rank of a charged $\ell$-partition $|\bla,\bsig\rangle$, it is natural to wonder if there is a formula for the rank in terms of $\bla$ and $\bsig$. 
For instance, when $\ell=1$, we simply have $\rk(\bA(\la,\si))=|\la|$ for any $\si\in\Z$. 
We give such a formula in the following proposition.
Let $\bA=\bA(\bla,\bsig) = (\bA^{(1)},\ldots, \bA^{(\ell)})$ and 
set $c_j=|\{\beta\in \bA^{(j)}\;:\;\beta\geq m\}|$, where $m$ is defined as above.  
When $m=0$ we have $c_j=\si_j+1$.
Moreover, if $\bsig=(\si_1,\ldots,\si_\ell)$, define
\begin{equation}\label{def_rk_charge}
Y:=\sum\limits_{i=1}^{\ell} \sum\limits_{\substack{ j=1 \\ j\neq i}}^\ell \si_i(\si_i-\si_j)
\mand
\rk (\bsig) = \left\lceil \frac{Y-\ell+1}{2\ell}\right\rceil.
\end{equation}

\begin{Prop}\label{rk_formula}
For all $\bla\in\cP^\ell$ and $\bsig\in\Z^\ell$, we have
$\rk(|\bla,\bsig\rangle) = |\bla| + \rk (\bsig).$
\end{Prop}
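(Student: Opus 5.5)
We prove the formula by truncating the abacus far to the left and splitting the sum of $\beta$-numbers into a partition part and a charge part. By \Cref{truncation} we may compute $\rk(\bA)$ using any $k\leq m$. We choose $k$ very negative, say $k\leq \min_j(\si_j-\ell(\la^j)+1)$, so that every position $<k$ is occupied in every row. Row $j$ then contributes the beads $(\la^j)_i-i+1+\si_j$ for $i=1,\ldots,N_j$, where $N_j=\si_j-k+1$. Hence $c_k=\sum_j N_j=|\bsig|+\ell(1-k)$.

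The sum over row $j$ of the $\beta$-numbers $\geq k$ is
\[
|\la^j|+\sum_{i=1}^{N_j}(\si_j-i+1)=|\la^j|+\sum_{x=k}^{\si_j}x .
\]
Summing over $j$, we get
\[
\sum_{\beta\geq k}\beta=|\bla|+\sum_{j}\sum_{x=k}^{\si_j}x,
\]
and the second term depends only on $\bsig$ and $k$. Therefore
\[
\rk(|\bla,\bsig\rangle)=|\bla|+\rk(|\bemp,\bsig\rangle),
\]
since the same truncation with $\bla=\bemp$ gives the identical charge terms. It remains to show $\rk(|\bemp,\bsig\rangle)=\rk(\bsig)$ as defined in \eqref{def_rk_charge}.

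For this step we use the shift invariance of \Cref{lem_invariance} together with freedom in $k$. Set $a_j=\si_j-k+1=N_j$ and $C=\sum_j a_j$. Then
\[
\sum_j\sum_{x=k}^{\si_j}x-kC=\sum_j\frac{a_j(a_j-1)}{2}.
\]
So we must prove
\[
\sum_j\binom{a_j}{2}-\left\lfloor\frac{(C-1)(C-\ell+1)}{2\ell}\right\rfloor=\left\lceil\frac{Y-\ell+1}{2\ell}\right\rceil,
\]
where $Y$ computed from $\bsig$ equals $Y$ computed from $(a_j)$. This holds because $Y=\ell\sum\si_i^2-(\sum\si_i)^2$ is invariant under simultaneous translation.

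This reduces to an identity for the integer vector $\ba$. We write $-\lfloor u\rfloor=\lceil -u\rceil$ and bring $\sum_j\binom{a_j}2$ inside the ceiling. The argument of the ceiling then becomes
\[
\frac{\ell\sum a_j^2-\ell C-(C-1)(C-\ell+1)}{2\ell}=\frac{\ell\sum a_j^2-C^2-\ell+1}{2\ell}=\frac{Y-\ell+1}{2\ell}.
\]
This proves the identity.

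The main obstacle is bookkeeping: making sure the chosen $k$ satisfies $k\leq m$, that the row counts $N_j$ are right with the paper's convention $\beta=\la_i-i+1+\si_j$, and that the floor-to-ceiling conversion is exact. The last point is fine because $\sum\binom{a_j}{2}$ is an integer. I would double-check the algebra on the worked example with $\ell=2$ and rank $15$ from earlier in the section.
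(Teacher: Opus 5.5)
Your proof is correct and is essentially the paper's argument: the paper shifts the abacus so that $m=0$ (giving $c_j=\si_j+1$), identifies each row's contribution as a level-$1$ rank equal to $|\la^j|$, and turns the floor term into $\lceil (Y-\ell+1)/(2\ell)\rceil$ by pulling out the integer $\sum_j(\si_j^2+\si_j)/2$. That is exactly your computation with $a_j=\si_j-k+1$. Truncating at an arbitrary $k\le m$ via \Cref{truncation}, and checking directly that $Y=\ell\sum_i\si_i^2-(\sum_i\si_i)^2$ is translation invariant, makes explicit a step the paper leaves implicit: that $\rk(\bsig)$ is unchanged when the charge is shifted to achieve $m=0$.
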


\begin{proof}
Let $\si=\sum_{i=1}^\ell \si_i$. Then $c=\sum_{i=1}^\ell c_i=\sum_{i=1}^\ell(\si_i+1)=\si+\ell$. 
By Lemma \ref{lem_invariance}, we may assume that $m=0$. We then have:
\[
\rk(|\bla,\bsig\rangle)=\rk(\bA)=\sum\limits_{\substack{\beta\in\{\!\{\!\bA\!\}\!\} \\ \beta\geq 0}}\beta-\left\lfloor \frac{(c-1)(c-\ell+1)}{2\ell}\right\rfloor
=\sum\limits_{\substack{\beta\in\{\!\{\!\bA\!\}\!\} \\ \beta\geq 0}}\beta-\left\lfloor \frac{(\si+\ell-1)(\si+1)}{2\ell}\right\rfloor.
\]
We find 
\begin{align*}
 (\si+\ell-1)(\si+1)&=\si^2+\ell\si+\ell-1=\sum\limits_{i=1}^\ell \si_i^2+2\sum\limits_{\substack{i,j=1 \\ i<j}}^\ell \si_i\si_j+\ell\sum\limits_{i=1}^\ell \si_i+ \ell-1 \\
& =\ell\left(\sum\limits_{i=1}^\ell \si_i^2+\si_i\right)+2\sum\limits_{\substack{i,j=1 \\ i<j}}^\ell \si_i\si_j -(\ell-1)\sum\limits_{i=1}^\ell \si_i^2 + \ell-1\\
&=\ell\left(\sum\limits_{i=1}^\ell \si_i^2+\si_i\right)-Y+\ell-1.
 \end{align*}
This yields
 \begin{align*}
 \rk(|\bla,\bsig\rangle)&=\sum\limits_{\substack{\beta\in\{\!\{\!\bA\!\}\!\} \\ \beta\geq 0}}\beta-\left\lfloor \frac{\sum\limits_{i=1}^\ell \si_i^2+\si_i}{2}-\left(\frac{Y-\ell+1}{2\ell}\right)\right\rfloor =\sum\limits_{\substack{\beta\in\{\!\{\!\bA\!\}\!\} \\ \beta\geq 0}}\beta- \frac{\sum\limits_{i=1}^\ell \si_i^2+\si_i}{2}-\left\lfloor-\left(\frac{Y-\ell+1}{2\ell}\right)\right\rfloor \\
 &=\sum\limits_{i=1}^\ell\left(\sum\limits_{\substack{\beta\in[A^{(i)}] \\ \beta\geq 0}}\beta-\frac{(\si_i^2+\si_i)}{2}\right)+\left\lceil\frac{Y-\ell+1}{2\ell}\right\rceil=\sum\limits_{i=1}^\ell \rk(|\lambda^i,\si_i\rangle)+\left\lceil\frac{Y-\ell+1}{2\ell}\right\rceil \\
 &= \sum\limits_{i=1}^\ell |\lambda^i|+\left\lceil\frac{Y-\ell+1}{2\ell}\right\rceil =|\bla|+ \left\lceil\frac{Y-\ell+1}{2\ell}\right\rceil =|\bla|+\rk(\bsig).
 \end{align*}
 This concludes the proof.
\end{proof}
\begin{Cor}\label{rk bsig}
We have $\rk(|\bemp,\bsig\rangle)=\rk(\bsig)$ for any $\bsig\in\Z^\ell$.
\end{Cor}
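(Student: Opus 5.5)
This is immediate from \Cref{rk_formula}: specialize to $\bla=\bemp$. The empty $\ell$-partition has $|\bemp|=0$, so the formula $\rk(|\bla,\bsig\rangle)=|\bla|+\rk(\bsig)$ becomes $\rk(|\bemp,\bsig\rangle)=\rk(\bsig)$, where $\rk(\bsig)=\left\lceil\frac{Y-\ell+1}{2\ell}\right\rceil$ as in \eqref{def_rk_charge}.

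As a sanity check, the identity can also be read off the definition of the rank directly. By \Cref{lem_invariance}, shift the abacus $\bA(\bemp,\bsig)$ horizontally so that $m=0$. Each row $j$ then has beads exactly at positions $\leq\si_j$, and these are gap-free. Hence the sum of the $\beta\geq 0$ in row $j$ is $\frac{\si_j(\si_j+1)}{2}$, i.e.\ $\rk(|\emptyset,\si_j\rangle)=0$. The algebra in the proof of \Cref{rk_formula} then leaves exactly the term $\left\lceil\frac{Y-\ell+1}{2\ell}\right\rceil$.

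There is no real obstacle here. The only thing to confirm is that the normalization $m=0$ is legitimate, which holds because the rank is shift-invariant (\Cref{lem_invariance}) and $Y$ depends only on the differences $\si_i-\si_j$ together with the $\si_i$. This is already handled inside \Cref{rk_formula}.
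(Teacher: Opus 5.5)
Your proof is correct and matches the paper: the corollary is stated immediately after \Cref{rk_formula} as the special case $\bla=\bemp$, using $|\bemp|=0$. The extra direct check via the normalization $m=0$ agrees with the computation inside the proof of \Cref{rk_formula}, but it is not needed.
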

As a corollary, we recover the following known formulas for the rank when $\ell=1$ and $\ell=2$, and we also have a similar formula when $\ell=3$.
\begin{Cor}\label{rk_ell=123}
\begin{enumerate}
\item Assume $\ell=1$. Then $\rk(|\la,s\rangle) = |\la|$ for any partition $\lambda$ and any $s\in\Z$.

\item Assume $\ell=2$ and write $\bsig=(\si_1,\si_2)$.
Then $\ds \rk(|\bla,\bsig\rangle) = |\bla| + \left\lfloor \frac{(\si_1-\si_2)^2}{4} \right\rfloor.$

\item Assume $\ell=3$ and write $\bsig=(\si_1,\si_2,\si_3)$. Then 
\[
\rk(|\bla,\bsig\rangle)=|\bla|+\left\lfloor \frac{(\si_1-\si_2)^2+(\si_3-\si_1)(\si_3-\si_2)}{3}\right\rfloor.
\]
\end{enumerate}

\end{Cor}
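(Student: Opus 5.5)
The plan is to specialize \Cref{rk_formula}. It gives $\rk(|\bla,\bsig\rangle)=|\bla|+\rk(\bsig)$ with $\rk(\bsig)=\lceil (Y-\ell+1)/(2\ell)\rceil$, so in each case we only need to rewrite $\lceil (Y-\ell+1)/(2\ell)\rceil$ in closed form. We do this by computing $Y=\sum_{i\neq j}\si_i(\si_i-\si_j)$ explicitly and comparing ceilings with floors.

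\textbf{Case $\ell=1$.} The sum defining $Y$ is empty, so $Y=0$. Then $\rk(\si)=\lceil 0/2\rceil=0$, which gives $\rk(|\la,s\rangle)=|\la|$.

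\textbf{Case $\ell=2$.} Here
\[
Y=\si_1(\si_1-\si_2)+\si_2(\si_2-\si_1)=(\si_1-\si_2)^2.
\]
Write $x=\si_1-\si_2$. We must show $\lceil (x^2-1)/4\rceil=\lfloor x^2/4\rfloor$. If $x$ is even, then $x^2\equiv 0\bmod 4$ and both sides equal $x^2/4$. If $x$ is odd, then $x^2\equiv 1\bmod 4$ and both sides equal $(x^2-1)/4$.

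\textbf{Case $\ell=3$.} Expanding, $Y=2\sum_i\si_i^2-2\sum_{i<j}\si_i\si_j=(\si_1-\si_2)^2+(\si_2-\si_3)^2+(\si_3-\si_1)^2$. Put $a=\si_1-\si_3$ and $b=\si_2-\si_3$. Then
\[
Y=(a-b)^2+a^2+b^2=2\bigl((a-b)^2+ab\bigr),
\]
and the proposed numerator is $N:=(\si_1-\si_2)^2+(\si_3-\si_1)(\si_3-\si_2)=(a-b)^2+ab=a^2-ab+b^2$. Hence $Y=2N$, and
\[
\rk(\bsig)=\left\lceil \frac{2N-2}{6}\right\rceil=\left\lceil\frac{N-1}{3}\right\rceil.
\]
The identity $\lceil (N-1)/3\rceil=\lfloor N/3\rfloor$ fails when $N\equiv 2\bmod 3$. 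The key step is therefore to observe that $a^2-ab+b^2\equiv (a+b)^2\bmod 3$, because $-ab\equiv 2ab$. Since squares are $0$ or $1$ modulo $3$, we get $N\equiv 0$ or $1\bmod 3$, and in both cases $\lceil (N-1)/3\rceil=\lfloor N/3\rfloor$.

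This residue argument for $\ell=3$ is the only nonroutine point; everything else is direct substitution into \Cref{rk_formula}.
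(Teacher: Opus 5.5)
Your proof is correct and takes the same route as the paper: specialize \Cref{rk_formula}, compute $Y$ for $\ell=1,2,3$, and turn the ceiling into a floor using that squares are $0$ or $1 \bmod 4$ when $\ell=2$, and that $N=a^2-ab+b^2\equiv(a+b)^2 \bmod 3$ never equals $2 \bmod 3$ when $\ell=3$. Your mod $3$ argument makes explicit the step the paper only calls ``congruence considerations mod $3$''.
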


\begin{proof}
The formula for $\ell=1$ is immediate from \Cref{rk_formula}, while the formula for $\ell=2$ follows from it together with the observation that the squares mod $4$ are $0$ and $1$. When $\ell=3$ we have \linebreak$Y-\ell+1=2(\sum_i \si_i^2-\sum_{i<j} \si_i\si_j)-2$, whence $\left\lceil \frac{ Y-2}{6}\right\rceil =  \left \lceil \frac{\sum_i \si_i^2-\sum_{i<j} \si_i\si_j-1}{3}\right\rceil  = \left\lfloor \frac{\sum_i \si_i^2-\sum_{i<j} \si_i\si_j}{3}\right\rfloor = \left\lfloor \frac{(\si_1-\si_2)^2+(\si_3-\si_1)(\si_3-\si_2)}{3}\right\rfloor$ by congruence considerations mod $3$. We apply \Cref{rk_formula} to conclude.
\end{proof}

In level $2$, the formula of Corollary \ref{rk_ell=123} allows us to give an alternative expression for the rank
depending on the parity of $\si_2-\si_1$, which often appears in the literature.

\begin{Cor}\label{rk_t}
Assume $\ell=2$.
We have
\[
\rk(|\bla,\bsig\rangle)=\begin{cases} 
|\bla|+t^2+t \quad\hbox{ if }\si_2-\si_1=2t+1 \text{ for some }t\in\mathbb{Z},\\ 
|\bla|+t^2 \qquad\; \;\hbox{ if }\si_2-\si_1=2t  \text{ for some }t \in\mathbb{Z}.
\end{cases}
\]
\end{Cor}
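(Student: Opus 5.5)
This is a direct specialization of \Cref{rk_ell=123}(2), which for $\ell=2$ gives
\[
\rk(|\bla,\bsig\rangle)=|\bla|+\left\lfloor \frac{(\si_1-\si_2)^2}{4}\right\rfloor .
\]
The plan is therefore to evaluate this floor according to the parity of $\si_2-\si_1$. Since $(\si_1-\si_2)^2=(\si_2-\si_1)^2$, only $\si_2-\si_1$ matters.

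\emph{Even case.} If $\si_2-\si_1=2t$ with $t\in\Z$, then $(\si_1-\si_2)^2/4=t^2$ is already an integer. The floor is $t^2$, giving $\rk(|\bla,\bsig\rangle)=|\bla|+t^2$.

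\emph{Odd case.} If $\si_2-\si_1=2t+1$, then
\[
\frac{(2t+1)^2}{4}=t^2+t+\frac14 .
\]
Since $t^2+t$ is an integer, the floor equals $t^2+t$, giving $\rk(|\bla,\bsig\rangle)=|\bla|+t^2+t$. This is valid for negative $t$ too, because $t^2+t\ge 0$ and only the fractional part $\tfrac14$ is discarded.

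No step is a real obstacle; the only point needing care is that $t$ may be negative in either case, and the computation above handles this uniformly. As a sanity check, I would also verify the formula directly from \Cref{rk_formula}, using $Y=(\si_1-\si_2)^2\cdot 2$ and $\rk(\bsig)=\lceil (2(\si_1-\si_2)^2-1)/4\rceil$, which yields the same values.
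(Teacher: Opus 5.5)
Your proof is correct and is essentially the paper's own argument: specialize \Cref{rk_ell=123}(2) and evaluate $\lfloor(\si_1-\si_2)^2/4\rfloor$ according to the parity of $\si_2-\si_1$.

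One small slip is in your optional sanity check. For $\ell=2$ one has $Y=\si_1(\si_1-\si_2)+\si_2(\si_2-\si_1)=(\si_1-\si_2)^2$, not twice that. Hence $\rk(\bsig)=\lceil((\si_1-\si_2)^2-1)/4\rceil$, which agrees with the floor because squares are $0$ or $1$ modulo $4$. With your extra factor $2$, the check would already fail for $\si_2-\si_1=1$.
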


\begin{proof}
We have
\begin{align*}
\rk(|\bla,\bsig\rangle) & = |\bla| + \left\lfloor \frac{(\si_1-\si_2)^2}{4} \right\rfloor
\\& =
\left\{
\begin{array}{l}
|\bla| + \left\lfloor \frac{(2t+1)^2}{4} \right\rfloor
= |\bla| + \left\lfloor t^2+t+ \frac{1}{4} \right\rfloor
 = |\bla| + t^2+t \text{ \quad in the first case,}
\\
|\bla| + \left\lfloor \frac{(2t)^2}{4} \right\rfloor
= |\bla| + \left\lfloor t^2\right\rfloor
 = |\bla| + t^2 \text{ \quad in the second case.}
\end{array}
\right.
\end{align*}
\end{proof}

Let $\bA=\bA(\bla,\bsig)$ and assume that $\bsig=(0,\ldots,0)$ or any permutation of $(0,\ldots,0,\pm 1)$. Clearly $\rk(\bsig)=0$, 
and thus by \Cref{rk_formula} we have $\rk(\bA) = |\bla|$, but see also \cite[Section 3A]{Malle1995}.

\begin{Rem}
\Cref{rk_formula} implies a formula for the size of a multipartition $\bla$ as a function of the abacus or symbol of $|\bla,\bs\rangle$, $\bs\in\Z^\ell$. See \cite{Jacon2024} for another way to get such a formula using level-rank duality.
\end{Rem}

The following fact is mentioned without proof in both \cite{Olsson1986} and \cite{Malle1995}. 
It serves as a key lemma for controlling the rank statistic when applying the Littlewood bijection \Cref{Lit_l_bis}, and is therefore crucial for establishing the generating function of the cocores and their higher-level analogues the spetsial $d$-cores.
\begin{Lem}\label{hook_rank}
Removing a spetsial $d$-hook in $\bA$ decreases its rank by $d$.
\end{Lem}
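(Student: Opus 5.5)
The plan is to compute the change of rank directly from \Cref{def_rk_abacus}, using \Cref{truncation} to neutralise any change in the parameter $m$. Let $\bA$ be an $\ell$-abacus. Suppose $\be\in\bA^{(j)}$ is a spetsial $d$-hook, so that $\be-d\notin\bA^{(j+1)}$ with superscripts taken mod $\ell$. Let $\bA'$ be the abacus obtained by removing $\be$ from row $j$ and adding $\be-d$ to row $j+1$.

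First I choose an integer $k$ with $k\leq \be-d$ and $k$ at most the parameter $m$ of both $\bA$ and $\bA'$. Such a $k$ exists since each $m$ is finite and we may take $k$ as small as we like. By \Cref{truncation}, both ranks can then be computed with the same truncation point $k$.

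Next I compare the two truncated formulas. The multiset $\{\!\{\bA'\}\!\}$ is obtained from $\{\!\{\bA\}\!\}$ by replacing one copy of $\be$ with one copy of $\be-d$. Both values are $\geq k$, so the count $c_k$ is the same for $\bA$ and $\bA'$. Hence the terms $-kc_k$ and $-\left\lfloor\frac{(c_k-1)(c_k-\ell+1)}{2\ell}\right\rfloor$ are identical in the two formulas. The sum $\sum_{\beta\geq k}\beta$ decreases by exactly $\be-(\be-d)=d$, and therefore $\rk(\bA')=\rk(\bA)-d$.

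The only point requiring care is that removing a hook may change the leftmost-empty-column parameter $m$ and the content $c$ in \Cref{def_rk_abacus}. A naive comparison using the untruncated definition would then need a case analysis. \Cref{truncation} is precisely what lets us avoid this, by fixing one common $k$ small enough for both abaci. I should also check that the row change (from $j$ to $j+1$, including the wrap from $\ell$ to $1$) is irrelevant. This holds because the multiset $\{\!\{\bA\}\!\}$ forgets rows, consistent with the invariance statements in \Cref{lem_invariance}. So the argument uses only the multiset of $\be$-numbers, and it works for every $\ell$ and $d\geq1$.
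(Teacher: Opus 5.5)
Your proof is correct, and it takes a cleaner route than the paper's.

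The paper works directly from \Cref{def_rk_abacus} and splits into three cases:
\begin{enumerate}
\item $\beta>m+d$;
\item $\beta=m+d$ with $m'=m$;
\item $\beta=m+d$ with $m'=m+k>m$.
\end{enumerate}
In the last case it checks by explicit computation that the drop $c'=c-\ell k$ in the content is exactly compensated by the shift of the floor term. That computation is just the induction step of \Cref{truncation} iterated $k$ times.

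By invoking \Cref{truncation} once with a common truncation point, you absorb all the bookkeeping about $m$ and $c$. The lemma then reduces to the observation that the multiset $\{\!\{\bA\}\!\}$ changes by replacing one copy of $\beta$ with $\beta-d$.

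Your three constraints on $k$ collapse to $k\le m$. Since $\beta-d$ is an empty position of $\bA$, we have $m\le\beta-d$. Since every position to the left of $m$ stays filled in $\bA'$, we have $m'\ge m$. So $k=m$ already works.

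Beyond brevity, your version buys generality. It uses only that one $\beta$-number is replaced by $\beta-d$, together with a row-blind count. So it shows verbatim that moving a bead $d$ steps left into an empty position of any row lowers the rank by $d$. This covers Malle's $(d,\zeta^k)$-hooks and ordinary $d$-hooks within a single row; the latter is consistent with \Cref{rk_formula}. The paper's case analysis also records how $m$ and $c$ change under the move, but that information is not needed for the statement.
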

\begin{proof}
Let $\beta\in \bA^{(i)}$ such that $\beta-d\notin \bA^{(i+1)}$ (taking the indices mod $\ell$). Removing this $d$-hook yields the abacus $\bA'$ which differs from $\bA$ by having $\bA'^{(i)}=\bA^{(i)}\setminus \{\beta\}$ and $(\bA')^{(i+1)}=\bA^{(i+1)}\sqcup\{\beta-d\}$, and $(\bA')^{(j)}=\bA^{(j)}$ for all $j\notin\{i,i+1\}$. As in the definition of the rank, let $m=\max\{z\in\mathbb{Z}\;\mid\; z\notin \bA^{(j)}\hbox{ for some }j\in\{1,\ldots,\ell\}\}$. Similarly, let $m'=\max\{z\in\mathbb{Z}\;\mid\; z\notin (\bA')^{(j)}\hbox{ for some }j\in\{1,\ldots,\ell\}\}$. Necessarily $\beta \geq m+d$.\\

{\em Case 1: $\beta>m+d$}.  Then $m'=m$ and $c'=c$, so it is immediate from the formula for the rank that $\rk (\bA')=\rk (\bA)-d$.\\

{\em Case 2: $\beta=m+d$ and $m'=m$.} Then $c=c'$ and again it is immediate from the formula for the rank that $\rk (\bA')=\rk (\bA)-d$.\\

{\em Case 3: $\beta=m+d$ and $m'>m$.} Then we have $m'=m+k$ for some $1\leq k\leq d$. It follows that $c'=c-\ell k$. We compute:
\begin{align*}
\rk(\bA')&=\sum_{\substack{\beta\in\{\!\{\!\bA'\!\}\!\}  \\ \beta\geq m'}}\beta-c'm'-\left\lfloor \frac{(c'-1)(c'-\ell+1)}{2\ell}\right\rfloor\\
&= \sum_{\substack{\beta\in\{\!\{\!\bA\!\}\!\} \\ \beta\geq m}}\beta -\left(\sum_{i=0}^{k-1}\ell(m+i)\right)-d-(c-\ell k)(m+k)-\left\lfloor \frac{((c-1)-\ell k)((c-\ell+1)-\ell k)}{2\ell} \right\rfloor\\
&=\sum_{\substack{\beta\in\{\!\{\!\bA\!\}\!\} \\ \beta\geq m}}\beta-\ell k m -\ell\left(\frac{k(k-1)}{2}\right)-d-cm+\ell km-kc+\ell k^2-\left\lfloor \frac{(c-1)(c-\ell+1)}{2\ell} -kc+\ell\left(\frac{k(k+1)}{2}\right)\right\rfloor \\
&=\sum_{\substack{\beta\in\{\!\{\!\bA\!\}\!\} \\ \beta\geq m}}\beta-cm-d+\ell \left(\frac{k(k+1)}{2}\right)-kc-\left\lfloor \frac{(c-1)(c-\ell+1)}{2\ell}\right\rfloor  +kc-\ell\left(\frac{k(k+1)}{2}\right)\\
&=\sum_{\substack{\beta\in\{\!\{\!\bA\!\}\!\} \\ \beta\geq m}}\beta-cm-\left\lfloor \frac{(c-1)(c-\ell+1)}{2\ell}\right\rfloor -d\\
&=\rk(\bA)-d.
\end{align*}
\end{proof}

\medskip

Recall Bijection \Cref{Lit_l_bis}, which associates to any $|\bla,\bsig\rangle \in\cU^\ell_{\si}$ the pair consisting of its spetsial $d$-core $|\bla,\bs\rangle_{[d]}$, which is again a charged $d$-partition in $\cU^\ell_{\si}$, and its spetsial $d$-quotient $|\bla,\bsig\rangle^{[d]}$, which is an $e$-partition for $e=\ell d$.
By \Cref{hook_rank}, this bijection respects the rank in the following way:

\begin{Prop}\label{rk_vs_lit} Let $|\bla,\bsig\in\cU^\ell$ and let $d\in\Z_{\geq 1}$. Set $e=d\ell$ and $\bmu=|\bla,\bsig\rangle^{[d]}\in\cP^e$. 
We have \[ \rk(|\bla,\bsig\rangle) = \rk \left( |\bla,\bsig\rangle_{[d]} \right) + d |\bmu |. \]
\end{Prop}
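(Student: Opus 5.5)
The plan is to obtain the formula by induction on $|\bmu|$, the size of the spetsial $d$-quotient. The two ingredients are Procedure $(2'')$ on $\sLR(\bA)$ and \Cref{hook_rank} on $\bA=\bA(\bla,\bsig)$.

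Write $\sLR(\bA)=\bA(\bmu,\btau)$. If $|\bmu|=0$, then $\sLR(\bA)$ already has all its beads swiped to the left. By \Cref{def_spetsialdcore}, $|\bla,\bsig\rangle_{[d]}=|\bla,\bsig\rangle$, and the formula is trivial.

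Now suppose $|\bmu|>0$. Then some component $\mu^j$ is nonempty, so row $j$ of $\sLR(\bA)$ contains a bead at some position $x$ with $x-1$ empty. Sliding it one step left gives an $e$-abacus $\bA(\bmu',\btau)$ with $|\bmu'|=|\bmu|-1$. The charge is unchanged, since moving a bead within a row preserves the charge of that row. Set $\bB=\sLR^{-1}(\bA(\bmu',\btau))$, which is legitimate since $\sLR$ is a bijection by \Cref{sLR}. The key step is to check that $\bB$ is obtained from $\bA$ by removing exactly one spetsial $d$-hook.

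For this check I will track single positions through $\sLR$, as in the proof of \Cref{prop_lr_vs_slr}. A one-step left slide in the $e$-abacus either stays inside a vertical $d$-domino, or crosses from the bottom cell of one domino to the top cell of the domino below it in the same column (equivalently, wraps across a big rectangle boundary). Pulled back through the $90$-degree rotation and the cyclic column rotations, this should correspond in $\bA$ to moving a bead $d$ positions to the left and one row up (mod $\ell$). That is precisely a $d$-hook removal $\be\in\bA^{(i)}$, $\be-d\notin\bA^{(i+1)}$. This is the statement, asserted in the text, that Procedures $(1'')$ and $(2'')$ correspond under $\sLR$.

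An alternative route through the same step uses \Cref{prop_lr_vs_slr}. Under $\mathsf{t}_{d\brho}$ and $\mathsf{t}_{\brho^d_{\mathrm{rev}}}$, the claim becomes the Uglov statement that a left slide in $\LR(\bA)$ corresponds to Procedure $(1')$ in the $\ell$-abacus, which follows directly from the coordinate formula defining $\LR$. I expect this verification of the index bookkeeping, especially the wrap-around case from row $\ell$ to row $1$, to be the main obstacle. The cleanest approach is to do it in the $d=1$ coordinates from the proof of \Cref{prop_lr_vs_slr} and then substitute dominoes.

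Given this, \Cref{hook_rank} yields $\rk(\bB)=\rk(\bA)-d$. Moreover, $\bB$ has spetsial $d$-quotient $\bmu'$, and it has the same spetsial $d$-core as $\bA$, because both $d$-cores are $\sLR^{-1}(\bA(\bemp,\btau))$ with the same $\btau$. The induction hypothesis applied to $\bB$ then gives
\[
\rk(\bA)=d+\rk(\bB)=d+\rk\left(|\bla,\bsig\rangle_{[d]}\right)+d(|\bmu|-1),
\]
which is the stated formula.
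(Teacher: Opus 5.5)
Your argument is the paper's own. The spetsial $d$-core is reached from $\bA$ by $|\bmu|$ one-step left slides in $\sLR(\bA)$, and each slide pulls back to a spetsial $d$-hook removal; this is the correspondence between Procedures $(1'')$ and $(2'')$, which the paper also only asserts. \Cref{hook_rank} then gives a rank drop of $d$ per step, and your induction on $|\bmu|$ makes this count explicit. The base case, the preservation of $\btau$, and the observation that $\bA$ and $\bB$ have the same spetsial $d$-core are all fine.

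Your geometric description of the key step is wrong, although its conclusion is right. A one-step left slide in $\sLR(\bA)$ is a horizontal move in a fixed row, and the vertical dominoes are one column wide. So a slide never stays inside a vertical domino, and never passes to a vertically adjacent domino. Indeed, a move inside a vertical domino would pull back to $\beta\mapsto\beta-1$ within a row of $\bA$, which is not a $d$-hook. The correct bookkeeping is as follows. Write $\beta=kd\ell+(i-1)d+p$ with $1\le i\le\ell$ and $1\le p\le d$. Steps 1 and 2 send the position $(\beta,j)$ of $\bA$ to column $k\ell+i$ and row $(r-1)d+p$ of $\sLR(\bA)$, where $r=(j+i-2)\bmod\ell+1$. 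Moving one column to the left keeps $r$ and $p$ fixed. It replaces $i$ by $i-1$, or replaces $i=1$ by $i=\ell$ in the previous big rectangle. In both cases the preimage position is $(\beta-d,j+1)$, with $j+1$ read mod $\ell$, because the rotation amounts of adjacent domino-columns differ by one modulo $\ell$. This is exactly a spetsial $d$-hook removal, with the emptiness condition matching $\beta-d\notin\bA^{(j+1)}$.

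Your alternative route through \Cref{prop_lr_vs_slr} also works and is just as short. The map $\mathsf{t}_{d\brho}$ turns the hook removal into $(\beta,j)\mapsto(\beta,j+1)$ for $j<\ell$, and into $(\beta,\ell)\mapsto(\beta-e,1)$ for $j=\ell$. The coordinate formula for $\LR$ sends both of these to one-step left slides within a row. Finally, $\mathsf{t}_{\brho^d_{\mathrm{rev}}}$ shifts each row rigidly, so it preserves such slides.
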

Combining this statement with \Cref{compare_cores} yields a similar statement for $e$-cores and $e$-quotients provided that $\ell$ divides $e$.
\begin{Cor}\label{rk_vs_lit_bis} Let $e\in\Z_{\geq 2}$ and suppose $\ell$ divides $e$. Set $d=e/\ell$. Let $|\bla,\bs\rangle\in\cU^\ell$.
 Set $\bmu=|\bla,\bs\rangle^{(e)}\in\cP^e$ and set $|\bnu,\br\rangle=|\bla,\bs\rangle_{(e)}\in\cU^\ell$. Then
\[ \rk(|\bla,\bs-d\brho\rangle)= \rk(|\bnu,\br-d\brho\rangle)+d|\bmu|. \]
\end{Cor}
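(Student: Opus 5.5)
The plan is to deduce the statement from \Cref{rk_vs_lit} by translating along the charge shift of \Cref{compare_cores}. Set $\bsig:=\bs-d\brho\in\Z^\ell$, so that $\bs=\bsig+d\brho$ and we are exactly in the setting of \Cref{compare_cores}, with $e=d\ell$.

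\textbf{Step 1.} By \Cref{compare_cores}, $|\bla,\bs\rangle_{(e)}=\mathsf{t}_{d\brho}\bigl(|\bla,\bsig\rangle_{[d]}\bigr)$. Since $\mathsf{t}_{d\brho}$ only adds $d\brho$ to the charge and leaves the multipartition unchanged, writing $|\bla,\bs\rangle_{(e)}=|\bnu,\br\rangle$ gives $|\bla,\bsig\rangle_{[d]}=|\bnu,\br-d\brho\rangle$.

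\textbf{Step 2.} Also by \Cref{compare_cores}, the quotients agree: $|\bla,\bsig\rangle^{[d]}=|\bla,\bs\rangle^{(e)}=\bmu$.

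\textbf{Step 3.} Apply \Cref{rk_vs_lit} to $|\bla,\bsig\rangle$ and $d$. This gives
\[
\rk(|\bla,\bsig\rangle)=\rk\bigl(|\bla,\bsig\rangle_{[d]}\bigr)+d|\bmu|.
\]
Substituting $\bsig=\bs-d\brho$ and the identification from Step 1 yields exactly
\[
\rk(|\bla,\bs-d\brho\rangle)=\rk(|\bnu,\br-d\brho\rangle)+d|\bmu|.
\]

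The only point needing care is Step 1: we must check that $\mathsf{t}_{d\brho}$ changes only the charge and not the multipartition. This holds by its definition, since it sends $|\bla,\bsig\rangle\mapsto|\bla,\bsig+d\brho\rangle$. We also need \Cref{rk_vs_lit} itself, which is taken as given. For completeness, it follows because $|\bla,\bsig\rangle_{[d]}$ is reached by removing spetsial $d$-hooks (Procedure $(1'')$). Each removal lowers the rank by $d$ by \Cref{hook_rank}, and the number of removals is $|\bmu|$, since each corresponds under $\sLR$ to a one-step left slide of a bead in the $e$-abacus (Procedure $(2'')$).
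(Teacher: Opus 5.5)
Your proof is correct and is the paper's argument: the paper obtains this corollary by substituting $\bsig=\bs-d\brho$ into \Cref{rk_vs_lit}. It then uses \Cref{compare_cores} to identify the spetsial $d$-core and $d$-quotient of $|\bla,\bsig\rangle$ with the charge-shifted $e$-core and the $e$-quotient of $|\bla,\bs\rangle$, exactly as in your Steps 1--3.
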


We have finally established the full analogue of the Littlewood Decomposition for charged multipartitions, which controls what happens to the rank under the bijections \Cref{Lit_l,Lit_l_bis}.

\section{Enumerating cocores}
\label{sec_enum_cocores}

The goal of this section is to derive the generating functions for the $d$-cocores (one for ``type $B/C$ symbols," and one for ``type $D/{^2}D$ symbols"), and prove positivity of this generating function whenever $d\geq 2$.

\subsection{Rank generating function of charged bipartitions}\label{rk gf level 2}

Let $\si\in\Z$ and consider the generating function 
\[
\sfu^2_\si(q) = \sum_{ |\bla,\bsig\rangle \in \cU^2_\si } q^{\rk(|\bla,\bsig\rangle)}.
\] 
As we will see in the more general setting of \Cref{sec_rk_gf}, in fact $\sfu^2_\si$ only depends on the parity of $\si$. We summarize this in the following lemma.

\begin{Lem}
We have
$$\sfu^2_\si(q) = 
\left\{
\begin{array}{ll}
\sfu^2_1(q) & \text{\quad if $\si$ is odd,}
\\
\sfu^2_0(q) & \text{\quad if $\si$ is even.}
\end{array}
\right.
$$
\end{Lem}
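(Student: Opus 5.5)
By \Cref{rk_formula}, the rank of $|\bla,\bsig\rangle$ is $|\bla|+\rk(\bsig)$. Since $\cU^2_\si$ is the disjoint union of the sets $\cU^2_\bsig$ for $\bsig\in\Z^2[\si]$, summing over $\bla$ separately gives
\[
\sfu^2_\si(q)=\sfp(q)^2\sum_{\bsig\in\Z^2[\si]} q^{\rk(\bsig)}.
\]
So it suffices to show that the charge series $\sum_{\bsig\in\Z^2[\si]}q^{\rk(\bsig)}$ depends only on the parity of $\si$. I would do this by exhibiting a rank-preserving bijection $\Z^2[\si]\to\Z^2[\si+2]$.

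The bijection is the diagonal shift $\bsig=(\si_1,\si_2)\mapsto(\si_1+1,\si_2+1)$. It clearly maps $\Z^2[\si]$ bijectively onto $\Z^2[\si+2]$. By \Cref{rk_ell=123}(2), $\rk(\bsig)=\lfloor(\si_1-\si_2)^2/4\rfloor$ depends only on the difference $\si_1-\si_2$, which the shift does not change. Equivalently, at the level of abaci, the shift is a simultaneous horizontal translation of both rows, and \Cref{lem_invariance} already says such translations preserve the rank.

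Iterating the shift in both directions gives $\sfu^2_\si=\sfu^2_{\si'}$ whenever $\si\equiv\si'\bmod 2$. Taking $\si'\in\{0,1\}$ yields the two cases of the statement.

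I do not expect a genuine obstacle. The only points needing care are that the shift is indeed a bijection between $\Z^2[\si]$ and $\Z^2[\si+2]$, which is immediate, and that no convergence issue arises. For the latter, each coefficient of the charge series is finite, since for a fixed sum $\si$ the difference $\si_1-\si_2$ determines $\bsig$, and $\lfloor(\si_1-\si_2)^2/4\rfloor$ takes each value only finitely many times.
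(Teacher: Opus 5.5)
Your proposal is correct and follows essentially the paper's argument. The paper deduces this lemma from the general statement in \Cref{sec_rk_gf}: it uses the factorization $\sfu^\ell_\si(q)=\sfr^\ell_\si(q)\sfp(q)^\ell$ (\Cref{symbolgf_dec}) together with $\sfr^\ell_\si(q)=\sfr^\ell_{\si+k\ell}(q)$, which comes from the diagonal shift $\bsig\mapsto\bsig+(k,\ldots,k)$ and \Cref{lem_invariance} (see \Cref{tinylemma2,rk_gf_reduction}); for $\ell=2$ this is exactly your shift by $(1,1)$.
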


This yields our first product formulas.
\begin{Lem}\label{lem_f_prod}
We have 
$\sfu^2_1(q) = 2 \psi(q^2) \sfp(q)^2$
and $\sfu^2_0(q) = \phi(q) \sfp(q)^2.$
\end{Lem}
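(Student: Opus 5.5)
By \Cref{rk_formula}, $\rk(|\bla,\bsig\rangle)=|\bla|+\rk(\bsig)$. Since $\cU^2_\si=\cP^2\times\Z^2[\si]$, the generating function factors:
\[
\sfu^2_\si(q)=\Big(\sum_{\bla\in\cP^2}q^{|\bla|}\Big)\Big(\sum_{\bsig\in\Z^2[\si]}q^{\rk(\bsig)}\Big)=\sfp(q)^2\sum_{\bsig\in\Z^2[\si]}q^{\rk(\bsig)}.
\]
So the task reduces to evaluating the charge sum, and both identities will follow once it is shown to equal $2\psi(q^2)$ for $\si=1$ and $\phi(q)$ for $\si=0$.

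The charges in $\Z^2[\si]$ are $\bsig=(\si_1,\si-\si_1)$ with $\si_1\in\Z$. The map $\si_1\mapsto \si_2-\si_1=\si-2\si_1$ is a bijection from $\Z$ onto the integers congruent to $\si$ mod $2$. I will then read off $\rk(\bsig)$ from \Cref{rk_t}, which gives $t^2+t$ when $\si_2-\si_1=2t+1$ and $t^2$ when $\si_2-\si_1=2t$. (This parity dependence also explains the preceding lemma.)

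For $\si=0$, $t$ ranges over all of $\Z$, so the charge sum is $\sum_{t\in\Z}q^{t^2}=\phi(q)$. For $\si=1$, $t$ ranges over $\Z$ and the sum is $\sum_{t\in\Z}q^{t^2+t}$. The only mildly delicate step is rewriting this as $2\psi(q^2)$: the involution $t\mapsto -1-t$ pairs $\Z_{\geq0}$ with $\Z_{<0}$ and preserves $t^2+t$. Hence
\[
\sum_{t\in\Z}q^{t^2+t}=2\sum_{t\geq0}q^{2\cdot\frac{t^2+t}{2}}=2\psi(q^2).
\]
Multiplying each charge sum by $\sfp(q)^2$ gives the stated formulas. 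I expect no real obstacle here, only bookkeeping of the parity and the bijection of charges.
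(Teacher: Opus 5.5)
Your proposal is correct and is essentially the paper's proof. Like the paper, you factor off $\sfp(q)^2$ via \Cref{rk_formula}, parametrize the charges by $t$ so that \Cref{rk_t} gives rank $t^2+t$ or $t^2$, and fold $\sum_{t\in\Z}q^{t^2+t}$ into $2\psi(q^2)$ using the symmetry $t\mapsto -1-t$; your parenthetical remark about parity also correctly justifies the preceding lemma.
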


\begin{proof} Assume first that $\bs=(\si_1,\si_2)\in\Z^\ell[1]$. We have $\bsig=(-t,t+1)$ by setting $t=-\si_1$.
We apply \Cref{rk_t} (1) and obtain
\begin{align*}
\sfu^2_1(q)
&
=  \sum_{\substack{\bla\in\cP^2 \\ t \in\Z }} q^{ |\bla| + t^2+t }
=  2\, \left(\sum_{\bla\in\cP^2} q^{ |\bla|} \right)  \left(\sum_{t \in\N } (q^2)^{\frac{t(t+1)}{2}} \right)
= 2 \sfp(q)^2 \sfc_2(q^2).
\end{align*}

Assume now that  $\bs=(\si_1,\si_2)\in\Z^\ell[0]$. We have $\bsig=(-t,t)$ by setting $t=-\si_1$.
We apply \Cref{rk_t} (2) and obtain
\[
\sfu^2_0(q)
 =  \sum_{\substack{\bla\in\cP^2 \\ t \in\Z }} q^{ |\bla| + t^2} 
 =  \left(\sum_{\bla\in\cP^2} q^{ |\bla|} \right) \left(\sum_{t \in\Z } q^{t^2 } \right) 
= \sfp(q)^2 \phi(q).
\]
\end{proof}

\subsection{Product formula and positivity for the $d$-cocores}

Consider the rank generating function of the $d$-cocores.
Similarly to the previous subsection, we actually consider two separate series, depending on the parity of $|\bsig|$. Set

$$\sfz^2_{d,1}(q) = \sum_{|\bla,\bsig\rangle \in \cC_{[d,1]}^2 } q^{\rk(|\bla,\bsig\rangle)} 
\mand
\sfz^2_{d,0}(q) = \sum_{|\bla,\bsig\rangle \in \cC_{[d,0]}^2 } q^{\rk(|\bla,\bsig\rangle)}.$$

\begin{Thm}\label{thm_prod_cocores} Let $d$ be a positive integer. 
The following product formulas hold.
\begin{enumerate}
    \item $\sfz^2_{d,1}(q) = 2\psi(q^2) \sfc_d(q)^2.$
\item $\sfz^2_{d,0}(q) = \phi(q) \sfc_d(q)^2.$
\end{enumerate}
\end{Thm}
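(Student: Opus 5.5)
The natural route is to compare generating functions through the rank-preserving Littlewood decomposition \Cref{Lit_l_bis}, specialised to $\ell=2$. In level $2$ the spetsial $d$-hooks are exactly $d$-cohooks and the spetsial $d$-cores are exactly the $d$-cocores, so for $\si\in\{0,1\}$ we have a bijection $\cU^2_\si \to \cC^2_{[d,\si]}\times\cP^{2d}$, $|\bla,\bsig\rangle\mapsto (|\bla,\bsig\rangle_{[d]}, |\bla,\bsig\rangle^{[d]})$. By \Cref{rk_vs_lit}, this bijection satisfies $\rk(|\bla,\bsig\rangle)=\rk(|\bla,\bsig\rangle_{[d]})+d|\bmu|$, where $\bmu$ is the spetsial $d$-quotient.

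Summing $q^{\rk}$ over both sides gives the factorisation
\[
\sfu^2_\si(q)=\sfz^2_{d,\si}(q)\,\sfp(q^d)^{2d}.
\]
We then substitute \Cref{lem_f_prod} on the left-hand side. For $\si=1$ this gives $2\psi(q^2)\sfp(q)^2=\sfz^2_{d,1}(q)\sfp(q^d)^{2d}$. For $\si=0$ it gives $\phi(q)\sfp(q)^2=\sfz^2_{d,0}(q)\sfp(q^d)^{2d}$.

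Since $\sfp(q^d)^{2d}$ has constant term $1$, it is invertible in $\Z[[q]]$, and we can divide by it. This yields $\sfz^2_{d,\si}(q)=\sfp(q)^2\sfp(q^d)^{-2d}\cdot(\text{$2\psi(q^2)$ or $\phi(q)$})$. By the product formula \Cref{gf_cores}, $\sfc_d(q)=\prod_n(1-q^{dn})^d/(1-q^n)=\sfp(q)\sfp(q^d)^{-d}$. Hence $\sfp(q)^2\sfp(q^d)^{-2d}=\sfc_d(q)^2$, which gives both formulas.

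The only point needing care is that the generating functions are well defined as formal power series, i.e.\ that for each $n$ only finitely many elements of $\cU^2_\si$ (respectively of $\cC^2_{[d,\si]}$) have rank $n$. This follows from \Cref{rk_t}: the rank is $|\bla|$ plus $t^2+t$ or $t^2$, and for fixed $\si$ the charge is determined by $t$, so the fibres are finite.

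The other thing to check is that \Cref{rk_vs_lit} really applies to all charges in $\cU^2_\si$, with no normalisation of $\bsig$ required. The paper states it in full generality, so we can rely on it directly; the rest is routine.
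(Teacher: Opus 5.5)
Your proposal is correct and follows the paper's proof essentially verbatim. You use the rank-compatible Littlewood bijection (\Cref{rk_vs_lit}) to get $\sfu^2_\si(q)=\sfz^2_{d,\si}(q)\sfp(q^d)^{2d}$, substitute \Cref{lem_f_prod}, and divide using $\sfc_d(q)=\sfp(q)/\sfp(q^d)^d$. Your remarks on the finiteness of rank fibres and the invertibility of $\sfp(q^d)^{2d}$ in $\Z[[q]]$ are correct points that the paper leaves implicit.
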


\begin{proof}
\begin{enumerate}
    \item 
We have $\sfu^2_1(q) = 2\psi(q^2) \sfp(q)^2$  by \Cref{lem_f_prod}.
On the other hand, 
from \Cref{rk_vs_lit}, we obtain
$$\sfu^2_1 (q) =\sfz^2_{d,1}(q)\sfp(q^{d})^{2d}.$$
Therefore, we have
\begin{align*}
\sfz^2_{d,1} (q) 
= \frac{\sfu^2_1(q)}{\sfp(q^{d})^{2d}} 
= \ds
\ds 2\psi(q^2) \left(\frac{ \sfp(q)}{\sfp(q^{d})^d}\right)^2
= 2\psi(q^2) \sfc_d(q)^2.
\end{align*}
\item 
We have $\sfu^2_0(q) = \phi(q) \sfp(q)^2$  by \Cref{lem_f_prod}.
Similarly as before,
from \Cref{rk_vs_lit}, we obtain
\[
\sfu^2_0 (q) =\sfz^2_{d,0}(q)\sfp(q^{d})^{2d}.
\]
Therefore, we have
\begin{align*}
\sfz^2_{d,0} (q) 
= \frac{\sfu^2_0(q)}{\sfp(q^{d})^{2d}} 
= \ds \phi(q) \left(\frac{ \sfp(q)}{\sfp(q^{d})^d}\right)^2
= \phi(q) \sfc_d(q)^2.
\end{align*}
\end{enumerate}
\end{proof}

We are ready to determine positivity of the generating functions of cocores.
Recall that, for $\si\in\{0,1\}$,  
$$\sfz^2_{d,\si}(q) = \sum_{n\in\N} z^2_{d,\si}(n) q^n$$
where $z^2_{d,\si}(n)$ denotes the number of $d$-cocores of rank $n$ with charge summing to $\si$.

\medskip

We will use the following lemma on several occasions.
\begin{Lem}\label{pos phipsipsi}
The coefficient of $q^n$ in $\phi(q)\psi(q)^2=\phi(q)^2\psi(q^2)$ is positive for all $n\in\N$.
\end{Lem}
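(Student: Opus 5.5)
First, the two series in the statement coincide. By \Cref{Sylvie4alt}, $\psi(q)^2=\psi(q^2)\phi(q)$, so multiplying by $\phi(q)$ gives $\phi(q)\psi(q)^2=\phi(q)^2\psi(q^2)$. It therefore suffices to prove positivity for $\phi(q)\psi(q)^2$.

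The coefficient of $q^n$ in $\phi(q)\psi(q)^2$ counts triples $(a,s,t)\in\Z\times\N\times\N$ with
\[ n=a^2+\tfrac{s(s+1)}{2}+\tfrac{t(t+1)}{2}. \]
I would follow the usual route from Legendre's Three-Square Theorem to Gauss' Eureka Theorem and multiply by $8$. Since $8\cdot\frac{s(s+1)}{2}+1=(2s+1)^2$, a representation as above is equivalent to
\[ 8n+2=8a^2+x^2+y^2 \]
with $x,y$ odd positive integers. Put $b=2a$. Then it suffices to write $8n+2=2b^2+x^2+y^2$ with $b$ even and $x,y$ odd.

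The cleanest way to get such a representation is to use the variable change from \Cref{phipsi} in reverse, turning $x^2+y^2$ into twice a sum of two squares. Write $x=u+v$ and $y=u-v$. Then $x^2+y^2=2(u^2+v^2)$, and $x,y$ are both odd exactly when $u+v$ is odd. So we need
\[ 4n+1=a^2+u^2+v^2 \quad\text{with } u+v \text{ odd}. \]
Since $4n+1\equiv 1$ or $5 \bmod 8$, Legendre's theorem guarantees that $4n+1$ is a sum of three squares. Because $4n+1\equiv 1\bmod 4$ and squares are $0$ or $1 \bmod 4$, exactly one of the three squares is odd. Label the odd one as $u$ and the other two as $a$ and $v$. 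Then $u+v$ is odd, as required. Signs and order cause no trouble: $x=u+v$ and $y=u-v$ may be taken up to sign, since $\psi$ sums over $t\geq0$ and the odd squares $(2t+1)^2$ already account for symmetry.

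Equivalently, and more directly, one can argue at the level of generating functions. Writing $4n+1=a^2+u^2+v^2$ with $u$ odd and $a,v$ even gives
\[ n=\left(\tfrac a2\right)^2+\left(\tfrac v2\right)^2+\tfrac{u^2-1}{4}, \]
which is a coefficient of $\phi(q)^2\psi(q^2)$, because $\frac{u^2-1}{4}=2\cdot\frac{t(t+1)}{2}$ with $u=2t+1$. So positivity of $\phi(q)^2\psi(q^2)$ follows immediately from Legendre's theorem, using only the mod $4$ parity observation. The only real content is that $4n+1$ is never of the form $4^j(8k+7)$: it is odd, and it is $\equiv 1$ or $5 \bmod 8$.
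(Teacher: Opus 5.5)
Your proposal is correct, and its final paragraph is essentially the paper's argument: write $4n+1$ as a sum of three squares via Legendre's theorem, observe mod $4$ that exactly one of them is odd, and read off $n=w^2+y^2+z^2+z$ as a coefficient of $\phi(q)^2\psi(q^2)$. The first route, through $8n+2$ and the substitution $x=u+v$, $y=u-v$, is a harmless detour to the same decomposition; note only that the first square there is really $b^2=(2a)^2$ and must be even, which your choice of the odd square as $u^2$ does guarantee.
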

\begin{proof} By \Cref{Sylvie4alt}, $\phi(q)\psi(q)^2=\phi(q)^2\psi(q^2)$. We must show that for all $n\in\N$,
\[
n=w^2+y^2+z^2+z
\]
for some $w,y,z\in \N$. 
We have
\begin{align*}
n=w^2+y^2+z^2+z 
&\iff 4n=4w^2+4y^2+4z^2+4z &   
\\
& \iff  4n+1=(2w)^2+(2y)^2+(2z+1)^2.
\end{align*}
By Legendre's Three-Square Theorem, there exist
 $a,b,c\in\N$ such that 
$$4n+1=a^2+b^2+c^2.$$
Reducing this equation modulo $4$, we see that
two of these squares must be even and the other odd,
which is exactly what we wanted.
\end{proof}

The following is an analogue of \Cref{grono} on the positivity of the generating function of $e$-core partitions. 
\begin{Thm}\label{thm_positivity_cocores}
Let $d$ be a positive integer. Then:
\begin{enumerate}
\item $z^2_{d,1}(n)>0$ for all $n\in\N$ if and only if $d\geq 2$,
\item $z^2_{d,0}(n)>0$ for all $n\in\N$ if and only if $d\geq 2$. 
\end{enumerate}
\end{Thm}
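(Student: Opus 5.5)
We use the product formulas of \Cref{thm_prod_cocores}, $\sfz^2_{d,1}(q)=2\psi(q^2)\sfc_d(q)^2$ and $\sfz^2_{d,0}(q)=\phi(q)\sfc_d(q)^2$. Since every factor has nonnegative coefficients and constant term $1$, it suffices to find, for each $d\geq 2$, a subproduct of factors whose coefficients are all positive. First we dispose of $d=1$. There $\sfc_1(q)=1$, so $\sfz^2_{1,1}(q)=2\psi(q^2)$ is supported on even exponents, and $\sfz^2_{1,0}(q)=\phi(q)$ is supported on squares. Both therefore have zero coefficients, for example at $q^1$ and $q^2$ respectively. This gives the ``only if'' direction.

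Next we reduce to small $d$ by divisibility. By \Cref{factorization gf e-cores}, $\sfc_e(q)$ divides $\sfc_{em}(q)$ with a cofactor having nonnegative coefficients and constant term $1$. Hence positivity for $d$ implies positivity for every multiple of $d$. So it suffices to treat $d=2$ and $d=3$, together with every $d\geq 4$. For $d\geq 4$, \Cref{grono} shows that $\sfc_d(q)$ alone has positive coefficients. Multiplying by $\phi(q)$ or $2\psi(q^2)$, which have constant term $\geq 1$, keeps all coefficients positive. In fact the divisibility argument already covers all $d\geq 2$ once $d=2$ and $d=3$ are done, so we only need those two cases.

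For $d=2$ we have $\sfc_2(q)=\psi(q)$. Then $\sfz^2_{2,0}(q)=\phi(q)\psi(q)^2$, which has positive coefficients by \Cref{pos phipsipsi}. Also $\sfz^2_{2,1}(q)=2\psi(q^2)\psi(q)^2=2\psi(q^2)^2\phi(q)$ by \Cref{Sylvie4alt}. We must show every $n$ has the form $2T_1+2T_2+a^2$ with $T_1,T_2$ triangular numbers and $a\in\Z$. Multiplying by $4$ and adding $2$ turns this into $4n+2=(2a)^2+(2x+1)^2+(2y+1)^2$. Since $4n+2\equiv 2\bmod 4$ is never of the form $4^j(8k+7)$, Legendre's Three-Square Theorem applies. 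Reducing mod $4$, exactly two of the three squares are odd and one is even, which gives the required representation.

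The main obstacle is $d=3$, where $\sfc_3(q)$ has many zero coefficients. The plan is to discard all of $\sfc_3(q)^2$ except the part coming from the explicit $3$-cores of \Cref{lem_size_3cores}, namely sizes $k^2$, $k^2+1$ and $k^2+k$. For $\sfz^2_{3,0}$ we keep, inside $\sfc_3(q)^2$, only the pairs of $3$-cores of square sizes. This bounds $\phi(q)\sfc_3(q)^2$ coefficientwise from below by a series whose support contains that of $\phi(q)\cdot\bigl(\sum_{k\ge0}q^{k^2}\bigr)^2$. That support is the set of sums of three squares, which misses the numbers $4^j(8k+7)$. To cover those $n$ we also use the cores of size $k^2+1$: then $n-1$ needs to be a sum of three squares. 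When $n\equiv 7\bmod 8$, $n-1\equiv 6$ is such a sum. When $n=4^j(8k+7)$ with $j\geq1$, $n-1\equiv 3\bmod 4$, and $n-1\equiv 3$ or $7\bmod 8$; the bad case $n-1\equiv 7\bmod 8$ is handled by subtracting $2$ instead, using two cores of size $k^2+1$, since $n-2\equiv 6\bmod 8$ or $2\bmod 4$ is a sum of three squares. For $\sfz^2_{3,1}=2\psi(q^2)\sfc_3(q)^2$ we argue similarly. Using pronic-size cores, we get $n=2T+(a^2+a)+(b^2+b)=2(T+T'+T'')$, which covers every even $n$ by Gauss' Eureka Theorem. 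For odd $n$ we replace one pronic core by a core of size $k^2+1$, or use square sizes together with Legendre's theorem on $4n+1$ or a similar shifted form. We then finish with a congruence check, verifying small leftover cases by hand.
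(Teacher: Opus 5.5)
Your architecture matches the paper's: the product formulas of \Cref{thm_prod_cocores}, the $d=1$ counterexamples, Granville--Ono for $d\geq 4$, Legendre on $4n+2$ and \Cref{pos phipsipsi} for $d=2$, and the explicit $3$-cores of \Cref{lem_size_3cores} for $d=3$. Your $\sfz^2_{3,0}$ argument (three squares, then shifting by $1$ or $2$ using cores of size $k^2+1$) is also correct. However, two things need fixing.

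First, the sentence claiming that divisibility covers all $d\geq 2$ once $d=2,3$ are done is false. Primes $d\geq 5$ are multiples of neither $2$ nor $3$. The sentence is harmless only because you invoke \Cref{grono} for $d\geq 4$ anyway, so delete it.

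Second, and more substantively, the odd-$n$ case of $\sfz^2_{3,1}$ is not actually proved. You list alternatives and defer to ``a congruence check'' and ``small leftover cases by hand'' without specifying either, so as written this is a gap. It closes immediately with a tool you already cited, and there are no leftover cases. Keep square-sized $3$-cores in both factors of $\sfc_3(q)^2$. Then it suffices that every $n$ equals $(z^2+z)+w^2+y^2$, i.e.\ $4n+1=(2w)^2+(2y)^2+(2z+1)^2$. This is exactly \Cref{pos phipsipsi}, the lemma you used for $\sfz^2_{2,0}$, and it handles even and odd $n$ at once. This is the paper's argument. Alternatively, your own $d=2$ result gives $n-1=x^2+(y^2+y)+(z^2+z)$ for $n\geq 1$, and replacing $x^2$ by the core of size $x^2+1$ yields $n$.

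The same one-line reduction also settles $\sfz^2_{3,0}$: take a square from $\phi$, one square-sized core and one pronic-sized core, and apply \Cref{pos phipsipsi}. This is how the paper does it, so your case analysis there is valid but longer than needed.
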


\begin{proof}
\begin{enumerate}
\item The idea is to use \Cref{grono} for $d\geq 4$,
and to treat the remaining cases by hand, which are dealt with using Legendre's Three-Square Theorem.
\begin{itemize}
\item  Assume $d=1$. Then $\sfc_d(q)=1$, 
so $a_{d,1}(n)=0$ except when $n$ is a pronic number. 
\item  Assume $d=2$. 
We have 
\[
\sfz^2_{2,1}(q)=\psi(q^2)\psi(q)^2=\psi(q^2)(\psi(q^2)\phi(q))=\phi(q)\psi(q^2)^2
\]
by applying \Cref{Sylvie4alt}. We must show that any $n\in\N$ may be written as $n=x^2+y^2+y+z^2+z$ for some $x,y,z\in\N$. Observe that
\begin{align*}
n=x^2+y^2+y+z^2+z 
&\iff 4n=4x^2+4y^2+4y+4z^2+4z &   
\\
& \iff  4n+2=(2x)^2+(2y+1)^2+(2z+1)^2.
\end{align*}
By Legendre's Three-Square Theorem, there exist
 $a,b,c\in\N$ such that 
$$4n+2=a^2+b^2+c^2.$$
Reducing this equation modulo $4$, we see that
one of these squares must be even and the other two odd, which concludes the proof.
\item  Assume $d=3$.
We have $\sfz^2_{3,1}(q) = 2\sfc_2(q^2)\sfc_3(q)^2$,
so $\frac{1}{2}z^2_{3,1}(n)$ is the number of ways to write $n$ as
the sum of a pronic number and two integers that are sizes of a $3$-core. Let $n\in\N$.
By \Cref{lem_size_3cores}, it suffices to write $n=w^2+y^2+z^2+z$
for some $w,y,z\in \N$, which we can do by \Cref{pos phipsipsi}.

    \item Assume $d\geq 4$.
We have, by \Cref{thm_prod_cocores}, $\sfz^2_{d,1}(q)=2\sfc_d(q)^2\sfc_2(q^2)
=2\sfc_d(q)^2\left(1+\sum\limits_{t> 0}q^{t^2+t}\right)$. 
Thus, for all $n\in\N$, 
$z^2_{d,1}(n)$ is at least twice the coefficient of $q^n$ in $\sfc_d(q)^2$,
so in particular $z^2_{d,1}(n)>c_{d}(n)>0$ by \Cref{grono}. 
\end{itemize}
\item We use a similar strategy as in (1).
\begin{itemize}
\item  Assume $d=1$. Then clearly $z^2_{d,0}(n)>0$ if and only if $n$ is a square. 
\item Assume $d=2$. 
We have \[
\sfz^2_{2,0}(q) = \phi(q)\sfc_2(q)^2=\phi(q)\psi(q)^2=\phi(q)^2\psi(q^2)
\]
by \Cref{Sylvie4alt}. This has positive coefficients by \Cref{pos phipsipsi}.
\item Assume $d=3$.
We have this time $\sfz^2_{3,0}(q) = \phi(q)\sfc_3(q)^2$,
so by \Cref{lem_size_3cores} it suffices to write $n$ as the sum of
a square, another square and a pronic number.
This is exactly the situation of (1), Case $d=3$, so we are done.
\item Assume $d\geq 4$.
Recall that \Cref{thm_prod_cocores} gives us
$\sfz^2_{d,0}(q) = \phi(q)\sfc_d(q)^2
=\left(1+\sum\limits_{t> 0} 2q^{t^2}\right)\sfc_d(q)^2$. 
In particular, for all $n\in\N$, 
$z^2_{d,0}(n)\geq c_{d}(n)>0$ by \Cref{grono}. 
\end{itemize}
\end{enumerate}
\end{proof}

For reasons that will become clear later (see \Cref{sec_RT}), 
we also want to study positivity of the following generating functions:

$$\overline{\sfz^2_{d,1}}(q) 
= \frac{1}{2} \sum_{n\in\N}z^2_{d,1}(n) q^n 
= \sfc_2(q^2) \sfc_d(q)^2 $$
and 
$$
\begin{array}{ccccc}
\sfz^{2,+}_{d,0}(q) 
& =&\ds \sum_{n\in\N} z^{2,+}_{d,0}(n) q^n & =&\ds \left(\sum_{t\in 2\Z} q^{t^2}\right) \sfc_d(q)^2
\\
\sfz^{2,-}_{d,0}(q) 
& =&\ds \sum_{n\in\N} z^{2,-}_{d,0}(n) q^n & =& \ds\left(\sum_{t\in 2\Z+1} q^{t^2}\right) \sfc_d (q)^2.
\end{array}
$$
Thus we have $\sfz^2_{d,0}(q)=\sfz^{2,+}_{d,0}(q)+\sfz^{2,-}_{d,0}(q)$ and $\sfz^2_{d,1}(q)=2\overline{\sfz^2_{d,1}}(q)$. Obviously, we have the following immediate corollary of \Cref{thm_positivity_cocores}.
\begin{Cor}\label{pos_typeBCcocores}
The coefficients of $\overline{\sfz^2_{d,1}}(q)$ are all positive if and only if $d\geq 2$.
\end{Cor}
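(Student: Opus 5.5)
The statement is an immediate consequence of the identity $\sfz^2_{d,1}(q)=2\,\overline{\sfz^2_{d,1}}(q)$, so the plan is simply to transfer \Cref{thm_positivity_cocores}(1) coefficientwise. For every $n\in\N$ the coefficient of $q^n$ in $\overline{\sfz^2_{d,1}}(q)$ is $\frac12 z^2_{d,1}(n)$. This is a nonnegative number: by \Cref{thm_prod_cocores}(1) we have $\overline{\sfz^2_{d,1}}(q)=\sfc_2(q^2)\sfc_d(q)^2$, a product of series with nonnegative integer coefficients. Hence $\frac12 z^2_{d,1}(n)>0$ holds exactly when $z^2_{d,1}(n)>0$.

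The proof therefore has two steps.

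First I would record the factorization $\overline{\sfz^2_{d,1}}(q)=\psi(q^2)\sfc_d(q)^2$, which follows from \Cref{thm_prod_cocores}(1) together with $\sfc_2=\psi$.

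Second, I would note that positivity of all coefficients of $\overline{\sfz^2_{d,1}}$ is equivalent to positivity of all $z^2_{d,1}(n)$. By \Cref{thm_positivity_cocores}(1), the latter holds if and only if $d\geq 2$.

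For the ``only if'' direction one can also argue directly. When $d=1$ we have $\sfc_1(q)=1$, so $\overline{\sfz^2_{1,1}}(q)=\psi(q^2)=\sum_{t\geq0}q^{t^2+t}$, whose coefficient of $q^1$ is $0$ because $1$ is not pronic.

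No step presents a real obstacle. The only point needing care is that halving does not destroy positivity: the coefficients are integers, and the factorization shows $\overline{\sfz^2_{d,1}}$ has integer coefficients.
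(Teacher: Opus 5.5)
Your proposal is correct and is essentially the paper's argument: the paper also obtains this corollary directly from $\sfz^2_{d,1}(q)=2\,\overline{\sfz^2_{d,1}}(q)$ together with \Cref{thm_positivity_cocores}(1). Your extra direct check at $d=1$ (the coefficient of $q^1$ in $\psi(q^2)$ vanishes) is a harmless addition.
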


Regarding $\sfz^{2,\pm}_{d,0}(q)$, we introduce the following notion, inspired by \cite[Theorem 1.1]{OhSun2009}.

\begin{Def}
An \textit{Oh-Sun} number is a pronic number $n=k(k+1)$ such that all prime divisors of $2k+1$ are congruent to $1$ modulo $4$.
\end{Def}

\begin{Thm}\label{thm_typesd_cocores}
\begin{enumerate}
\item  $z^{2,+}_{d,0}(n)>0$ for all $n\in\N$ if and only if $d\geq 2$.
\item  $z^{2,-}_{d,0}(n)>0$ for all $n\in\N\setminus\{0\}$ if and only if $d\geq 3$.
Moreover, $z^{2,-}_{2,0}(n)=0$ if and only if $n=0$ or $n$ is an Oh-Sun number. 
\end{enumerate}
\end{Thm}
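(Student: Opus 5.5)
The plan is to first make the two series explicit in terms of $\phi$ and $\psi$. The even squares give $\sum_{t\in2\Z}q^{t^2}=\phi(q^4)$. Writing an odd $t$ as $\pm(2k+1)$ with $(2k+1)^2=8\cdot\frac{k(k+1)}{2}+1$ gives $\sum_{t\in 2\Z+1}q^{t^2}=2q\,\psi(q^8)$. Hence
\[
\sfz^{2,+}_{d,0}(q)=\phi(q^4)\sfc_d(q)^2,\qquad \sfz^{2,-}_{d,0}(q)=2q\,\psi(q^8)\sfc_d(q)^2 .
\]
The "only if" directions come from $d=1$ (where $\sfc_1=1$ and the series are supported on $4\cdot$squares, resp. odd squares) and from the second statement for $d=2$. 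For $d\geq4$ the argument is as in \Cref{thm_positivity_cocores}. Both $\phi(q^4)$ and $q^{-1}\cdot\frac12\sfz^{2,-}_{d,0}$ have constant term $1$. Therefore $z^{2,+}_{d,0}(n)\geq c_d(n)>0$, and $z^{2,-}_{d,0}(n)\geq 2c_d(n-1)>0$ for $n\geq1$, by \Cref{grono}.

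For (1) with $d=2$, I would use \Cref{Sylvie4alt} to rewrite $\phi(q^4)\psi(q)^2=\phi(q^4)\phi(q)\psi(q^2)$. So I need $n=(2w)^2+y^2+z(z+1)$, i.e.\ $4n+1=(4w)^2+(2y)^2+(2z+1)^2$. Legendre gives $4n+1=a^2+b^2+c^2$ with exactly one of them odd, by reduction mod $4$. It remains to arrange that one of the two even entries is divisible by $4$. Writing the even entries as $2a',2b'$, this fails only when $a',b'$ are both odd.

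This parity fix is the step I expect to be the main obstacle. I would first try to pass to a different representation. One option is to choose $c$ so that $(4n+1-c^2)/4\not\equiv 2\bmod 4$. Another is to work with $16n+4+\dots$ or $8n+\dots$ variants to force a square divisible by $16$. Failing that, I would count representations by three squares with congruence conditions on the entries.

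For $d=3$, \Cref{lem_size_3cores} lets me use $3$-core sizes that are squares, squares plus $1$, or pronic. So it suffices either to write $n=(2w)^2+y^2+z^2+\varepsilon$ with $\varepsilon\in\{0,1\}$, or to use the pronic form just discussed. I would argue by congruence classes mod $8$. If $n\not\equiv3\bmod4$ and $n$ is a sum of three squares, one of the squares is even. Otherwise, I pass to $n-1$ and reduce mod $4$. The residual cases $4^j(8k+7)$ are handled by the pronic representation.

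For (2) and $n\geq1$, I need $n-1=8T+m$ with $T$ triangular and $c_d^{*2}(m)>0$, i.e.\ $n=(2k+1)^2+m$. For $d=3$, I would take $m$ to be a sum of two $3$-core sizes from \Cref{lem_size_3cores}. This amounts to writing $n$ as an odd square plus two numbers that are squares, squares plus $1$, or pronic. I would handle this again via Legendre applied to a suitable $4n+j$, for instance $4n+1$ or $4n+2$, reducing mod $8$ to force one odd entry.

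For $d=2$, the condition is $n=(2k+1)^2+T(a)+T(b)$, i.e.\ $8n+2=8(2k+1)^2+(2a+1)^2+(2b+1)^2$. This is precisely the type of mixed representation treated in \cite[Theorem 1.1]{OhSun2009}. I would translate their characterization of the exceptions into the statement that the zeros are exactly $n=0$ and the Oh–Sun numbers. Then I would exhibit one Oh–Sun number, e.g.\ $n=2$, to show that $d=2$ does indeed fail.
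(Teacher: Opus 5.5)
Your overall plan matches the paper's. The cases $d=1$ and $d\ge 4$ are immediate or follow from Granville--Ono, part (2) with $d=2$ is exactly \cite[Theorem 1.1]{OhSun2009}, and $d=3$ combines the three-square theorem with the $3$-core sizes (squares, squares plus one, pronic numbers).

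\textbf{The gap: part (1) with $d=2$.} You correctly reduce this case to a representation $4n+1=(4w)^2+(2y)^2+(2z+1)^2$, but you leave the parity step open, and none of your proposed repairs can close it using the three-square theorem alone.

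For $n$ odd there is nothing to do: $4n+1\equiv 5\pmod 8$ forces exactly one of the two even coordinates to be divisible by $4$. For $n=2m$ even, $4n+1\equiv 1\pmod 8$ forces the two even coordinates to be congruent mod $4$. What you then need is that every $m\in\N$ has the form $\frac{z(z+1)}{2}+2(u^2+v^2)$, i.e.\ that $x^2+16y^2+16z^2$ represents every integer $\equiv 1\pmod 8$.

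Your repairs do not reach this:
\begin{itemize}
\item Legendre's theorem gives some representation but no control over the classes mod $4$ of the even coordinates.
\item Rescaling to $16n+4$ only reproduces the representations of $4n+1$, since all coordinates must be even.
\item Choosing $c$ first requires knowing that $(4n+1-c^2)/4$ is a sum of two squares for that particular $c$.
\end{itemize}
This is a genuine arithmetic fact about a ternary form. The paper imports it, in the equivalent form $8n+2=32w^2+(2y+1)^2+(2z+1)^2$, as the $(8,2)$-universality of $32X^2+Y^2+Z^2$ proved in \cite{PW2018}. Without such an input, the ``if'' direction of (1) is not established.

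\textbf{Knock-on effects and a wrong example.}
\begin{itemize}
\item The gap also enters your $d=3$ argument for (1). The residual $n=4^j(8k+7)$ with $j\ge 1$ are even, so ``the pronic representation'' is the same unproved step. The paper avoids it: for $n\equiv 0\pmod 4$, write $n-2=x^2+y^2+z^2$ with $x$ even (possible since $n-2\equiv 2\pmod 4$), so $n=x^2+(y^2+1)+(z^2+1)$.
\item In (2) with $d=3$, applying Legendre to $4n+1$ fails in general. For $n=6$, every representation of $25$ has both even coordinates divisible by $4$, so no odd square appears. Instead apply it to $n$, $n-1$ or $n-2$ according to $n \bmod 8$, using that even squares are $0$ or $4 \bmod 8$.
\item $n=2$ is not an Oh--Sun number, since $2k+1=3$. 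Indeed $2=1^2+1+0$ is an odd square plus two triangular numbers. The smallest positive Oh--Sun number is $6$ (with $2k+1=5$), and it does witness the failure for $d=2$.
\end{itemize}
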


\begin{proof}
\begin{enumerate}

\item This situation is similar to Theorem \ref{thm_positivity_cocores}(2) but considering only even square terms of $\phi(q)$.

\begin{itemize}
\item \textit{Case $d=1$.} We  have $\sfz^{2,+}_{d,0}(n) > 0 $ if and only if $n$ is an even square.
\item 
\textit{Case $d=2$.}
We want to write 
$n = (2w)^2 + \frac{y^2+y}{2} + \frac{z^2+z}{2}$
which is equivalent to 
$8n+2 = 32 w^2 + (2y+1)^2+ (2z+1)^2.$
But any expression of the form $8n+2=32w^2+a^2+b^2$
with $a,b\in\N$ forces $a,b$ to be odd.
So it suffices to show that for all $n\in\N$,
$8n+2=32w^2+a^2+b^2$ for some $w,a,b\in\N$.
In other terms, it suffices to know
that the quadratic form $32X^2+Y^2+Z^2$ is $(8,2)$-universal, which 
in fact was proved in 
\cite{PW2018}. 
\item \textit{Case $d=3$.}
We need to show that $n=(2a)^2+b+c$ where $c_3(b),c_3(c)>0$. Clearly this is true if $n=0$, so assume $n>0$. If $n\equiv1$ or $2\bmod 4$ then $n$ is a sum of three squares by Lagrange's theorem, and one of these squares must be even as the squares mod $4$ are $0$ and $1$. The other two squares are sizes of $3$-cores by Lemma \ref{lem_size_3cores}, so we are done in this case. If $n\equiv 3\bmod 4$, write $n-1=x^2+y^2+z^2$ with $x$ even. Then $n=x^2+y^2+(z^2+1)$ does the job, since not only $y^2$ but also $z^2+1$ is the size of a $3$-core, by Lemma \ref{lem_size_3cores}. If $n\equiv 0\bmod 4$, write $n-2=x^2+y^2+z^2$ with $x$ even. Then $n=x^2+(y^2+1)+(z^2+1)$ is the desired expression.
\item \textit{Case $d\geq4$.} We have $\sfz^{2,+}_{d,0}(n) > 0 $ for all $n\geq 0$ by \Cref{grono}.
\end{itemize}

\item Note first that
$\sfz^{2,-}_{d,0}(0)=0$ for all $d\geq 1$,
since $\sfz^{2,-}_{d,0}>0$ implies that an odd square appears in the decomposition of $n$.
So we restrict the study to $n>0$.

\begin{itemize}
\item \textit{Case $d=1$.} We  have $\sfz^{2,-}_{d,0}(n) > 0 $ if and only if $n$ is an odd square.
\item \textit{Case $d=2$.} We  have $\sfz^{2,-}_{d,0}(n) > 0 $ if and only if $n$ is
a sum of an odd square and two triangular numbers.
By \cite[Theorem 1.1]{OhSun2009}, this is equivalent to $n$ not being 
an Oh-Sun number.

\item \textit{Case $d=3$.} We must show for any positive integer $n$, $n=(2a+1)^2+b+c$ for some $a,b,c\in\mathbb{N}$ such that $c_3(b),c_3(c)\neq 0$. First suppose that $n\equiv1,2,3,5$ or $6\bmod 8$. Then $n$ is a sum of three squares by Lagrange's three-square theorem. As any even square is congruent to $0$ or $4$ mod $8$, at least one of these squares must be odd. The remaining two squares are sizes of $3$-cores by Lemma \ref{lem_size_3cores}, so we are done in this case. Next, suppose $n\equiv 7 \bmod 8$ or $n\equiv 4 \bmod 8$. We may write $n-1=x^2+y^2+z^2$ with $x$ odd. Then $n=x^2+(y^2+1)+z^2$, and by Lemma \ref{lem_size_3cores} both $y^2+1$ and $z^2$ are sizes of $3$-cores. Similarly, if $n\equiv 0\bmod 8$, write $n-2= x^2+y^2+z^2$ with $x$ odd. Then $n=x^2+(y^2+1)+(z^2+1)$, with $y^2+1$ and $z^2+1$ being the sizes of two $3$-cores by Lemma \ref{lem_size_3cores} again.
\item \textit{Case $d\geq4$.} We  have $\sfz^{2,-}_{d,0}(n) > 0 $ for all $n>0$ by \Cref{grono}.

\qedhere
\end{itemize}
\end{enumerate}

\end{proof}

\subsection{Defect $0$ unipotent blocks of finite classical groups}\label{sec_RT}
Our goal now is to interpret the combinatorial results we have obtained as results about the modular representation theory of finite groups of Lie type. 
We have seen that Granville and Ono's theorem stating that $c_e(n)>0$ for all $n\in\N$ provided $e\geq 4$ (\Cref{grono}) may be reformulated as the statement that $\mathbb{k}\mathrm{GL}_n({{v}})$ has a defect $0$ unipotent block for all $n\in\N$ provided that the order of ${{v}}\bmod \mathrm{char}(\mathbb{k})$ is greater than or equal to $4$ (\Cref{def0gln}). In this section, we deduce analogous statements for finite symplectic and (special) orthogonal groups from our results on the generating functions of cocores. 

\medskip

By a {\em finite classical group} $G_n({{v}})$, we mean one of the following matrix groups with entries in a finite field $\mathbb{F}_{{v}}$, classified by Dynkin type as follows:
\renewcommand{\arraystretch}{1.2}
\[
\begin{array}{lllll}
\hline 
 \text{Group} & \mathrm{SO}_{2n+1}({{v}}) &  \mathrm{Sp}_{2n}({{v}}) & \mathrm{O}_{2n}^+({{v}}) & \mathrm{O}_{2n}^-({{v}})\\[3pt]\hline
 \text{Type} & B_n & C_n & D_n & {}^2 D_n \\ \hline
 \end{array}
 \]
We will always assume that ${{v}}$ is a power of an odd prime. 

\medskip

Let $G_n({{v}})$ be a finite classical group. The unipotent characters of $G_n({{v}})$ are partitioned into {\em Harish-Chandra series} which record if and how they are obtained by Harish-Chandra induction from proper Levi subgroups. Each Harish-Chandra series is labeled by a unique {\em cuspidal unipotent character}, which is a unipotent character that cannot be obtained by Harish-Chandra induction from a proper Levi subgroup. In the case of $\mathrm{GL}_n({{v}})$, the only cuspidal is the trivial character of the trivial group $S_0=\{1\}$. The theory becomes more interesting outside type $A$.  The cuspidal unipotent characters for the finite classical groups may be parametrized by the number of solutions by $t\in\Z$ to the equations in the table below:
\[
\begin{array}{lllll}
\hline
 \text{Group} & \mathrm{SO}_{2n+1}({{v}}) &  \mathrm{Sp}_{2n}({{v}}) & \mathrm{O}_{2n}^+({{v}}) & \mathrm{O}_{2n}^-({{v}})\\[3pt]\hline
 \text{Unip. Cuspidal(s)}  &  n=t^2+t,  \text{ $t$ even} &  n=t^2+t,  \text{ $t$ odd}  & n=t^2,  \text{ $t$ even} & n=t^2,  \text{ $t$ odd} 
 \\ \hline
\end{array}
\]
Thus, the groups $\mathrm{SO}_{2n+1}({{v}})$ and $ \mathrm{Sp}_{2n}({{v}})$ each have a unique unipotent cuspidal if and only if $n$ is a pronic number; the group $ \mathrm{O}_{2n}^+({{v}})$ has a unique unipotent cuspidal when $n=0$, and otherwise has two distinct unipotent cuspidals if and only if $n$ is an even square bigger than $0$; while the group $ \mathrm{O}_{2n}^-({{v}})$ has two distinct unipotent cuspidals if and only if $n$ is an odd square \cite{FongSrinivasan1989,Wald04}.

\medskip

Within a given Harish-Chandra series, the unipotent characters are labeled by the irreducible representations of a Weyl group.
In the case of the finite classical groups described above, these Weyl groups are all of type $B$, with the exception of the principal series of $ \mathrm{O}_{2n}^+({{v}})$ which is of type $D_n$.
As mentioned earlier, Lusztig introduced $2$-symbols to parametrize the unipotent characters of the finite classical groups \cite{Lusztig1977}. 
We will use charged bipartitions $|\bla,\bsig\rangle$. The difference worth noting is that Lusztig's symbols are modded out by cyclic permutation, whereas we do not consider $|(\la^1,\la^2),(\si_1,\si_2)\rangle$ as the same symbol as $|(\la^2,\la^1),(\si^2,\si^1)\rangle$. Instead, we will assign different charged bipartitions to type $B$ and type $C$ characters.
The parametrization of the charges $\bsig$ that we will use comes from a categorical action, as explained in \cite{DVV2}. The relevant charges $\bsig$ label the unipotent cuspidals by the discussion above. The charge $\bsig$ indexes the Harish-Chandra series of the unipotent character, while the size of $\bla$ records its cuspidal depth.

\medskip

The relevant charges $\bsig$ for the finite classical groups are given by two separate formulas, one for types $B$ and $C$ together and one for types $D$ and ${^2}D$ together, and are parametrized in both cases by $t\in\Z$. For each $t\in\Z$ and $\bla\in\cP^2$, there is a unique symbol $|\bla,(-t,t+1)\rangle$ labeling a unipotent character of type $B$ or $C$. For each $t\in\Z$, $t\neq 0$, and $\bla\in\cP^2$, there is a unique symbol $|\bla,(-t,t)\rangle$ labeling a unipotent character of type $D$ or $^{2}D$. When $t=0$ and $\bla=(\lambda^1,\lambda^2)$, $\la^2\neq\la^1$, the symbols $|(\lambda^1,\lambda^2),(0,0)\rangle$ and $|(\lambda^2,\lambda^1),(0,0)\rangle $ label a single unipotent character of type $D$; while for each partition $\lambda$, there are two unipotent characters associated to the symbol $|(\lambda,\lambda),(0,0)\rangle$, which may be labeled as $|(\lambda,\lambda),(0,0)\rangle^+$ and $|(\lambda,\lambda),(0,0)\rangle^-$. This is because $\bsig=(0,0)$ is the charge for the principal series of $\mathrm{O}_{2n}^+({{v}})$ (induced from the trivial character of the trivial group) and the unipotent characters in the principal series are parametrized by the irreducible representations of the Weyl group $D_n$.

\medskip

The table below summarizes our assignment of charges $\bsig$ depending on $t\in\Z$ 
to the unipotent characters of the four types of finite classical groups and the ranks of the resulting symbols $|\bla,\bsig\rangle$, $\bla\in\cP^2$.
\[
\begin{array}{lllll}
\hline
 \text{Type} & B & C & D & {^2}D\\[3pt]
 \hline
  \text{Charges } \bsig & (-t,t+1),  \text{ $t$ even}  & (-t,t+1),  \text{ $t$ odd} &  (-t,t),   \text{ $t$ even}  &(-t,t),  \text{ $t$ odd} \\
    \text{Rank of }|\bla,\bsig\rangle  & t^2+t+|\bla| &  t^2+t+|\bla| & t^2+|\bla|& t^2+|\bla| 
    \\ \hline
   \end{array}
\]
We set $\mathsf{Unip}(G_n)$ to be the set of unipotent characters of $G_n({{v}})$, for $G_n\in\{B_n,C_n,D_n,{^2}D_n\}$. By definition, $\rk(|\bla,\bsig\rangle)=n$ if and only if $|\bla,\bsig\rangle$ labels a unipotent character of a group of type $G_n$.

\medskip

For $G$ one of the Dynkin types $B$, $C$, $D$ or ${}^2D$, we then set
\[
\mathsf{Unip}(G)=\bigcup_{n\in\N} \mathsf{Unip}(G_n),
\]
the set of unipotent characters of all the groups $G_n$ for $n\in\N$. Thus, if $G\in\{B,C\}$ we have a parametrization of $\mathsf{Unip}(G)$ by charged bipartitions as follows:
\[
\mathsf{Unip}(G)=\begin{cases}
\{|\bla,\bsig\rangle\in\cU^2_1\;\mid\; \bsig=(-t,t+1)\hbox{ for some }t\in2\Z \}& \hbox{ if }G=B,\\
\{|\bla,\bsig\rangle\in\cU^2_1\;\mid\; \bsig=(-t,t+1)\hbox{ for some }t\in2\Z +1 \}& \hbox{ if }G=C.
\end{cases}
\]
If $G=D$ then we have to slightly tweak the parametrization in the case of the principal series of type $D$ as explained above, yielding:
\begin{align*}
\mathsf{Unip}(D)&=
\{|\bla,\bsig\rangle\in\cU^2_0\;\mid\; \bsig=(-t,t)\hbox{ for some }t\in2\Z, t\neq 0 \}\\
&\qquad\qquad\cup\{|\bla,(0,0)\rangle\;\mid\; \la^1\neq\la^2\}/\left(|\la^1.\la^2,(0,0)\rangle\sim|\la^2.\la^1,(0,0)\rangle\right)
\cup\{|\la.\la,(0,0)\rangle^\pm\}.
\end{align*}
If $G={^2}D$, then we have the parametrization
\[
\mathsf{Unip}({^2}D)=\{ |\bla,\bsig\rangle\in\cU^2_0\;\mid\; \bsig=(-t,t)\hbox{ for some }t\in2\Z+1 \}.
\]

\medskip

We then define the generating function of the unipotent characters of type $G\in\{B,C,D,{^2}D\}$ as follows:
\[
\sfu_G({{q}})=\sum_{|\bla,\bsig\rangle\in\mathsf{Unip}(G) } {{q}}^{\mathrm{rk}(|\bla,\bsig\rangle)}.
\]
We thus have 
\[
\sfu_B({{q}})+\sfu_C({{q}})=\sfr^2_1({{q}}),
\]
while $\sfu_D({{q}})+\sfu_{{^2}D}({{q}})$ is very close to being equal to $\sfr^2_0({{q}})$. Note that $\sfu_B({{q}})=\sfu_C({{q}})$, as the abaci labeling type $C$ are the flip upside-down of the abaci labeling type $B$, and this does not affect the rank by \Cref{lem_invariance}.

\medskip

Cocores enter the story when we consider modular representations of these groups, that is, representations of $G_n({{v}})$ over a field $\mathbb{k}$ of characteristic ${{l}}>0$. We always assume that ${{l}}$ is odd and coprime to ${{v}}$, the latter condition being known as {\em non-defining characteristic}. 
Let $e$ be the multiplicative order of ${{v}}$ in $\mathbb{k}^\times$. The modular representation theory of $G_n({{v}})$ in non-defining characteristic falls into two cases:
\begin{itemize}
\item An easier case: when $e$ is odd (the {\em linear prime case}), the representation theory is determined from that of $\mathrm{GL}_m({{v}})$ for various $m\leq n$, which is understood \cite{GruberHiss}.
\item A harder case: when $e$ is even (the {\em unitary prime case}), there are fundamental open questions (such as determining decomposition numbers).\end{itemize}
Fong and Srinivasan proved the analogue of the Nakayama Conjecture for the modular representations of finite classical groups \cite{FongSrinivasan1989}:
\begin{itemize}
\item if $e$ is odd, two unipotent characters belong to the same block of $\mathbb{k}G_n({{v}})$ if and only if their symbols have the same pair of charged $e$-cores obtained by taking the $e$-core of each partition in the associated bipartition separately and keeping the charge the same.
\item if $e$ is even, two unipotent characters belong to the same block of $\mathbb{k}G_n({{v}})$ if and only if their symbols have the same $d$-cocore, where $d=\frac{e}{2}$.
\end{itemize}
(We note that in this theory, symbols i.e. abaci are only defined up to cyclic permutation of the two rows, and the same is true for the cocores, as they are also symbols).

\medskip

We may now rephrase \Cref{thm_positivity_cocores} and \Cref{thm_typesd_cocores} as statements about the existence of defect $0$ unipotent blocks of finite classical groups. For $G\in\{B,C,D,{^{{2}}D}\}$, let $\sfz_{d,G}(q)$ be the generating function of the $d$-cocores that are symbols of type $G$.
\begin{Cor}\label{cor_fglt}
Let ${{v}}$ be a power of an odd prime, and let $G_n({{v}})\in\{ \mathrm{SO}_{2n+1}({{v}}),  \;\mathrm{Sp}_{2n}({{v}}),\; \mathrm{O}_{2n}^+({{v}}), \;\mathrm{O}_{2n}^-({{v}})\}$ be a finite classical group. Consider $\mathbb{k}G_n({{v}})$ for a field $\mathbb{k}$ of odd characteristic ${{l}}>0$ not dividing ${{v}}$, and let $e$ be the order of ${{v}}$ modulo ${{l}}$.
\begin{enumerate}
\item Suppose $e$ is odd (so that ${{l}}$ is a linear prime for $G_n({{v}})$). Then $G_n({{v}})$ has a unipotent $l$-block of defect $0$ for every $n\in\N$ if and only if $e\geq 3$.
\item Suppose $e$ is even (so that ${{l}}$ is a unitary prime for $G_n({{v}})$). 
	\begin{enumerate}
	\item If $G_n({{v}})\in\{\mathrm{SO}_{2n+1}({{v}}), \; \mathrm{Sp}_{2n}({{v}})\}$, then $G_n(v)$ has a unipotent $l$-block of defect $0$ for every $n\in\N$ if and only if $e\geq 4$. 
	\item For every $n\in\N$ there is a defect $0$ unipotent $l$-block of at least one of $\mathrm{O}_{2n}^+({{v}}),\;\mathrm{O}_{2n}^-({{v}})$ if and only if $e\geq 4$. 
	\end{enumerate}
\end{enumerate}
\end{Cor}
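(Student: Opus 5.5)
Part (1), the linear prime case, and part (2), the unitary prime case, will be handled separately. In each, the block-theoretic statement is first translated into a combinatorial one via Fong--Srinivasan. The positivity results already proved are then invoked, or, for part (1), the Granville--Ono circle of ideas.

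\emph{Part (2).} Write $e=2d$. By Fong--Srinivasan, two unipotent characters of $G_n(v)$ lie in the same $l$-block if and only if their symbols have the same $d$-cocore. So a unipotent block of defect $0$ corresponds exactly to a symbol of rank $n$ of the relevant type that is its own $d$-cocore.

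For types $B$ and $C$ the flip symmetry noted above gives $\sfz_{d,B}(q)=\sfz_{d,C}(q)$. Since $\sfz_{d,B}+\sfz_{d,C}=\sfz^2_{d,1}$, each of them equals $\overline{\sfz^2_{d,1}}(q)$. The flip swaps the parity of $t$, since $(-t,t+1)$ flipped is $(t+1,-t)=(-t',t'+1)$ with $t'=-t-1$. It also preserves the property of being a $d$-cocore, because cohooks are symmetric in the two rows. By \Cref{pos_typeBCcocores}, all coefficients are positive if and only if $d\geq 2$, that is, $e\geq 4$; this gives (2)(a).

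For (2)(b), cocores of type $D$ or ${}^2D$ and rank $n$ exist if and only if $z^2_{d,0}(n)>0$. The identification of pairs in the principal series only changes multiplicities, not whether the count is positive. Hence (2)(b) follows from \Cref{thm_positivity_cocores}(2). For the ``only if'' direction in both subparts I will exhibit an $n$ with no cocore when $d=1$, for example $n=2$: it is neither pronic (needed for (a)) nor a square (needed for (b)).

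\emph{Part (1).} Here the blocks are determined by taking the $e$-core of each component while keeping the charge fixed. A defect $0$ block of $G_n(v)$ therefore corresponds to a symbol $|\bla,\bsig\rangle$ of the right type and rank $n=|\bla|+\rk(\bsig)$ in which both $\la^1$ and $\la^2$ are $e$-cores. So I need, for every $n$, a representation
\[
n=\rk(\bsig)+a+b \quad\text{with } c_e(a),c_e(b)>0,
\]
where $\rk(\bsig)$ is in the set allowed for the given type: $t^2+t$ with $t$ even or odd, or $t^2$ with $t$ even or odd. I choose $\bsig$ to have rank $0$ whenever the type permits this. For types $B$ and $D$ take $t=0$. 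For type $C$ the rank is at least $2$ ($t=-1$ gives $(1,0)$, of rank $0$; in fact $t=-1$ is odd, so rank $0$ is available). For type ${}^2D$, $t=\pm1$ gives rank $1$.

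It then suffices that every $n$, or every $n\geq 1$ for ${}^2D$ after subtracting $1$, is a sum of two sizes of $e$-cores. For $e\geq 4$ this is immediate from \Cref{grono}: take $b=0$. For $e=3$ I use the generating function $\sfc_3(q)^2$. Its coefficient is positive for every $n$ by \Cref{lem_size_3cores} together with Legendre: write $n$ as a square, or a square plus one, or a pronic number plus another such size, using the case analysis of \Cref{thm_typesd_cocores}(1), case $d=3$.

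The main obstacle is this $e=3$ case. I expect to argue that $\sfc_3(q)^2$ has all coefficients positive, either from Robbins' explicit description of $3$-cores or by checking residues mod $4$ as in the earlier proofs. For type ${}^2D$ I also need $n-1$ to be such a sum for all $n\geq 1$; if that fails, I will instead use the odd square $t^2$ with larger $t$. The ``only if'' direction for $e=1$ is trivial, since only rank-$\rk(\bsig)$ cuspidal symbols survive, which misses $n=2$.
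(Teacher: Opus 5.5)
Part (2) is correct and follows the paper's argument. There is a genuine gap in part (1), in the case $e=3$, which is the only nontrivial case there.

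By fixing a charge of rank $0$ (rank $1$ for ${}^2D$), you reduce to showing that every $m\in\N$ is a sum of two $3$-core sizes, i.e.\ that $\sfc_3(q)^2$ has positive coefficients. The tools you name cannot give this. \Cref{lem_size_3cores} only supplies the sizes $k^2$, $k^2+1$ and $k(k+1)$. A sum of two of these has one of the shapes $x^2+y^2+c$ ($c=0,1,2$), $x^2+y^2+y+c$ ($c=0,1$), or $x^2+x+y^2+y$. Each shape is a condition on a sum of two squares; for example, $n=x^2+y^2+y$ iff $4n+1=(2x)^2+(2y+1)^2$. So together these shapes reach only a set of $n$ of density zero, and no congruence case analysis can repair that.

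The argument you want to imitate, \Cref{thm_typesd_cocores}(1) with $d=3$, works only because it has \emph{three} summands, $n=(2a)^2+b+c$. There $(2a)^2$ is the rank of the charge, not a $3$-core size. The fact you need is in fact true: $\sum_{a+b=n}c_3(a)c_3(b)=\sigma(3n+2)/3$. This follows because $6c_3(a)$ is the number of representations of $3a+1$ by $x^2+xy+y^2$, combined with the known representation numbers of $x^2+xy+y^2+z^2+zw+w^2$. None of this appears in your proposal.

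The repair is the paper's approach: do not fix the charge. Sum over all charges of a given type. The rank generating functions of symbols whose two components are $e$-cores are then:
\begin{itemize}
\item $\psi(q^2)\sfc_e(q)^2=\overline{\sfz^2_{e,1}}(q)$ for types $B$ and $C$;
\item $\sfz^{2,+}_{e,0}(q)$ for type $D$ and $\sfz^{2,-}_{e,0}(q)$ for type ${}^2D$.
\end{itemize}
These are exactly the cocore generating functions with $d$ replaced by $e$. \Cref{pos_typeBCcocores} and \Cref{thm_typesd_cocores} then settle odd $e$ in both directions. The charge rank $t^2+t$ or $t^2$ supplies the third summand in Legendre's theorem.

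A separate slip: $2=1\cdot 2$ is pronic, so $n=2$ is not a witness for types $B$ and $C$ in either part. The relevant series, $\psi(q^2)$ for $e=1$ in (1) and for $d=1$ in (2)(a), has coefficient $1$ at $q^2$. Use $n=3$ instead, or simply rely on the equivalences already stated in the cited results.
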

\begin{proof} 
\begin{enumerate}
\item This follows from the characterization of the blocks in the linear prime case due to Fong and Srinivasan and \Cref{thm_typesd_cocores}. By \Cref{rk_ell=123}(2), the rank of $|\bla,\bsig\rangle\in\cU^2_1$ is $|\bla|+t^2+t$ for some $t\in\Z$, while the rank of $|\bla,\bsig\rangle\in\cU^2_0$ is $|\bla|+t^2$ for some $t\in\Z$.
By the parametrization of the type $B$ and $C$ symbols, the generating function with respect to the rank of all $|\bla,\bsig\rangle$ of type $B$ or $C$ such that $\bla$ is a pair of $e$-cores is $\psi({{q^2}})\sfc_e({{q}})^2=\overline{\sfz^2_{e,1}}({{q}})$. (We are enumerating all pairs of $e$-cores for each charge $\bsig$ of a given type, say type $B$, and these charges have rank $t^2+t=0,2,6,\ldots$) Likewise, using the parametrization of the type $D$ and $^2D$ symbols, the generating function with respect to rank of 
all $|\bla,\bsig\rangle$ of type $D$ such that $\bla$ is a pair of $e$-cores is
 $\ds \left(\sum_{t\in 2\Z} q^{t^2}\right)\sfc_e({{q}})^2=\sfz^{2,+}_{e,0}({{q}})$, while for type $^2D$ symbols it is $\ds \left(\sum_{t\in 2\Z+1} q^{t^2}\right)\sfc_e({{q}})^2=\sfz^{2,+}_{e,0}({{q}})$. (Again, we take the ranks of all type $D$ charges, which are all $t^2$, $t\in\Z$ even, and then for each charge we count all pairs of $e$-cores. Likewise for type $^2D$, where the charges have rank $t^2$, $t\in\Z$ odd). In types $B,C,{^{{2}}D}$ these are precisely the generating functions of the defect $0$ blocks. In type $D$, we have the issue explained above that when $\bsig=(0,0)$, some symbols label two characters while other symbols label half a character. However, this is not important if we only care about positivity of the coefficients. 
	\begin{enumerate}
	\item Set $d=\frac{e}{2}$. We have $\sfz_{d,B}({{q}})=\sfz_{d,C}({{q}})=\overline{\sfz^2_{d,1}}({{q}})=\psi({{q}}^2)\sfc_d({{q}})^2$. The result then follows from \Cref{pos_typeBCcocores}. 
	\item Set $d=\frac{e}{2}$. We have  $\sfz_{d,D}({{q}})+ \sfz_{d,{^2D}}({{q}})=\sfz^2_{d,0}(q)$. The result then follows from \Cref{thm_positivity_cocores}.
	 \end{enumerate}
\end{enumerate}
\end{proof}
 From \Cref{cor_fglt}, we conclude that  $\mathrm{SO}_{2n+1}({{v}})$ and $\mathrm{Sp}_{2n}({{v}})$ both have a defect $0$ unipotent $l$-block if and only if $e\geq 3$.
 
 \begin{Conj}\label{conj_typesd}
 Let ${{v}}$ be a power of an odd prime, let $\mathbb{k}$ be a field of odd characteristic ${{l}}>0$ not dividing ${{v}}$, and let $e$ be the order of ${{v}}$ modulo ${{l}}$. Then $\mathrm{O}_{2n}^+({{v}})$ has a unipotent $l$-block of defect $0$ for every $n\in\N$ if and only if $e\geq 3$, while $\mathrm{O}_{2n}^-({{v}})$ has a unipotent $l$-block of defect $0$ for every $n\in\N$ if and only if $e=3$ or $e\geq 5$.
 \end{Conj}
 
 The conjecture is true for odd $e$ by \Cref{cor_fglt}, so the conjecture is really about what happens when $e$ is even. The results of \Cref{escore_gf_sec} allow us to establish some cases of this conjecture.  \Cref{thm_gf_escores} says that $\sfc_{4,(0,2)}(q)$ has positive coefficients while $\sfc_{4,(0,0)}(q)$ does not, and $\sfz^2_{2,0}(q)=\sfc_{4,(0,2)}(q) + 2q \sfc_{4,(0,0)}(q)$. Moreover, as $(0,2)=(0,0)+2\brho$, we see that $\sfc_{4,(0,2)}(q)=\sfz_{2,D}({{q}})$, while $(0,0)\sim(1,1)=(-1,1)+2\brho$ and thus $2q\sfc_{4,(0,0)}(q)= \sfz_{2,^2D}({{q}})$.
Thus \Cref{conj_typesd} is true for $e=4$. If $4$ divides $e$, then $\sfc_{e,(0,2)}(q)$ has positive coefficients by \Cref{ecore_div}. This implies that  $\sfz_{\frac{e}{2},D}({{q}})$
has positive coefficients for any $e=8m+4$, $m\geq 0$, while  $\sfz_{\frac{e}{2},{^2D}}({{q}})$ has positive coefficients for any $e=8m$, $m\geq 1$. In the case $e=6$, the conjecture is equivalent to the statement that both $\sfc_{6,(0,1)}(q)$ and $\sfc_{6,(0,3)}(q)$ have positive coefficients, which appears to be true from a computer calculation.

 %%%%%%%%%%%%%%%%%%%%%%%%%%%%%%%%%%%%%
 %%%%%%%%%%%%%%%%%%%%%%%%%%%%%%%%%%%%%

\section{Enumerating spetsial cores}\label{section_spetsialcores}
\label{sec_enum_spets}
In this section we find the generating functions for cuspidal symbols, $\ell$-symbols, and spetsial $d$-cores in $\cU^\ell_0$ and $\cU^\ell_1$ and settle the question of positivity of their coefficients. The generating functions of spetsial $d$-cores are of interest for enumerating the $\Phi_e$-blocks of imprimitive spetses that consist of a single $\ell$-symbol, where $e=d\ell$. However, we also find the combinatorics interesting in that it naturally develops the level $2$ case to higher levels, in the process revealing connections to number theory (sums of squares, Theta series of lattices). We will also use the results from this section to obtain some generating functions of Jacon and Lecouvey's $(e,\bs)$-cores in \Cref{escore_gf_sec}.

\subsection{Rank generating functions of cuspidal symbols}\label{rk_cusp}
We have seen in Lemma \ref{rk_formula} that the rank of a charged $\ell$-partition $|\bla,\bsig\rangle$ may be expressed as $|\bla|$ plus a non-negatively-valued function of the charge $\bsig$, which we denoted $\rk(\bsig)$. If we restrict our attention to symbols whose charges $\bsig$ satisfy $|\bsig|=\si$ for a fixed $\si\in\mathbb{Z}$, then $\rk(\bsig)$ is given by a quadratic polynomial. This is particularly interesting in two cases: $\si=1$ and $\si=0$. These are the cases relevant for parametrizing the unipotent characters of spetses of types $G(\ell,1,n)$ and $G(\ell,\ell,n)$, respectively. In the case $\si=0$ it turns out that $\rk(\bsig)$ is given by a quadratic form, while in the case of $\si=1$ it has additional linear terms.

\medskip

Recall that for $\si\in\mathbb{Z}$, 
$\cU_\si^\ell=\{|\bla,\bsig\rangle\mid \si_1+\si_2+\ldots+\si_\ell=\si\}$ where $\bsig=(\si_1,\ldots,\si_\ell)$. 
In particular, 
\[
\cU_0^\ell=\{|\bla,\bsig\rangle\mid \si_1+\si_2+\ldots+\si_\ell=0\},\quad\quad\quad
\cU_1^\ell=\{|\bla,\bsig\rangle\mid \si_1+\si_2+\ldots+\si_\ell=1\}.
\]
\begin{Def}
Call $|\bla,\bsig\rangle\in\cU^\ell$ a {\em cuspidal symbol} if $\bla$ is the empty $\ell$-partition $\bemp$.
\end{Def}
By  \Cref{rk bsig}, we may identify $\rk(\bsig)$ with the rank of the cuspidal symbol $|\bemp,\bsig\rangle$.
We are going to study the rank generating functions of cuspidal symbols in $\cU_\si^\ell$ for $\si=0$ and $\si=1$.

\begin{Lem}\label{polynomialrankformulas}
Let $\bsig=({\si}_1,{\si}_2,\ldots,{\si}_\ell)\in\mathbb{Z}^\ell$.
\begin{enumerate}
\item Suppose ${\si}_1+{\si}_2+\ldots+{\si}_\ell=1$. Then 
\[
\rk(\bsig)=\sum_{i=1}^{\ell-1} ({\si}_i^2-{\si}_i) + \sum_{1\leq i<j\leq \ell-1} {\si}_i{\si}_j.
\]
\item
Suppose ${\si}_1+{\si}_2+\ldots+{\si}_\ell=0$. Then 
\[
\rk(\bsig)=\sum_{i=1}^{\ell-1} {\si}_i^2 + \sum_{1\leq i<j\leq \ell-1} {\si}_i{\si}_j.
\]
\end{enumerate}
\end{Lem}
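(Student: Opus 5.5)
Our starting point is the explicit formula \Cref{def_rk_charge}, $\rk(\bsig)=\left\lceil \frac{Y-\ell+1}{2\ell}\right\rceil$ with $Y=\sum_{i\neq j}\si_i(\si_i-\si_j)$. First we simplify $Y$. Writing $Q=\sum_i\si_i^2$ and $\si=\sum_i\si_i$, we have
\[
Y=(\ell-1)Q-\sum_{i\neq j}\si_i\si_j=(\ell-1)Q-(\si^2-Q)=\ell Q-\si^2 .
\]
Hence $\rk(\bsig)=\left\lceil \frac{\ell Q-\si^2-\ell+1}{2\ell}\right\rceil$. This is the only place where the definition of the rank enters.

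For (2), put $\si=0$. Then $\rk(\bsig)=\left\lceil \frac{Q}{2}-\frac{\ell-1}{2\ell}\right\rceil$. Since $\si=0$, $Q\equiv \si^2= 0 \bmod 2$, so $Q/2$ is an integer. Because $0\le\frac{\ell-1}{2\ell}<1$, the ceiling equals $Q/2$. Next, eliminate $\si_\ell=-(\si_1+\dots+\si_{\ell-1})$:
\[
Q=\sum_{i<\ell}\si_i^2+\Big(\sum_{i<\ell}\si_i\Big)^2=2\sum_{i<\ell}\si_i^2+2\sum_{1\le i<j\le\ell-1}\si_i\si_j .
\]
Dividing by two gives the claimed quadratic form.

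For (1), put $\si=1$. Then $\rk(\bsig)=\left\lceil \frac{\ell(Q-1)}{2\ell}\right\rceil=\left\lceil\frac{Q-1}{2}\right\rceil$. Since $\si=1$, $Q\equiv\si^2=1\bmod 2$, so this equals $(Q-1)/2$. Substituting $\si_\ell=1-\sum_{i<\ell}\si_i$ gives
\[
Q=\sum_{i<\ell}\si_i^2+1-2\sum_{i<\ell}\si_i+\Big(\sum_{i<\ell}\si_i\Big)^2=1+2\sum_{i<\ell}(\si_i^2-\si_i)+2\sum_{1\le i<j\le\ell-1}\si_i\si_j,
\]
and $(Q-1)/2$ is exactly the stated expression.

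The only delicate point is handling the ceiling. This is resolved by two observations: the parity identity $Q\equiv\si^2 \pmod 2$ (because $x^2\equiv x$), and the fact that the correction term $\frac{\ell-1}{2\ell}$ lies in $[0,1)$ in case (2) and vanishes in case (1). Everything else is routine algebra.
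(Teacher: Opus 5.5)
Your proof is correct and follows the same route as the paper. Both start from the closed formula $\lceil (Y-\ell+1)/(2\ell)\rceil$, compute $Y$, and eliminate the last coordinate. The only difference is bookkeeping: you first record the symmetric identity $Y=\ell\sum_i\sigma_i^2-\bigl(\sum_i\sigma_i\bigr)^2$ and dispose of the ceiling by a parity argument before substituting. The paper instead substitutes for $\sigma_\ell$ directly inside $Y$ and reads off $Y=2\ell(\cdots)+\ell-1$ (resp.\ $Y=2\ell(\cdots)$), which is messier to write but makes the integrality automatic.
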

\begin{proof}
By Lemma \ref{rk_formula}, we have $\rk(\bsig)=\left\lceil \frac{Y-\ell+1}{2\ell}\right\rceil$ where $Y=\sum\limits_{i=1}^\ell {\si}_i\sum\limits_{j\neq i} ({\si}_i-{\si}_j)$. We may write
\begin{align*}
Y&=\sum_{i=1}^{\ell-1}{\si}_i\sum_{1\leq i<j\leq \ell-1}({\si}_i-{\si}_j)+\sum_{i=1}^{\ell-1}{\si}_i({\si}_i-{\si}_\ell)+{\si}_\ell\sum_{j=1}^{\ell-1}({\si}_\ell-{\si}_j)\\
&=(\ell-1)\sum_{i=1}^{\ell-1}{\si}_i^2-2\left(\sum_{1\leq i<j\leq \ell-1}{\si}_i{\si}_j\right)-2{\si}_\ell\sum_{i=1}^{\ell-1}{\si}_i+(\ell-1){\si}_\ell^2.
\end{align*}
\begin{enumerate}
\item If ${\si}_1+{\si}_2+\ldots+{\si}_\ell=1$, we have ${\si}_\ell=1-\sum\limits_{i=1}^{\ell-1}{\si}_i$. Noting that \[{\si}_\ell^2=1+\sum\limits_{i=1}^{\ell-1}{\si}_i^2+2\sum\limits_{1\leq i<j\leq \ell-1}{\si}_i{\si}_j-2\sum\limits_{i=1}^{\ell-1} {\si}_i,\] 
we find that
\begin{align*}
Y&=(\ell-1)\sum_{i=1}^{\ell-1}{\si}_i^2-2\left(\sum_{1\leq i<j\leq \ell-1}{\si}_i{\si}_j\right)-2\left (1-\sum_{i=1}^{\ell-1}{\si}_i \right)\sum_{i=1}^{\ell-1}{\si}_i+\left(1-\sum_{i=1}^{\ell-1}{\si}_i \right)^2\\
&=(\ell-1)\sum_{i=1}^{\ell-1}{\si}_i^2-2\left(\sum_{1\leq i<j\leq \ell-1}{\si}_i{\si}_j\right)-2\sum_{i=1}^{\ell-1}{\si}_i+2\sum_{i=1}^{\ell-1}{\si}_i^2+4\left(\sum_{1\leq i<j\leq \ell-1}{\si}_i{\si}_j\right)\\
&\qquad\qquad+(\ell-1)\left(1+\sum\limits_{i=1}^{\ell-1}{\si}_i^2+2\sum\limits_{1\leq i<j\leq \ell-1}{\si}_i{\si}_j-2\sum\limits_{i=1}^{\ell-1} {\si}_i\right)\\
&=2\ell\left( \sum_{i=1}^{\ell-1}({\si}_i^2-{\si}_i)+\sum_{1\leq i<j\leq \ell-1}{\si}_i{\si}_j\right)+\ell-1.
\end{align*}
Thus 
\begin{align*}
\rk(\bsig)=\left\lceil \frac{ Y-\ell+1}{2\ell}\right\rceil&=\left\lceil \frac{ 2\ell\left( \sum_{i=1}^{\ell-1}({\si}_i^2-{\si}_i)+\sum_{1\leq i<j\leq \ell-1}{\si}_i{\si}_j\right)+\ell-1 -\ell+1}{2\ell}\right\rceil\\
&=\sum_{i=1}^{\ell-1}({\si}_i^2-{\si}_i)+\sum_{1\leq i<j\leq \ell-1}{\si}_i{\si}_j.
\end{align*}
\item
If ${\si}_1+{\si}_2+\ldots+{\si}_\ell=0$, we have ${\si}_\ell=-\sum\limits_{i=1}^{\ell-1}{\si}_i$ and ${\si}_\ell^2=\sum\limits_{i=1}^{\ell-1}{\si}_i^2+2\left(\sum\limits_{1\leq i<j\leq \ell-1}{\si}_i{\si}_j\right)$. Then
\begin{align*}
Y&=(\ell-1)\sum_{i=1}^{\ell-1}{\si}_i^2-2\left(\sum_{1\leq i<j\leq \ell-1}{\si}_i{\si}_j\right)+2\left(\sum\limits_{i=1}^{\ell-1}{\si}_i\right)^2+(\ell-1)\left(\sum\limits_{i=1}^{\ell-1}{\si}_i\right)^2\\
&=2\ell\left( \sum_{i=1}^{\ell-1}{\si}_i^2+\sum_{1\leq i<j\leq \ell-1}{\si}_i{\si}_j\right),
\end{align*}
from which 
\begin{align*}
\rk(\bsig)=\left\lceil \frac{ Y-\ell+1}{2\ell}\right\rceil&=\left\lceil \frac{ 2\ell\left( \sum_{i=1}^{\ell-1}{\si}_i^2+\sum_{1\leq i<j\leq \ell-1}{\si}_i{\si}_j\right) -\ell+1}{2\ell}\right\rceil\\
&=\sum_{i=1}^{\ell-1}{\si}_i^2+\sum_{1\leq i<j\leq \ell-1}{\si}_i{\si}_j.
\end{align*}

\end{enumerate}
\end{proof}

\medskip
Next, consider the generating function with respect to rank of all $|\bemp,\bsig\rangle\in\cU_1^\ell$:
\[
\sfr^\ell_1(q):=\sum_{\substack{\bsig\in\mathbb{Z}^\ell \\ \si_1+\si_2+\ldots+\si_\ell=1}} q^{\rk(\bsig)}.
\]
Similarly, consider the generating function with respect to rank of all $|\bemp,\bsig\rangle\in\cU_0^\ell$:
\[
\sfr^\ell_0(q):=\sum_{\substack{\bsig\in\mathbb{Z}^\ell \\ \si_1+\si_2+\ldots+\si_\ell=0}} q^{\rk(\bsig)}.
\]

By Lemma \ref{polynomialrankformulas}, we have:
\begin{equation}\label{charge_gf_si=1}
\sfr^\ell_1(q)=\sum_{{\si}_1,{\si}_2,\ldots,{\si}_{\ell-1}\in\mathbb{Z}}q^{\sum\limits_{i=1}^{\ell-1}({\si}_i^2-{\si}_i)+\sum\limits_{1\leq i<j\leq \ell-1}{\si}_i{\si}_j}=\sum_{{\si}_1,{\si}_2,\ldots,{\si}_{\ell-1}\in\mathbb{Z}}q^{\sum\limits_{i=1}^{\ell-1}({\si}_i^2+{\si}_i)+\sum\limits_{1\leq i<j\leq \ell-1}{\si}_i{\si}_j}
\end{equation}
and 
\begin{equation}\label{charge_gf_si=0}
\sfr^\ell_0(q)=\sum_{{\si}_1,{\si}_2,\ldots,{\si}_{\ell-1}\in\mathbb{Z}}q^{\sum\limits_{i=1}^{\ell-1}{\si}_i^2+\sum\limits_{1\leq i<j\leq \ell-1}{\si}_i{\si}_j}.
\end{equation}

\medskip
It is easy to verify the following formulas.
\begin{Lem}\label{rksumsquares} We have
\[
q\sfr^\ell_1(q^2)=\sum_{\substack{{\si}_1,{\si}_2,\ldots,{\si}_\ell\in\mathbb{Z} \\ {\si}_1+{\si}_2+\ldots+{\si}_\ell=1}}q^{{\si}_1^2+{\si}_2^2+\ldots+{\si}_\ell^2} \qquad\hbox{ and }\qquad \sfr^\ell_0(q^2)=\sum_{\substack{{\si}_1,{\si}_2,\ldots,{\si}_\ell\in\mathbb{Z} \\ {\si}_1+{\si}_2+\ldots+{\si}_\ell=0}}q^{{\si}_1^2+{\si}_2^2+\ldots+{\si}_\ell^2}.
\]
\end{Lem}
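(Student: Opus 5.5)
Both identities will follow from \Cref{polynomialrankformulas} by a direct algebraic identity, once we eliminate $\si_\ell$ using the constraint on $|\bsig|$. The plan is to show that, for $\bsig$ with $|\bsig|=\si\in\{0,1\}$, the quantity $\sum_{i=1}^\ell \si_i^2$ equals $2\rk(\bsig)+\si$. Summing $q^{\sum_i\si_i^2}$ over $\Z^\ell[\si]$ then gives $q^{\si}\sfr^\ell_\si(q^2)$, by the definitions of $\sfr^\ell_1$ and $\sfr^\ell_0$ as sums of $q^{\rk(\bsig)}$ over the same index sets.

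For the case $\si=0$, substitute $\si_\ell=-\sum_{i<\ell}\si_i$. This gives
\[
\si_\ell^2=\sum_{i=1}^{\ell-1}\si_i^2+2\sum_{1\leq i<j\leq \ell-1}\si_i\si_j,
\]
so that
\[
\sum_{i=1}^\ell\si_i^2=2\Big(\sum_{i=1}^{\ell-1}\si_i^2+\sum_{1\leq i<j\leq\ell-1}\si_i\si_j\Big),
\]
which is $2\rk(\bsig)$ by \Cref{polynomialrankformulas}(2).

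For the case $\si=1$, substitute $\si_\ell=1-\sum_{i<\ell}\si_i$. Exactly as computed in the proof of \Cref{polynomialrankformulas}(1),
\[
\si_\ell^2=1+\sum_{i=1}^{\ell-1}\si_i^2+2\sum_{1\leq i<j\leq\ell-1}\si_i\si_j-2\sum_{i=1}^{\ell-1}\si_i .
\]
Adding $\sum_{i<\ell}\si_i^2$ yields
\[
\sum_{i=1}^\ell\si_i^2=1+2\Big(\sum_{i=1}^{\ell-1}(\si_i^2-\si_i)+\sum_{1\leq i<j\leq\ell-1}\si_i\si_j\Big)=1+2\rk(\bsig)
\]
by \Cref{polynomialrankformulas}(1).

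The map $\bsig\mapsto(\si_1,\ldots,\si_{\ell-1})$ is a bijection from $\Z^\ell[\si]$ to $\Z^{\ell-1}$. This is precisely the parametrization used in \Cref{charge_gf_si=1} and \Cref{charge_gf_si=0}, so the two sums match term by term and the lemma follows. There is no real obstacle here: the only point requiring care is the sign of the linear term in the $\si=1$ case, which is handled by using the first of the two expressions in \Cref{charge_gf_si=1}. That expression is the one that matches \Cref{polynomialrankformulas}(1) directly.
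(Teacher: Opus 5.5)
Your proof is correct. It carries out the verification the paper leaves to the reader (``It is easy to verify''): eliminate $\si_\ell$, obtain $\sum_{i=1}^\ell\si_i^2=2\rk(\bsig)+|\bsig|$ for $|\bsig|\in\{0,1\}$ via \Cref{polynomialrankformulas}, and sum over $\Z^\ell[\si]$. Your closing bijection remark is not needed, since $\sfr^\ell_\si$ is defined directly as a sum over $\Z^\ell[\si]$.
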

This lemma shows that the problem of enumerating cuspidal symbols in level $\ell$ is very closely related to the problem of finding all integer solutions to the Diophantine equation
 \[
 x_1^2+x_2^2+\ldots+x_\ell^2=n,
 \]
the problem of how many ways there are to write $n$ as the sum of $\ell$ squares.

\medskip

We now address the question of positivity of the coefficients of $\sfr^\ell_1(q)$ and $\sfr^\ell_0(q)$. Let $r^\ell_1(n)$ be the coefficient of $q^n$ in $\sfr^\ell_1(q)$, and let $r^\ell_0(n)$ be the coefficient of $q^n$ in $\sfr^\ell_0(q)$. Thus,  $r^\ell_1(n)$ is the number of charges $\bsig\in\Z^\ell$ of rank $n$ satisfying $|\bsig|=1$, while  $r^\ell_0(n)$ is the number of charges  $\bsig\in\Z^\ell$ of rank $n$ satisfying $|\bsig|=0$.

\begin{Thm}\label{thm_positivity_cuspidals}
Let $\ell$ be a positive integer. Then:
\begin{enumerate}
\item $r^\ell_1(n)>0$ for all $n\in\N$ if and only if $\ell\geq 4$,
\item $r^\ell_0(n)>0$ for all $n\in\N$ if and only if $\ell\geq 5$.
\end{enumerate}
\end{Thm}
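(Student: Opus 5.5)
Everything rests on \Cref{rksumsquares}. It tells us that $r^\ell_1(n)$ counts the $\bsig\in\Z^\ell$ with $|\bsig|=1$ and $\sum_i\si_i^2=2n+1$. Likewise, $r^\ell_0(n)$ counts the $\bsig\in\Z^\ell$ with $|\bsig|=0$ and $\sum_i\si_i^2=2n$. The sum condition is easy to handle. Flipping the sign of a coordinate preserves $\sum\si_i^2$ and changes $\sum\si_i$ by an even amount, while the parity of $\sum\si_i$ always equals the parity of $\sum\si_i^2$. So we will represent $2n+1$ (resp.\ $2n$) as a sum of $\ell$ squares and then choose signs so that the signed sum becomes exactly $1$ (resp.\ $0$).

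For positivity we may take $\ell=4$ in (1) and $\ell=5$ in (2). Appending a coordinate $0$ preserves both the sum and the sum of squares, so $r^\ell_\si(n)\le r^{\ell+1}_\si(n)$ and larger $\ell$ follows.

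For $\ell=4$, $\si=1$, we write $2n+1=a^2+b^2+c^2+d^2$ with $a,b,c,d\ge0$ using Lagrange. The hard part is to choose signs with $\pm a\pm b\pm c\pm d=1$. This is a partition problem on four numbers, and it need not be solvable for an arbitrary representation. I would therefore instead reduce to three squares. Set $\si_4=1-\si_1-\si_2-\si_3$ and substitute as in \Cref{polynomialrankformulas}, which gives $\rk(\bsig)=\sum_{i\le3}(\si_i^2-\si_i)+\sum_{i<j\le3}\si_i\si_j$. This is a positive definite ternary quadratic polynomial (the $A_3$ form plus linear terms). Completing squares, or passing to the sum-zero version via coordinates $(x_1,x_2,x_3)$ with the change of variables used in the proof of \Cref{phipsi}, should turn the count into a representation problem of the shape $m=X^2+Y^2+Z^2$ with congruence conditions. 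I expect the outcome to be: $8n+3$, or $4n+\dots$, is a sum of three odd squares. By Legendre, $8n+3$ is always a sum of three squares, and all three are necessarily odd. From $8n+3=x^2+y^2+z^2$ with $x,y,z$ odd, a linear change such as $\si$'s given by $(x+y)/2,(x+z)/2,\dots$ should produce the required charge; this is analogous to the proof of \Cref{pos phipsipsi}. The main obstacle is finding the right explicit linear map. I would first try the map $(\si_1,\dots,\si_4)\mapsto$ the $D_4$-type coordinates $x=\si_1+\si_2-\si_3-\si_4$ and so on (the Hadamard transform). This sends a sum-$1$ vector with $\sum\si_i^2=2n+1$ to four odd integers $x_i$ with $\sum x_i^2=4(2n+1)$ and a fixed parity pattern. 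It then remains to show that $8n+4$ is a sum of four odd squares satisfying the needed congruence. One of the four odd squares can be fixed, using Gauss's Eureka theorem on triangular numbers.

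For $\ell=5$, $\si=0$, we write $\si_5=-(\si_1+\dots+\si_4)$, so that $\rk$ becomes the $A_4$ quadratic form. I would restrict to charges with one coordinate pair forced, for instance $\si_1=-\si_2$ or similar, to obtain a sum of simpler forms. Concretely, charges $(a,-a,b,c,-b-c)$ give rank $a^2+b^2+bc+c^2$. Then I would combine a square with the Löschian form $b^2+bc+c^2$ and try further such slices, e.g. $(a,-a,b,-b,0)$ plus a $3$-cycle slice, to cover all $n$. If this fails, the fallback is $(a,b,-a,-b,0)$-type slices giving $a^2+b^2$, plus a third independent direction, which yields $a^2+b^2+c^2+\dots$ with sum-zero constraints, and then Legendre as above.

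Necessity: $r^3_1$ and $r^4_0$ must each miss some $n$. These are finite checks. For $\ell=3$, $\si=1$, the form $\si_1^2-\si_1+\si_2^2-\si_2+\si_1\si_2$ should miss a small value like $n=3$. For $\ell=4$, $\si=0$, the $A_3$ form $\sum\si_i^2+\sum\si_i\si_j$ should miss e.g. $n=7$. Via \Cref{rksumsquares} this reduces to checking the relevant sums of $\ell$ squares, for which mod-$8$ obstructions such as $14=2\cdot7$ are natural candidates. Finally, $\ell\le2$ fails trivially by \Cref{lem_f_prod}.
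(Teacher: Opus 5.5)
Your treatment of $\ell=5$, $\si=0$ is not a proof. The slice $(a,-a,b,c,-b-c)$ has rank $a^2+b^2+bc+c^2$, and this already misses $n=6$, since none of $6,5,2$ has the form $b^2+bc+c^2$. The ``further slices'' and the fallback are never specified. The missing idea is a one-line reduction to the case you already handle: fix $\si_5=-1$. The other four coordinates then sum to $1$, and by \Cref{rksumsquares} they satisfy $\sum_{i\le 4}\si_i^2=2n-1=2(n-1)+1$. Hence $r^5_0(n)\ge r^4_1(n-1)>0$ for $n\ge 1$, and $r^5_0(0)=1$. This is exactly the paper's argument.

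Your Hadamard route for $\ell=4$, $\si=1$ does close, but you must state the congruence. Take odd $x_1,x_2,x_3$ with $x_1^2+x_2^2+x_3^2=8n+3$, which exist by Legendre or Gauss' Eureka theorem. Choose the sign of $x_1$ so that $1+x_1+x_2+x_3\equiv 0 \pmod 4$. Then the four numbers $\frac14(1\pm x_1\pm x_2\pm x_3)$ with an even number of minus signs are integers, they sum to $1$, and their squares sum to $2n+1$. The paper reaches the same arithmetic input more directly. It applies $x=a+b$, $y=b+c$, $z=c+a$ to the reduced ternary polynomial and obtains the exact identity $\sfr^4_1(q)=4\psi(q)^3$.

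Your necessity witness for (2) is wrong. The vector $(3,-2,-1,0)$ has sum $0$ and squares summing to $14$, so $r^4_0(7)>0$. The number $14=2\cdot 7$ is no obstruction, because Legendre's obstruction needs a factor $4^a$, not $2$.

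Use the same substitution instead. It gives $\sfr^4_0(q^2)=\sum_{x+y+z\ \mathrm{even}}q^{x^2+y^2+z^2}$. Since $x+y+z\equiv x^2+y^2+z^2 \pmod 2$, we get $r^4_0(n)=0$ exactly when $2n=4^a(8b+7)$, and the first such $n$ is $14$. You can also check this directly: four integers whose squares sum to $28$ are, up to order and sign, $(5,1,1,1)$, $(4,2,2,2)$ or $(3,3,3,1)$, and no choice of signs gives sum $0$.

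Your witness $n=3$ for $r^3_1$ is correct, since $7$ is not a sum of three squares. Your monotonicity $r^\ell_\si(n)\le r^{\ell+1}_\si(n)$ then correctly carries both failures down to all smaller $\ell$.
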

\begin{proof}
\begin{enumerate}
\item 
Let $\ell=4$. We will show that 
\[
\sfr^4_1(q)=4\mathsf{c}_2(q)^3.
\]
It suffices to show that $\sfr^4_1(q^2)=4\mathsf{c}_2(q^2)^3$. We first observe that for all $t\in\mathbb{Z}$, $(-t-1)^2+(-t-1)=t^2+t$, and that $t, -t-1 $ have opposite parity. It follows that 
\[
2\mathsf{c}_2(q^2)=2\sum_{t\geq 0}q^{t^2+t}=\sum_{t\geq 0}q^{t^2+t}+q^{(-t-1)^2+(-t-1)}=\sum_{t\in\mathbb{Z}}q^{t^2+t}.
\]
Therefore,
\begin{align*}
8\mathsf{c}_2(q^2)^3&=\sum_{x,y,z\in\mathbb{Z}}q^{x^2+y^2+z^2+x+y+z}\\
&=\sum_{\substack{x,y,z\in\Z \\ x+y+z\;\mathrm{even}}}q^{x^2+y^2+z^2+x+y+z}+\sum_{\substack{x,y,z \in\Z \\ x+y+z\; \mathrm{odd}}} q^{x^2+y^2+z^2+x+y+z}\\
&=2\sum_{\substack{x,y,z\in\Z \\ x+y+z\;\mathrm{even}}}q^{x^2+y^2+z^2+x+y+z},
\end{align*}
as the map $\Z^3\rightarrow\Z^3$ sending $x$ to $-x-1$, $y$ to $-y-1$ and $z$ to $-z-1$ swaps the two sums in the second line, while $q^{x^2+y^2+z^2+x+y+z}=q^{(-x-1)^2+(-y-1)^2+(-z-1)^2-x-1-y-1-z-1}$. 

For $a,b,c\in\mathbb{Z}$, set $x=a+b$, $y=b+c$, $z=c+a$. This defines a bijection between $\mathbb{Z}^3$ and the set $\{(x,y,z)\in\mathbb{Z}^3\mid x+y+z\hbox{ is even}\}$. Then observe that
\begin{align*} x^2+y^2+z^2+x+y+z &= (a+b)^2+(b+c)^2+(c+a)^2 + (a+b)+(b+c)+(c+a)\\ &= 2(a^2+b^2+c^2+ab+ac+bc+a+b+c).\end{align*}
It follows that
\[
\sfr^4_1(q^2) = \sum_{a,b,c\in\mathbb{Z}}q^{2(a^2+b^2+c^2+ab+ac+bc+a+b+c)} 
			= \sum_{\substack{x,y,z\in\Z \\ x+y+z\;\mathrm{even}}}q^{x^2+x+y^2+y+z^2+z}
			= 4\mathsf{c}_2(q^2)^3.
\]

Therefore, $\sfr^4_1(q)=4\mathsf{c}_2(q)^3.$ As $\mathsf{c}_2(q)$ is the generating function of the triangular numbers, the statement that $r^4_1(n)>0$ for every $n\in\N$ is equivalent to the statement that every positive integer is the sum of three triangular numbers, which is Gauss' Eureka Theorem. We conclude that $r^4_1(n)>0$ for every $n\in\N$. 

\medskip

If $\ell>4$, the statement that $r^\ell_1(n)>0$ for all $n\in\N$ follows by induction on $\ell$ with base case $\ell=4$: for the induction step, set the final variable equal to $0$.

\medskip

As for $\ell<4$, the statement is clear for $\ell=1$ as $\sfr^1_1(q)=1$. When $\ell=2$, we have $\sfr^2_1(q)=\sum\limits_{t\in\Z}q^{t^2+t}=2\mathsf{c}_2(q^2)$. When $\ell=3$, we have $\sfr^3_1(q)=\sum\limits_{a,b\in\Z}q^{a^2+b^2+ab+a+b}=3\mathsf{c}_3(q)$, as was proved in \cite{Hirschhorn2008}. It follows that neither $\sfr^2_1(q)$ nor $\sfr^3_1(q)$ has strictly positive coefficients.
 
\item This was proved in \cite[Propositions 1.8 and 1.9]{CGJL} using the $290$-Theorem. We give a different proof for the cases $\ell\leq 5$ using formulas for the generating functions, and induction on $\ell$ for $\ell>5$. 

\medskip

Let $\ell=5$. Then $\sfr^5_0(q^2)=\sum\limits_{\substack{x,y,z,w,v\in\Z \\ x+y+z+w+v=0}}q^{x^2+y^2+z^2+w^2+v^2}$ by Lemma \ref{rksumsquares}. Fixing $v=-1$, the coefficient of $q^{2n}$ in $\sfr^5_0(q^2)$ is greater than or equal to the coefficient of $q^{2n}$ in 
\[
1+\sum_{x+y+z+w=1}q^{x^2+y^2+z^2+w^2+1}=1+q\sum_{x+y+z+w=1}q^{x^2+y^2+z^2+w^2}=1+q^2\sfr^4_1(q^2),
\]
by Lemma \ref{rksumsquares} again. For all $n\in\N$, the coefficient of $2n$ is positive in this generating function by part (1) above. Thus the same is true for the coefficient of $2n$ in $\sfr^5_0(q^2)$. We conclude that $r^5_0(n)>0$ for all $n\in\N$.

\medskip
If $\ell>5$, the statement that the coefficients of $\sfr^\ell_0(q)$ are strictly positive follows by induction on $\ell$ with base case $\ell=5$: for the induction step, set the final variable equal to $0$.
\medskip

We now deal with the cases $\ell<5$. In the cases $\ell=1$ and $\ell=2$, we have $\sfr^1_0(q)=1$ and $\sfr^2_0(q)=\sum_{t\in\mathbb{Z}}q^{t^2}$, for which the claim is obvious. When $\ell=3$, $\sfr^3_0(q)=\sum_{a,b\in\mathbb{Z}}q^{a^2+b^2+ab}$, and 
\[
\sum_{a,b\in\mathbb{Z}}q^{a^2+b^2+ab}=1+6\sum_{n\geq 1}\left(\frac{q^{3n-2}}{1-q^{3n-2}}-\frac{q^{3n-1}}{1-q^{3n-1}} \right)
\]
thanks to Ramanujan, see  \cite{Hirschhorn2008} for an accessible proof. For $n\geq 1$, $\frac{1}{6}r^3_0(n)$ is therefore the number of divisors of $n$ congruent to $1$ mod $3$ minus the number of divisors of $n$ congruent to $-1$ mod $3$ \cite{Hirschhorn2008}. It follows that $r^3_0(n)=0$ if $n\equiv -1\bmod 3$, as $d_1d_2\equiv -1\bmod 3$ implies that one of $d_1,d_2$ is $1$ mod $3$ and the other is $-1$ mod $3$, so that the divisors of $n$ come in pairs that cancel. 

When $\ell=4$, we have $\sfr^4_0(q)=\sum\limits_{a,b,c\in\mathbb{Z}}q^{a^2+b^2+c^2+ab+bc+ca}$. As in the proof for $\sfr^4_1(q)$, set $x=a+b$, $y=b+c$, $z=c+a$ in order to obtain 
\[ \sfr^4_0(q^2)=\sum_{\substack{x,y,z\in\Z \\ x+y+z\; \mathrm{even}}}q^{x^2+y^2+z^2}. \]
Clearly $x+y+z$ is even if and only if $x^2+y^2+z^2$ is even. By Lagrange's Three-Square Theorem, if $2n=4^a(8b+7)$ for some $a\geq 1$ and $b\geq 0$ then the coefficient of $q^{2n}$ in $ \sfr^4_0(q^2)$ is equal to $0$, and therefore $r^4_0(n)=0$ whenever such an equality holds.
\end{enumerate}
\end{proof}

In the proof of \Cref{thm_positivity_cuspidals}, it came to light that the generating functions $\sfr^\ell_0(q)$ and $\sfr^\ell_1(q)$ for levels $\ell=2,3,4$ have pleasant formulas. These give rise to well-known integer sequences of number-theoretic origin, and we take note of them here as Corollaries to the proof of Theorem \ref{thm_positivity_cuspidals}.  For a lattice $\Lambda\subset \Z^\ell$, and a vector $v=(a_1,a_2,...,a_\ell)\in\Lambda$, let the norm $\Vert v \Vert=a_1^2+a_2^2+\ldots+a_\ell^2$ be the square of its length.
The Theta series of $\Lambda$ is defined as $\Theta_\Lambda(z)=\sum\limits_{v\in\Lambda}q^{\Vert v\Vert}$, where $q=e^{\pi i z}$ and $z$ is a complex variable in the upper half plane \cite[§2.3]{ConwaySloane1999}.

\begin{Cor}($\ell=2$) From \Cref{lem_f_prod} it follows that 
\begin{align*}
\sfr^2_0(q) &= \sum\limits_{t\in\Z}q^{t^2} = 1+2q+2q^4+2q^9+\ldots,\\
 \sfr^2_1(q) &= 2\sum\limits_{t\geq 0}q^{t^2+t} = 2+2q^2+2q^6+2q^{12}+\ldots.
\end{align*}
Thus $r^2_0(n)$ is $1$ if $n=0$, $2$ if $n$ is a square greater than $0$, and $0$ otherwise; this is OEIS sequence \href{https://oeis.org/A000122}{A000122} \cite{oeis}. Likewise $r^2_1(n)$ is $2$ if $n$ is a pronic number and $0$ otherwise, yielding $2$ times OEIS sequence \href{https://oeis.org/A010054}{A010054} \cite{oeis}. We have
\begin{align*}
\sfr^2_0(q)& = \Theta_\Z(z) = \theta_3(z) = \phi(q),\\
\sfr^2_1(q)& = q^{-\frac{1}{4}}\theta_2(z)= 2\psi(q^2),
\end{align*}
where $\theta_3(z)$ and $\theta_2(z)$ are Jacobi $\theta$-functions, see \cite[pp.45,102]{ConwaySloane1999}.
\end{Cor}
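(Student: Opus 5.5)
The plan is to read off both series directly from \Cref{polynomialrankformulas} specialized to $\ell=2$, and then to cross-check against \Cref{lem_f_prod} as the statement suggests.

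\emph{The case $\si=0$.} For $\ell=2$ and $\bsig=(\si_1,-\si_1)$, part (2) of \Cref{polynomialrankformulas} gives $\rk(\bsig)=\si_1^2$. The map $\bsig\mapsto\si_1$ is a bijection $\Z^2[0]\to\Z$. Hence
\[
\sfr^2_0(q)=\sum_{t\in\Z}q^{t^2}=\phi(q)=1+2q+2q^4+2q^9+\ldots
\]

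\emph{The case $\si=1$.} For $\bsig=(\si_1,1-\si_1)$, part (1) gives $\rk(\bsig)=\si_1^2-\si_1$. The involution $t\mapsto 1-t$ of $\Z$ preserves $t^2-t$ and has no fixed points. Each pronic number $k(k+1)$ with $k\geq 0$ is therefore attained exactly twice, namely at $t=k+1$ and $t=-k$. This gives $\sfr^2_1(q)=2\sum_{t\geq0}q^{t^2+t}=2\psi(q^2)$.

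\emph{Consistency with \Cref{lem_f_prod}.} By \Cref{rk_formula}, the rank splits as $|\bla|+\rk(\bsig)$ with $\bla$ and $\bsig$ independent, so $\sfu^2_\si(q)=\sfr^2_\si(q)\,\sfp(q)^2$. Dividing the formulas of \Cref{lem_f_prod} by $\sfp(q)^2$ (which is invertible in $\Z[[q]]$) recovers the same two identities. This is the derivation the statement alludes to.

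\emph{Coefficients and theta functions.} The coefficient descriptions follow at once. The value $r^2_0(n)$ is $1$ at $n=0$, $2$ at positive squares and $0$ otherwise. The value $r^2_1(n)$ is $2$ at pronic numbers and $0$ otherwise. The OEIS identifications are then by inspection of the first terms. For the theta-series identities, recall the definition from \cite{ConwaySloane1999}: with $q=e^{\pi i z}$, one has $\theta_3(z)=\sum_{n\in\Z}q^{n^2}=\Theta_\Z(z)$, which equals $\phi(q)$ by definition. One also has $\theta_2(z)=\sum_{n\in\Z}q^{(n+1/2)^2}=q^{1/4}\sum_{n\in\Z}q^{n^2+n}$. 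The last sum is $2\psi(q^2)$ by the same pairing $n\leftrightarrow -n-1$ used above.

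There is no real obstacle. The only point needing care is matching the normalization of $\theta_2$ with \cite{ConwaySloane1999}, i.e.\ the factor $q^{1/4}$ and the convention $q=e^{\pi iz}$ rather than $e^{2\pi iz}$.
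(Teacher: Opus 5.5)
Your proof is correct and is essentially the paper's route: the paper reads these identities off \Cref{lem_f_prod}, whose proof computes the ranks of the charges $(-t,t+1)$ and $(-t,t)$ as $t^2+t$ and $t^2$ via \Cref{rk_t}, pairs $t$ with $-t-1$, and then strips off $\sfp(q)^2$ using \Cref{symbolgf_dec}; this is exactly your computation, except that you use \Cref{polynomialrankformulas} in place of \Cref{rk_t}, which makes no difference. One point should not be left to ``inspection'': the coefficient sequence $2,0,2,0,0,0,2,\dots$ of $\sfr^2_1(q)$ is twice the indicator of the pronic numbers, and it matches twice A010054 (the indicator of the triangular numbers, i.e.\ the coefficients of $\psi(q)$) only after the substitution $q\mapsto q^2$, which is consistent with $\sfr^2_1(q)=2\psi(q^2)$.
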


\begin{Cor}\label{charge gf level 3}($\ell=3$)
\begin{itemize}
\item When $\ell=3$ and $\si=0$, we have seen that $r^3_0(n)$ is equal to the number of integer solutions to \linebreak $x^2+xy+y^2=n$. This is OEIS sequence \href{https://oeis.org/A004016}{A004016}, and $\sfr^3_0(q)=\Theta_{A_2}(z)$ where $A_2$ stands for the $A_2$ root lattice i.e. the planar hexagonal lattice \cite{oeis}. This lattice describes the maximal-density sphere-packing in dimension 2 (i.e. the optimal way to arrange coins of equal size in the plane). See \cite[Figure 1.3]{ConwaySloane1999}. The OEIS entry gives the following formula for $\sfr_0^3(q)$ in terms of $\phi$ and $\psi$ \cite{oeis}:
\[
\sfr^3_0(q)=\phi(q)\phi(q^3)+4q\psi(q^2)\psi(q^4).
\]
\item When $\ell=3$ and $\si=1$, $r^3_1(n)=3c_3(n)$, that is, three times the number of $3$-core partitions of $n$. This is three times OEIS sequence \href{https://oeis.org/A033687}{A033687}\cite{oeis}, and $\sfr^3_0(q)$ is the Theta series of the planar hexagonal lattice $A_2$ with respect to a deep hole. See \cite[Figure 1.3]{ConwaySloane1999}.
\end{itemize}
\end{Cor}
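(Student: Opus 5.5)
The plan is to read everything off the explicit double-sum expressions already obtained in the proof of \Cref{thm_positivity_cuspidals}, namely \Cref{charge_gf_si=0} and \Cref{charge_gf_si=1} specialised to $\ell=3$, together with \Cref{rksumsquares}.

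\textbf{The case $\si=0$.} By \Cref{charge_gf_si=0}, $\sfr^3_0(q)=\sum_{a,b\in\Z}q^{a^2+ab+b^2}$. This is literally the generating function of the number of integer solutions of $x^2+xy+y^2=n$. For the lattice statement, \Cref{rksumsquares} gives
\[
\sfr^3_0(q^2)=\sum_{x_1+x_2+x_3=0}q^{x_1^2+x_2^2+x_3^2}.
\]
Since $A_2=\{x\in\Z^3\mid x_1+x_2+x_3=0\}$, the right-hand side is $\Theta_{A_2}$ in the normalisation of \cite{ConwaySloane1999}. So, up to the rescaling $q\leftrightarrow q^2$ (equivalently $z\leftrightarrow z/2$, as in \Cref{table_gf}), we get $\sfr^3_0=\Theta_{A_2}$.

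For the $\phi,\psi$ formula I would imitate \Cref{phipsi} and split the double sum according to the parity of $b$.
\begin{itemize}
\item If $b=2c$, then $a^2+ab+b^2=(a+c)^2+3c^2$. The map $(a,c)\mapsto(a+c,c)$ is a bijection of $\Z^2$, so this part contributes $\phi(q)\phi(q^3)$.
\item If $b=2c+1$, then $4(a^2+ab+b^2)=u^2+3v^2$ with $u=2a+b$ and $v=b$ both odd. Writing $u=2s+1$ and $v=2t+1$ gives the exponent $1+(s^2+s)+3(t^2+t)$. Here $(a,c)\mapsto(s,t)$ is a bijection onto $\Z^2$, and by the symmetry $t\mapsto -t-1$ used in the proof of \Cref{thm_positivity_cuspidals} we have $\sum_{t\in\Z}q^{t^2+t}=2\psi(q^2)$. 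This part therefore contributes $4q\,\psi(q^2)\psi(q^6)$.
\end{itemize}
This computation produces $\psi(q^6)$, which also agrees with the OEIS entry for A004016. I therefore expect the $\psi(q^4)$ in the displayed statement to be a misprint, and I would flag it rather than try to prove it as written.

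\textbf{The case $\si=1$.} By \Cref{charge_gf_si=1}, $\sfr^3_1(q)=\sum_{a,b\in\Z}q^{a^2+b^2+ab+a+b}$. The identity $\sfr^3_1(q)=3\sfc_3(q)$ is exactly Hirschhorn's result \cite{Hirschhorn2008}, already invoked in the proof of \Cref{thm_positivity_cuspidals}, so it can simply be cited. For the geometric interpretation, \Cref{rksumsquares} gives
\[
q\,\sfr^3_1(q^2)=\sum_{x_1+x_2+x_3=1}q^{\Vert x\Vert}.
\]
The set $\{x_1+x_2+x_3=1\}$ is the translate of $A_2$ by $(1,0,0)$. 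Its projection to the plane of $A_2$ is $(2/3,-1/3,-1/3)$, which is a deep hole of $A_2$. The norm then splits as $\Vert x\Vert=\Vert x_\perp\Vert+1/3$, so this sum is $\Theta_{A_2+\text{deep hole}}$ up to the factor $q^{1/3}$ and the same rescaling as before. I also note that the corollary's second bullet writes $\sfr^3_0$ where $\sfr^3_1$ is evidently meant.

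\textbf{Where the difficulty lies.} There is essentially no real obstacle. The only delicate points are bookkeeping: fixing the normalisation of $q$ versus $z$ and the fractional power of $q$ in the theta series, and checking that the parity-splitting substitutions in the $\si=0$ case are genuinely bijections onto $\Z^2$.
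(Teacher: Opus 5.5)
Your proposal is correct, and it is more self-contained than the paper's treatment. The paper gives no separate argument for this corollary. It takes the two double sums $\sfr^3_0(q)=\sum_{a,b\in\Z}q^{a^2+ab+b^2}$ and $\sfr^3_1(q)=\sum_{a,b\in\Z}q^{a^2+b^2+ab+a+b}=3\sfc_3(q)$ from the proof of \Cref{thm_positivity_cuspidals}, the latter via \cite{Hirschhorn2008} exactly as you do. It then simply quotes the $\phi,\psi$ formula, the identification with $\Theta_{A_2}$ and the deep-hole interpretation from OEIS and Conway--Sloane. You instead derive the $\phi,\psi$ identity by splitting on the parity of $b$, in the style of \Cref{phipsi}. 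You obtain both lattice statements from \Cref{rksumsquares} together with the orthogonal decomposition $x=x_\perp+\frac13(1,1,1)$, which also explains why the translate of $A_2$ by the deep hole $(2/3,-1/3,-1/3)$ appears. The gain of your route is that it exposes the errors in the statement as printed. First, the second summand must be $4q\,\psi(q^2)\psi(q^6)$: with $\psi(q^4)$ the coefficient of $q^5$ would be $4$, whereas $x^2+xy+y^2=5$ has no integer solutions. Second, the theta-series identification only holds after $q\mapsto q^2$, which is consistent with the paper's own \Cref{table_gf} and \Cref{rem_cusp_theta}. Third, the second bullet should concern $\sfr^3_1$, up to the factor $q^{1/3}$ and the same rescaling. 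Flagging these rather than trying to prove them as written is the right call.
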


\begin{Cor}\label{charge gf level 4}($\ell=4$)
\begin{itemize}
\item We saw that $\sfr^4_0(q^2)=\sum\limits_{x+y+z\; \text{even}}q^{x^2+y^2+z^2}$. This is the Theta series of the face-centered cubic lattice, which describes a maximal-density sphere packing in dimension three (the usual way to stack oranges). See \cite[Figure 1.1]{ConwaySloane1999} for an illustration of this sphere-packing and the fcc lattice. We can thus interpret the coefficients of $\sfr^4_0(q)$ as follows: $r^4_0(n)$ is the number of spheres in the fcc lattice packing whose centers are located at a distance of $\sqrt{2n}$ from the origin.  See OEIS sequence \href{https://oeis.org/A004015}{A004015} \cite{oeis} and \cite[pp. 12-13]{ConwaySloane1999} for more on the Theta series of the fcc lattice. In particular, it can be expressed in terms of $\theta_2(z)$ and $\theta_3(z)$ as $\Theta_{\text{fcc}}(z)=\theta_3(4z)^3+3\theta_3(4z)\theta_2(4z)^2=\phi(q^4)^3+12q^2\phi(q^4)\psi(q^8)^2$ \cite[p.112,eq.(66)]{ConwaySloane1999}. It follows that
\[
\sfr^4_0(q)=\phi(q^2)^3+12q\phi(q^2)\psi(q^4)^2.
\]
\item On the other hand, we saw that $\sfr^4_1(q)=4\sfc_2(q)^3=4\psi(q)^3$, and $r^4_1(n)$ is four times the number of ways to write $n$ as a sum of three triangular numbers. This is four times \href{https://oeis.org/A008443}{A008443} \cite{oeis}.
\end{itemize}
\end{Cor}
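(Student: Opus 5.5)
The plan is to obtain everything from identities already proved in \Cref{thm_positivity_cuspidals}, plus an elementary parity splitting for the new product formula, so that the Conway--Sloane formula is recovered rather than merely quoted.

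\textbf{The case $\si=1$.} The second item, $\sfr^4_1(q)=4\psi(q)^3$, is exactly what was established in the proof of \Cref{thm_positivity_cuspidals}(1), using $\sfc_2(q)=\psi(q)$. The interpretation of $r^4_1(n)$ as four times the number of representations of $n$ as a sum of three triangular numbers is then immediate from the definition of $\psi$. No further work is needed here.

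\textbf{The fcc identification.} For the first item, I start from the identity in the proof of \Cref{thm_positivity_cuspidals}(2), case $\ell=4$:
\[
\sfr^4_0(q^2)=\sum_{\substack{x,y,z\in\Z\\ x+y+z \text{ even}}}q^{x^2+y^2+z^2}.
\]
It comes from the substitution $x=a+b$, $y=b+c$, $z=c+a$, which is a bijection from $\Z^3$ onto the even-sum vectors. The set $\{(x,y,z)\in\Z^3 \mid x+y+z \text{ even}\}$ is by definition the $D_3$, i.e.\ fcc, lattice. So this sum is literally $\Theta_{\mathrm{fcc}}$, with $q$ playing the role of $e^{\pi i z}$. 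The distance interpretation follows: $r^4_0(n)$ counts lattice vectors of norm $2n$, that is, at distance $\sqrt{2n}$ from the origin.

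\textbf{The product formula.} Rather than quoting \cite{ConwaySloane1999}, I would prove the expression in $\phi,\psi$ directly by splitting the even-sum vectors according to the parities of $x,y,z$.
\begin{itemize}
\item Since the sum is even, either all three coordinates are even, or exactly two are odd.
\item All even: writing $x=2x'$ and so on contributes $\phi(q^4)^3$.
\item Exactly two odd: there are $3$ choices for which coordinate is even. Each choice contributes $\phi(q^4)\bigl(\sum_{x\ \mathrm{odd}}q^{x^2}\bigr)^2$.
\item For the odd sum, I use $(2t+1)^2=8\cdot\tfrac{t(t+1)}{2}+1$ and the symmetry $t\leftrightarrow -t-1$, as in the proof of \Cref{phipsi}. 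This gives $\sum_{x\ \mathrm{odd}}q^{x^2}=2q\,\psi(q^8)$.
\end{itemize}
Combining these yields
\[
\sfr^4_0(q^2)=\phi(q^4)^3+12q^2\phi(q^4)\psi(q^8)^2,
\]
which matches the cited formula $\theta_3(4z)^3+3\theta_3(4z)\theta_2(4z)^2$. Finally, since both sides are power series in $q^2$, I replace $q^2$ by $q$ to get $\sfr^4_0(q)=\phi(q^2)^3+12q\phi(q^2)\psi(q^4)^2$.

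The only point needing care is the bookkeeping of the factor $2$ in the odd-square sum, i.e.\ matching $\sum_{t\in\Z}$ against $\sum_{t\ge0}$. The rest is routine.
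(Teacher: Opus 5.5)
Your proof is correct. For the $\si=1$ item, and for identifying $\sfr^4_0(q^2)$ with the theta series of the even-sum (fcc) lattice, you do exactly what the paper does: you reuse the substitution $x=a+b$, $y=b+c$, $z=c+a$ from the proof of \Cref{thm_positivity_cuspidals}.

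The only difference is the product formula. The paper quotes \cite[p.112, eq.(66)]{ConwaySloane1999} and translates via $\theta_3(4z)=\phi(q^4)$ and $\theta_2(4z)=2q\psi(q^8)$. You instead derive $\phi(q^4)^3+12q^2\phi(q^4)\psi(q^8)^2$ directly. You sort even-sum vectors by the parities of their coordinates (all even, or exactly two odd in one of three ways) and use $\sum_{x\ \mathrm{odd}}q^{x^2}=2q\psi(q^8)$. The bookkeeping, including $12=3\cdot 2^2$, is right.

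The citation is shorter and places the formula in the classical theta-function setting. Your argument makes the corollary self-contained, in the same elementary spirit as \Cref{phipsi}. It also explicitly checks the conversion of $\theta_2$ into $\psi$, which is exactly where a stray factor of $2$ could enter.

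Your method also extends at no cost. The same parity split on the odd-sum coset (one odd coordinate in three ways, or all three odd) gives $6q\phi(q^4)^2\psi(q^8)+8q^3\psi(q^8)^3$. Combined with the first step of the proof of \Cref{sigma=2 gf}, this yields the second formula of \Cref{sigma=2 gf cor} immediately. That route needs neither the octahedral-hole theta series nor the identities for $\phi(\pm q)$.
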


\begin{Rem}\label{rem_cusp_theta}
In fact, by Lemma \ref{rksumsquares}, 
\[
\sfr^\ell_0(q^2)=\sum\limits_{\substack{v=(a_1,a_2,\ldots,a_\ell)\in\Z^\ell \\ a_1+a_2+\ldots+a_\ell=0}}x^{\Vert v\Vert}
			=\Theta_{A_{\ell-1}}(z),
\]
the Theta series of the $A_{\ell-1}$ root lattice. See  \cite[Equation 56, p. 110]{ConwaySloane1999} for an explicit formula using $\theta_3(z)$.
\end{Rem}

The motivation for studying $\sfr^\ell_1(q)$ and $\sfr^\ell_0(q)$ comes from the speculative representation theory of spetses for the groups $G(\ell,1,n)$ and $G(\ell,\ell,n)$. However, nothing prevents us from considering the generating function of charges $\bsig=(\si_1,\ldots,\si_\ell)\in\Z^\ell$ that sum to any integer we like. For $\si\in\Z$, define
\[
\sfr^\ell_\si(q)=\sum_{\substack{\bsig\in\mathbb{Z}^\ell \\ \si_1+\si_2+\ldots+\si_\ell=\si}} q^{\rk(\bsig)}.
\]
For the purpose of studying the $(e,\bs)$-cores in level $4$ 
 we now work out the rank generating function of the level $4$ charges that sum to $2$.

\begin{Lem}\label{sigma=2 gf}
We have $\sfr^4_2(q^2)=q\left( \frac{\phi(q)^3-\phi(-q)^3}{2}\right).$
\end{Lem}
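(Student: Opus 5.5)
The plan is to reduce the statement to a sum of squares, as in \Cref{rksumsquares}, and then to use an orthogonal change of variables of Hadamard type.

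\textbf{Step 1: rank of a charge summing to $2$.} Let $\bsig\in\Z^4[2]$. In \eqref{def_rk_charge} we have
\[
Y=\sum_i \si_i(\ell\si_i-\si)=4\sum_i\si_i^2-4 .
\]
Hence $\rk(\bsig)=\left\lceil \frac{4\sum_i\si_i^2-7}{8}\right\rceil$. Since $\sum_i\si_i^2\equiv\sum_i\si_i=2 \pmod 2$, we can write $\sum_i\si_i^2=2k$. Then $\rk(\bsig)=\lceil k-\tfrac78\rceil=k$. It follows that
\[
\sfr^4_2(q^2)=\sum_{\bsig\in\Z^4[2]}q^{\si_1^2+\si_2^2+\si_3^2+\si_4^2}.
\]

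\textbf{Step 2: rewrite the right-hand side.} In $\phi(q)^3=\sum_{x,y,z\in\Z}q^{x^2+y^2+z^2}$, the substitution $q\mapsto -q$ changes the sign of a term exactly when $x^2+y^2+z^2$ is odd. This happens exactly when $x+y+z$ is odd. Therefore
\[
\frac{\phi(q)^3-\phi(-q)^3}{2}=\sum_{\substack{x,y,z\in\Z\\ x+y+z\ \mathrm{odd}}}q^{x^2+y^2+z^2}.
\]

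\textbf{Step 3: the bijection.} Define
\[
x=\tfrac{\si_1+\si_2-\si_3-\si_4}{2},\qquad y=\tfrac{\si_1-\si_2+\si_3-\si_4}{2},\qquad z=\tfrac{\si_1-\si_2-\si_3+\si_4}{2}.
\]
These are integers: for instance $\si_1+\si_2-\si_3-\si_4=2-2(\si_3+\si_4)$, and the other two combinations behave the same way.

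The four vectors $\tfrac12(1,1,1,1)$, $\tfrac12(1,1,-1,-1)$, $\tfrac12(1,-1,1,-1)$, $\tfrac12(1,-1,-1,1)$ form an orthonormal basis of $\mathbb{R}^4$. Pairing $\bsig$ with the first gives $\tfrac12\sum_i\si_i=1$. So Parseval gives
\[
\si_1^2+\si_2^2+\si_3^2+\si_4^2=1+x^2+y^2+z^2 .
\]
The inverse map is
\[
\si_1=\tfrac{1+x+y+z}{2},\quad \si_2=\tfrac{1+x-y-z}{2},\quad \si_3=\tfrac{1-x+y-z}{2},\quad \si_4=\tfrac{1-x-y+z}{2}.
\]
These four numbers differ pairwise by even integers, so they are all integers exactly when $x+y+z$ is odd. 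This shows the map is a bijection from $\Z^4[2]$ onto $\{(x,y,z)\in\Z^3\mid x+y+z \text{ odd}\}$.

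\textbf{Conclusion and main obstacle.} Combining the three steps gives
\[
\sfr^4_2(q^2)=q\sum_{x+y+z\ \mathrm{odd}}q^{x^2+y^2+z^2}=q\,\frac{\phi(q)^3-\phi(-q)^3}{2}.
\]
The only delicate point is the ceiling computation in Step 1. It works only because $\sum_i\si_i^2$ is forced to be even, so the parity argument there must be checked carefully. Everything else is bookkeeping.
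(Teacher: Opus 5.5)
Your proof is correct and is essentially the paper's. After eliminating $\si_4=2-\si_1-\si_2-\si_3$, your Hadamard change of variables becomes the paper's bijection $(a,b,c)\mapsto(a+b-1,b+c-1,c+a-1)$ up to reordering and one sign, and your Step 1 is the paper's rank computation, written symmetrically as $\rk(\bsig)=\frac12\sum_i\si_i^2$. The one real difference is Step 2, where you prove $\frac{\phi(q)^3-\phi(-q)^3}{2}=\sum_{x+y+z\ \mathrm{odd}}q^{x^2+y^2+z^2}$ directly by parity, whereas the paper cites Conway--Sloane for the theta series of the fcc lattice at an octahedral hole; a minor wording slip in Step 3 is that the numerators $1\pm x\pm y\pm z$, not the $\si_i$ themselves, differ pairwise by even integers, which does not affect the argument.
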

\begin{proof}
Consider $\bsig=(a,b,c,d)\in\Z^4$ such that $a+b+c+d=2$. We set $d=2-a-b-c$ and compute 
\begin{align*}
\rk(\bsig)   &= \left\lceil \frac{\left(8(a^2+b^2+c^2)+8(ab+bc+ca)-16(a+b+c)+12\right)-3}{8}  \right\rceil\\
		&= a^2+b^2+c^2+ab+bc+ca-2(a+b+c)+2.
\end{align*}
Thus
\[
\sfr^4_2(q^2) = \sum_{a,b,c\in\Z} q^{2(a^2+b^2+c^2+ab+bc+ca-2(a+b+c))+4}.
\]
There is a bijection 
\begin{align*}
f:\;\Z^3\quad&\rightarrow\{(x,y,z)\in\Z^3\;\mid\; x+y+z\hbox{ is odd}\}\\
(a,b,c)&\mapsto(x,y,z)=(a+b-1,b+c-1,c+a-1)
\end{align*}
For $x=a+b-1$, $y=b+c-1$, and $z=c+a-1$, we find that 
\[
x^2+y^2+z^2=(a+b-1)^2+(b+c-1)^2+(c+a-1)^2=2\left(a^2+b^2+c^2+ab+bc+ca-2(a+b+c)\right)+3.
\]
It follows that 
\[
\sfr^4_2(q^2)=q\sum_{\substack{x,y,z\in\Z \\ x+y+z\text{ is odd}}}q^{x^2+y^2+z^2}.
\]

Now, the Theta series of the fcc lattice with respect to an octahedral hole is given by computing the square of the distance of each lattice point in the fcc lattice from a fixed octahedral hole. An example of an octahedral hole is $(0,0,1)$ \cite[Ch. 4, §6.3]{ConwaySloane1999}. As the fcc lattice is given as
\[
\Lambda_{\text{fcc}}=\{(x,y,z)\in\Z^3\;\mid\;x+y+z\text{ is even}\},
\]
it follows that the Theta series of the fcc lattice with respect to an octahedral hole is equal to 
\[
\sum_{\substack{x,y,z\in\Z \\ x+y+z \text{ is odd}}}q^{x^2+y^2+z^2}.
\]
The generating function of the Theta series of the fcc lattice with respect to an octahedral hole is known to be \cite[Ch. 4, §6.3, Eqn.(63)]{ConwaySloane1999}:
\[
\frac{\phi(q)^3-\phi(-q)^3}{2}
\]
and so we conclude that
\[ 
\sfr^4_2(q^2)=q\left(\frac{\phi(q)^3-\phi(-q)^3}{2}\right).
\]
\end{proof}

\begin{Prop}\label{sigma=2 gf cor}
We have
\begin{align*}
\sfr^4_2(q) & = 2q \psi(q^4)\phi(q)^2 + 4q\psi(q^2)^2\phi(q^2)\\
		& = 6q\phi(q^2)^2\psi(q^4) +  8q^2\psi(q^4)^3.
\end{align*}
\end{Prop}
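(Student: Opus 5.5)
We start from \Cref{sigma=2 gf}, which gives
\[
\sfr^4_2(q^2)=q\sum_{\substack{x,y,z\in\Z \\ x+y+z\text{ odd}}}q^{x^2+y^2+z^2}=q\cdot\frac{\phi(q)^3-\phi(-q)^3}{2},
\]
and extract the odd part of $\phi(q)^3$. Write $\phi(q)=E+O$ with $E=\phi(q^4)$ (even squares) and $O=2q\psi(q^8)$ (odd squares, since $(2t+1)^2=8\frac{t(t+1)}{2}+1$ and $t,-t-1$ pair up). Then $\phi(-q)=E-O$, so
\[
\frac{\phi(q)^3-\phi(-q)^3}{2}=3E^2O+O^3=6q\phi(q^4)^2\psi(q^8)+8q^3\psi(q^8)^3 .
\]
Multiplying by $q$ and substituting $q^2\mapsto q$ then gives exactly the second expression, $6q\phi(q^2)^2\psi(q^4)+8q^2\psi(q^4)^3$. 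This is the cleanest route, and the key step is just this decomposition of $\phi(q)$ into even and odd parts.

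For the first expression, we show it equals the second using the earlier identities. By \Cref{phipsi}, $\phi(q)^2=\phi(q^2)^2+4q\psi(q^4)^2$, so
\[
2q\psi(q^4)\phi(q)^2=2q\phi(q^2)^2\psi(q^4)+8q^2\psi(q^4)^3.
\]
By \Cref{Sylvie4alt} applied at $q^2$, $\psi(q^2)^2=\psi(q^4)\phi(q^2)$, hence
\[
4q\psi(q^2)^2\phi(q^2)=4q\phi(q^2)^2\psi(q^4).
\]
Adding these gives $6q\phi(q^2)^2\psi(q^4)+8q^2\psi(q^4)^3$, so the two displayed forms coincide.

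The only real obstacle is bookkeeping: getting the factor $2$ in $O=2q\psi(q^8)$ right (it comes from the sign symmetry $t\leftrightarrow -t-1$ on odd integers, exactly as in the proof of \Cref{phipsi}), and performing the substitution $q^2\mapsto q$ correctly after multiplying by the prefactor $q$. As a sanity check, the constant term vanishes and the $q^1$ coefficient is $6$. This matches the six charges of rank $1$ summing to $2$, namely the permutations of $(1,1,0,0)$, which have $Y=8$ and so rank $\lceil 5/8\rceil=1$.
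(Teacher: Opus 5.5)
Your proof is correct and is essentially the paper's argument: both split $\phi(\pm q)=\phi(q^4)\pm 2q\psi(q^8)$ inside \Cref{sigma=2 gf} and then use \Cref{phipsi} and \Cref{Sylvie4alt}. The only difference is the order: you expand straight to the second (bisected) form and then check that the first agrees with it, while the paper simplifies to the first form and derives the second from it. One harmless slip in your sanity check: for the charge $(1,1,0,0)$ one has $Y=4$ (the value $8$ is $\sum_{i\neq j}(\sigma_i-\sigma_j)^2$), but the rank is still $\lceil 1/8\rceil=1$.
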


\begin{proof}
We have $\phi(q)=\phi(q^4)+2q\psi(q^8)$ and $\phi(-q)=\phi(q^4)-2q\psi(q^8)$ by \cite[Appendix A]{KassReut2018}.
With \Cref{phipsi}, we can expand and simplify to obtain
$$\phi(q)^3 - \phi(-q)^3 = 4q\psi(q^8)\phi(q^2)^2 + 8q\psi(q^4)^2\phi(q^4).$$
Applying \Cref{sigma=2 gf} gives
$\sfr_2^4(q^2) = 2q^2\psi(q^8)\phi(q^2)^2 + 4q^2\psi(q^4)^2\phi(q^4)$,
and we can substitute $q$ for $q^2$ to obtain the desired formula. Applying \Cref{phipsi} yields the second version of the formula, which is the so-called ``bisection of the series" into the sum of the two series consisting of its odd- and even-power terms.
\end{proof}

The following observations will be useful for us later.
\begin{Lem}\label{tinylemma1}
For any $\si\in\Z$, we have $\sfr^\ell_\si(q)=\sfr^\ell_{-\si}(q)$.
\end{Lem}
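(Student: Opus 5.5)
The plan is to exhibit a rank-preserving bijection $\Z^\ell[\si]\to\Z^\ell[-\si]$. The natural candidate is negation $\bsig\mapsto -\bsig$, since the set $\Z^\ell[-\si]$ is exactly the image of $\Z^\ell[\si]$ under this map. It then suffices to check that $\rk(-\bsig)=\rk(\bsig)$.

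For this I will use the explicit formula \Cref{def_rk_charge}, namely $\rk(\bsig)=\lceil (Y-\ell+1)/(2\ell)\rceil$ with $Y=\sum_{i}\sum_{j\neq i}\si_i(\si_i-\si_j)$. Each summand $\si_i(\si_i-\si_j)$ is a homogeneous quadratic in the entries of $\bsig$. Hence $Y(-\bsig)=Y(\bsig)$, and therefore $\rk(-\bsig)=\rk(\bsig)$.

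Summing $q^{\rk(\bsig)}$ over $\bsig\in\Z^\ell[\si]$ and reindexing by $-\bsig$ then gives $\sfr^\ell_\si(q)=\sfr^\ell_{-\si}(q)$.

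There is no real obstacle here; the only point to watch is that the ceiling in the rank formula depends only on $Y$ and $\ell$, and both are unchanged under negation. An alternative argument, if desired, goes via abaci. Negating the charge corresponds to reflecting the cuspidal abacus $\bA(\bemp,\bsig)$, i.e. $\be\mapsto -\be$ combined with complementation, which swaps beads and spaces. One could then check invariance of $\rk$ directly from \Cref{def_rk_abacus} and \Cref{lem_invariance}, but the algebraic route above is shorter.
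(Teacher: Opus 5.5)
Your proof is correct and is essentially the paper's argument: the paper also observes that $\sum_{i,j}\si_i(\si_i-\si_j)$ is unchanged under $\bsig\mapsto-\bsig$, deduces $\rk(\bsig)=\rk(-\bsig)$ from \Cref{rk_formula}, and reindexes the sum defining $\sfr^\ell_\si(q)$ by $\btau=-\bsig$. The abacus-reflection aside is not needed; note that as stated it only recovers $-\bsig$ up to a uniform horizontal shift, which is harmless by \Cref{lem_invariance}.
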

\begin{proof} Let $\bsig=(\si_1,\ldots,\si_\ell)\in\Z^\ell$. 
Since $\sum\limits_{i,j}\si_i(\si_i-\si_j)=\sum\limits_{i,j}(-\si_i)(-\si_i-(-\si_j))$, it follows from Lemma \ref{rk_formula} that $\rk(\bsig)=\rk(-\bsig)$. Therefore
\[
\sfr^\ell_\si(q) = \sum_{\substack{\bsig\in\Z^\ell \\ \si_1+\ldots+\si_\ell=\si}}q^{\rk(\bsig)} = \sum_{\substack{\bsig\in\Z^\ell \\ -\si_1-\ldots-\si_\ell=-\si}}q^{\rk(-\bsig)} = \sum_{\substack{ \btau\in\Z^\ell \\
\tau_1+\ldots+\tau_\ell=-\si}}q^{\rk(\btau)} = \sfr^\ell_{-\si}(q).
\] 
\end{proof}

\begin{Lem}\label{tinylemma2}
For any $\si,k\in\Z$, we have $\sfr^\ell_\si(q)=\sfr^\ell_{\si+k\ell}(q)$.
\end{Lem}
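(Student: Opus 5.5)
The plan is to exhibit a rank-preserving bijection between $\Z^\ell[\si]$ and $\Z^\ell[\si+k\ell]$. The natural candidate is translation by the constant vector $\mathbf{1}=(1,\ldots,1)$: the map $\bsig\mapsto\bsig+k\mathbf{1}$ is a bijection $\Z^\ell[\si]\to\Z^\ell[\si+k\ell]$, with inverse $\btau\mapsto\btau-k\mathbf{1}$. Once we know $\rk(\bsig+k\mathbf{1})=\rk(\bsig)$, reindexing the sum defining $\sfr^\ell_{\si+k\ell}(q)$, exactly as in the proof of \Cref{tinylemma1}, gives the claimed equality.

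To see that translation preserves the rank, use the formula \Cref{def_rk_charge}, $\rk(\bsig)=\lceil (Y-\ell+1)/(2\ell)\rceil$ with $Y=\sum_i\sum_{j\neq i}\si_i(\si_i-\si_j)$. Replacing each $\si_i$ by $\si_i+k$ leaves every difference $\si_i-\si_j$ unchanged, and it changes $Y$ by $k\sum_i\sum_{j\neq i}(\si_i-\si_j)$. This extra term vanishes, because the double sum is antisymmetric in $(i,j)$. Hence $Y$ is invariant, and therefore $\rk$ is invariant.

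A second route gives the same conclusion and serves as a sanity check. By \Cref{rk bsig}, $\rk(\bsig)=\rk(|\bemp,\bsig\rangle)$. Adding $k$ to every component of the charge shifts every row of the abacus of $|\bemp,\bsig\rangle$ horizontally by $k$, and \Cref{lem_invariance} says the rank is invariant under such a simultaneous shift.

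There is no real obstacle here. The only point needing care is checking that the cross term in $Y$ cancels, which is immediate from the antisymmetry.
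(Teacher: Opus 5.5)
Your proposal is correct and is essentially the paper's proof: the paper reindexes the sum via $\bsig\mapsto\bsig+(k,\ldots,k)$ and takes rank invariance from \Cref{lem_invariance} (simultaneous horizontal shift), which is your second route. Your first route, checking directly that $Y$ is unchanged because the extra term $k\sum_{i\neq j}(\sigma_i-\sigma_j)$ vanishes by antisymmetry, is also valid. It is a slightly more self-contained justification of the same step, since it avoids the abacus.
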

\begin{proof}
Recall that for all $\bsig\in\Z^\ell$, it holds that $\rk(\bsig)=\rk(\bsig+(k,k,\ldots,k))$ by Lemma \ref{lem_invariance}. Thus
\[
\sfr^\ell_\si(q) = \sum_{\substack{\bsig\in\Z^\ell \\ \si_1+\ldots+\si_\ell=\si}}q^{\rk(\bsig)}
				= \sum_{\substack{\btau=\bsig+(k,k,\ldots,k) \\ \si_1+\ldots+\si_\ell=\si}}q^{\rk(\btau)}
				=  \sum_{\substack{\btau\in\Z^\ell \\ \tau_1+\ldots+\tau_\ell=\si+k\ell}}q^{\rk(\btau)}
				=\sfr^\ell_{\si+k\ell}(q).
\]
\end{proof}
Thus $\sfr^\ell_\si(q)$ is well-defined on $\si\in\Z/\ell\Z$, and moreover, we only need to consider the first $\left\lceil \frac{\ell +1}{2}\right\rceil$ residues of $\Z/\ell\Z$ for the value of $\si$ in order to see all distinct functions $\sfr^\ell_\si(q)$ that occur. Thus, if $n\in\Z$ then $\sfr^\ell_n(q)$ is equal to one of $\sfr^\ell_0(q)$, $\sfr^\ell_1(q), \ldots,  \sfr^\ell_{\left\lfloor \frac{\ell}{2} \right\rfloor}(q)$. 

\subsection{Rank generating functions of charged $\ell$-partitions and spetsial $d$-cores}
\label{sec_rk_gf}

Consider the generating function of all symbols in $\cU^\ell_\si$ with respect to their rank:
\[
\sfu^\ell_\si(q) = \sum_{ |\bla,\bsig\rangle \in \cU^\ell_\si } q^{\rk(|\bla,\bsig\rangle)}.
\]

\begin{Lem}\label{symbolgf_dec}
We have 
\[
\sfu^\ell_\si(q)=\sfr^\ell_\si(q)\sfp(q)^\ell.
\]
\end{Lem}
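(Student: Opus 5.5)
The identity follows from the rank formula of \Cref{rk_formula} by separating the sum over charged $\ell$-partitions into a sum over charges and a sum over multipartitions. By definition, $\cU^\ell_\si=\bigsqcup_{\bsig\in\Z^\ell[\si]}\cU^\ell_\bsig$, and each $\cU^\ell_\bsig$ is in bijection with $\cP^\ell$ via $\bla\mapsto|\bla,\bsig\rangle$. Hence the set $\cU^\ell_\si$ is in bijection with the product $\Z^\ell[\si]\times\cP^\ell$, and we may write
\[
\sfu^\ell_\si(q)=\sum_{\bsig\in\Z^\ell[\si]}\ \sum_{\bla\in\cP^\ell} q^{\rk(|\bla,\bsig\rangle)}.
\]

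Next I apply \Cref{rk_formula}, which gives $\rk(|\bla,\bsig\rangle)=|\bla|+\rk(\bsig)$ for every $\bla$ and $\bsig$ with no restriction on the charge. The summand then factors as $q^{\rk(\bsig)}q^{|\bla|}$. Since $\rk(\bsig)$ does not depend on $\bla$ and $|\bla|$ does not depend on $\bsig$, the double sum splits as a product:
\[
\sfu^\ell_\si(q)=\Big(\sum_{\bsig\in\Z^\ell[\si]}q^{\rk(\bsig)}\Big)\Big(\sum_{\bla\in\cP^\ell}q^{|\bla|}\Big).
\]
The first factor is $\sfr^\ell_\si(q)$ by definition. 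The second factor is $\sfp(q)^\ell$ by Euler's formula applied componentwise, as recalled in \Cref{sec_part}.

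The only point needing care is that the manipulation is legitimate for formal power series, that is, that each coefficient is a finite sum. Since $\rk(\bsig)\geq 0$ and $|\bla|\geq0$, it suffices that only finitely many $\bsig\in\Z^\ell[\si]$ have a given rank. I would get this from \Cref{rk_formula}: there $Y=\sum_{i<j}(\si_i-\si_j)^2$, and this quantity is bounded for only finitely many $\bsig$ with a fixed sum $\si$. The equality of the two formal series then holds coefficientwise, and no step presents a real obstacle.
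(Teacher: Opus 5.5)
Your proposal is correct and follows the paper's proof: both identify $\cU^\ell_\si$ with $\Z^\ell[\si]\times\cP^\ell$, apply \Cref{rk_formula} to write $\rk(|\bla,\bsig\rangle)=|\bla|+\rk(\bsig)$, and factor the double sum into $\sfr^\ell_\si(q)\sfp(q)^\ell$. Your extra check that each coefficient is a finite sum is valid and is a detail the paper omits. It uses $Y=\sum_{i<j}(\si_i-\si_j)^2$, so bounded rank bounds all differences, which together with the fixed sum $\si$ leaves only finitely many charges.
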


\begin{proof} Applying \cref{rk_formula} in the second equality below, we have
\[
\sfu^\ell_\si(q)=\sum_{|\bla,\bsig\rangle\in\cU^\ell_\si}q^{\rk(|\bla,\bsig\rangle)}=\sum_{(\bla,\bsig)\in\cP^\ell\times\Z^\ell[\si]}q^{|\bla|+\rk(\bsig)}=\left(  \sum_{\bsig\in\Z^\ell[\si]}q^{\rk(\bsig)}  \right)\left(  \sum_{\bla\in\cP^\ell}q^{|\bla|}  \right)=\sfr^\ell_\si(q)\sfp(q)^\ell.
\]
\end{proof}

\begin{Lem}\label{rk_gf_reduction} Let $\si,n\in\Z$ and suppose $n\equiv \si \bmod \ell$. We have
\[
\sfu^\ell_{-\si}(q)=\sfu^\ell_\si(q), \qquad\qquad
\hbox{and}
\qquad\qquad
\sfu^\ell_n(q)=\sfu^\ell_\si(q).
\]

\end{Lem}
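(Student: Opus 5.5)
The plan is to reduce both identities to the corresponding statements about the cuspidal generating functions $\sfr^\ell_\si(q)$, which have already been established. By \Cref{symbolgf_dec}, for every $\si\in\Z$ we have the factorization $\sfu^\ell_\si(q)=\sfr^\ell_\si(q)\sfp(q)^\ell$. The factor $\sfp(q)^\ell$ is independent of $\si$, so both claims follow once the analogous equalities hold for $\sfr^\ell_\si(q)$.

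For the first equality, I will invoke \Cref{tinylemma1}, which gives $\sfr^\ell_{-\si}(q)=\sfr^\ell_\si(q)$. Multiplying both sides by $\sfp(q)^\ell$ and applying \Cref{symbolgf_dec} on each side then yields $\sfu^\ell_{-\si}(q)=\sfu^\ell_\si(q)$.

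For the second equality, I will write $n=\si+k\ell$ for some $k\in\Z$, which is possible because $n\equiv\si\bmod\ell$. \Cref{tinylemma2} gives $\sfr^\ell_n(q)=\sfr^\ell_{\si+k\ell}(q)=\sfr^\ell_\si(q)$, and multiplying by $\sfp(q)^\ell$ concludes as before.

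An alternative would be a direct bijective argument on $\cU^\ell$. Negating and reversing abaci would handle the first claim, and the simultaneous shift $\bsig\mapsto\bsig+(k,\ldots,k)$ would handle the second, with the rank preserved by \Cref{rk_formula} and \Cref{lem_invariance}. The factorization route avoids checking what these maps do to $\bla$, so I will use it. There is no real obstacle here; the only point needing care is that \Cref{symbolgf_dec} holds for all $\si\in\Z$ and not just for $\si\in\{0,1\}$. This is the case, since its proof relies only on \Cref{rk_formula}.
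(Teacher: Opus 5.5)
Your proposal is correct and follows the paper's proof exactly: factor $\sfu^\ell_\si(q)=\sfr^\ell_\si(q)\sfp(q)^\ell$ via \Cref{symbolgf_dec}, then apply \Cref{tinylemma1} for the first equality and \Cref{tinylemma2} for the second. Your remark that \Cref{symbolgf_dec} holds for all $\si\in\Z$ is also correct, so the argument is complete.
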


\begin{proof}
The first equality follows from combining \Cref{tinylemma1,symbolgf_dec}, while the second equality follows from combining \Cref{tinylemma2,symbolgf_dec}.
\end{proof}

\medskip

We proceed to the study of the generating function of the spetsial $d$-cores. Let $d$ be a positive integer and fix $\si\in\Z$. We let 
\[ 
\cC^\ell_{[d,\si]}=\{|\bla,\bsig\rangle\in\cU^\ell_\si \;\mid\; |\bla,\bsig\rangle = |\bla,\bsig\rangle_{[d]}\}
\]
denote the set of spetsial $d$-cores in level $\ell$ whose charges sum to $\si$. Consider its rank generating function:
\[
\sfz^\ell_{d,\si}(q):=\sum_{|\bla,\bsig\rangle\in\cC^\ell_{[d,\si]}}q^{\rk(|\bla,\bsig\rangle)}.
\]

\begin{Prop}\label{gf_spetsialdcores}
Let $\ell\geq 2$ and $d\geq 1$. For any $\si\in\Z$, we have
\[
\sfz^\ell_{d,\si}(q)=\sfr^\ell_\si(q) \sfc_d(q)^\ell.
\]
\end{Prop}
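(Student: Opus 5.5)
The plan mirrors the proof of \Cref{thm_prod_cocores}: compute $\sfu^\ell_\si(q)$ in two ways and divide. On one hand, \Cref{symbolgf_dec} gives $\sfu^\ell_\si(q)=\sfr^\ell_\si(q)\sfp(q)^\ell$. On the other hand, set $e=d\ell$ and use the spetsial Littlewood bijection \Cref{Lit_l_bis}, $\cU^\ell_\si\leftrightarrow \cC^\ell_{[d,\si]}\times\cP^e$. Two facts are needed here. First, the map is a bijection onto the full product, i.e.\ every pair (spetsial $d$-core, $e$-partition) arises. This follows from \Cref{sLR}: $\sLR$ is a bijection $\cU^\ell_\si\to\cU^e_\si$, and on the $e$-side any charge $\btau\in\Z^e[\si]$ can be paired with any $\bmu\in\cP^e$. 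Second, the bijection is compatible with the rank, $\rk(|\bla,\bsig\rangle)=\rk(|\bla,\bsig\rangle_{[d]})+d|\bmu|$, which is \Cref{rk_vs_lit}. The main obstacle is the justification of that proposition, which comes from \Cref{hook_rank} together with the observation that sliding a bead one step left in $\sLR(\bA)$ corresponds to removing one spetsial $d$-hook in $\bA$ (Procedures $(1'')$/$(2'')$). Since that proposition is already stated, I take it as given.

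Summing over the bijection then gives
\[
\sfu^\ell_\si(q)=\Big(\sum_{|\bnu,\btau\rangle\in\cC^\ell_{[d,\si]}}q^{\rk(|\bnu,\btau\rangle)}\Big)\Big(\sum_{\bmu\in\cP^{e}}q^{d|\bmu|}\Big)=\sfz^\ell_{d,\si}(q)\,\sfp(q^d)^{d\ell}.
\]
Equating the two expressions and dividing by the invertible series $\sfp(q^d)^{d\ell}$ (its constant term is $1$) yields
\[
\sfz^\ell_{d,\si}(q)=\sfr^\ell_\si(q)\left(\frac{\sfp(q)}{\sfp(q^d)^d}\right)^\ell.
\]
Finally, Euler's formula \Cref{gf_part} together with the Garvan--Kim--Stanton product \Cref{gf_cores} gives $\sfp(q)/\sfp(q^d)^d=\prod_n(1-q^{dn})^d/(1-q^n)=\sfc_d(q)$, which completes the argument.
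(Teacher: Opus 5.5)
Your proof is correct and is essentially the paper's argument. The paper likewise writes $\sfu^\ell_\si(q)$ as $\sfz^\ell_{d,\si}(q)\,\sfp(q^d)^{\ell d}$ via \Cref{rk_vs_lit} and as $\sfr^\ell_\si(q)\,\sfp(q)^\ell$ via \Cref{symbolgf_dec}, then divides and uses $\sfp(q)/\sfp(q^d)^d=\sfc_d(q)$. Your remark that \Cref{Lit_l_bis} is onto all of $\cC^\ell_{[d,\si]}\times\cP^{d\ell}$, justified through \Cref{sLR}, spells out a point the paper leaves implicit.
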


\begin{proof}
On the one hand, we have $\sfu^\ell_\si(q) = \sfz^\ell_{d,\si}(q)\sfp(q^d)^{\ell d}$ by \Cref{rk_vs_lit}. On the other hand, we have $\sfu^\ell_\si(q)=\sfr^\ell_\si(q)\sfp(q)^\ell$ by \Cref{symbolgf_dec}. We thus obtain
\[
\sfz^\ell_{d,\si}(q)=\frac{\sfr^\ell_\si(q)\sfp(q)^\ell}{\sfp(q^d)^{d\ell}}=\sfr^\ell_\si(q)\left(\frac{\sfp(q)}{\sfp(q^d)^d}\right)^\ell=\sfr^\ell_\si(q)\sfc_d(q)^\ell.
\]
\end{proof}

\begin{Rem}\label{red_sig}
Combining \Cref{gf_spetsialdcores} with \Cref{tinylemma1,tinylemma2} we see that the study of $\sfz^\ell_{d,\si}(q)$ can be restricted to $\si\in\{0,1,2,\ldots,\left\lfloor\frac{\ell}{2}\right\rfloor\}$.
\end{Rem}

\begin{Cor}\label{d=1_cor}
In the case of spetsial $1$-cores, we have
\[
\sfz^\ell_{1,\si}(q)=\sfr^\ell_\si(q).
\]
\end{Cor}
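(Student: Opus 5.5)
The plan is to specialize \Cref{gf_spetsialdcores} to $d=1$ and then simplify the factor coming from $1$-cores. Taking $d=1$ there gives
\[
\sfz^\ell_{1,\si}(q)=\sfr^\ell_\si(q)\,\sfc_1(q)^\ell .
\]
So it only remains to show that $\sfc_1(q)=1$. This was already noted after \Cref{gf_cores}: setting $e=1$ in the product formula gives $\prod_{n\geq1}\frac{(1-q^n)}{(1-q^n)}=1$. Combinatorially, every nonempty partition has a removable $1$-hook (a corner box), so $\emptyset$ is the only $1$-core. Substituting this yields $\sfz^\ell_{1,\si}(q)=\sfr^\ell_\si(q)$.

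As a cross-check, and as an alternative direct route, one can argue bijectively without generating functions. By \Cref{def_spetsialdcore} and \Cref{sLR} with $d=1$, the spetsial $1$-cores in $\cU^\ell_\si$ are exactly $\sLR^{-1}(\bA(\bemp,\btau))$ for $\btau\in\Z^\ell[\si]$. One would then need to identify their ranks with the ranks of the charges $\btau$. Doing that requires tracking how $\sLR$ with $d=1$ (a cyclic rotation of columns within $\ell\times\ell$ blocks) interacts with the rank, which takes more work. For that reason I would rely on the first, generating-function argument.

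There is no real obstacle here. The only point needing care is that \Cref{gf_spetsialdcores} holds for all $d\geq1$, including $d=1$. This holds because \Cref{rk_vs_lit} and the bijection \Cref{Lit_l_bis} hold for $d=1$ with $e=\ell$, and the division by $\sfp(q^d)^{d\ell}=\sfp(q)^\ell$ is legitimate in formal power series.
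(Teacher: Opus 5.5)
Your proposal is correct and is the paper's own argument: specialize \Cref{gf_spetsialdcores} to $d=1$ and use $\sfc_1(q)=1$. The bijective route you set aside is actually easy and appears later in the paper (\Cref{spetsial1core_vs_cuspsymb}). For $d=1$, $\sLR$ only permutes beads within each abacus column, so it preserves rank by \Cref{lem_invariance}.
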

\begin{proof}
This is immediate from \Cref{gf_spetsialdcores} as $\sfc_1(q)=1$.
\end{proof}

We have already established in \Cref{thm_prod_cocores} that $z^2_{d,0}(n)>0$ and $z^2_{d,1}(n)>0$ for all $n\in\N$ provided that $2d\geq 4$. We now establish an analogous statement for higher levels $\ell$, where a disparity between the $\si=0$ and $\si=1$ cases appears thanks to \Cref{thm_positivity_cuspidals} above.

\begin{Thm}\label{thm_positivity_spetsialdcores}
Let $\ell\geq 3$ and $d\geq 1$. Let $z^\ell_{d,\si}(n)$ be the coefficient of $q^n$ in the generating function $\sfz^\ell_{d,\si}(q)$ of the spetsial $d$-cores, $\si\in\{0,1\}$. Then:
\begin{enumerate}
\item $z^\ell_{d,1}(n)>0$ for all $n\in\N$ if and only if $\ell d\geq 4$,
\item $z^\ell_{d,0}(n)>0$ for all $n\in\N$ if and only if $\ell d\geq 5$.
\end{enumerate}
\end{Thm}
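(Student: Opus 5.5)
By \Cref{gf_spetsialdcores} we have $\sfz^\ell_{d,\si}(q)=\sfr^\ell_\si(q)\sfc_d(q)^\ell$. Both factors have nonnegative coefficients and constant term $1$ or larger. So $z^\ell_{d,\si}(n)$ is bounded below by the coefficient of $q^n$ in either factor alone. Indeed, $\sfr^\ell_\si$ has constant term at least $1$, coming from the charges $(0,\ldots,0)$, resp.\ $(1,0,\ldots,0)$, of rank $0$. Likewise $\sfc_d(q)^\ell$ has constant term $1$. The plan is to split into cases according to $d$ and $\ell$, and in each case to exhibit one positive factor or else a nonvanishing coefficient.

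\emph{Positivity.} First let $d\geq 4$. Then $z^\ell_{d,\si}(n)\geq c_d(n)\cdot r^\ell_\si(0)>0$ by \Cref{grono}. Next let $d=1$. By \Cref{d=1_cor}, $\sfz^\ell_{1,\si}=\sfr^\ell_\si$, so \Cref{thm_positivity_cuspidals} gives exactly the claim: positivity holds for $\si=1$ iff $\ell\geq4$, and for $\si=0$ iff $\ell\geq5$. Now let $d\in\{2,3\}$. Then $\ell d\geq 6$ automatically, so positivity is required for every $\ell\geq3$. If $\ell\geq 5$, or if $\ell\geq4$ and $\si=1$, positivity follows from \Cref{thm_positivity_cuspidals} as above. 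The remaining cases are $\ell=3$ with $\si=0,1$, and $\ell=4$ with $\si=0$. Here I would set the extra variables to zero to reduce to lower level. Embedding $\Z^2[\si]\hookrightarrow\Z^3[\si]$ via $\bsig\mapsto(\bsig,0)$ shows that each $\sfr^\ell_\si$ dominates coefficientwise $\sfr^2_\si$. The latter is $\phi(q)$ for $\si=0$ and $2\psi(q^2)$ for $\si=1$. Also $\sfc_d(q)^\ell$ dominates $\sfc_d(q)^2$, because $\sfc_d$ has constant term $1$. Hence $z^\ell_{d,\si}(n)$ is bounded below by the coefficient of $q^n$ in $\sfz^2_{d,\si}$. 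That coefficient is positive for $d\geq2$ by \Cref{thm_positivity_cocores}. The rank formula of \Cref{rk_formula} must be checked to be compatible with this padding by zeros. In fact the lower bound only needs the pointwise inequality of coefficients of $\sfr$, which I would verify directly from the quadratic expressions in \Cref{polynomialrankformulas}: setting $\si_{\ell-1}=0$ recovers the level $\ell-1$ formula. So this step is routine.

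\emph{Failure.} The only pairs with $\ell\geq3$ and $\ell d<4$ (for $\si=1$), resp.\ $\ell d<5$ (for $\si=0$), have $d=1$. These are $\ell=3$ for $\si=1$, and $\ell\in\{3,4\}$ for $\si=0$. All of them are covered by the negative parts of \Cref{thm_positivity_cuspidals} via \Cref{d=1_cor}. For instance, $r^3_1(n)=3c_3(n)$ vanishes at $n\equiv3\bmod4$.

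I expect the main obstacle to be the reduction in the $d\in\{2,3\}$, small-$\ell$ cases: making sure the monotonicity-in-$\ell$ argument for $\sfr^\ell_\si$ is correctly justified. If that comparison caused trouble, I would handle those cases by hand instead, with Legendre-type arguments as in \Cref{pos phipsipsi} and \Cref{lem_size_3cores}.
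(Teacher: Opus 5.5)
Your proof is correct. Its skeleton matches the paper's: the factorization $\sfz^\ell_{d,\si}=\sfr^\ell_\si\,\sfc_d(q)^\ell$, Granville--Ono for $d\geq 4$, and \Cref{thm_positivity_cuspidals} for $d=1$ and for large $\ell$. The routes differ only in the residual cases $d\in\{2,3\}$ with $\ell=3$, or with $\ell=4$ and $\si=0$.

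\textbf{The paper's route.} It ignores the charge factor and shows the core factor alone is positive once $\ell\geq 3$. For $d=2$, $\sfc_2(q)^\ell$ dominates $\psi(q)^3$, which is positive by Gauss' Eureka theorem. For $d=3$, $\sfc_3(q)^\ell$ dominates $\sfc_3(q)^3$. This is positive because every $n$ has the form $a^2+b^2+c^2+c$ (Legendre, as in \Cref{pos phipsipsi}), and squares and pronic numbers are sizes of $3$-cores by \Cref{lem_size_3cores}.

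\textbf{Your route.} You use both factors and descend to level $2$ via the padding $\bsig\mapsto(\bsig,0,\ldots,0)$, then invoke \Cref{thm_positivity_cocores}. The padding is rank-preserving for $|\bsig|\in\{0,1\}$. The cleanest justification is \Cref{rksumsquares}: $2\rk(\bsig)$ equals $\sum_i\si_i^2$, respectively $\sum_i\si_i^2-1$, independently of $\ell$. Your check via \Cref{polynomialrankformulas} also works.

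\textbf{Comparison.} Your argument needs no new number-theoretic input and treats all three residual cases explicitly. It also gives a structural by-product: coefficientwise monotonicity $z^\ell_{d,\si}(n)\geq z^{\ell'}_{d,\si}(n)$ for $\ell\geq\ell'\geq 2$ and $\si\in\{0,1\}$. In return, it inherits the case-by-case Legendre-type work already done for cocores. The paper's argument is self-contained, uniform in $\si$ and in $\ell\geq 3$, and reads as a small-$d$ analogue of the Granville--Ono step applied to $\sfc_d(q)^\ell$. Its proof text writes out only $\ell=3$, $\si=1$ for $d=3$, though the domination $\sfc_3(q)^\ell\geq\sfc_3(q)^3$ covers the rest.
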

\begin{proof}
Let $\si\in\{0,1\}$. By \Cref{gf_spetsialdcores}, we have 
\[
\sfz^\ell_{d,\si}(q)=\sfr^\ell_\si(q) \sfc_d(q)^\ell = \sfr^\ell_\si(q)\left( 1+\sum_{n>1}a_nq^n\right) = \left( 1+\sum_{n>1}b_nq^n \right) \sfc_d(q)^\ell 
\]
for some $a_n,b_n\in\N$. It follows that $z^\ell_{d,\si}(n)>0$ whenever $d\geq 4$ by \Cref{grono}, that $z^\ell_{d,1}(n)>0$ whenever $\ell\geq 4$ by \Cref{thm_positivity_cuspidals}, and that $z^\ell_{d,0}(n)>0$ whenever $\ell\geq 5$  by  \Cref{thm_positivity_cuspidals}. This leaves only a few cases to check for small $\ell$ and $d$. 

\medskip

Let $d\in\{2,3\}$. We have:
\[
\sfz^\ell_{d,\si}(q)=\sfr^\ell_\si(q)\sfc_d(q)^\ell=\begin{cases} 
											\sfr^\ell_\si(q)\sfc_2(q)^\ell \qquad\qquad \hbox{ if }d=2 ,\\  
											\sfr^\ell_\si(q)\sfc_3(q)^\ell \qquad \qquad\; \hbox{if }d=3. 
												\end{cases}
\]
In the case $d=2$, we have $z^\ell_{2,\si}(n)>0$ for all $n\in\N$ by Gauss' Eureka Theorem. In the case $d=3$, we have $z^3_{3,1}(n)>0$ for all $n\in\N$ since any $n\in\N$ can be written as $n=a^2+b^2+c^2+c$ for some $a,b,c\in\N$ by the proof of \Cref{thm_positivity_cocores}(1)($d=3$), and thus any $n\in\N$ can be written as $n=x+y+z$ where $x,y,z$ are sizes of $3$-cores. 

\medskip

For the case $d=1$, we consider the two cases $\si=1$ and $\si=0$ for the relevant values of $\ell$ and apply \Cref{d=1_cor}:
\begin{enumerate}
\item (Case 1: $\si=1$). Let $\ell=3$. We have
\[
\sfz^3_{1,1}(q)=\sfr^3_1(q)
\]
and by \Cref{thm_positivity_cuspidals}, it follows that the coefficients of $\sfz^3_{1,1}(q)$ are not all positive.
\item (Case 2: $\si=0$). Let $\ell\in\{3,4\}$. We have
\[
\sfz^\ell_{1,0}(q)=\sfr^\ell_0(q)
\]
and by \Cref{thm_positivity_cuspidals}, it follows that the coefficients of $\sfz^\ell_{1,0}(n)=\sfr^\ell_0(q)$ are not all positive.
\end{enumerate}
\end{proof}

%%%%%%%%%%%%%%%%%

\subsection{Rank-preserving bijection between spetsial $1$-cores and cuspidal symbols}\label{section_d=1}
In the previous section, we observed in \Cref{d=1_cor} that  
$\sfr^\ell_\si(q)=\sfz^\ell_{1,\si}(q)$
for any $\si\in\Z$.
That is, the rank generating function of the set of charges $\Z^\ell[\si]$ for the charged bipartitions in $\cU^\ell_\si$ equals the rank generating function of the spetsial $1$-cores in $\cU^\ell_\si$.
We can upgrade this equality of generating functions to a rank-preserving bijection between spetsial $1$-cores and cuspidal symbols using the spetsial level-rank duality studied in \Cref{spetsial combinat}.

\medskip

By \Cref{sLR}, if $d=1$ then the spetsial level-rank duality is a permutation
\[
\sLR:\cU^\ell_\si\lra\cU^\ell_\si.
\]
Let $\cC^\ell_{[1,\si]}=\{|\bla,\bsig\rangle\in\cU^\ell_\si\;\mid\; |\bla,\bsig\rangle_{[1]}=|\bla,\bsig\rangle\}$ be the subset of spetsial $1$-cores. On the other hand, the subset of cuspidal symbols in $\cU^\ell_\si$ is $\{|\bemp,\btau\rangle\in\cU^\ell_\si\}=\{|\bemp,\btau\rangle\in\cU^\ell \;\mid\;|\btau|=\si\}$ and is naturally identified with $\Z^\ell[\si]$. The map $\sLR$ sends $|\bla,\bsig\rangle$ to $|\bmu,\btau\rangle\in\cU^\ell$ where $\bmu=|\bla,\bsig\rangle^{[d]}$ and $\sLR^{-1}(|\bemp,\btau\rangle)=|\bla,\bsig\rangle_{[d]}$. We have $|\bla,\bsig\rangle\in\cC^\ell_{[1,\si]}$ if and only if $\sLR(|\bla,\bsig\rangle)=|\bemp,\btau\rangle$ for some $\btau\in\Z^\ell[\si]$. Thus $\sLR$ restricts to a bijection
\[
\sLR:\cC^\ell_{[1,\si]}\lra\{|\bemp,\btau\rangle\in\cU^\ell \;\mid\;|\btau|=\si\}.
\]
Let us check that this map preserves the rank. When $d=1$, $\sLR$ cyclically rotates each column $j$ of $\bA=\bA(|\bla,\bsig\rangle)$ in the upwards direction by $j-1\bmod \ell$. This preserves the rank by  \Cref{lem_invariance} (the multiset of $\be$-numbers of $\bA$ does not change). We have proved the following statement.

\begin{Prop}\label{spetsial1core_vs_cuspsymb} When $d=1$, the spetsial level-rank duality $\sLR$ yields a rank-preserving bijection between the spetsial $1$-cores in $\cU^\ell_\si$ and the cuspidal symbols in $\cU^\ell_\si$ (identified with charges in $\Z^\ell[\si]$).
\end{Prop}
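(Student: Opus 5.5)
The plan is to read off the proposition from the $d=1$ specialization of the spetsial Littlewood bijection \Cref{Lit_l_bis}, and to check rank preservation directly on abaci using \Cref{lem_invariance}.

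\textbf{Step 1 (restriction of $\sLR$).} By \Cref{sLR} with $d=1$, we have $e=\ell$, so $\sLR$ is a bijection $\cU^\ell_\si\to\cU^\ell_\si$. By \Cref{def_spetsialdcore}, $|\bla,\bsig\rangle$ is a spetsial $1$-core exactly when swiping beads left in $\sLR(\bA)$ does nothing. This happens if and only if $\sLR(\bA)=\bA(\bemp,\btau)$ for some $\btau$. Since $\sLR$ preserves the charge sum, we get $|\btau|=\si$. Conversely, for every $\btau\in\Z^\ell[\si]$, the preimage $\sLR^{-1}(\bA(\bemp,\btau))$ is a spetsial $1$-core: applying $\sLR$ to it returns $\bA(\bemp,\btau)$, which is already swiped. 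Hence $\sLR$ restricts to a bijection from $\cC^\ell_{[1,\si]}$ onto the cuspidal symbols in $\cU^\ell_\si$, which we identify with $\Z^\ell[\si]$.

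\textbf{Step 2 (rank preservation).} When $d=1$, the $d$-dominoes are single positions. Step 1 of the construction of $\sLR$ cyclically permutes the entries of each abacus column. Step 2 rotates $1\times1$ dominoes, which changes nothing. The placement of $0$ is unchanged, so no horizontal shift occurs. Consequently the multiset $\{\!\{\bA\}\!\}$ of $\be$-numbers is preserved, and \Cref{lem_invariance} (invariance under permuting the entries of a column) gives $\rk(\sLR(\bA))=\rk(\bA)$. Finally, \Cref{rk bsig} identifies $\rk(|\bemp,\btau\rangle)$ with $\rk(\btau)$, which gives the identification of rank with the charge statistic.

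\textbf{Main obstacle.} The only delicate point is confirming that for $d=1$ the construction of $\sLR$ really involves no horizontal shift. This reduces to the convention that $0$ sits in the rightmost column of its big rectangle both before and after the map. Once that is checked, the rank equality is immediate from the definition of the rank, which depends only on the multiset of $\be$-numbers. As a consistency check, the proposition recovers \Cref{d=1_cor}.
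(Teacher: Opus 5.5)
Your proposal is correct and follows essentially the same route as the paper. The paper also uses the $d=1$ case of \Cref{sLR} to restrict $\sLR$ to a bijection from the spetsial $1$-cores onto the abaci $\bA(\bemp,\btau)$ with $|\btau|=\si$. It then gets rank preservation from the fact that, for $d=1$, $\sLR$ only cyclically rotates the entries of each abacus column, so the multiset of $\beta$-numbers is unchanged and \Cref{lem_invariance} applies.
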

%In this way, we can parametrize the cuspidal symbols within a given $\cU^\ell_\si$ by the spetsial $1$-cores in $\cU^\ell_\si$.

\begin{Exa}\label{cusp_bij_example1}
Let $\ell=2$ and for $t\geq 1$ set $\Delta_t=(t,t-1,\ldots,2,1)$, and set $\Delta_0=\emptyset$. Let $\si=1$ (so the charged bipartitions label type $B$ and $C$ unipotent characters). The spetsial $1$-cores are all of the form $|(\Delta_t,\Delta_t),(1,0)\rangle$ or $|(\Delta_t,\Delta_t),(0,1)\rangle$, and the rank of these is $2\left(\frac{t(t-1)}{2}\right)+0=t^2+t$. Applying $\sLR$ to $|(\Delta_t,\Delta_t),(0,1)\rangle$ yields the cuspidal symbol $|\bemp,(-t,t+1)\rangle$ if $t$ is even, and $|\bemp,(t+1,-t)\rangle$ if $t$ is odd. Applying $\sLR$ to $|(\Delta_t,\Delta_t),(1,0)\rangle$ yields the cuspidal symbol $|\bemp,(-t,t+1)\rangle$ if $t$ is odd, and $|\bemp,(t+1,-t)\rangle$ if $t$ is even.  We have $\rk(|\bemp,(-t,t+1)\rangle)=\rk(|\bemp,(t+1,-t)\rangle)=t^2+t$. We remark that the labeling by Dynkin type in \Cref{sec_RT} is preserved under this bijection.
\end{Exa}

\begin{Exa}\label{cusp_bij_example2}
Let $\ell=4$, let $\bla=\left( (2^2), (1^3), (3,1^3), (2) \right)$ and $\bsig=(0,0,0,0)$. The abacus of $|\bla,\bsig\rangle$ is
\begin{center}
\begin{tikzpicture}[scale=0.5, bb/.style={draw,circle,fill,minimum size=2.5mm,inner sep=0pt,outer sep=0pt}, wb/.style={draw,circle,fill,minimum size=0.5mm,inner sep=0pt,outer sep=0pt}]
%%%%%%%%%%%%%%%%%%%%%
%%%  On dessine les lignes de l'abaque et tous les emplacements
\foreach \j in {1,2,3,4}
{
\draw [line width=0.1mm] (-4,\j) -- (5,\j);
    \foreach \k in {-3,...,4}
    {
        \node [wb] at (\k,\j) {};
    }
}
%%%%%%%%%%%%%%%%%%%%%
%%%  On place les billes
\foreach \k/\j in {-3/1,-2/1,1/1,2/1,   -3/2,-1/2,0/2,1/2,   -2/3,-1/3,0/3, 3/3,   -3/4,-2/4,-1/4,2/4 }
{
    \node [bb] at (\k,\j) {};
}
%%%%%%%%%%%%%%%%%%%%%
%%%  On fait appraître la graduation
\foreach \k in {-3,...,4}
{
    \node [scale = 0.7] at (\k,0) {$\k$};
}
%%%%%%%%%%%%%%%%%%%%%
%%%  On dessine les rectangles
\draw [line width=0.1mm, color=gray] (-3.5,0.5) -- (4.5,0.5);
\draw [line width=0.1mm, color=gray] (-3.5,4.5) -- (4.5,4.5);
\foreach \k in {-3.5, 0.5, 4.5}
{
\draw [line width=0.1mm, color=gray] (\k,4.5) -- (\k,0.5);
}
\end{tikzpicture}
\end{center}
which is seen to be a spetsial $1$-core of rank $|\bla|=4+3+6+2=15$. 
Applying $\sLR$ yields the abacus of $\bemp$ with charge $(3,2,-4,-1)$, which has rank $3^2+2^2+(-4)^2+3\cdot2+2(-4)+3(-4)=15$, depicted below.
\begin{center}
\begin{tikzpicture}[scale=0.5, bb/.style={draw,circle,fill,minimum size=2.5mm,inner sep=0pt,outer sep=0pt}, wb/.style={draw,circle,fill,minimum size=0.5mm,inner sep=0pt,outer sep=0pt}]
%%%%%%%%%%%%%%%%%%%%%
%%%  On dessine les lignes de l'abaque et tous les emplacements
\foreach \j in {1,2,3,4}
{
\draw [line width=0.1mm] (-4,\j) -- (5,\j);
    \foreach \k in {-3,...,4}
    {
        \node [wb] at (\k,\j) {};
    }
}
%%%%%%%%%%%%%%%%%%%%%
%%%  On place les billes
\foreach \k/\j in {-3/1,-2/2,1/1,2/2,   -3/2,-1/4,0/1,1/2,   -2/4,-1/1,0/2, 3/1,   -3/4,-2/1,-1/2,2/1 }
{
    \node [bb] at (\k,\j) {};
}
%%%%%%%%%%%%%%%%%%%%%
%%%  On fait appraître la graduation
\foreach \k in {-3,...,4}
{
    \node [scale = 0.7] at (\k,0) {$\k$};
}
%%%%%%%%%%%%%%%%%%%%%
%%%  On dessine les rectangles
\draw [line width=0.1mm, color=gray] (-3.5,0.5) -- (4.5,0.5);
\draw [line width=0.1mm, color=gray] (-3.5,4.5) -- (4.5,4.5);
\foreach \k in {-3.5, 0.5, 4.5}
{
\draw [line width=0.1mm, color=gray] (\k,4.5) -- (\k,0.5);
}
\end{tikzpicture}
\end{center}
\end{Exa}

\begin{Rem} This is also possibly interesting for the $(\ell,\bs)$-cores in level $\ell$.
If $s\in\Z$ then the restriction of usual level-rank duality $\LR:\cU^\ell_s\lra \cU^\ell_s$ to $\cC_{\ell,s}^\ell$ yields a bijection $\LR:\cC_{\ell,s}^\ell\rightarrow\Z^\ell[s]$ and for $\si=s-\frac{\ell(\ell-1)}{2}$ we have the commutative diagram by \Cref{commlrdualities}

\[
\begin{tikzcd}
 \cC^\ell_{[1,\si]} \arrow[r, "\sLR"] \arrow[d, "\mathsf{t}_\brho"]
&  \Z^\ell[\si]  \arrow[d, "\mathsf{t}_{\brho_{\mathrm{rev}}}"] \\
  \arrow[r, "\LR"]
  \cC_{\ell,s}^\ell   &\Z^\ell[s] 
  \end{tikzcd}
\]
Let $\bs\in\Z^\ell$ and let $\bla\in\cC^\ell_{\ell,\bs}$. By \Cref{rk_vs_lit_bis} and \Cref{spetsial1core_vs_cuspsymb}, $\rk(|\bla,\bs-\brho\rangle)=\rk(\bt-\brho)$ where $\bt\in\Z^\ell=\LR(|\bla,\bs\rangle)$. Thus $|\bla|=\rk(\bt-\brho)-\rk(\bs-\brho)$ by \Cref{rk_formula}. We then have
\[
\sfc_{\ell,\bs}(q)=q^{-\rk(\bs-\brho)}\sum_{\bt\in\LR(\cC^\ell_{\ell,\bs})}q^{\rk(\bt-\brho)}
\]
where $\sfc_{\ell,\bs}(q)$ is the generating function of the $(\ell,\bs)$-cores with respect to their size. 
\end{Rem}

%%%%%%%%%%%%%%%%%

\subsection{Application to imprimitive spetses}

Can there be representation-theoretic data on the model of a finite group of Lie type -- group order formula, unipotent character degree formulas, Frobenius eigenvalues, Brauer trees, Fourier transform matrices... -- without anything behind it being represented? And if so, what is the meaning of the theory for the classical objects that do exist? These are questions raised by the Spetses program, which has amassed data to suggest the existence of exotic analogues of finite groups of Lie type where ``spetsial" complex reflection groups play the role of Weyl groups. See \cite{BMM1999} for the definitive paper on the Spetses program, \cite{Malle1998} for a short survey overview, and \cite{KMS2024,ChlouverakiMalle2026,Malle2026} for some recent developments.

\medskip

Weyl groups arise from the geometry of algebraic groups. However, Coxeter groups and more generally complex reflection groups may be defined algebraically, and extend the Weyl groups to a larger family of finite groups generated by {\em complex reflections}, invertible linear transformations of a finite-dimensional complex vector space that fix a hyperplane and whose unique eigenvalue different from $1$ is a root of unity. The irreducible complex reflection groups were classified by Shephard and Todd \cite{ShephardTodd1954}, and fall into an infinite series $G(\ell,m,n)$ (where $m|\ell$) and a finite list of exceptional groups. The complex reflection groups $G(\ell,1,n)$ for $\ell,n\in\Z_{\geq 1}$ include the symmetric groups $S_n$ as the case $\ell=1$ and the type $B$ Weyl groups as the case $\ell=2$. The group $G(\ell,1,n)$ consists of the $n\times n$ generalized permutation matrices whose non-zero entries are $\ell$'th roots of unity.

\medskip

The spetsial groups are a subset of the complex reflection groups satisfying the additional conditions in \cite[Proposition 3.10]{Malle1998}. Although the mythical spetsial analogues of finite reductive groups have not been found, there is a polynomial order formula for the group and a polynomial formula for the degrees of its unipotent characters, see \cite[Proposition 3.5]{BMM1999} and \cite[Theorem 4.10]{Malle1998}. For $G(\ell,1,n)$, the unipotent degrees are given by a hook formula \cite[Proposition 3.12]{Malle1995}. 

\medskip

The spetses for $G(\ell,m,n)$ (where $m$ divides $\ell$), are called {\em imprimitive spetses} and were studied by Malle in \cite{Malle1995}. Let us only discuss the case of $G(\ell,1,n)$ here. Recall that $\cU^\ell_1$ is the set of of all charged $\ell$-partitions $|\bla,\bsig\rangle$ such that $|\bsig|=1$. The language of symbols is usually used in the context of spetses -- recall from \Cref{sec_symbols_defs}  that charged $\ell$-partitions are another way of defining symbols. The cyclic group of order $\ell$ acts freely on $\cU^\ell_1$ by cyclic permutation of the rows of the abacus $\cA(|\bla,\bsig\rangle)$. The ``unipotent characters" of the $G(\ell,1,n)$-spets may be parametrized by the equivalence classes of $\cU^\ell_1$ under this action. The rank function is well-defined on an equivalence class by \Cref{lem_invariance}. As each equivalence class has cardinality $\ell$, it follows that the rank generating function of the unipotent characters for the $G(\ell,1,n)$-spets is $\frac{1}{\ell}\sfu^\ell_1({{q}})$, and the rank generating function of the unipotent characters labeled by spetsial $d$-cores is $\frac{1}{\ell}\sfz^\ell_{d,1}({{q}})$. 

\medskip

As blocks are algebraic in their definition, we canot really talk about blocks of spetses (however, see \cite{Craven2014,KMS2024}). The work-around to talk about blocks of the $G(\ell,1,n)$-spets in quantum characteristic $e$ without reference to the group algebra or a category of representations over a field $\mathbb{k}$ is by using cyclotomic polynomials $\Phi_e({{q}})$. Through their role in the polynomial formulas mentioned above, they give rise to the notion of $\Phi_e$-Harish-Chandra series, or $\Phi_e$-HC series for short. By analogy with the unitary prime case in level $2$, we will from now on assume that $e=d\ell$ for some $d\geq 1$. Under this assumption, an $\ell$-symbol is called $\Phi_e$-cuspidal if it is a spetsial $d$-core, see \cite[Section 3C]{Malle1995}. The analogue of the Nakayama Conjecture in this setting then states that for symbols $|\bla,\bsig\rangle,|\bmu,\btau\rangle\in\cU^\ell_1$, $|\bla,\bsig\rangle$ belongs to the same $\Phi_e$-HC series as $|\bmu,\btau\rangle$ if and only if $|\bla,\bsig\rangle_{[d]}=|\bmu,\btau\rangle_{[d]}$. The $\Phi_e$-cuspidals are thus the symbols that are alone in their $\Phi_e$-HC series. A $\Phi_e$-cuspidal is the analogue of a defect $0$ block in this setting.

\medskip

As a corollary of our results on the generating functions of the spetsial $d$-cores, we thus have the following results on the spetses of $G(\ell,1,n)$. 
% Let us use $x$ as the variable in the generating functions, as $q$ is held in reserve for a prime power in this setting.
\begin{Cor}\label{spetsial_results}
Fix a level $\ell$ and some $d\in\Z_{\geq 1}$, and set $e=d\ell$. 
\begin{enumerate}
\item The rank generating function of the $\Phi_e$-cuspidals for the spetses of $G(\ell,1,n)$, $n\in\N$, is $\frac{1}{\ell}\sfr^\ell_1({{q}}) \sfc_d({{q}})^\ell$.
\item The spets of $G(\ell,1,n)$ has a $\Phi_e$-cuspidal for all $n\in\N$ if and only if $e\geq 4$. 
\end{enumerate}
\end{Cor}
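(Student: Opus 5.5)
Part (1) reduces to counting and is mostly set up by the preceding discussion. The unipotent characters of the $G(\ell,1,n)$-spetses are the orbits of $\cU^\ell_1$ under the free action of $\Z/\ell\Z$ by cyclic permutation of the abacus rows, and the rank is constant on orbits by \Cref{lem_invariance}.

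First I check that the set of spetsial $d$-cores $\cC^\ell_{[d,1]}$ is stable under this action. Removing a spetsial $d$-hook moves a bead from row $j$ to row $j+1$ modulo $\ell$, and this rule is equivariant under cyclically shifting all row indices. Hence a symbol has a removable $d$-hook if and only if each of its cyclic rotations does. By the analogue of the Nakayama Conjecture recalled above, the $\Phi_e$-cuspidals are exactly the orbits of spetsial $d$-cores. Each such orbit has exactly $\ell$ elements, all of the same rank, so their rank generating function is $\frac{1}{\ell}\sfz^\ell_{d,1}(q)$. \Cref{gf_spetsialdcores} with $\si=1$ then turns this into $\frac{1}{\ell}\sfr^\ell_1(q)\sfc_d(q)^\ell$.

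For part (2), the coefficient of $q^n$ in $\frac1\ell\sfz^\ell_{d,1}(q)$ is positive exactly when $z^\ell_{d,1}(n)>0$. So the claim is that $z^\ell_{d,1}(n)>0$ for all $n$ if and only if $\ell d\geq 4$. I would split by level.

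\textbf{Case $\ell\geq 3$.} This is \Cref{thm_positivity_spetsialdcores}(1) verbatim.

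\textbf{Case $\ell=2$.} Here $\cC^2_{[d,1]}$ is the set of $d$-cocores with charge sum $1$. \Cref{thm_positivity_cocores}(1) gives positivity if and only if $d\geq 2$, which is the same as $e=2d\geq 4$.

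\textbf{Case $\ell=1$.} The statement should reduce to Granville--Ono, \Cref{grono}, since then $\sfr^1_1(q)=1$ and $e=d$, so the generating function is just $\sfc_e(q)$. The paper's setup assumes $\ell\geq 2$, so I would either note this case separately or restrict to $\ell\geq2$.

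The main obstacle I expect is in part (1): confirming that the cyclic action really preserves spetsial $d$-cores and the charge-sum condition. The charge sum is clearly invariant, since rotating rows only permutes the entries of $\bsig$. The core property follows from the equivariance of the hook-removal rule noted above. I would also double-check that, in the convention of \Cref{spetsial combinat}, the blocks/HC-series are labeled by the spetsial $d$-cores of the symbols and not by cores taken up to rotation. Since the core of a rotated symbol is the rotation of the core, this is consistent with working on orbits. Everything else is direct citation.
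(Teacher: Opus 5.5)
Your proposal is correct and follows the paper's proof. Part (1) identifies the $\Phi_e$-cuspidals with the $\ell$-element rotation orbits of spetsial $d$-cores in $\cU^\ell_1$ and applies \Cref{gf_spetsialdcores}, and part (2) reads positivity off from the corresponding theorem. Your explicit split by level is a useful refinement: the paper cites only \Cref{thm_positivity_spetsialdcores}, which is stated for $\ell\geq 3$, so the case $\ell=2$ really needs \Cref{thm_positivity_cocores}(1) (and $\ell=1$ reduces to \Cref{grono}), and your equivariance check that row rotation preserves spetsial $d$-cores makes explicit what the paper takes for granted.
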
 
\begin{proof}
\begin{enumerate}
\item The generating function of the $\Phi_e$-cuspidals is $\frac{1}{\ell}\sfz^\ell_{d,1}({{q}})$, and now apply \Cref{gf_spetsialdcores}.
\item This follows directly from the definition of $\Phi_e$-cuspidal and the positivity criterion of \Cref{thm_positivity_spetsialdcores}.
\end{enumerate}
\end{proof}

As interesting special cases of \Cref{spetsial_results}, when $\ell=3$, the rank generating function of the $\Phi_1$-cuspidals for the spets of $G(3,1,n)$, $n\in\N$, is $\sfc_3({{q}})$, the generating function of the $3$-core partitions. This follows from \Cref{charge gf level 3} and \Cref{d=1_cor}. When $\ell=4$, the rank generating function of the $\Phi_1$-cuspidals for the spets of $G(4,1,n)$, $n\in\N$, is $\psi({{q}})^3$. That is, the number of $\Phi_1$-cuspidals for the spets of $G(4,1,n)$ is the number of ways to write $n$ as the sum of three triangular numbers. This follows from \Cref{charge gf level 4} and \Cref{d=1_cor}.

%%%%%%%%%%%%%%%%%%%%%%%%%%%%%%%%%
%%%%%%%%%%%%%%%%%%%%%%%%%%%%%%%%%
%%%%%%%%%%escores
%%%%%%%%%%%%%%%%%%%%%%%%%%%%%%%%%
%%%%%%%%%%%%%%%%%%%%%%%%%%%%%%%%%

\section{Enumerating $(e,\bs)$-cores}\label{escore_gf_sec}

We now turn to the study of the $(e,\bs)$-cores. 
For $e,\ell\in\Z_{\geq 2}$ and $\bs\in\Z^\ell$, we let 
\[
\sfc_{e,\bs}(q)=\sum_{\bla\in\cC^\ell_{e,\bs}}q^{|\bla|} =\sum_{n\geq 0}c_{e,\bs}(n)q^n
\]
be the generating function of the $(e,\bs)$-cores with respect to their size. By definition, $c_{e,\bs}(n)$ is the number of $(e,\bs)$-cores of size $n$.
The goal of this section is to obtain precise formulas for some of the $\sfc_{e,\bs}(q)$ 
and study their positivity. 
We work in the setting that the level $\ell$ divides $e$ and set $d=e/\ell$. Then we use the equality between a generating function of spetsial $d$-cores and a sum of $q$-shifts of generating functions of $(e,\bs)$-cores established below in \Cref{gf_cores_decomp}. For small $\ell$ and $e$, our results on spetsial $d$-cores then yield explicit formulas for the generating functions of $(e,\bs)$-cores.

\subsection{First properties of the generating functions of $(e,\bs)$-cores }

First of all, we can reduce the study to specific charges.
For $\bs,\bs'\in\Z^\ell$, let us write $\bs\sim \bs'$ if $\bs'$
can be obtained from $\bs$ by compositions of the affine cyclic permutation $(s_1,\ldots, s_\ell)\mapsto (s_\ell-e, s_1,\ldots, s_{\ell-1})$
and the translation $(s_1,\ldots, s_\ell)\mapsto (s_1+1,\ldots, s_\ell+1)$.

\begin{Lem}\label{equivcharges}
Let $\bs,\bs'\in\sD_e^\ell$ such that $\bs\sim\bs'$.
Then $\sfc_{e,\bs}(q) = \sfc_{e,\bs'}(q).$
\end{Lem}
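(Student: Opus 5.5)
It suffices to treat the two generators of $\sim$ separately and show that each gives a size-preserving bijection between $\cC^\ell_{e,\bs}$ and $\cC^\ell_{e,\bs'}$; composing these bijections then gives the statement for arbitrary $\bs\sim\bs'$. The tool throughout is the nestedness criterion of \Cref{charac_e_cores_l}: $\bla$ is an $(e,\bs)$-core if and only if
\[
\bS(\la^1,s_1)\subseteq\cdots\subseteq\bS(\la^\ell,s_\ell)\subseteq\bS(\la^1,s_1+e).
\]

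\emph{Translation.} For $\bs'=\bs+(1,\ldots,1)$, we use the identity map $\bla\mapsto\bla$. Shifting every charge by $1$ translates every row symbol by $+1$, including the extra row $\bS(\la^1,s_1+e)$. Inclusions of subsets of $\Z$ are preserved by translation, so nestedness is unchanged. Hence $\cC^\ell_{e,\bs}=\cC^\ell_{e,\bs'}$ as sets of $\ell$-partitions, and the sizes agree trivially.

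\emph{Affine cyclic permutation.} For $\bs'=(s_\ell-e,s_1,\ldots,s_{\ell-1})$, we use the map $\bla\mapsto\bla'=(\la^\ell,\la^1,\ldots,\la^{\ell-1})$, which clearly preserves $|\bla|$. Since $\bS(\la,s-e)=\bS(\la,s)-e$, the nested chain required for $(\bla',\bs')$ reads
\[
\bS(\la^\ell,s_\ell)-e\subseteq\bS(\la^1,s_1)\subseteq\cdots\subseteq\bS(\la^{\ell-1},s_{\ell-1})\subseteq\bS(\la^\ell,s_\ell).
\]
The first inclusion is equivalent to $\bS(\la^\ell,s_\ell)\subseteq\bS(\la^1,s_1+e)$, the last inclusion of the original chain. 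All the middle inclusions appear verbatim in the original chain. The one inclusion of the original chain not yet recovered is $\bS(\la^{\ell-1},s_{\ell-1})\subseteq\bS(\la^\ell,s_\ell)$, and it appears as the last inclusion of the new chain. So the two lists of inclusions coincide, and the map is a bijection $\cC^\ell_{e,\bs}\to\cC^\ell_{e,\bs'}$ with inverse given by cyclically shifting the other way. The hypothesis $\bs,\bs'\in\sD_e^\ell$ only guarantees that we stay in the range where cores can exist; it is not otherwise needed.

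The main obstacle is the bookkeeping in the cyclic case: one must check that every inclusion of the new extended chain corresponds to exactly one inclusion of the old chain under the shift by $e$, including the wrap-around row. If the generator is instead meant to be composed along a chain of intermediate charges, we simply apply the two bijections repeatedly, since sizes are preserved at each step.
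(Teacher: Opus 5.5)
Your proof is correct and follows the paper's approach. The paper's proof is the one-line remark that the lemma is a direct consequence of the nestedness criterion in \Cref{charac_e_cores_l}, and you carry out that verification explicitly: translation leaves the nested chain unchanged, and the affine cyclic shift, combined with cyclically permuting the components, only reorders the required inclusions, giving size-preserving bijections between the sets of cores (you are also right that the hypothesis on the domain of the charges is not actually needed).
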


\begin{proof}
This is a direct consequence of the characterisation of $(e,\bs)$-cores given in \Cref{charac_e_cores_l}.
\end{proof}

\Cref{equivcharges} allows us to reduce the study of $\sfc_{e,\bs}(q)$
to the case of charges $\bs=(s_1,\ldots,s_\ell)$ such that $0=s_1\leq s_2\leq\ldots\leq s_\ell<\ell$. 
Moreover, we have the following easy special case.

\begin{Lem}\label{dilated} 
Let $\bs\sim(0,\ldots, 0)$. Then $\sfc_{e,\bs}(q) = \sfc_e(q^\ell)$.
\end{Lem}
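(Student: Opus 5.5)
By \Cref{equivcharges} it suffices to treat $\bs=(0,\ldots,0)$. Our plan is to use the nesting criterion of \Cref{charac_e_cores_l} to reduce the $(e,\bs)$-cores to a single abacus. For $\bs=(0,\ldots,0)$ the criterion reads
\[
\bS(\la^1,0)\subseteq\bS(\la^2,0)\subseteq\cdots\subseteq\bS(\la^\ell,0)\subseteq\bS(\la^1,e).
\]
All the symbols $\bS(\la^j,0)$ have the same charge $0$, so any inclusion between two of them must be an equality. (If $\bS\subseteq\bS'$ are symbols of equal charge, then deleting the common beads leaves the same finite number of beads on each side, so the difference is empty.) Hence $\la^1=\la^2=\cdots=\la^\ell=:\la$.

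It then remains to check the last inclusion, $\bS(\la,0)\subseteq\bS(\la,e)$. This says that whenever $\be$ is a bead, $\be-e$ is also a bead, which is exactly the condition that $\la$ is an $e$-core. Conversely, if $\la$ is an $e$-core, the tuple $(\la,\ldots,\la)$ satisfies the full chain. So $\bla\mapsto\la^1$ is a bijection $\cC^\ell_{e,(0,\ldots,0)}\to\cC_e$ with $|\bla|=\ell|\la^1|$. Summing $q^{|\bla|}$ over this bijection gives $\sfc_{e,\bs}(q)=\sum_{\la\in\cC_e}q^{\ell|\la|}=\sfc_e(q^\ell)$.

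The step most likely to need care is the reduction via $\sim$. We must check that the affine cyclic shift and the translation preserve both the size of $\bla$ and the nesting criterion, so that $\bs\sim(0,\ldots,0)$ really allows us to replace $\bs$ by $(0,\ldots,0)$. \Cref{equivcharges} already grants this, provided both charges lie in $\sD_e^\ell$, and $(0,\ldots,0)$ clearly does. If $\bs\notin\sD_e^\ell$, then by the remark after \eqref{flotw_domain} there are no cores at all for that charge, so we should restrict to the case $\bs\in\sD_e^\ell$, which is implicit in the statement.
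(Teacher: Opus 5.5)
Your proof is correct and follows the paper's argument: both reduce to a constant charge and read off from the nesting criterion of \Cref{charac_e_cores_l} that the $(e,\bs)$-cores are exactly the tuples $(\la,\ldots,\la)$ with $\la$ an $e$-core. Your version also supplies the details the paper leaves implicit, namely that inclusions between symbols of equal charge are equalities and that the last inclusion is the $e$-core condition. Your final caveat is unnecessary: $\sD_e^\ell$ is stable under the affine cyclic permutation, the translation, and their inverses, so every $\bs\sim(0,\ldots,0)$ automatically lies in $\sD_e^\ell$.
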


\begin{proof}
Let $\bs=(k,\ldots,k)$.
By \Cref{charac_e_cores_l}, $\bla$ is an $(e,\bs)$-core
if and only if $\bla=(\la,\ldots,\la)$ where $\la$ is an $e$-core partition.
Hence
$$ \sfc_{e,\bs}(q)=\sum_{\bla\in\cC_{e,\bs}} q ^{|\bla|}=\sum_{\la\in\cC_{e}} q ^{\ell|\la|} = \sfc_e(q^\ell).$$
\end{proof}

Recall that $\brho=(0,1,\ldots,\ell-1)\in\Z^\ell$.
For all $\bsig\in\Z^\ell$, if we set $\bs=\bsig+d\brho$, then \Cref{compare_cores} tells us there
is a bijection between 
$\cC_{[d,\bsig]}^\ell$ and $\cC_{e,\bs}^\ell$ fixing $\ell$-partitions $\bla$ and shifting the charge by $d\brho$.
Set
\[
\widetilde{\rk} (\bs) =\left\lceil \frac{ \widetilde{Y} -\ell+1 }{2\ell} \right\rceil
\text{\quad where\quad }
\widetilde{Y} = \sum_{\substack{1\leq i,j\leq \ell \\ i\neq j }} (s_i- (i-1)d)(s_i-(i-1)d-(s_j-(j-1)d)),
\]
so that $\widetilde{\rk}(\bs)= \rk(\bs-d\brho)=\rk(\bsig)$. Fix $\si\in\Z$ and let $s=\si + d\frac{\ell(\ell-1)}{2}$. The following formula relates the generating function of the spetsial $d$-cores in $\cU^\ell_\si$ 
with the generating functions of $(e,\bs)$-cores in $\cU^\ell_s$.

\begin{Prop}\label{gf_cores_decomp}
For any $\si\in\Z$, $d\geq 1$ and $e=d\ell$, we have
\[
\ds\sfz_{d,\si}^\ell(q)
= \sum_{ \bs\in \sD_e^\ell[\si + d\frac{\ell(\ell-1)}{2}]} q^{\widetilde\rk(\bs)} \sfc_{e,\bs}(q).
\]
\end{Prop}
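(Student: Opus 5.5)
We will prove the formula by partitioning $\cC^\ell_{[d,\si]}$ according to charge and transporting each piece to a set of $(e,\bs)$-cores via \Cref{compare_cores}. By definition,
\[
\sfz^\ell_{d,\si}(q)=\sum_{|\bla,\bsig\rangle\in\cC^\ell_{[d,\si]}} q^{\rk(|\bla,\bsig\rangle)}
=\sum_{\bsig\in\Z^\ell[\si]}\ \sum_{\bla\,:\,|\bla,\bsig\rangle\in\cC^\ell_{[d,\si]}} q^{|\bla|+\rk(\bsig)},
\]
where the second equality uses \Cref{rk_formula} to split the rank as $|\bla|+\rk(\bsig)$.

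For a fixed charge $\bsig$, we set $\bs=\bsig+d\brho$. Then $|\bs|=\si+d\frac{\ell(\ell-1)}{2}$, so $\bs\in\Z^\ell[\si+d\frac{\ell(\ell-1)}{2}]$. We will check that $|\bla,\bsig\rangle$ is a spetsial $d$-core if and only if $|\bla,\bs\rangle$ is an $e$-core, i.e. if and only if $\bla\in\cC^\ell_{e,\bs}$.

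This follows from \Cref{compare_cores}, which gives $|\bla,\bs\rangle_{(e)}=\mathsf{t}_{d\brho}(|\bla,\bsig\rangle_{[d]})$. If $|\bla,\bsig\rangle_{[d]}=|\bla,\bsig\rangle$, the right-hand side equals $|\bla,\bs\rangle$. Conversely, if $|\bla,\bs\rangle_{(e)}=|\bla,\bs\rangle$, then applying $\mathsf{t}_{d\brho}^{-1}$ recovers $|\bla,\bsig\rangle_{[d]}=|\bla,\bsig\rangle$, since $\mathsf{t}_{d\brho}$ is injective.

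Using $\rk(\bsig)=\rk(\bs-d\brho)=\widetilde\rk(\bs)$, the inner sum for $\bsig$ becomes $q^{\widetilde\rk(\bs)}\sfc_{e,\bs}(q)$. The map $\bsig\mapsto\bsig+d\brho$ is a bijection $\Z^\ell[\si]\to\Z^\ell[\si+d\frac{\ell(\ell-1)}{2}]$. Summing over $\bsig$ therefore gives
\[
\sfz^\ell_{d,\si}(q)=\sum_{\bs\in\Z^\ell[\si+d\frac{\ell(\ell-1)}{2}]} q^{\widetilde\rk(\bs)}\sfc_{e,\bs}(q).
\]

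It remains to restrict the index set from $\Z^\ell$ to $\sD_e^\ell$. Whenever $\bs\notin\sD_e^\ell$, the set $\cC^\ell_{e,\bs}$ is empty, because the charge of any $e$-core lies in $\sD_e^\ell$ by \cite[Proposition 2.14]{JaconLecouvey2021}, as recalled in the remark after \Cref{Lit_l}. Hence $\sfc_{e,\bs}(q)=0$ for such $\bs$, and those terms drop out, which yields the stated formula.

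The only delicate point is this last restriction of the index set. It can also be checked directly from \Cref{charac_e_cores_l}: the nested condition $\bS(\la^1,s_1)\subseteq\cdots\subseteq\bS(\la^\ell,s_\ell)\subseteq\bS(\la^1,s_1+e)$ forces $s_1\le\cdots\le s_\ell\le s_1+e$, because containment of charged symbols implies the charges are ordered the same way. The rest of the argument is routine bookkeeping.
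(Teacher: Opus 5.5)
Your proof is correct and follows essentially the paper's argument: split by charge, use \Cref{compare_cores} to identify spetsial $d$-cores $|\bla,\bsig\rangle$ with $e$-cores $|\bla,\bsig+d\brho\rangle$, and use $\rk(|\bla,\bsig\rangle)=|\bla|+\rk(\bsig)=|\bla|+\widetilde\rk(\bs)$. The only difference is cosmetic. The paper restricts to charges $\bsig\in\sDe_d^\ell[\si]$ from the outset, which is equivalent to $\bs\in\sD_e^\ell[\si+d\frac{\ell(\ell-1)}{2}]$ after the shift. You instead sum over all charges and drop the empty terms at the end, and your justification for that step via charge ordering under nested symbols is valid.
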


\begin{proof}
By the previous discussion, we have
\begin{equation*}
\sfz_{d,\si}^\ell(q) 
= \sum_{ \bsig\in \sDe_d^\ell[\si] } \sum_{\bla\in\cC_{[d,\bsig]}^\ell} q^{\rk(|\bla,\bsig\rangle)}
= \sum_{ \bs\in \sD_e^\ell[s] } q^{\widetilde\rk(\bs)} \sum_{\bla\in\cC_{e,\bs}^\ell} q^{|\bla|}.
\end{equation*}
\end{proof}

In fact, only a few values of $\si$ will be necessary in order to make all distinct generating functions of $(e,\bs)$-cores appear as ($q$-shifted) summands of some spetsial $d$-core generating function. By \Cref{red_sig}, every $\sfz^\ell_{d,\si}(q)$ coincides with one of $\sfz^\ell_{d,0}(q),\sfz^\ell_{d,1}(q),\ldots,\sfz^\ell_{d,\left\lfloor\frac{\ell}{2}\right\rfloor}(q)$. Thus, each distinct $(e,\bs)$-core generating function appears, multiplied by an appropriate power of $q$ and with some multiplicity, in one of these.

\medskip

Now, let $\mathcal{S}=\{\bs=(s_1,s_2,\ldots, s_\ell)\in\Z^\ell\;\mid\;0\leq s_1\leq s_2\leq  \ldots \leq s_\ell< \ell\}$ and take $e=\ell$. By \Cref{equivcharges}, there are finitely many distinct generating functions of $(\ell,\bs)$-cores for $\bs\in\Z^\ell$, and all of them are represented by at least one charge $\bs\in\mathcal{S}$. An interesting application of \Cref{gf_cores_decomp} is that the sum of $q$-shifted generating functions of $(\ell,\bs)$-cores for all $\bs\in\mathcal{S}$ is equal to a natural sum of generating functions of cuspidal $\ell$-symbols. 

\begin{Prop}\label{radprop}
\[
\sum_{\substack{\bs=(s_1,\ldots,s_\ell)\in\Z^\ell \\ 0\leq s_1\leq s_2\leq\ldots \leq s_\ell<\ell}}q^{\rk(\bs-\brho)}\sfc_{\ell,\bs}(q) =
\sum_{\si\in\Z/\ell\Z}\sfr^\ell_\si(q).
\]
\end{Prop}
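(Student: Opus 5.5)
The plan is to apply \Cref{gf_cores_decomp} with $d=1$ (so $e=\ell$) for each residue $\si\in\Z/\ell\Z$ and sum. With $d=1$, \Cref{d=1_cor} gives $\sfz^\ell_{1,\si}(q)=\sfr^\ell_\si(q)$. \Cref{gf_cores_decomp} then reads
\[
\sfr^\ell_\si(q)=\sum_{\bs\in\sD_\ell^\ell[\si+\frac{\ell(\ell-1)}{2}]} q^{\rk(\bs-\brho)}\sfc_{\ell,\bs}(q),
\]
where $\widetilde\rk(\bs)=\rk(\bs-\brho)$. By \Cref{tinylemma2}, the left side depends only on $\si \bmod \ell$. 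I would therefore pick representatives $\si$ so that $s=\si+\frac{\ell(\ell-1)}{2}$ runs over $0,1,\ldots,\ell-1$; this is allowed since shifting $\si$ by a constant permutes residues. Summing over these $\ell$ values turns the right side of the proposition into $\sum_{\si\in\Z/\ell\Z}\sfr^\ell_\si(q)$.

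It then remains to match the left-hand sides. I claim that for $s\in\{0,\ldots,\ell-1\}$, the sets $\sD_\ell^\ell[s]$ and $\mathcal{S}[s]$ coincide, where $\mathcal{S}[s]$ denotes the charges in $\mathcal{S}$ with component sum $s$. Suppose $\bs\in\sD_\ell^\ell$, so $s_1\le\cdots\le s_\ell\le s_1+\ell$, and suppose $|\bs|=s\in[0,\ell-1]$.
\begin{itemize}
\item If $s_1\le -1$, then $s_j\ge 0$ for some $j$, since otherwise $|\bs|<0$. I need to rule this out, and also handle the boundary cases $s_\ell=s_1+\ell$ and $s_\ell\ge \ell$.
\item Conversely, if $\bs\in\mathcal{S}$, then $\bs\in\sD_\ell^\ell$ because $s_\ell<\ell\le s_1+\ell$.
\end{itemize}
Here I expect the sets genuinely differ. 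For example, with $\ell=2$ and $s=1$, the charges $(-1,2)$ and $(0,1)$ both lie in $\sD_2^2[1]$, and $(-1,2)\notin\mathcal{S}$. So a direct equality fails, and the correct approach is via the equivalence $\sim$ of \Cref{equivcharges}.

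The main step, and the main obstacle, is a bijection-type statement, which I would phrase as follows. Consider all of $\bigsqcup_{s}\sD_\ell^\ell[s]$ over a full set of residues; equivalently, use $\sD_\ell^\ell$ modulo the translation $(1,\ldots,1)$, which changes $s$ by $\ell$, preserves $\rk(\bs-\brho)$ by \Cref{lem_invariance}, and preserves $\sfc_{\ell,\bs}$ by \Cref{equivcharges}. The claim is that the orbits of the affine cyclic permutation combined with translation, restricted to $\sD_\ell^\ell$, can be counted in such a way that each $\bs\in\mathcal{S}$ contributes exactly once with weight $q^{\rk(\bs-\brho)}\sfc_{\ell,\bs}(q)$.

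To prove this, I would verify two facts about the affine cyclic map $\pi(s_1,\ldots,s_\ell)=(s_\ell-\ell,s_1,\ldots,s_{\ell-1})$.
\begin{itemize}
\item $\pi$ preserves $\sD_\ell^\ell$ (apart from boundary ties) and preserves $\sfc_{\ell,\bs}$.
\item $\rk(\pi\bs-\brho)=\rk(\bs-\brho)$ up to a compensating change in $|\bs|$. Concretely, $\pi\bs-\brho$ is a cyclic permutation of $\bs-\brho$ shifted by a constant vector, using $\rho$-entries $0,\ldots,\ell-1$. Then \Cref{lem_invariance} (row permutation and uniform shift) gives equality of ranks.
\end{itemize}
Next I would choose in each class modulo translation the unique normalized representative with $0\le s_1\le\cdots\le s_\ell<\ell$. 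Existence comes from translating so that $s_1=0$ and applying $\pi$ until $s_\ell<\ell$. Uniqueness needs checking: the tie case $s_\ell=s_1+\ell$ is exactly where two representatives of $\sD$ map to one element of $\mathcal{S}$. I would resolve this by a careful case analysis of equality in the $\sD$ condition, matching multiplicities against the left-hand sum. This bookkeeping is where I expect the real difficulty to lie.
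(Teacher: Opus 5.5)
\textbf{There is a genuine gap: the matching step is missing, and the mechanism you propose for it is false.}

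Your reduction is the right one, and it is the one the paper uses. At $d=1$, \Cref{gf_cores_decomp} and \Cref{d=1_cor} give $\sfr^\ell_\si(q)=\sum_{\bt\in\sD^\ell_\ell[n]}w(\bt)$, where $n=\si+\frac{\ell(\ell-1)}{2}$ and $w(\bt)=q^{\rk(\bt-\brho)}\sfc_{\ell,\bt}(q)$. Your argument that $w$ is invariant under $\pi(t_1,\ldots,t_\ell)=(t_\ell-\ell,t_1,\ldots,t_{\ell-1})$ is also correct. In fact $\pi$ is a bijection of $\sD^\ell_\ell$ onto itself, with inverse $(t_2,\ldots,t_\ell,t_1+\ell)$, so there are no boundary exceptions.

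The step that carries all the content is matching $\bigsqcup_n\sD^\ell_\ell[n]$ (over a residue system of $n$) with $\mathcal{S}$, compatibly with $w$. You leave this open, and your mechanism fails:
\begin{itemize}
\item A class modulo translation, or modulo $\sim$, has no unique representative in $\mathcal{S}$. For instance $(0,\ldots,0)$ and $(1,\ldots,1)$ both lie in $\mathcal{S}$ and are translates of each other.
\item Your recipe (translate to $t_1=0$, then apply $\pi$ while $t_\ell=\ell$) only outputs charges with $s_1=0$. So it never reaches $(1,\ldots,1)$.
\item It sends the distinct translation classes of $(0,\ldots,0)$ and $(0,\ldots,0,\ell)$ to the same charge.
\end{itemize}
So your map is neither injective nor surjective, and ``matching multiplicities by case analysis'' is exactly the statement to be proved. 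A smaller point: $(-1,2)\notin\sD^2_2[1]$, since $2>-1+2$. The real mismatch is that $\mathcal{S}$ contains charges of sum $\geq\ell$, such as $(1,1)$, while $\sD^2_2[0]=\{(0,0),(-1,1)\}$.

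\textbf{How to close the gap.} Drop translation entirely and use $\pi$ alone. The map $\pi$ preserves the multiset of residues mod $\ell$ and lowers $|\bt|$ by exactly $\ell$.
\begin{enumerate}
\item For $\bt\in\sD^\ell_\ell[n]$, let $\overline{\bt}\in\mathcal{S}$ be the nondecreasing rearrangement of the least nonnegative residues of $t_1,\ldots,t_\ell$ mod $\ell$.
\item Apply $\pi^{-1}$ while $t_1<0$, then $\pi$ while $t_\ell\geq\ell$. Each step stays inside $\sD^\ell_\ell$, and the process ends at $\overline{\bt}$. Hence $\overline{\bt}=\pi^k\bt$ for some $k$, and $w(\overline{\bt})=w(\bt)$.
\item The map $\bt\mapsto\overline{\bt}$ is injective on $\sD^\ell_\ell[n]$, because distinct powers of $\pi$ applied to one charge give distinct sums.
\item It is onto $\{\bs\in\mathcal{S}\mid |\bs|\equiv n\bmod\ell\}$, because for such $\bs$ the charge $\pi^{(|\bs|-n)/\ell}\bs$ lies in $\sD^\ell_\ell[n]$ and has residue vector $\bs$.
\end{enumerate}
Summing over $n$ in a complete residue system then gives the proposition.

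\textbf{Comparison with the paper.} The paper's proof also builds a bijection $\sD^\ell_\ell[n]\to\{\bs\in\mathcal{S}\mid|\bs|\equiv n\}$, via $\mu=(t_\ell-t_1,\ldots,t_2-t_1)$ and conjugation of partitions. It then equates the two sums without checking the summands, and that bijection does not preserve $w$ termwise. For $\ell=4$ it sends $(-1,1,3,3)\sim(0,0,0,2)$ to $(2,2,3,3)\sim(0,0,1,1)$, both with $\rk(\cdot-\brho)=1$. But $\sfc_{4,(0,0,1,1)}(q)=\sfc_{4,(0,1)}(q^2)$ has only even powers, whereas $(\emptyset,\emptyset,\emptyset,(1))$ is a $(4,(0,0,0,2))$-core of size $1$. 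So your insistence on a $w$-compatible matching is well founded, and the residue map above is the bijection that actually makes the argument work.
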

\begin{proof}
Let $\mathcal{S}=\{\bs=(s_1,\ldots,s_\ell)\in\Z^\ell \;\mid \; 0\leq s_1\leq s_2\leq\ldots \leq s_\ell<\ell\}$, and for $\si\in\Z/\ell\Z$, let \[\mathcal{S}_\si=\{\bs\in\mathcal{S}\;\mid\; s_1+s_2+\ldots+s_\ell\equiv \si \bmod \ell\}.\]
On the other hand, for $n\in\Z$ recall the set
\[
\mathcal{D}^\ell_\ell[n]=\{\bt=(t_1,t_2,\ldots,t_\ell)\in\Z^\ell \; \mid \; t_1\leq t_2\leq \ldots \leq t_\ell\leq t_1+\ell \hbox{ and } t_1+t_2+\ldots+ t_\ell=n\}.
\]
Suppose $n\equiv\si \bmod \ell$. Then we have a bijection between $\mathcal{D}^\ell_\ell[n]$ and $\mathcal{S}_\si$ as follows. For $\bt\in\mathcal{D}^\ell_\ell[n]$, set $\mu_1=t_\ell-t_1$, $\mu_2=t_{\ell-1}-t_1$,..., $\mu_{\ell-1}=t_2-t_1$. Then $\sum_{i=1}^\ell t_i=\ell t_1+\sum_{i=1}^{\ell-1}\mu_i$, and thus $\sum_{i=1}^{\ell-1}\mu_i\equiv |\bt|\equiv \si\bmod \ell$. Since $t_\ell\geq t_{\ell-1}\geq \ldots \geq t_2\geq t_1$, we have $\mu_1\geq \mu_2\geq \ldots\geq \mu_{\ell-1}$, so $\mu=(\mu_1,\mu_2,\ldots,\mu_{\ell-1})$ is a partition with $\ell-1$ parts. Moreover, $\mu_1\leq \ell$ since $t_\ell\leq t_1+\ell$. Let $\nu=(\nu_1,\nu_2,\ldots,\nu_\ell)$ be the transpose partition of $\mu$ with $\ell$ parts, where we take the last parts to be $0$ if necessary. Set $s_i=\nu_{\ell+1-i}$. Then $\bs=(s_1,s_2,\ldots,s_\ell)\in\mathcal{S}_\si$ and we have constructed the desired bijection (the inverse map is obvious).\\

We then have
\begin{align*}
\sum_{\si\in\Z/\ell\Z}\sfr^\ell_\si(q)
	&= \sum_{\si\in\Z/\ell\Z}\sfz^\ell_{1,\si}(q)
	= \sum_{\si\in\Z/\ell\Z}\;\sum_{ \bs\in \sD_\ell^\ell\left[\si+\frac{\ell(\ell-1)}{2}\right]} q^{\rk(\bs-\brho)} \sfc_{\ell,\bs}(q)\\
	& = \sum_{\si\in\Z/\ell\Z} \; \sum_{\bs\in\mathcal{S}_\si}q^{\rk(\bs-\brho)} \sfc_{\ell,\bs}(q)
	= \sum_{\bs\in\mathcal{S}}q^{\rk(\bs-\brho)} \sfc_{\ell,\bs}(q),
\end{align*}
where we applied \Cref{d=1_cor} in the first equality and Proposition \ref{gf_cores_decomp} in the second equality, and used that $\si+\frac{\ell(\ell-1)}{2}$ runs over a complete set of residues for $\Z/\ell\Z$ in the third equality.
\end{proof}

The next lemma adds comprehensibility to the charges appearing in the tables in the proof of \Cref{thm_gf_escores}.
\begin{Lem}
Let $\si\in\Z$, let $d\geq 1$, let $\bsig,\btau\in \Z^\ell[\si]$ and let $\bs,\bt\in\Z^\ell[\si+d\frac{\ell(\ell-1)}{2}]$. Then $\bt\sim\bs$ if and only if $\btau$ is a cyclic permutation of $\bsig$.
\end{Lem}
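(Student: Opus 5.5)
The plan is to translate the equivalence $\sim$ on the $\bs$-side into an operation on the $\bsig$-side. Here $\bsig$ and $\btau$ are related to $\bs$ and $\bt$ through the shift $\bs=\bsig+d\brho$ and $\bt=\btau+d\brho$; this is the only reading under which the sums match, since $|\bs|=|\bsig|+d\frac{\ell(\ell-1)}{2}$. The relation $\sim$ is generated by two moves, the translation $T:(s_1,\ldots,s_\ell)\mapsto(s_1+1,\ldots,s_\ell+1)$ and the affine cyclic permutation $C:(s_1,\ldots,s_\ell)\mapsto(s_\ell-e,s_1,\ldots,s_{\ell-1})$. Since $\bs$ and $\bt$ both have component sum $\si+d\frac{\ell(\ell-1)}{2}$, I first track how the total sum changes. $T$ raises it by $\ell$, and $C$ lowers it by $e=d\ell$. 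So any word in $T^{\pm1},C^{\pm1}$ relating $\bs$ to $\bt$ must use a number $a$ of $T$'s and a number $b$ of $C$'s (signed counts) with $a\ell-b d\ell=0$, that is, $a=bd$.

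The key computation is to conjugate $C$ by the shift $d\brho$. Put $\bs=\bsig+d\brho$, so $s_i=\si_i+(i-1)d$. Then
\[
C(\bs)=(\si_\ell+(\ell-1)d-\ell d,\ \si_1,\ \si_2+d,\ \ldots,\ \si_{\ell-1}+(\ell-2)d).
\]
Subtracting $d\brho$ from this gives $(\si_\ell-d,\si_1-d,\ldots,\si_{\ell-1}-d)$. Hence $C(\bs)-d\brho=\pi(\bsig)-(d,\ldots,d)$, where $\pi$ is the cyclic shift $\pi(\bsig)=(\si_\ell,\si_1,\ldots,\si_{\ell-1})$. Therefore the composite $T^dC$ corresponds exactly to $\pi$ on the $\bsig$-side. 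Likewise $T$ corresponds to adding $(1,\ldots,1)$ to $\bsig$.

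For the ``if'' direction, suppose $\btau=\pi^k(\bsig)$. Then $\bt=(T^dC)^k(\bs)$, so $\bt\sim\bs$.

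For ``only if'', note that $T$ commutes with $C$, since $C$ commutes with adding a constant vector. So any word relating $\bs$ to $\bt$ can be rearranged as $T^aC^b$, which equals $T^{a-bd}(T^dC)^b$. On the $\bsig$-side this becomes $\pi^b(\bsig)+(a-bd)(1,\ldots,1)$. The sum constraint forces $a=bd$, so $\btau=\pi^b(\bsig)$, a cyclic permutation of $\bsig$.

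I expect the main obstacle to be the index bookkeeping in the conjugation computation, namely checking that the $-e$ in the first entry exactly absorbs the wrap-around of $d\brho$. The commutation of $T$ with $C$, which justifies the normal form $T^aC^b$, also needs a one-line check.
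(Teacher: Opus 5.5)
Your proof is correct and follows essentially the same route as the paper. The paper also shows that the shift $\bsig\mapsto\bsig+d\brho$ intertwines the cyclic rotation of $\bsig$ with the operation $\bs\mapsto(s_\ell-(\ell-1)d,s_1+d,\ldots,s_{\ell-1}+d)$, which is your $T^dC$. Your normal form $T^aC^b$ together with the sum constraint $a=bd$ proves something the paper only asserts: that every $\bt\sim\bs$ with $|\bt|=|\bs|$ is obtained from $\bs$ by a power of this operation.
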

\begin{proof}
Let $\zeta=e^{\frac{2\pi i}{\ell}}$ be a primitive $\ell$'th root of unity, and let $C_\ell=\langle \zeta\rangle$ be the subgroup of $\mathbb{C}^\times$ generated by $\zeta$, a cyclic group of order $\ell$. Write $\bsig=(\si_1,\si_2,\ldots,\si_\ell)$ and $\btau=(\tau_1,\tau_2,\ldots,\tau_\ell)$. Set $s=\si+d\frac{\ell(\ell-1)}{2}$ and set $e=\ell d$. Define an action of $C_\ell$ on $\Z^\ell[\si]$ by $\zeta\cdot \bsig=(\si_\ell,\si_1, \si_2,\ldots,\si_{\ell-1})$, and define an action of $C_\ell$ on $\Z^\ell[s]$ by $\zeta\cdot \bs=(s_\ell-(\ell-1)d,s_1+d,s_2+d,\ldots,s_{\ell-1}+d).$ Then $\zeta\cdot\bs\sim\bs$, and any $\bt\sim\bs$ such that $|\bs|=|\bt|$ satisfies $\zeta^j\cdot\bs$ for some $j\in\{0,\ldots,\ell-1\}$. Let $\bt_{d\brho}:\Z^\ell[\si]\lra\Z^\ell[s]$ be the $d\brho$-shift on charges given by $\bt_{d\brho}(\bsig)=\bsig+d\brho$. It is easy to check that $\zeta\cdot\bt_{d\brho}(\bsig)=\bt_{d\brho}(\zeta\cdot\bsig)$, and as $\bt_{d\brho}$ is invertible, this implies the statement.
\end{proof}

\subsection{Counting $(e,\bs)$-cores from spetsial cores}

For small values of the parameters $d$ and $\ell$, we are able to establish some remarkable formulas for
$\sfc_{e,\bs}(q)$ when $e=\ell d$ using the relationship of \Cref{gf_cores_decomp}.
We also derive positivity results in most cases.

\begin{Thm}\label{thm_gf_escores}
The formulas for $\sfc_{e,\bs}(q)$ in \Cref{tab_escores} hold, from which we derive the information recorded in the rightmost column about positivity of their coefficients $c_{e,\bs}(n)$.
\end{Thm}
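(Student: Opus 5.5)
The table is not visible here, but it covers small $\ell$ and $e=d\ell$ with charges normalised via \Cref{equivcharges}. My plan is to work case by case, with each case driven by \Cref{gf_cores_decomp}. For fixed $\ell$, $d$ and $\si\in\{0,\ldots,\lfloor\ell/2\rfloor\}$, the left-hand side $\sfz^\ell_{d,\si}(q)=\sfr^\ell_\si(q)\sfc_d(q)^\ell$ is already known in closed form. This follows from \Cref{gf_spetsialdcores} together with the explicit formulas for $\sfr^\ell_\si$: \Cref{lem_f_prod} for $\ell=2$, \Cref{charge gf level 3} for $\ell=3$, and \Cref{charge gf level 4} and \Cref{sigma=2 gf cor} for $\ell=4$.

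On the right-hand side, I first enumerate $\sD_e^\ell[\si+d\frac{\ell(\ell-1)}{2}]$, which is finite modulo $\sim$. I then group the charges into $\sim$-classes using \Cref{equivcharges} and the lemma identifying $\sim$ with cyclic permutation of $\bsig$. For each class I record its multiplicity and the shift $\widetilde\rk(\bs)$. This gives a linear relation
\[
\sfz^\ell_{d,\si}(q)=\sum_{\text{classes}} m_{[\bs]}\, q^{\widetilde\rk(\bs)}\sfc_{e,\bs}(q).
\]

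Some classes can be pinned down directly. The class of $(0,\ldots,0)$ is handled by \Cref{dilated}, which gives $\sfc_e(q^\ell)$. For the remaining classes, I plan to separate the unknowns using a residue argument. When only one unknown class remains in a relation, or when the shifts $\widetilde\rk$ fall into distinct residue classes mod $\ell$ (or mod $2$), I extract the corresponding subseries by bisection or multisection. This is exactly the kind of splitting done in \Cref{phipsi} and \Cref{sigma=2 gf cor}.

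The worked example just before the section is typical. With $\ell=2$, $e=4$ and $\si=0$, I get $\sfz^2_{2,0}=\phi(q)\psi(q)^2=\phi(q)^2\psi(q^2)$. By \Cref{phipsi}, its even part is $\phi(q^2)^2\psi(q^2)$ and its odd part is $4q\psi(q^4)^2\psi(q^2)$. Matching these gives $\sfc_{4,(0,2)}(q)$ and $2q\sfc_{4,(0,0)}(q)$, with $\sfc_{4,(0,0)}(q)=\sfc_4(q^2)$ serving as a consistency check. Where the parity separation alone fails, I would instead fix $\si$ so that a class appears with distinct shifts in two different relations. I would then also use the bijection constructed in the proof of \Cref{radprop} to cross-check the total count.

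For positivity, once each $\sfc_{e,\bs}$ is written as a product or sum of $\phi$, $\psi$, $\sfc_3$ and $\sfc_e$ evaluated at powers of $q$, I decide positivity in the same way as in \Cref{thm_positivity_cocores} and \Cref{thm_typesd_cocores}. The tools are Legendre's three-square theorem and Gauss' Eureka theorem (e.g.\ \Cref{pos phipsipsi}), \Cref{lem_size_3cores} for $3$-core sizes, and \Cref{grono} when a factor $\sfc_e$ with $e\ge4$ appears. Non-positivity is shown by congruence obstructions. For example, a series in $q^\ell$ alone has gaps, as for $\sfc_{4,(0,0)}(q)=\sfc_4(q^2)$, and a series of the form $\phi(q^2)^2\psi(q^2)$ in $q^2$ has zero odd coefficients.

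I expect the main obstacle to be separating the unknowns when several inequivalent classes share the same residue of $\widetilde\rk$. In that situation the decomposition does not determine them individually, and I would have to look for an extra relation, either from a different $\si$ or from a direct abacus count via \Cref{charac_e_cores_l}.
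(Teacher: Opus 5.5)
Your setup matches the paper's: \Cref{gf_cores_decomp}, enumerating $\sD_e^\ell[\si+d\frac{\ell(\ell-1)}{2}]$ with the shifts $\widetilde\rk(\bs)$, grouping charges into $\sim$-classes, and using \Cref{dilated}. The step you rely on to separate the unknowns, however, is invalid.

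Bisecting or multisecting $\sfz^\ell_{d,\si}(q)$ by the residue of $\widetilde\rk(\bs)$ isolates $q^{\widetilde\rk(\bs)}\sfc_{e,\bs}(q)$ only if every $\sfc_{e,\bs}(q)$ is a power series in $q^2$ (resp.\ $q^\ell$). That is false in general, and your own worked example shows it:
\begin{itemize}
\item The odd part of $\sfz^2_{2,0}(q)=\phi(q)^2\psi(q^2)$ is $4q\psi(q^2)\psi(q^4)^2=4q\sfc_4(q^2)$. Matching it with $2q\sfc_{4,(0,0)}(q)$ gives $\sfc_{4,(0,0)}(q)=2\sfc_4(q^2)$, so your consistency check with \Cref{dilated} actually fails.
\item Matching the even part gives $\sfc_{4,(0,2)}(q)=\phi(q^2)^2\psi(q^2)$. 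This is wrong: $(\emptyset,(1))$ and $((1),\emptyset)$ are both $(4,(0,2))$-cores by \Cref{charac_e_cores_l}. You would then conclude $c_{4,(0,2)}(n)=0$ for all odd $n$, the opposite of the table.
\item The same rule fails for $\ell=3$: from $\sfr^3_0(q)=\sfr^3_0(q^3)+6q\sfc_3(q^3)$ it gives $\sfc_{3,(0,0,0)}(q)=2\sfc_3(q^3)$.
\item It also fails for $\ell=4$, $\si=0$: it gives $\sfc_{4,(0,1,2,3)}(q)=\phi(q^2)^3$, although $((1),\emptyset,\emptyset,\emptyset)$ is a $(4,(0,1,2,3))$-core.
\end{itemize}

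The paper only solves a relation by subtraction, and only when all classes but one are already known. Here that means $\sfc_{4,(0,2)}(q)=\phi(q)\psi(q)^2-2q\sfc_4(q^2)$, which \Cref{lem_phipsitrick} turns into $\phi(q^2)^2\psi(q^2)+2q\sfc_4(q^2)$; its odd part is positive by \Cref{grono}.

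The known classes come from two sources:
\begin{itemize}
\item \Cref{dilated}.
\item The reduction $\sfc_{e,(a,a,b,b)}(q)=\sfc_{e,(a,b)}(q^2)$, since nesting in \Cref{charac_e_cores_l} forces $\la^1=\la^2$ and $\la^3=\la^4$. This feeds the level-$2$ results into level $4$. You do not identify it, and without it the level-$4$, $\si=2$ relation has two unknown classes, $(0,1,1,2)$ and $(0,0,2,2)$, with the same shift $q$.
\end{itemize}

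Where two unknown classes genuinely share a relation, the theorem does not separate them. The three cases are $\sfc_{6,(0,3)}(q)+2q\sfc_{6,(0,1)}(q)=\phi(q)\sfc_3(q)^2$, then $\sfc_{4,(0,1,2,3)}(q)+4q\sfc_{4,(0,0,0,2)}(q)=\phi(q^2)^3+8q\phi(q^2)\psi(q^4)^2$, and $\sfc_{4,(0,0,1,2)}(q)+\sfc_{4,(0,1,1,1)}(q)=\psi(q)^3$. The table records only these relations. Its positivity entries there are conjectural, or follow from nonnegativity of both summands plus the three-square obstruction in the combined series, or come from direct computation (the zeros at $n=4,19,112$).

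Varying $\si$ cannot supply the extra relation you hope for. The relation $\sim$ preserves $|\bs|\bmod\ell$, so a given class occurs only in the relation for one residue of $\si$ modulo $\ell$.

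Finally, $\sfc_{6,(0,2)}(q)=\psi(q^2)(\sfc_3(q)^2-q^2\sfc_3(q^4)^2)$ contains a difference. Proving its positivity needs the identity $c_3(4n+1)=c_3(n)$ of Baruah and Berndt, which is not among your tools.
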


\begin{table}[h]
\[
\begin{array}{llllll}
\hline
    \ell & d & e & \bs & \sfc_{e,\bs}(q)&\text{$c_{e,\bs}(n)>0$ for all $n\in\N$?}
\\
\hline
2&1&2&(0,0)& \psi(q^2)& \text{no (positive iff $n$ pronic)}
\\
&&&(0,1)& \phi(q)& \text{no (positive iff $n$ square)}
\\
&2&4&(0,0)&\psi(q^2)\psi(q^4)^2& \text{no (positive iff $n$ even)}
\\
&&&(0,1)&\psi(q^2)\psi(q)^2& \text{yes}
\\
&&&(0,2)& \psi(q^2)\left(\phi(q^2)^2 + 2q\psi(q^4)^2\right)& \text{yes}
\\
&3&6&(0,0)& \psi(q^2)\sfc_3(q^4)^2&\text{no (positive iff $n$ even)}
\\
&&&(0,1)& \text{see \Cref{proplevel3}} &\text{\em conjecturally yes}
\\
&&&(0,2)& \psi(q^2)\left(\sfc_3(q)^2 - q^2\sfc_3(q^4)^2\right) &\text{yes}
\\
&&&(0,3)& \text{see \Cref{proplevel3}} & \text{\em conjecturally yes}
\\
3&1&3&(0,0,0)& \sfc_3(q^3)&\text{no ($0$ for all $n\equiv 1,2\bmod 3$)}
\\
&&&(0,0,1)& \sfc_3(q)&\text{no (characterised by \cite{GO1996, Robbins})}
\\
&&&(0,1,2)& \sfr^3_0(q^3)+3q\sfc_3(q^3)
&\text{no ($0$ for all $n\equiv 2\bmod 3$)}
\\
4 &1&4&(0,0,0,0)&\psi(q^4)\psi(q^8)^2 & \text{no (positive iff $n$ divisible by $4$)}
 \\
   &&&(0,0,0,1)&\text{see \Cref{Eureka_dissection}} & \text{no ($0$ if $n=4,19,112$)}
 \\
 &&&(0,0,0,2)&\text{see \Cref{proplevel4}} & \text{no ($0$ for infinitely many odd $n$)}
\\
    &&&(0,0,1,1)&\psi(q^4)\psi(q^2)^2 & \text{no (positive iff $n$ even)}
 \\
   &&&(0,0,1,2)&\text{see \Cref{Eureka_dissection}} & \text{\em conjecturally yes}
 \\
     &&&(0,0,2,2)& \psi(q^4)\left( \phi(q^4)^2+2q^2\psi(q^8)^2 \right) & \text{no (positive iff $n$ even)}
 \\
    &&&(0,1,1,2)& \psi(q^4)\left( \phi(q^2)^2+2q\psi(q^4)^2\right)

   & \text{no (positive iff $n\not\equiv 3\mod 4$)}
 \\
 &&&(0,1,2,3)&\text{see \Cref{proplevel4}} & \text{no ($0$ for infinitely many even $n$)}
 \\
\hline
\end{array}
\]
\caption{Formulas for the first $(e,\bs)$-core generating functions}
\label{tab_escores}
\end{table}

\begin{Rem}
This is a complete list of the distinct generating functions of $(e,\bs)$-cores for all possible $\bs\in\Z^\ell$ for each of the given values of $e$ when $\ell=2,3,4$.
\end{Rem}

\subsection{Proof of \Cref{thm_gf_escores}}

\subsubsection{Case $\ell=2$.}
Let $\ell=2$ and let $\si\in\{0,1\}.$ 
By Lemma \ref{tinylemma2} and \Cref{gf_spetsialdcores}, it holds that $\sfz_{d,\si}^\ell(q) = \sfz_{d,n}^\ell(q)$ for all $n \equiv \si\bmod \ell$.  

\medskip

\emph{Subcase $d=1$.}

We have $e=d\ell=2$.
Assume first $\si=0$, so that $s=\si+d\frac{\ell(\ell-1)}{2}=1$.
Let us compute the elements $\bsig\in\sDe_1^2[0], \bs\in\sD_2^2[1]$ and their corresponding rank.
$$
\begin{array}{lll}
\hline
\bs\in\sD_2^2[0] & \bsig\in\sDe_1^2[0] & \rk(\bsig)  
\\
\hline
(0,1) & (0,0) & 0
\\
\hline
\end{array}
$$
By \Cref{gf_cores_decomp}, we have
$$\sfz_{1,0}^2(q)= \sfc_{2,(0,1)}(q)$$
and we obtain
$$\sfc_{2,(0,1)}(q) = \sfz_{1,0}^2(q) =\phi(q)$$
where the last identity holds by \Cref{thm_prod_cocores}.

\medskip

Assume now that $\si=1$, so that $s=\si+d\frac{\ell(\ell-1)}{2}=2$.
We compute again $\sDe_1^2[1], \sD_2^2[2]$ and their corresponding rank.
$$
\begin{array}{lll}
\hline
\bs\in\sD_2^2[2] & \bsig\in\sDe_1^2[1] & \rk(\bsig)  
\\
\hline
(0,2) & (0,1) & 0
\\
(1,1) & (1,0) & 0
\\
\hline
\end{array}
$$
We have $(0,2)\sim(1,1)\sim(0,0)$ and by \Cref{dilated} we get
$\sfc_{2,(0,0)}(q)=\psi(q^2).$

\medskip

\emph{Subcase $d=2$.}

We have $e=d\ell=4$.
Assume first $\si=0$, so that $s=\si+d\frac{\ell(\ell-1)}{2}=2$.
Let us compute the elements $\bsig\in\sDe_4^2[0], \bs\in\sD_2^2[2]$ and their corresponding rank.
$$
\begin{array}{lll}
\hline
\bs\in\sD_4^2[2] & \bsig\in\sDe_2^2[0] & \rk(\bsig)  
\\
\hline
(0,2) & (0,0) & 0
\\
(-1,3) & (-1,1) & 1
\\
(1,1) & (1,-1) & 1
\\
\hline
\end{array}
$$
By \Cref{gf_cores_decomp}, we have
$$\sfz_{2,0}^2(q)= q\sfc_{4,(-1,3)}(q)+ \sfc_{4,(0,2)}(q)+ q\sfc_{4,(1,1)}(q) =  \sfc_{4,(0,2)}(q) + 2q \sfc_{4,(0,0)}(q).$$
Since $ \sfc_{4,(0,0)}(q) =  \sfc_4(q^2)$ by \Cref{dilated},
we obtain
$$\sfc_{4,(0,2)}(q) = \sfz_{2,0}^2(q) -  2q\sfc_4(q) = \phi(q)\psi(q)^2 - 2q\sfc_4(q^2)$$
where the last identity holds by \Cref{thm_prod_cocores}.

\begin{Lem}\label{lem_phipsitrick}
We have $\phi(q)\psi(q)^2 =\phi(q^2)^2\psi(q^2) + 4q\sfc_4(q^2)$.
\end{Lem}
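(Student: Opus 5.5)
Using \Cref{factorization gf e-cores} with $e=m=2$ gives $\sfc_4(q)=\psi(q)\psi(q^2)^2$. Hence $\sfc_4(q^2)=\psi(q^2)\psi(q^4)^2$, and every term of the claimed identity has a factor $\psi(q^2)$. On the left-hand side this factor appears through \Cref{Sylvie4alt}, which gives $\phi(q)\psi(q)^2=\phi(q)^2\psi(q^2)$. After cancelling $\psi(q^2)$ (legitimate, since $\psi(q^2)$ has constant term $1$ and is invertible in $\Z[[q]]$), the statement becomes
\[
\phi(q)^2=\phi(q^2)^2+4q\psi(q^4)^2 .
\]
This is exactly \Cref{phipsi}.

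The proof will therefore run in four lines:
\begin{align*}
\phi(q)\psi(q)^2 &= \phi(q)^2\psi(q^2) && \text{by \Cref{Sylvie4alt},}\\
&= \psi(q^2)\left(\phi(q^2)^2+4q\psi(q^4)^2\right) && \text{by \Cref{phipsi},}\\
&= \phi(q^2)^2\psi(q^2)+4q\,\psi(q^2)\psi(q^4)^2 ,\\
&= \phi(q^2)^2\psi(q^2)+4q\,\sfc_4(q^2) && \text{by \Cref{factorization gf e-cores},}
\end{align*}
where the last step uses $\sfc_4(q)=\psi(q)\psi(q^2)^2$ with $q$ replaced by $q^2$.

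I do not expect a real obstacle. The only point to check is that the factorization $\sfc_4(q)=\psi(q)\psi(q^2)^2$, which the paper records right after \Cref{factorization gf e-cores}, is substituted at $q^2$ rather than $q$, so that it matches the term $q\psi(q^2)\psi(q^4)^2$ produced by \Cref{phipsi}.
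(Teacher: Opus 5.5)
Your proposal is correct and follows the paper's proof exactly: \Cref{Sylvie4alt}, then \Cref{phipsi}, then $\sfc_4(q^2)=\psi(q^2)\psi(q^4)^2$ from \Cref{factorization gf e-cores}. The cancellation of $\psi(q^2)$ you mention is only motivation; your four-line chain does not use it.
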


\begin{proof}
We have
\begin{align*}
\phi(q)\psi(q)^2 
& =  \phi(q)^2\psi(q^2) \text{\;\quad\qquad\qquad\qquad by \Cref{Sylvie4alt}}
\\
& = (\phi(q^2)^2+4q\psi(q^4)^2)\psi(q^2) \text{\quad by \Cref{phipsi}}
\\
& = \phi(q^2)^2 \psi(q^2) + 4q\psi(q^4)^2 \psi(q^2)
\\
& = \phi(q^2)^2 \psi(q^2) + 4q\sfc_4(q^2)
\end{align*}
since $\sfc_4(q) = \sfc_2(q)\sfc_2(q^2)^2 = \psi(q)\psi(q^2)^2$ by \Cref{factorization gf e-cores}.
\end{proof}

From \Cref{lem_phipsitrick} we obtain as a direct consequence the following formula:
\begin{equation}\label{l=2d=2s=(02)}
\sfc_{4,(0,2)}(q) = \psi(q^2)\left(\phi(q^2)^2+2q\psi(q^4)^2\right) = \phi(q^2)^2\psi(q^2) + 2q\sfc_4(q^2).
\end{equation}
This is the bisection of the series and we have gotten rid of the minus signs. 
Moreover, Formula \Cref{l=2d=2s=(02)} allows us to deduce positivity of the coefficients of $\sfc_{4,(0,2)}(q)$. Let $n\in\N$. The coefficient of $q^{2n+1}$ is given by $c_4(n)$, and $c_4(n)>0$ by \cite[Theorem 3]{Ono1994}. The coefficient of $q^{2n}$ is given by the coefficient of $q^n$ in $\phi(q)^2\psi(q)$. We thus need to show that any $n\in\N$ may be expressed as $n=x^2+y^2+\frac{z^2+z}{2}$ for some $x,y,z\in\N$. As the squares mod $8$ are $0$, $1$ and $4$, Legendre's Three-Square Theorem gives us $8n+1=u^2+v^2+w^2$ for some even non-negative integers $u$ and $v$ and some odd positive integer $w$. Write $u=2u'$, $v=2v'$ and $w=2z+1$ for some $u',v',z\in\N$. Noting that $u'$ and $v'$ must have the same parity (by congruence considerations mod $8$ again), it follows that $u'+v'$ and $u'-v'$ are both even. We then set $x=\frac{u'+v'}{2},\;y=\frac{u'-v'}{2} \in\N$. We have:
\begin{align*}
8n+1 & = u^2+v^2+w^2\\
8n+1 &= 4(u'^2+v'^2)+4z^2+4z+1\\
4n & = 2(u'^2+v'^2)+2(z^2+z)\\
4n & = (u'+v')^2+(u'-v')^2+2(z^2+z)\\
4n & = (2x)^2+(2y)^2+2(z^2+z) \\
n & = x^2 + y^2 + \frac{z^2+z}{2}.
\end{align*}

\medskip

Assume now that $\si=1$, so that $s=\si+d\frac{\ell(\ell-1)}{2}=3$.
We have
$$
\begin{array}{lll}
\hline
\bs\in\sD_4^2[3] & \bsig\in\sDe_2^2[1] & \rk(\bsig)  
\\
\hline
(0,3) & (0,1) & 0
\\
(1,2) & (1,0) & 0
\\
\hline
\end{array}
$$
By \Cref{gf_cores_decomp}, we have
$$\sfz_{2,1}^2(q)= \sfc_{4,(0,3)}(q) + \sfc_{4,(1,2)}(q) = 2\sfc_{4,(0,1)}(q) $$
and we obtain
\begin{equation}\label{Sylvie4}
\sfc_{4,(0,1)}(q) = \frac{1}{2}\sfz_{2,1}^2(q) = \frac{1}{2}\left(2\psi(q^2)\sfc_2(q)^2\right)=\psi(q^2)\psi(q)^2.
\end{equation}
Its coefficients are positive by \Cref{thm_positivity_cocores}.

\medskip

\emph{Subcase $d=3$.}

We have $e=d\ell=6$.
Assume first $\si=0$, so that $s=\si+d\frac{\ell(\ell-1)}{2}=3$.
$$
\begin{array}{lll}
\hline
\bs\in\sD_6^2[3] & \bsig\in\sDe_3^2[0] & \rk(\bsig)  
\\
\hline
(0,3) & (0,0) & 0
\\
(-1,4) & (-1,1) & 1
\\
(1,2) & (1,-1) & 1
\\
\hline
\end{array}
$$
By \Cref{gf_cores_decomp}, we have
$$\sfz_{3,0}^2(q)= q\sfc_{6,(-1,4)}(q)+ \sfc_{6,(0,3)}(q)+ q\sfc_{6,(1,2)}(q) =  \sfc_{6,(0,3)}(q) + 2q \sfc_{6,(0,1)}(q),$$
hence by \Cref{thm_prod_cocores} we have proved the following relationship:
\begin{equation}\label{proplevel3} 
\phi(q)c_3(q)^2 =\sfc_{6,(0,3)}(q) + 2q \sfc_{6,(0,1)}(q).
\end{equation}
Positivity of the terms $\sfc_{6,(0,3)}(q)$ and $\sfc_{6,(0,1)}(q)$ separately is equivalent to the truth of \Cref{conj_typesd} in the case $e=6$.

\medskip

Assume now $\si=1$, so that $s=\si+d\frac{\ell(\ell-1)}{2}=4$.
$$
\begin{array}{lll}
\hline
\bs\in\sD_6^2[4] & \bsig\in\sDe_3^2[1] & \rk(\bsig)  
\\
\hline
(0,4) & (0,1) & 0
\\
(1,3) & (1,0) & 0
\\
(2,2) & (2,-1) & 2
\\
(-1,5) & (-1,2) & 2
\\
\hline
\end{array}
$$
By \Cref{gf_cores_decomp}, we have
$$\sfz_{3,1}^2(q) 
= q^2\sfc_{6,(-1,5)}(q)+ \sfc_{6,(0,4)}(q)+ \sfc_{6,(1,3)}(q)+ q^2\sfc_{6,(2,2)}(q) 
= 2\sfc_{6,(0,2)}(q) + 2q^2 \sfc_{6}(q^2).$$
As $\sfz_{3,1}^2(q)=2\psi(q^2)\sfc_3(q)^2$ by \Cref{thm_prod_cocores}, we obtain 
\begin{align*}
 \sfc_{6,(0,2)}(q) &= \psi(q^2)\sfc_3(q)^2 - q^2\sfc_{6}(q^2) \\
                    &= \psi(q^2)\left(\sfc_3(q)^2 - q^2\sfc_3(q^4)^2\right)\\
                   &=  \psi(q^2)\left(\sfc_3(q)- q\sfc_3(q^4)\right)\left(\sfc_3(q)+ q\sfc_3(q^4)\right),
 \end{align*}
where we applied \Cref{factorization gf e-cores} in the second equality. 

\medskip

Let us now show that $ c_{6,(0,2)}(n)>0 $ for all $n\in\N$. By \cite[Theorem 4.1]{BaruahBerndt},
\[ c_3(4n+1)=c_3(n) \]
for all $n\in\N$. Therefore,
\begin{itemize}
\item the coefficient of $q^n$ in $\sfc_3(q)- q\sfc_3(q^4)$ equals $0$ if $n\equiv 1\bmod 4$, and $c_3(n)$ else,
\item the coefficient of $q^n$ in $\sfc_3(q)+ q\sfc_3(q^4)$ equals $2c_3(n)$ if $n\equiv 1\bmod 4$, and $c_3(n)$ else.
\end{itemize}
It follows that $ c_{6,(0,2)}(n)>0$ if and only $n=x^2+x+a+b$ for some $x,a,b\in\N$ such that $c_3(a),c_3(b)>0$ and $a$ is even. By \Cref{lem_size_3cores}, to show the latter it suffices to show that $n=x^2+x+y^2+y+z^2$ for some $x,y,z\in\N$. This is equivalent to positivity of the coefficient of $q^n$ in $\psi(q^2)^2\phi(q)=\psi(q^2)\psi(q)^2=\sfz^2_{2,1}(q)$, which holds for all $n\in\N$ by \Cref{thm_positivity_cocores}. We conclude that the coefficients of $\sfc_{6,(0,2)}(q)$ are all positive.

\subsubsection{Case $\ell=3$.}

\emph{Subcase $d=1$.}

We have $e=d\ell=3$.
Assume first $\si=0$, so that $s=\si+d\frac{\ell(\ell-1)}{2}=3$.
$$
\begin{array}{lll}
\hline
\bs\in\sD_3^3[3] & \bsig\in\sDe_1^3[0] & \rk(\bsig)  
\\
\hline
(0,1,2) & (0,0,0) & 0
\\
(-1,2,2) & (-1,1,0) & 1
\\
(0,0,3) & (0,-1,1) & 1
\\
(1,1,1) & (1,0,-1) & 1
\\
\hline
\end{array}
$$
Similarly to the previous computations, we have
$$\sfr^3_0(q)=\sfz_{1,0}^3(q)= \sfc_{3,(0,1,2)}(q) + 3q\sfc_{3,(0,0,0)}(q)=\sfc_{3,(0,1,2)}(q) + 3q\sfc_{3}(q^3)$$
and we obtain $\sfc_{3,(0,1,2)}(q) = \sfr^3_0(q) - 3q\sfc_3(q^3).$
We obtain an explicit formula by remembering Ramanujan's formula 
$$\sfr^3_0(q) = 1 + 6 \sum_{n\geq 1} \left( \frac{q^{3n-2}}{1-q^{3n-2}} - \frac{q^{3n-1}}{1-q^{3n-1}} \right).$$

We can also apply \cite[Lemmas 2 and 3]{Hirschhorn2008} to write 
\[ \sfr^3_0(q)=\sfr^3_0(q^3)+6q\sfc_3(q^3).\] 
We then obtain 
\[
\sfc_{3,(0,1,2)}(q)=\sfr^3_0(q^3)+3q\sfc_3(q^3)
\]
from which it is transparent that the coefficient of $q^{3n-1}$ in $\sfc_{3,(0,1,2)}(q)$ equals $0$ for all $n\in\N$. The identity $r^3_0(3n)=r^3_0(n)$ then tells us that $r^3_0(3n)=c_{3,(0,1,2)}(3n)$ for all $n\in\N$. As for the coefficients of $q^{3n+1}$ for $n\in\N$, \cite[Lemmas 2 and 3]{Hirschhorn2008} imply that $r^3_0(3n+1)=6c_3(n)$, and thus $r^3_0(3n+1)=2c_{3,(0,1,2)}(3n+1)$. Thus we completely understand the relationship between the coefficients of $\sfr^3_{0}(q)$ and $\sfc_{3,(0,1,2)}(q)$:
\begin{itemize}
\item $c_{3,(0,1,2)}(n)=r^3_0(n)$ if $n\equiv 0 \bmod 3$,
\item $c_{3,(0,1,2)}(n)=\frac{1}{2}r^3_0(n)$ if $n\equiv 1\bmod 3$,
\item $c_{3,(0,1,2)}(n)=0=r^3_0(n)$ if $n\equiv 2\bmod 3$.\\
\end{itemize}
The sequence of coefficients $c_{3,(0,1,2)}(n)$ is OEIS sequence \href{https://oeis.org/A113062}{A113062}, while the sequence of coefficients $r^3_0(n)$ is OEIS sequence \href{https://oeis.org/A004016}{A004016} \cite{oeis}.\\

Assume now $\si=1$, so that $s=\si+d\frac{\ell(\ell-1)}{2}=4$.
\[
\begin{array}{lll}
\hline
\bs\in\sD_3^3[4] & \bsig\in\sDe_1^3[1] & \rk(\bsig)  
\\
\hline
(0,1,3) & (0,0,1) & 0
\\
(0,2,2) & (0,1,0) & 0
\\
(1,1,2) & (1,0,0) & 0
\\
\hline
\end{array}
\]
We have this time
\[
\sfz_{1,1}^3(q)= 3\sfc_{3,(0,0,1)}(q)
\]
and as $\sfz^3_{1,1}(q)=\sfr^3_1(q)=3\sfc_3(q)$, we obtain
\[
\sfc_{3,(0,0,1)}(q) = \sfc_3(q).
\]

Assume finally $\si=2$, so that $s=\si+d\frac{\ell(\ell-1)}{2}=5$.
\[
\begin{array}{lll}
\hline
\bs\in\sD_3^3[5] & \bsig\in\sDe_1^3[2] & \rk(\bsig)  
\\
\hline
(0,2,3) & (0,1,1) & 0
\\
(1,1,3) & (1,0,1) & 0
\\
(1,2,2) & (1,1,0) & 0
\\
\hline
\end{array}
\]
We obtain
\[
\sfz_{1,2}^3(q)= 3\sfc_{3,(0,1,1)}(q).
\]
We have $\sfz_{1,2}^3(q)=\sfr^3_2(q)=\sfr^3_{-1}(q)=\sfr^3_1(q)$ by \Cref{tinylemma1,tinylemma2}. Thus 
\[
\sfc_{3,(0,1,1)}(q)=\sfc_{3,(0,0,1)}(q)=\sfc_3(q).
\]

\subsubsection{Case $\ell=4$.}

\emph{Subcase $d=1$.}

We have $e=d\ell=4$.
Assume first $\si=0$, so that $s=\si+d\frac{\ell(\ell-1)}{2}=6$.
$$
\begin{array}{lll}
\hline
\bs\in\sD_4^4[6] & \bsig\in\sDe_1^4[0] & \rk(\bsig)  
\\
\hline
(0,1,2,3) & (0,0,0,0) & 0
\\
(-1,1,3,3) & ( -1,0,1,0) & 1
\\
(0,0,2,4) & ( 0,-1,0,1) & 1
\\
(1,1,1,3)&(1,0,-1,0)&1
\\
(0,2,2,2) & ( 0,1,0,-1) & 1
\\
(-1,2,2,3) & ( -1,1,0,0) & 1
\\
(0,0,3,3) & (0,-1,1,0) & 1
\\
(0,1,1,4) & (0,0,-1,1) & 1
\\
(1,1,2,2)&(1,0,0,-1)&1
\\
\hline
\end{array}
$$
We thus have
\begin{align*}
\sfz_{1,0}^4(q)  &= \sfc_{4,(0,1,2,3)}(q) + 4q \sfc_{4,(0,0,0,2)}(q) + 4q \sfc_{4,(0,0,1,1)}(q)\\
			&= \sfc_{4,(0,1,2,3)}(q) + 4q \sfc_{4,(0,0,0,2)}(q) +4q\sfc_{4,(0,1)}(q^2).
\end{align*}
Applying \Cref{charge gf level 4}, \Cref{d=1_cor} and \Cref{Sylvie4}, 
we obtain
\begin{align*}
\phi(q^2)^3+12q\phi(q^2)\psi(q^4)^2 &= \sfr^4_0(q)	\\
 & = \sfz_{1,0}^4(q)  \\
 & = \sfc_{4,(0,1,2,3)}(q) + 4q \sfc_{4,(0,0,0,2)}(q) + 4q\sfc_{4,(0,1)}(q^2)	\\
 &= \sfc_{4,(0,1,2,3)}(q) + 4q \sfc_{4,(0,0,0,2)}(q) + 4q\psi(q^4)\psi(q^2)^2.
\end{align*}

Applying \Cref{Sylvie4alt} with $q^2$ in place of $q$ yields
\[
\sfc_{4,(0,1)}(q^2)=\psi(q^4)\psi(q^2)^2=\phi(q^2)\psi(q^4)^2,
\]
allowing us to deduce that
\begin{equation}\label{proplevel4} 
\sfc_{4,(0,1,2,3)}(q) + 4q \sfc_{4,(0,0,0,2)}(q) = \phi(q^2)^3+8q\phi(q^2)\psi(q^4)^2 .
\end{equation}
The right-hand-side is the bisection of the series. From this, we can deduce that both $\sfc_{4,(0,1,2,3)}(q)$ and $\sfc_{4,(0,0,0,2)}(q)$ have infinitely many coefficients that are $0$. In particular,  $\sfc_{4,(0,1,2,3)}(q)$ has infinitely many even-power coefficients that are $0$ and $\sfc_{4,(0,0,0,2)}(q)$ has infinitely many odd-power coefficients that are $0$, since infinitely many positive integers cannot be expressed as a sum of three squares. On the other hand, $\phi(q)\psi(q^2)^2$ has positive coefficients, so it is possible that $\sfc_{4,(0,1,2,3)}(q)$ has positive odd-power coefficients and and $\sfc_{4,(0,0,0,2)}(q)$ has positive even-power coefficients. 

\medskip

Next, assume $\si=1$, so that $s=1+\frac{4(4-1)}{2}=7$.
\[
\begin{array}{lll}
\hline
\bs\in\sD_4^4[7] & \bsig\in\sDe_1^4[1] & \rk(\bsig)  
\\
\hline
(0,1,2,4) & ( 0,0,0,1) & 0
\\
(0,1,3,3) & ( 0,0,1,0) & 0
\\
(0,2,2,3) & ( 0,1,0,0) & 0
\\
(1,1,2,3) & (1,0,0,0) & 0
\\
(0,0,3,4) & (0,-1,1,1) & 1
\\
(-1,2,3,3) & (-1,1,1,0) & 1
\\
(1,2,2,2) & ( 1,1,0,-1) & 1
\\
(1,1,1,4)&(1,0,-1,1)&1
\\
\hline
\end{array}
\]
We apply \Cref{charge gf level 4} and find
\[
4\psi(q)^3=\sfr^4_1(q)=\sfz^4_{1,1}(q)=4\sfc_{4,(0,0,1,2)}(q)+4\sfc_{4,(0,1,1,1)}(q),
\]
and thus the following relationship holds:
\begin{equation}\label{Eureka_dissection}
\sfc_{4,(0,0,1,2)}(q)+\sfc_{4,(0,1,1,1)}(q)=\psi(q)^3.
\end{equation}

Similarly, if we take $\si=-1$, using that $\sfz^4_{-1}(q)=\sfz^4_1(q)$ we find that 
\[
\sfc_{4,(0,1,2,2)}(q)+\sfc_{4,(0,0,0,1)}(q)=\psi(q)^3.
\]
We have $\sfc_{4,(0,0,0,1)}(q)=\sfc_{4,(0,1,1,1)}(q)$ and $\sfc_{4,(0,1,2,2)}(q)=\sfc_{4,(0,0,1,2)}(q)$.\

\medskip

Finally, assume $\si=2$, so that $s=2+\frac{4(4-1)}{2}=8$.
\[
\begin{array}{lll}
\hline
\bs\in\sD_4^4[8] & \bsig\in\sDe_1^4[2] & \rk(\bsig)  
\\
\hline
(0,1,3,4) & (0,0,1,1) & 1
\\
(0,2,3,3) & (0,1,1,0) & 1
\\
(1,2,2,3) & ( 1,1,0,0) & 1
\\
(1,1,2,4)&(1,0,0,1)&1
\\
(0,2,2,4)&(0,1,0,1)&1
\\
(1,1,3,3)&(1,0,1,0)&1
\\
(0,0,4,4) & (0,-1,2,1) & 3
\\
(-1,3,3,3) & (-1,2,1,0) & 3
\\
(2,2,2,2) & ( 2,1,0,-1) & 3
\\
(1,1,1,5) & (1,0,-1,2) & 3
\\
\hline
\end{array}
\]

We obtain
\begin{align*}
\sfr^4_2(q)&= 4q \sfc_{4,(0,1,1,2)}(q) + 2q \sfc_{4,(0,0,2,2)}(q) +4q^3\sfc_{4,(0,0,0,0)}(q)\\
		&= 4q \sfc_{4,(0,1,1,2)}(q) + 2q \sfc_{4,(0,2)}(q^2)+4q^3\sfc_4(q^4)\\
		&= 4q \sfc_{4,(0,1,1,2)}(q) + 2q\left( \psi(q^4)\phi(q^4)^2+2q^2\psi(q^4)\psi(q^8)^2   \right) +4q^3\psi(q^4)\psi(q^8)^2\\
		&=  4q \sfc_{4,(0,1,1,2)}(q) + 2q \psi(q^4)\phi(q^4)^2+ 8q^3\psi(q^4)\psi(q^8)^2\\
		&= 4q \sfc_{4,(0,1,1,2)}(q) +2q\psi(q^4)\left(\phi(q^4)^2+4q^2\psi(q^4)^2 \right) \\
		&= 4q \sfc_{4,(0,1,1,2)}(q) +2q \psi(q^4)\phi(q^2)^2
\end{align*}
where we applied \Cref{phipsi} in the last step.
We can now apply \Cref{sigma=2 gf cor},
and we obtain
$$4q\sfc_{4,(0,1,1,2)}(q) = 4q\psi(q^4)\phi(q^2)^2+8q^2\psi(q^4)^3.$$
We obtain the bisection of the series, which we may write in the following way by applying \Cref{Sylvie4alt}:
\begin{equation}\label{c4(0112)}
\sfc_{4,(0,1,1,2)}(q) =\psi(q^2)^2\phi(q^2)+ 2q \psi(q^4)^3 .
\end{equation}
Therefore, if $n$ is even, $c_{4,(0,1,1,2)}(n)>0$ by \Cref{pos phipsipsi}.
On the other hand, if $n$ is odd, then $c_{4,(0,1,1,2)}(n)\neq 0$ only if $n\equiv1\mod 4$ and
in fact by Gauss' Eureka theorem, $c_{4,(0,1,1,2)}(n)>0$ in this case.
In summary, $c_{4,(0,1,1,2)}(n)>0$ if and only if $n\not\equiv 3\mod 4$.

\subsection{Defect $0$ blocks of cyclotomic Hecke algebras}\label{sec_def0_Hecke}

In another direction, one can consider representation theory of cyclotomic Hecke algebras,
which are obtained as multi-parameter deformations of the group algebra of $G(\ell,1,n)$  \cite{BroueMalle1993,Ariki2002}.
In the semisimple case, its irreducible representations are labeled by $\ell$-partitions of size $n$,
and much of the partition combinatorics that applies to $S_n$ generalises.
The non-semisimple cases can be reduced to the case where the parameters are
$\xi,\xi^{s_1},\ldots, \xi^{s_\ell}$ where $\xi$ is a primitive $e$-th root of unity.
Writing $\bs=(s_1,\ldots,s_\ell)$, we can therefore consider the non-semisimple cyclotomic Hecke algebra associated with $(e,\bs)$, which we denote by $H_{(e,\bs)}(n)$.

\medskip

Now, this algebra being symmetric, it comes equipped with a defect statistic on the blocks.
Note that this is simpler than the defect statistic for blocks of finite groups as it is a block invariant \cite{ChlouverakiJacon2023}, where the authors also show that it coincides with 
an appropriate weight statistic \cite{Fayers2007}. 
We refer to \cite{LyleMathas2007,GeckJacon2011,JaconLecouvey2021,ChlouverakiJacon2023, DeclercqJacon2024} for more details on block theory of cyclotomic Hecke algebras.
Most interesting to us is the following analogue of the Nakayama conjecture in this context.

\begin{Thm}\cite{JaconLecouvey2021}
\label{naka_hecke}
Two irreducible representations labeled by $\ell$-partitions $\bla$ and $\bmu$ belong to the same block of $H_{(e,\bs)}(n)$ if and only if $|\bla,\bs\rangle_{(e)}=|\bmu, \bs\rangle_{(e)}$.
\end{Thm}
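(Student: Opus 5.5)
This theorem is quoted from \cite{JaconLecouvey2021}, and the proof I would give reduces it to known descriptions of blocks of Ariki--Koike algebras together with the combinatorics of \Cref{sec_cores_l}. The starting point is the classical block criterion due to Lyle and Mathas \cite{LyleMathas2007}. It says that $\bla$ and $\bmu$ lie in the same block of $H_{(e,\bs)}(n)$ if and only if they have the same multiset of residues, where the residue of a box $(a,b)$ in component $j$ is $b-a+s_j \bmod e$. So the plan is to prove the following purely combinatorial statement: for $|\bla|=|\bmu|=n$, the residue multisets of $|\bla,\bs\rangle$ and $|\bmu,\bs\rangle$ agree if and only if their $e$-cores, in the sense of \Cref{def_ecore_ell}, coincide.

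\textbf{Step 1: residues as bead counts.} I would encode the residue content on the $\ell$-abacus. Adding a box of residue $i$ to component $j$ means moving a bead in row $j$ from a position $\be\equiv i \pmod e$ to $\be+1$. Hence the number of boxes of residue $i$ is a linear function of the counts $N_{j,k}$, where $N_{j,k}$ is the number of beads in row $j$ at positions $\ge$ a fixed threshold and congruent to $k \bmod e$. Under $\LR$, positions congruent to $k \bmod e$ in row $j$ are sent to row $k$ of the $e$-abacus. It follows that the residue content of $|\bla,\bs\rangle$ is an injective affine function of the $e$-charge $\bt$ of $\LR(\bA(\bla,\bs))$, once the size $n$ and the charge $\bs$ are fixed. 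The natural route is to check that, for fixed $\bs$, the vector of residue counts determines $\bt$, and conversely, through the standard relation between charges and weights in the Fock space: $\bt$ determines the $\widehat{\mathfrak{sl}}_e$-weight.

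\textbf{Step 2: cores are determined by $\bt$.} By \Cref{def_ecore_ell}, the $e$-core of $|\bla,\bs\rangle$ is $\LR^{-1}$ of the empty $e$-partition charged by $\bt$, so it depends only on $\bt$, and the map $\bt\mapsto$ core is injective. Combining this with Step 1, the two residue multisets agree iff the two $\bt$ agree iff the two $e$-cores coincide.

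\textbf{Main obstacle.} The delicate point is Step 1: showing that residue content and $\bt$ determine each other once $\bs$ and $n$ are fixed. Here $\LR$ scatters a fixed charge $\bs$ across many charges $\bt$ (see the Remark after \Cref{LR}). I would handle this by identifying both the residue content and $\bt$ with the weight of the basis vector $|\bla,\bs\rangle$ in the Uglov Fock space. The $\widehat{\mathfrak{sl}}_e$-weight of $|\bla,\bs\rangle$ is $\Lambda_\bs-\sum_i m_i\alpha_i$, where $m_i$ is the number of boxes of residue $i$. By Uglov's level-rank duality, this weight equals the level-$e$ weight read off from $\bt$ modulo $\delta$, and the $\delta$-coefficient is fixed by $n$. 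I would verify this by tracking how a single bead move changes both sides.
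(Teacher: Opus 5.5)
The paper gives no proof of this statement. It imports the result from \cite{JaconLecouvey2021} (Corollary 4.4 there), so there is no internal argument to compare yours with. Your proposal is correct. It is a self-contained derivation from the Lyle--Mathas residue criterion \cite{LyleMathas2007}, reading the ``irreducible representations'' as the Specht modules of the generic algebra, combined with the paper's own $\LR$-description of the $e$-core. Step 2 is immediate, since $\LR$ is a bijection and the abaci $\bA(\bemp,\bt)$ are pairwise distinct.

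What you call the main obstacle needs no appeal to Uglov's representation-theoretic duality; the single bead-move check you mention at the end is the whole proof.
\begin{itemize}
\item Under $\LR$, row $r$ of the $e$-abacus is filled exactly by the positions $\equiv r \pmod e$ coming from all rows of $\bA(\bla,\bs)$, so $t_r$ is a normalized count of those beads.
\item Adding a box of residue $i$ moves a bead from a position $\equiv i$ to one $\equiv i+1$, so it decreases $t_i$ by one and increases $t_{i+1}$ by one.
\end{itemize}
Starting from $\bemp$, this gives
\[
t_r(\bla,\bs)=t_r(\bemp,\bs)+m_{r-1}(\bla)-m_r(\bla)\qquad (r\in\Z/e\Z),
\]
where $m_i(\bla)$ is the number of boxes of residue $i$.

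The cyclic difference map $(m_r)_r\mapsto(m_{r-1}-m_r)_r$ has kernel exactly the constant vectors. Adding a constant $c$ to every $m_r$ changes $|\bla|$ by $ec$. Hence, for $|\bla|=|\bmu|=n$ and fixed $\bs$, equal residue multisets is equivalent to equal $\bt$, which closes the argument.

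One small correction. The residue counts are not a linear function of the bead counts $N_{j,k}$ alone. Adding an $e$-rim hook to a component leaves every $N_{j,k}$ unchanged but adds one box of each residue. So the size $n$ genuinely enters, and your later phrasing ``once the size $n$ and the charge $\bs$ are fixed'' is the accurate one.
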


As a direct consequence of \Cref{naka_hecke} and \Cref{thm_gf_escores},
there is a defect $0$ block for $H_{(e,\bs)}(n)$ for all $n\in\N$ if 
$(\ell, e,\bs)\in \{ (2, 4, (0,1)), (2, 4, (0,2)) , (3, 6, (0,2))\}$.
Since \Cref{tab_escores} only treats small values of the parameters, we pose the question of extending
these results. In particular, it might be possible to study positivity (or equivalently, existence of defect $0$ blocks) without using product formulas for the generating functions (which seem quite complicated to establish in general).

\section*{Acknowledgements}

We thank Sylvie Corteel, Olivier Dudas, Abel Lacabanne, Gunter Malle and  Byeong-Kweon Oh for helpful remarks and communications. 
E. Norton thanks RepNet for travel support to work on this project.


\begin{thebibliography}{99}

\bibitem{Ariki2002}
\emph{Representations of quantum algebras and combinatorics of Young tableaux}, 
University Lecture Series, {\bf 26}, 
American Mathematical Society, Providence, RI, 2002.

\bibitem{BaruahBerndt}
N.~D. Baruah and B.~C. Berndt, 
\emph{Partition identities and Ramanujan's modular equations}, 
J. Combin. Theory Ser. A {\bf 114} (2007), no.~6, 1024--1045.

\bibitem{BDR17}
C. Bonnaf\'{e}, F. Dat and R. Rouquier, 
\emph{Derived categories and Deligne-Lusztig varieties II}, 
Ann. of Math. (2), {\bf 185}(2) (2017), 609--670.

\bibitem{BonnafeRouquier02}
C. Bonnaf\'e and R. Rouquier, 
\emph{Cat\'egories d\'eriv\'ees et vari\'et\'es de Deligne--Lusztig}, 
Publ. Math. Inst. Hautes \'Etudes Sci.  {\bf 97} (2003), 1--59. 

\bibitem{Brauer1944}
R. Brauer,
\emph{On the arithmetic in a group ring}, 
Proc. Nat. Acad. Sci. U.S.A. {\bf 30} (1944), 109--114.

\bibitem{Brauer1947}
R. Brauer,
\textit{On a conjecture by Nakayama},
Transactions of the Royal Society of Canada III (1947), no. 41,
11--19.

\bibitem{BroueMalle1993}
M. Brou\'{e} and G. Malle,
\emph{Zyklotomische Heckealgebren}, 
Ast\'{e}risque No. {\bf 212} (1993), 119–189.

\bibitem{BMM1999}
M. Brou\'{e}, G. Malle and J. Michel,
\emph{Towards Spetses I}
Transform. Groups, Vol. 4, No. 2-3 (1999), pp. 157--218.

\bibitem{CGJL}
N. Chapelier-Laget, T. Gerber, N. Jacon and C. Lecouvey,
\textit{Entropy of affine permutations and universality of affine atomic lengths},
arXiv:2603.22256, 2026.

\bibitem{CGJ2025}
M. Chlouveraki, J.-B. Gramain and N. Jacon,
\textit{Generalised hook lengths and Schur elements for Hecke algebras},
Algebr. Comb. {\bf 8} (2025), no. 4, 1069--1084.


\bibitem{ChlouverakiJacon2023}
M. Chlouveraki and N. Jacon,
\emph{Defect in cyclotomic Hecke algebras},
Math. Z. {\bf 305} (2023), no. 4, article 57.

\bibitem{ChlouverakiMalle2026}
M. Chlouveraki and G. Malle,
\emph{Intersections of blocks of cyclotomic Hecke algebras},
Math. Z. {\bf 312} (2026), Paper No. 99.

\bibitem{ConwaySloane1999}
J. H. Conway and N. J. A. Sloane,
\textit{Sphere packings, lattices and groups},
Grundlehren der mathematischen Wissenschaften {\bf 290},
Springer-Verlag, New York, 1999.

\bibitem{Craven2014}
D. Craven,
\emph{The Brauer trees of non-crystallographic groups of Lie type}, 
J. Alg. {\bf 398} (2014), 481--495.

\bibitem{Craven}
D. Craven,
\emph{Representation Theory of Finite Groups: a Guidebook},
Universitext, Springer Nature, Switzerland (2019).

\bibitem{DeclercqJacon2024}
D. Declercq and N. Jacon,
\emph{Blocks of Ariki-Koike algebras and level-rank duality},
arXiv:2409.19355.

\bibitem{DeligneLusztig}
P. Deligne and G. Lusztig, 
\emph{Representations of reductive groups over finite fields},
Ann. of Math. {\bf 103} (1976), 103--161.

\bibitem{DVV2}
O. Dudas, M. Varagnolo and \'E. Vasserot, 
\emph{Categorical actions on unipotent representations of finite classical groups}, 
in {\it Categorification and higher representation theory}, 41--104,  
Contemp. Math., {\bf 683}, Amer. Math. Soc., Providence, RI.


\bibitem{ErdmannMichler}
K. Erdmann and G.~O. Michler, 
\emph{Blocks for symmetric groups and their covering groups and quadratic forms},
 Beitr\"age Algebra Geom. {\bf 37} (1996), no.~1, 103--118.

\bibitem{Fayers2007}
M. Fayers, 
\emph{Core blocks of Ariki-Koike algebras}, 
J. Algebraic Combin. {\bf 26} (2007), no.~1, 47--81.


\bibitem{Fayers2019}
M. Fayers,
\emph{Simultaneous core multipartitions},
European J. Combin. {\bf 76} (2019), 138--158.

\bibitem{FongSrinivasan1982}
P. Fong and B. Srinivasan,
The blocks of finite general linear and unitary groups,
Invent. Math. {\bf 69} (1982), 109--154.

\bibitem{FongSrinivasan1989}
P. Fong and B. Srinivasan,
\textit{The blocks of finite classical groups}, 
J. reine angew. Math. {\bf 396} (1989), 122--191.




\bibitem{GKS1990}
F. G. Garvan, D. Kim and D. Stanton,
\textit{Cranks and $t$-cores},
Invent. Math. {\bf 101} (1990), 1--17.


\bibitem{Chevie}
M. Geck, G. Hiss, F. Lübeck, G. Malle and G. Pfeiffer, 
\emph{CHEVIE -- A system for computing and processing generic character tables for finite groups of Lie type, Weyl groups and Hecke algebras}, 
Appl. Algebra Engrg. Comm. Comput., 7 (1996), pages 175--210.

\bibitem{GeckJacon2011}
M. Geck and N. Jacon, 
{\em Representations of Hecke algebras at roots of unity}, 
Algebra and Applications, {\bf 15}, Springer-Verlag
London, Ltd., London, 2011.

\bibitem{GO1996}
A. Granville and K. Ono,
\emph{Defect zero $p$-blocks for finite simple groups},
Trans. Amer. Math. Soc. {\bf 348} (1996), 331--347.


\bibitem{GruberHiss}
J. Gruber and G. Hiss, 
\emph{Decomposition numbers of finite classical groups for linear primes},
 J. Reine Angew. Math. {\bf 485} (1997), 55--91.

\bibitem{Hirschhorn2008}
M. D. Hirschhorn,
\textit{A letter from Fitzroy House},
Amer. Math. Monthly {\bf 115} (2008), no. 6, 563--566.


\bibitem{Jacon2024}
N. Jacon,
\textit{Size of multipartitions through level-rank duality},
Preprint, 2024.

\bibitem{JaconLecouvey2020}
N. Jacon and C. Lecouvey, 
\emph{Keys and Demazure crystals for Kac-Moody algebras},
 J. Comb. Algebra {\bf 4} (2020), no.~4, 325--358.


\bibitem{JaconLecouvey2021}
N. Jacon and C. Lecouvey,
\emph{Cores of Ariki--Koike algebras},
Doc. Math. {\bf 26} (2021), 103--124.


\bibitem{KassReut2018}
C. Kassel and C. Reutenauer, 
\emph{Complete determination of the zeta function of the Hilbert scheme of $n$ points on a two-dimensional torus},
 Ramanujan J. {\bf 46} (2018), no.~3, 633--655.

\bibitem{KMS2024}
R. Kessar, G. Malle and J. Semeraro,
\emph{Weight conjectures for l-compact groups and spetses}, 
Ann. Sci. \'{E}cole Norm. Sup. (4) {\bf 57} (2024), 841--896.

\bibitem{Kiming}
I. Kiming, 
\emph{A note on a theorem of A. Granville and K. Ono},
 J. Number Theory {\bf 60} (1996), no.~1, 97--102

\bibitem{Lit1951}
D. E. Littlewood,
\emph{Modular representations of symmetric groups},
Proceedings of the Royal Society of London A {\bf 209} (1951), no. 1098,
333--353.


\bibitem{Lusztig1977}
G. Lusztig, 
\emph{Irreducible representations of finite classical groups}, 
Invent. Math. {\bf 43} (1977), no.~2, 125--175.


\bibitem{Lyle2024}
S. Lyle, 
\emph{Core blocks for Hecke algebras of type $B$ and sign sequences}, 
Comm. Algebra {\bf 52} (2024), no.~5, 1965--1981.

\bibitem{LyleMathas2007}
S. Lyle and A. Mathas,
\emph{Blocks of affine and cyclotomic Hecke algebras}, 
Adv. Math. {\bf 216}, no. 2, 854--878.

\bibitem{Malle1995}
G. Malle,
\emph{Unipotente Grade imprimitiver komplexer Spiegelungsgruppen},
J. Alg. {\bf 177} (1995), 768--826.

\bibitem{Malle1998}
G. Malle,
\emph{Spetses},
Doc. Math. Extra Vol. ICM II (1998), 87--96.

\bibitem{Malle2026}
G. Malle,
\emph{Harish-Chandra theories, Ennola $d$-ality and Rouquier blocks for spetses},
arXiv:2607.00515.
 
\bibitem{Michler1986}
G. O. Michler,
\emph{A finite simple group of Lie type has $p$-blocks with different
defects if $p\neq2$},
J. Alg. {\bf 104} (1986), 220--230.


\bibitem{oeis}
OEIS Foundation Inc.,
\textit{The On-Line Encyclopedia of Integer Sequences},
\url{https://oeis.org}, 2026.

\bibitem{OhSun2009}
B.-K. Oh and Z.-W. Sun,
\emph{Mixed sums of squares and triangular numbers (III)},
J. Number Theory  {\bf 129} (2009), no. 4, 964--969.


\bibitem{Olsson1986}
J. B. Olsson,
\emph{Remarks on symbols, hooks and degrees of unipotent characters},
J. Combin. Theory Ser. A {\bf 42} (1986), 223--238.


\bibitem{Ono1994}
K. Ono,
\emph{On the positivity of the number of $t$-core partitions},
Acta Arith. {\bf 66} (1994), no. 3, 221--228.

\bibitem{PW2018}
L. Pehlivan and K. Williams,
\emph{$(k,l)$-universality of ternary quadratic forms
$ax^2+by^2+cz^2$},
Integers {\bf 18} (2018), A20.



\bibitem{Robbins}
N. Robbins, 
\emph{On $t$-core partitions}, 
Fibonacci Quart. {\bf 38} (2000), no.~1, 39--48.


\bibitem{Robinson1947}
G. de B. Robinson,
\emph{On a conjecture by Nakayama},
Transactions of the Royal Society of Canada III (1947), no. 41,
20--25.

\bibitem{Ruhstorfer20}
L. Ruhstorfer,
\emph{On the Bonnaf\'{e}-Dat-Rouquier Morita equivalence}, 
J. Alg. {\bf 558} (2020), 660--676.

\bibitem{ShephardTodd1954}
G. C. Shephard and J. A. Todd, 
\emph{Finite unitary reflection groups}, 
Canadian J. Math., {\bf 6} (1954), 274--304.

\bibitem{TrinhXue2023}
M.-T. Q. Trinh and T. Xue,
\emph{Level-rank dualities from $\Phi$-Harish-Chandra series and affine Springer fibers},
arXiv:2311.17106 (2023).



\bibitem{Uglov1999}
D. Uglov,
\emph{Canonical bases of higher level $q$-deformed Fock spaces and Kazhdan--Lusztig polynomials},
in \emph{Physical Combinatorics} (M.~Kashiwara and T.~Miwa, eds.),
Progress in Mathematics, vol.~191,
Birkh{\"a}user, Basel, 2000.





  \bibitem{Wald04}
 J.-L. Waldspurger,  
 \emph{Une conjecture de Lusztig pour les groupes classiques}, 
 M\'em. SMF {\bf 96} (2004).




\bibitem{Willems1988}
W. Willems,
\emph{Blocks of defect zero in finite simple groups of Lie type},
J. Alg. {\bf 113} (1988), no. 2, 511--522.


\end{thebibliography}
\end{document}